\documentclass[11pt]{article}
\usepackage[utf8]{inputenc}

\usepackage{epsfig,epsf,fancybox}
\usepackage{amsmath}
\usepackage{mathrsfs,bbm}
\usepackage{amssymb}
\usepackage{graphicx}
\usepackage{color}
\usepackage{multirow}
\usepackage{paralist}
\usepackage{verbatim}
\usepackage{galois}
\usepackage{algorithm}
\usepackage{algorithmic}
\usepackage{boxedminipage}
\usepackage{booktabs}
\usepackage{accents}
\usepackage{stmaryrd}
\usepackage{subfig}
\usepackage{appendix}
\usepackage{graphicx}
\usepackage{epstopdf}
\usepackage{appendix}
\usepackage{amsthm}
\usepackage{cases}  
\usepackage[top=1in,bottom=1.2in,left=1in,right=1in,xetex]{geometry}
\usepackage{extarrows}

\usepackage{titlesec}
\usepackage{titletoc}

\usepackage[unicode=true,pdfusetitle,
bookmarks=true,bookmarksnumbered=true,bookmarksopen=true,bookmarksopenlevel=3,
breaklinks=false,pdfborder={0 0 1},backref=false,colorlinks=false]
{hyperref}

\numberwithin{equation}{section}

\newtheorem{theorem}{Theorem}[section]

\newtheorem{lemma}[theorem]{Lemma}

\newtheorem{condition}[theorem]{Condition}
\newtheorem{remark}[theorem]{Remark}

\newtheorem{example}{Example}[section]

\newcommand{\f}{\mathscr{F}}
\newcommand{\lr}{\mathcal{L}}
\newcommand{\sr}{\mathcal{S}}
\newcommand{\e}{\mathbb{E}}
\newcommand{\br}{\mathbb{R}}
\newcommand{\pr}{\mathcal{P}}
\newcommand{\dd}{\partial}

\newcommand{\brn}{{\mathbb{R}^n}}

\newcommand{\brd}{{\mathbb{R}^d}}
\newcommand{\de}{\Delta}

\newcommand{\hv}{\widehat{v}}

\newcommand{\tx}{\widetilde{X}}
\newcommand{\ty}{\widetilde{Y}}

\newcommand{\dr}{\mathcal{D}}
\newcommand{\bd}{\mathbb{D}}

\newcommand{\argmin}{\mathop{\rm argmin}}

\allowdisplaybreaks[2]

\title{A Class of Degenerate Mean Field Games, Associated FBSDEs and Master Equations}

\usepackage{authblk}

\author[a]{Alain Bensoussan\footnote{E-mail: axb046100@utdallas.edu}}
\author[b]{Ziyu Huang\footnote{E-mail: zyhuang19@fudan.edu.cn}}
\author[c]{Shanjian Tang\footnote{E-mail: sjtang@fudan.edu.cn}}
\author[b]{Sheung Chi Phillip Yam\footnote{E-mail: scpyam@sta.cuhk.edu.hk}}

\affil[a]{\small \it International Center for Decision and Risk Analysis, Naveen Jindal School of Management, University of Texas at Dallas}
\affil[b]{\small \it Department of Statistics and Data Science, The Chinese University of Hong Kong}
\affil[c]{\small \it Department of Finance and Control Sciences, School of Mathematical Sciences, Fudan University}

\date{}

\begin{document}

\maketitle

\begin{abstract}
In this paper, we study a class of degenerate mean field games (MFGs) with state-distribution dependent and unbounded functional diffusion coefficients. With a probabilistic method, we  study the well-posedness of the  forward-backward stochastic differential equations (FBSDEs) associated with the MFG and arising from the maximum principle,  and  estimate the corresponding Jacobian and Hessian flows. We further establish the classical regularity of the value functional $V$; in particular, we show that when the cost function is $C^3$ in the spatial and control variables and $C^2$ in the distribution argument, then the value functional is $C^1$ in time and $C^2$ in the spatial and distribution variables. As a consequence,  the value functional $V$ is the unique classical solution of the degenerate MFG master equation. \\

\noindent{\textbf{Keywords:}} Degenerate mean field games; Forward-backward stochastic differential equations; Method of continuation; Strong convexity or small mean field effect; Solution regularity and master equation

\noindent {\bf Mathematics Subject Classification (2020):} 60H30; 60H10; 93E20. 

\end{abstract}



\section{Introduction}\label{sec:intro}

Recently popular in mathematical sciences, mean field games (MFGs) were first proposed by Lasry and Lions in a series of papers \cite{CP,JM1,JM2,JM3} and also independently by Huang, Caines and Malhamé \cite{HM2,HM1,HM}. In the field of study, the controlled individual dynamical system is affected not only by the dynamical state and the control, but also by the equilibrium probability distribution of the state of the overall population. There are numerous works in various settings in this area. For analytical methods to second order mean field games, we refer to Bensoussan-Frehse-Yam \cite{AB_book}, Cardaliaguet-Cirant-Porretta \cite{CP1}, Cardaliaguet-Delarue-Lasry-Lions \cite{CDLL}, Cardaliaguet-Seeger-Souganidis \cite{Cardaliaguet-Seeger-Souganidis}, Gangbo-M\'esz\'aros-Mou-Zhang \cite{GW}, Gomes-Pimentel-Voskanyan \cite{GDA}, Graber-M\'{e}sz\'{a}ros \cite{GM}, Huang-Tang \cite{HZ} and Porretta \cite{PA}. For probabilistic approaches to mean field games and master equation, we refer to Ahuja-Ren-Yang \cite{SA1}, Bensoussan-Tai-Wong-Yam \cite{AB9'}, Bensoussan-Wong-Yam-Yuan \cite{AB10'}, Buckdahn-Li-Peng- Rainer \cite{BR}, Carmona-Delarue \cite{book_mfg}, Chassagneux-Crisan-Delarue \cite{CJF} and Huang-Tang \cite{HZ1}. For mean field type control problem and their master equations, we refer to Bensoussan-Huang-Yam \cite{AB6,AB8}, Bensoussan-Huang-Tang-Yam \cite{AB10}, Bensoussan-Tai-Yam \cite{AB5}, Bensoussan-Yam \cite{AB}, Carmona-Delarue \cite{CR} and  Ricciardi \cite{RM}.

In this paper, we adopt a probabilistic approach and stochastic control method to study mean field game with a state-distribution dependent diffusion which can  be degenerate, and give the unique classical solution of the	 associated master equation. Let $(\Omega,\f,\{\f_t,0\le t\le T\},\mathbb{P})$ be a complete filtered probability space (with the filtration being  augmented by all the $\mathbb{P}$-null sets)  on which an $n$-dimensional Brownian motion $\{B_t,{0\le t\le T}\}$ is defined and is $\f_t$-adapted. We denote by $\mathcal{P}_{2}(\brn)$ the space of all probability measures of finite second-order  moments on $\brn$,  equipped with the 2-Wasserstein metric $W_2$; see Subsection~\ref{sec:notation} for details. For $(t,\mu)\in[0,T]\times\pr_2(\brn)$, we choose a random vector $\xi\in L_{\f_t}^2$ independent of the Brownian motion $\{B_s-B_t, t\le s\le T\}$ such that $\lr(\xi)=\mu$, and consider the following mean field game: 
\begin{equation}\label{intro_1}
	\left\{
	\begin{aligned}
		&v_{t\xi}(\cdot)\in\argmin_{v(\cdot)\in\mathcal{L}_{\mathscr{F}}^2(t,T)}J_{t\xi}\left(v(\cdot);m(s),t\le s\le T\right)\\
		&\qquad\qquad\qquad\qquad :=\e\left[\int_t^T f\left(s,X^v(s),m(s),v(s)\right)ds+g\left(X^v(T),m(T)\right)\right],\\
		&X^v(s)=\xi+\int_t^sb\left(r,X^v(r),m(r),v(r)\right)dr+\int_t^s\sigma\left(r,X^v(r),m(r),v(r)\right)dB(r),\\
		&m(s):=\mathcal{L}(X^{v_{t\xi}}(s)),\quad  s\in[t,T],
	\end{aligned}
	\right.
\end{equation}
for some regular enough functional coefficients:
\begin{align*}
	&b:[0,T]\times\brn\times\mathcal{P}_{2}(\brn)\times \brd\to \brn,\quad \sigma:[0,T]\times \brn\times\mathcal{P}_{2}(\brn)\times \brd\to \br^{n\times n},\\
	&f:[0,T]\times\brn\times\mathcal{P}_{2}(\brn)\times \brd\to \br,\quad g:\brn\times\mathcal{P}_{2}(\brn)\to \br.
\end{align*}
Under assumptions on coefficients $(b,\sigma,f,g)$ (see Section~\ref{sec:MP}), MFG \eqref{intro_1}  is expected to have a unique smooth enough feedback control $v_{t\xi} \in \mathcal{L}_{\mathscr{F}}^2(t,T)$.  Denote by  $Y_{t\xi}\in\sr_\f^2(t,T)$ the corresponding equilibrium state process for the mean field game, and by  $\left\{m^{t,\mu}(s):=\lr(Y_{t\xi}(s)),t\le s\le T\right\}$ the distribution flow of the equilibrium state process. Given the distribution flow $m^{t,\mu}$ and  $x\in\brn$, we then consider the following optimal stochastic control problem:
\begin{equation}\label{intro_1'}
	\left\{
	\begin{aligned}
		&v_{tx\mu}(\cdot)\in\argmin_{v(\cdot)\in\mathcal{L}_{\mathscr{F}}^2(t,T)}J_{tx}\left(v(\cdot);m^{t,\mu}\right)\\
		&\ \qquad\qquad\qquad\qquad :=\e\left[\int_t^T f\left(s,X^v(s),m^{t,\mu}(s),v(s)\right)ds+g\left(X^v(T),m^{t,\mu}(T)\right)\right],\\
		&X^v(s)=x+\int_t^sb\left(r,X^v(r),m^{t,\mu}(r),v(r)\right)dr+\int_t^s\sigma\left(r,X^v(r),m^{t,\mu}(r),v(r)\right)dB(r).
	\end{aligned}
	\right.
\end{equation}
Note that it is a standard stochastic control problem,  with the distribution of the equilibrium state of the MFG \eqref{intro_1} being autonomous and serving as a time-changing parameter. Therefore, it  is distinguished from  a McKean-Vlasov optimal stochastic control problem, 
where the distribution flow is specified by  the controlled state process of Problem \eqref{intro_1'} and is thus simultaneously controlled rather than being autonomous. Problem \eqref{intro_1'}  is expected to have a unique optimal control under assumptions (A1), (A2) and (A3) (see Section~\ref{sec:MP}). Since Problem \eqref{intro_1'} depends on $\xi$ only through its law $\mu$, it is reasonable to denote the optimal control by $v_{tx\mu}\in\mathcal{L}_{\mathscr{F}}^2(t,T)$. We shall use a probabilistic approach to study the regularity of the value functional of  Problem \eqref{intro_1'}:
\begin{equation}\label{intro_4}
	\begin{split}
		V(t,x,\mu):&=\inf_{v(\cdot)\in\lr_{\f}^2(t,T)}J_{tx}\left(v(\cdot);m^{t,\mu}\right)\\
		&=J_{tx}\left(v_{tx\mu}(\cdot);m^{t,\mu}\right),\quad (t,x,\mu)\in[0,T]\times\brn\times\pr_2(\brn),
	\end{split}	
\end{equation}
where $t$ is the initial time, $\mu$ is the initial condition of MFG \eqref{intro_1} and serves as an augmented infinite-dimensional state of  Problem \eqref{intro_1'}, and $x$ is the spatial  initial condition for Problem \eqref{intro_1'}. One of our main results asserts that, when the diffusion $\sigma$ does not depend on the control variable, the function ${V}$ is the unique classical solution of the following mean-field game master equation: 
\begin{equation}\label{intro_5}
	\left\{
	\begin{aligned}
		&\dd_t V(t,x,\mu)+H\left(t,x,\mu,D_x V(t,x,\mu),\frac{1}{2}D_x^2 V(t,x,\mu)\sigma(t,x,\mu)\right)\\
		&+\int_\brn \left\{ \left(D_p H \left(t,y,\mu,D_x V(t,y,\mu),\frac{1}{2}D_x^2 V(t,y,\mu)\sigma(t,y,\mu)\right)\right)^\top D_y\frac{dV}{d\nu}(t,x,\mu)({y})\right.\\
		&\quad\qquad\left. +\frac{1}{2}\text{Tr}\left[\left(\sigma\sigma^\top\right) (t,{y},\mu)D_y^2\frac{dV}{d\nu}(t,x,\mu)({y})\right]\right\}d\mu(y)=0,\quad t\in[0,T),\\ 
		&V(T,x,\mu)=g(x,\mu),\quad (x,\mu)\in \brn\times\pr_2(\brn),
	\end{aligned}	
	\right.
\end{equation}
where the Hamiltonian $H:[0,T]\times\brn\times\pr_2(\brn)\times\brn\times\br^{n\times n}\to\br$ is defined as
\begin{align}
		&H (s,x,m,p,q):=\inf_{v\in\brd} L\left(s,x,m,v,p,q\right),\label{bh'}\\
		&L(s,x,m,v,p,q):=p^\top  b(s,x,m,v)+\sum_{j=1}^n \left(q^j\right)^\top \sigma^j(s,x,m,v)+f(s,x,m,v), \label{H'}
\end{align}
and $\frac{dV}{d\nu}$ is the linear functional derivative of $V$ in the measure variable (see Section~\ref{sec:notation} below) . With $v^{t,\mu}(s,x):=V\left(s,x,m^{t,\mu}(s)\right)$ for $(s,x)\in[t,T]\times\brn$,  the pair $\left(m^{t,\mu},v^{t,\mu}\right)$ solves the system of HJB-FP equations associated to the classical mean field game with initial data $(x,\mu)$ at time $t$; see Remark~\ref{rk:FP-HJB}, and also  Section 2.2 of \cite{AB_JMPA} and Section 3.3 of \cite{HZ2} for more discussions. In fact, the mean field game master equation \eqref{intro_5} is a decoupling field of the mean field game HJB-FP system, and our probabilistic approach gives a classical solution of the master equation \eqref{intro_5}. The proof of the well-posedness of Equation \eqref{intro_5} relies heavily on the regularity  in $(x,\mu)$ of the value functional $V$. When the cost function is $C^3$ both in the spatial and control variables,  the value functional is shown to be also $C^2$ in its spatial variables; when the derivatives $D_y^2 \frac{d}{d\nu}D_x f,\ D_y^2 \frac{d}{d\nu}D_v f,$ and  $\ D_y^2 \frac{d}{d\nu}D_x g$ are bounded and continuous, then the value functional is linearly functionally differentiable in $\mu$ with the corresponding derivative $D_y^2\frac{dV}{d\nu}$ being continuous. The proof of these regularity requires that  $Y_{t\xi}$ has the G\^ateaux derivative  in $\xi$ and $(Y_{tx\mu},v_{tx\mu})$ has  the usual derivative in $(x,\mu)$. We first study the regularity solutions  to two systems of forward-backward stochastic differential equations (FBSDEs) corresponding to MFG \eqref{intro_1} and Problem \eqref{intro_1'}, which arise from the maximum principle \cite{AB_JMPA,AB8}. Our approach is significantly different from the recently well-received PDE approach to mean field games via master equation under Lasry-Lions monotonicity condition or displacement monotonicity condition in the existing literature. Our probabilistic approach is more aligned with the stochastic control method, and allows us to cope with the case where $b$ is linear in $(x,v)$ and nonlinearly dependent on $m$, while $\sigma$ can be degenerate so that it is linear in $x$ and nonlinearly dependent on $m$. We also require less regularity in the cost functions than those demanded in the existing literature. To the best of our knowledge, the degenerate mean field games are barely touched in the contemporary research, and most  studies require  the diffusion term to be constant, such as most of preceding works. When the diffusion coefficient is degenerate, the difficulty in the mathematical analysis is widely believed to be escalated to another higher level. Nevertheless, Chassagneux et al. \cite{CJF} study the case when $\sigma$ is bounded and lists out some interesting examples, but the linear volatility function is not included. Cardaliaguet et al. \cite{Cardaliaguet-Seeger-Souganidis} prove the existence of the weak solutions to the HJB-FP system corresponding to MFG when the noise coefficient is control-independent and the the initial distribution has a bounded density. Differently, in our work, we give the strong solution of the MFG \eqref{intro_1} and the classical solution of the master equation \eqref{intro_5} via a stochastic control approach. In particular, when the cost functions are $C^1$ and the diffusion coefficient $\sigma$ is state-distribution-control dependent, MFG \eqref{intro_1} can still be warranted with a unique solution. And our running cost function $f$ can be non-separable with a quadratic growth.

For the well-posedness of the FBSDEs corresponding to MFG \eqref{intro_1} and Problem \eqref{intro_1'} and the regularity of their solutions, we use the continuation method for general FBSDEs defined on Hilbert spaces under a monotonicity condition, and then apply this result to the solvability of the associated FBSDEs \eqref{intro_2}, \eqref{lem:MP2}, \eqref{FB:dr} and \eqref{FB:dr'}. In a particular, the monotonicity condition here is satisfied when $f$ is strongly convex in $v$. This kind of continuation method was first proposed by Hu and Peng \cite{YH2}, and then extended by Peng and Wu \cite{YH2}. After that, it was utilized in FBSDEs for mean field type control problem by Carmona and Delarue \cite{CR}, and then used by Ahuja et al. \cite{SA1}, and Huang and Tang \cite{HZ1} for mean field games with common noises, and Bensoussan et al. \cite{AB10} for mean field type control problems. In our work, we apply the continuation method not only to the solvability of the FBSDEs corresponding to MFG \eqref{intro_1} and Problem \eqref{intro_1'}, but also to that of its Jacobian and Hessian flows together with the well-posedness of the linear functional derivatives of its solution, which goes beyond the framework of \cite{SA1,CR,HZ1}. The linear functional derivatives are also characterized as a system of FBSDEs, whose monotonicity conditions are derived from taking the derivatives of the usual optimality conditions. 

In this work, we establish both global solvability when $f$ is strongly convex and local solvability of all aforementioned FBSDEs for a general $f$. More precisely, the local solvability does not require $f$ to be convex in $x$, and we can assume the coefficients $(b,\sigma)$ to be linear in $(x,v)$ and nonlinearly dependent on $m$. To ensure the global solvability, we require $f$ to be convex in $(x,v)$ and $(b,\sigma)$ to be independent of $m$. Nevertheless, we allow $f$ to be non-separable in $v$ and $m$. This goes beyond the framework of \cite{CJF}. We also refer to Bensoussan et al. \cite{AB5,AB9'} for the relation between the convexity assumption and the displacement monotonicity condition and Lasry-Lions monotonicity condition extensively used in the literature \cite{SA1,CP,CDLL,book_mfg,CJF,HZ1}. 

The rest of the paper is organized as follows. In Section~\ref{sec:MP}, we give the well-posedness of FBSDEs corresponding to MFG \eqref{intro_1} and Problem \eqref{intro_1'} arising from the maximum principle. In Section~\ref{sec:distribution}, we compute the G\^ateaux derivatives with respect to the initial condition $\xi$ of the solution processes of FBSDEs associated to MFG \eqref{intro_1}. In Section~\ref{sec:state}, we compute the G\^ateaux derivatives in the initial $(x,\mu)$ of the solution of FBSDEs associated  to Problem \eqref{intro_1'}. In Section~\ref{sec:V}, we give the regularity of the value functional $V$. In Section~\ref{sec:master}, we eventually establish that $V$ is the unique classical solution of the master equation \eqref{intro_5}. 
The statements in Sections~\ref{sec:MP}, \ref{sec:distribution}, \ref{sec:state}, \ref{sec:V} and \ref{sec:master} are proved in Appendices~\ref{pf:MP}, \ref{pf:distribution}, \ref{pf:state}, \ref{pf:V} and \ref{pf_thm1}, respectively.

\subsection{Notations}\label{sec:notation}

For any $X\in L^2(\Omega,\f,\mathbb{P};\brn)$, we denote by $\lr(X)$ its law. For every $t\in[0,T]$, we  denote by $L^2_{\f_t}$ the set of all $\f_t$-measurable square-integrable $\brn$-valued random vectors, and denote by $\lr^2_{\f}(0,T)$  the set of all $\f_t$-progressively-measurable $\brn$-valued processes $\alpha=\{\alpha_t,\ 0\le t\le T\}$ such that $\e\left[\int_0^T |\alpha_t|^2dt\right]<+\infty$. We  denote by $\mathcal{S}^2_{\f}(0,T)$ the set of all $\f_t$-progressively-measurable $\brn$-valued processes $\beta=\{\beta_t,\ 0\le t\le T\}$ such that $\e\left[\sup_{0\le t\le T} |\beta_t|^2\right]<+\infty$. We denote by $\mathcal{P}_{2}(\brn)$ the space of all probability measures of finite second-order moments on $\brn$, 
equipped with the 2-Wasserstein metric: $W_2\left(m,m'\right):=\inf_{\pi\in\Gamma\left(m,m'\right)}\sqrt{\int_{\brn\times\brn}\left|x-x'\right|^2\pi\left(dx,dx'\right)}$, where $\Gamma\left(m,m'\right)$ is the set of joint probability measures with respective marginals $m$ and $m'$. We denote by $\delta_0$ the distribution of the random variable $\xi$ such that $\mathbb{P}(\xi=\mathbf{0})=1$. 
See \cite{AL,VC} for more results on Wasserstein metric space.

The linear functional derivative of a functional $k(\cdot):\mathcal{P}_{2}(\brn)\to\br$ at $m\in \mathcal{P}_{2}(\brn)$ is another functional $\mathcal{P}_{2}(\brn)\times \brn\ni(m,y)\mapsto\dfrac{dk}{d\nu}(m)(y)$,  being continuous under the product topology and satisfying $\int_{\brn}\Big|\dfrac{dk}{d\nu}(m)(y)\Big|^{2}dm(y)\leq c(m)$ for some positive constant $c(m)$ which is bounded on any bounded subsets of $\pr_2(\brn)$, such that 
$$
\lim_{\epsilon\to0}\dfrac{k((1-\epsilon)m+\epsilon m')-k(m)}{\epsilon}=\int_\brn\dfrac{dk}{d\nu}(m)(y)\left(dm'(y)-dm(y)\right), \quad \forall m'\in\mathcal{P}_{2}(\brn);
$$ we may refer to \cite{AB,book_mfg} for more details on linear functional derivatives. Particularly, the linear functional derivatives in $\pr_2(\brn)$ are connected to the G\^ateaux derivatives in $L^2(\Omega,\f,\mathbb{P};\brn)$ in the following way. For a linearly functional differentiable functional $k:\pr_2(\brn)\to\br$ such that the derivative $D_y\frac{d k}{d\nu}(\mu)(y)$ is continuous in $(\mu,y)$ and $D_y\frac{d k}{d\nu}(\mu)(y)\le c(\mu)(1+|y|)$ for $(\mu,y)\in\pr_2(\brn)\times\brn$,  the functional $K(X):=k(\lr(X)), \  X\in L^2(\Omega,\f,\mathbb{P};\brn)$ has the following G\^ateaux derivative
\begin{align}\label{lem01_1}
	D_X K(X)(\omega)=D_y\frac{d k}{d\nu}(\lr(X))(X(\omega)). 
\end{align}
moreover, when $X$ is the identity function $I$, \eqref{lem01_1} is identical to the $L$-derivative $\dd_m k(m)(x)$ of \cite{book_mfg}. 

In this paper, for any random variable $\xi$, we use $\widetilde{\xi}$ to stand for its independent copy, and use $\widetilde{\e}[\widetilde{\xi}]$ for its expectation.  For convenience, we write $f|_{a}^b:=f(b)-f(a)$ for the difference of a  functional $f$ between two points $b$ and $a$. For any matrix $Q\in\br^{n\times n}$ and vector $x\in\brn$, we use the notation $Qx^{\otimes 2}:=x^\top Qx$.

\section{Forward-backward systems}\label{sec:MP}

In this section, we write down the maximum principle and associated FBSDEs for MFG \eqref{intro_1} and Problem \eqref{intro_1'}, respectively. We need the following assumptions. 

\textbf{(A1)} The functions $(b,\sigma)$ are linear in $(x,v)$. That is,
\begin{align*}
	&b(s,x,m,v)=b_0(s,m)+b_1(s)x+b_2(s)v,\\ 
	& \sigma^j(s,x,m,v)=\sigma^j_0(s,m)+\sigma^j_1(s)x+\sigma^j_2(s)v,\quad 1\le j\le n.
\end{align*}
Here, the maps $b_0,\sigma^j_0:[0,T]\times\pr_2(\brn)\to\brn$ are $L$-Lipschitz continuous in $m\in\pr_2(\brn)$; and the maps $b_1,\sigma^j_1:[0,T]\to\br^{n\times n}$ and $b_2,\sigma^j_2:[0,T]\to\br^{n\times d}$ are bounded  by $L$. Moreover, the derivatives $\left(D \frac{db_0}{d\nu}, D \frac{d\sigma^j_0}{d\nu}\right)$ exist, and they are continuous in all their arguments and are bounded by $L$.

\textbf{(A2)} The functions $f$ and $g$ have a  quadratic growth, and  satisfy for $(s,x,m,v)\in [0,T]\times\brn\times \pr_2(\brn)\times \brd$,
\begin{align*}
	|f(s,x,m,v)|\le L \left(1+|x|^2+W_2^2(m,\delta_0)+|v|^2\right),\quad |g(x,m)|\le L \left(1+|x|^2+W_2^2(m,\delta_0)\right).
\end{align*}
All the derivatives $D_x^2 f, \ D_vD_x f, \ D_v^2 f, \ D_y \frac{d}{d\nu}D_xf, \ D_y \frac{d}{d\nu}D_v f, \ D_x^2 g,\ D_y \frac{d}{d\nu}D_xg$ exist, and they are continuous in all their arguments, and they also satisfy for $m,m'\in \pr_2(\brn)$ and $y,y'\in\brn$,
\begin{align*}
	\left| \left(D_x^2,D_vD_x,D_v^2\right) f (s,x,m,v) \right|+\left| D_x^2 g(x,m) \right|\le\ & L,\\
	\left| D_y\frac{d f}{d \nu} (s,x,m',v)(y')-D_y\frac{d f}{d \nu} (s,x,m,v)(y)\right|&+\left| D_y\frac{d g}{d \nu} (x,m')(y')-D_y\frac{d g}{d \nu} (x,m)(y)\right|\\
	\le\ & L\left(W_2(m,m')+|y'-y|\right).
\end{align*}
Moreover, there exist nonnegative constants $L_x,L_v,L_g\le L$, such that
\begin{align*}
	&\left| D_y \frac{d}{d\nu} D_x f(s,x,m,v)(y)\right|\le L_x, \quad \left| D_y \frac{d}{d\nu} D_v f(s,x,m,v)(y)  \right|\le L_v,\quad \left| D_y \frac{d}{d\nu} D_x g(x,m)(y)\right|\le L_g.
\end{align*}

\textbf{(A3)} There exists $\lambda_v> 0$ such that for any $s\in[0,T]$ and  $(x,m,v,v')\in\brn\times\pr_2(\brn)\times\brd\times\brd$,
\begin{align*}
	&f(s,x,m,v')-f(s,x,m,v)\geq \left(D_v f (s,x,m,v)\right)^\top  (v'-v)+\lambda_v |v'-v|^2.
\end{align*}

\textbf{(A3')} The functions $(b_0,\sigma_0)$ are independent of $m$, and there exist $\lambda_v>0,\ \lambda_x\geq\frac{L_v^2}{8\lambda_v}+\frac{L_x}{2}$ and  $\lambda_g\geq\frac{L_g}{2} $ such that, for any $(s,m)\in[0,T]\times\pr_2(\brn)$, $x,x'\in\brn$ and $v,v'\in\brd$,
\begin{align*}
	&f\left(s,x',m,v'\right)-f(s,x,m,v)\geq \left(D_x f (s,x,m,v)\right)^\top  \left(x'-x\right)+\lambda_x \left|x'-x\right|^2\\
	&\qquad\qquad\qquad\qquad\qquad\qquad\qquad +\left(D_v f (s,x,m,v)\right)^\top  \left(v'-v\right)+\lambda_v \left|v'-v\right|^2,\\
	&g\left(x',m\right)-g(x,m)\geq \left(D_x g (x,m)\right)^\top  \left(x'-x\right)+\lambda_g \left|x'-x\right|^2.
\end{align*}

\begin{remark}
	For the convexity conditions in (A3), see \eqref{convex}-\eqref{convex'} for their roles. The condition $\lambda_x\geq\frac{L_v^2}{8\lambda_v}+\frac{L_x}{2}$ in (A3') is a kind of small mean field effect condition; see \eqref{small_mf_eff_3}, \eqref{lem3_6} and \eqref{small_mf_eff_1} for instance for its roles. And we refer to \cite{AB5,AB9'} for a detailed discussion on the relation among this small mean field effect assumption, the displacement monotonicity condition, and Lasry-Lions monotonicity condition. 
\end{remark}

For the Lagrangian $L:[0,T]\times \brn\times\pr_2(\brn)\times\brd\times\brn\times\br^{n\times n}\to\br$ defined in \eqref{H'}, we define the minimizing  $[0,T]\times \brn\times\pr_2(\brn)\times \brn\times\br^{n\times n}\ni(s,x,m,p,q)\mapsto \widehat{v}(s,x,m,p,q)\in\brd$ as
\begin{align}\label{hv'}
	D_v L \left(s,x,m,\widehat{v}(s,x,m,p,q),p,q\right)=0,
\end{align}
then, the Hamiltonian defined in \eqref{bh'} satisfies as
\begin{equation*}
	\begin{split}
		&H (s,x,m,p,q):=L\left(s,x,m,\widehat{v}(s,x,m,p,q),p,q\right).
	\end{split}
\end{equation*}
From Assumptions (A1) and (A3), we know that the minimizing map $\hv$ is well-defined and satisfies
\begin{align}\label{hv}
	[b_2(s)]^\top p+\sum_{j=1}^n\left[\sigma^j_2(s)\right]^\top q^j+D_vf\left(s,x,m,\hv(s,x,m,p,q)\right)=0.
\end{align}
From the convexity of $f$ in $v$ and the Lipchitz-continuity of $D_v f$ in $x$ and $m$, recalling in Assumption (A2) that $L_v\le L$, the map $\widehat{v}$ is $\frac{L}{2\lambda_v}$-Lipschitz continuous in $(x,m,p,q)\in \brn\times\pr_2(\brn)\times\brn\times \br^{n\times n}$; the proof of this claim is similar as \cite[Lemma 5.10]{AB6}, which is omitted here.

\subsection{Well-posedness of MFG \eqref{intro_1}}

We begin by giving a sufficient condition for MFG \eqref{intro_1}, whose proof is given in Appendix~\ref{pf_lem_MP1}. 

\begin{lemma}\label{lem:MP1}
	Under Assumptions (A1)-(A3), suppose that the following FBSDEs 
	\begin{equation}\label{intro_2}
		\left\{	
		\begin{aligned}
			&Y_{t\xi}(s)=\xi+\int_t^s D_pH\left(r,Y_{t\xi}(r),\lr(Y_{t\xi}(r)),p_{t\xi}(r),q_{t\xi}(r)\right)dr\\
			&\ \qquad\qquad+\int_t^s D_qH(r,Y_{t\xi}(r),\lr(Y_{t\xi}(r)),p_{t\xi}(r),q_{t\xi}(r))dB(r),\\
			&p_{t\xi}(s)=D_x g(Y_{t\xi}(T),\lr(Y_{t\xi}(T)))+\int_s^T D_x H(r,Y_{t\xi}(r),\lr(Y_{t\xi}(r)),p_{t\xi}(r),q_{t\xi}(r))dr\\
			&\ \qquad\qquad -\int_s^T q_{t\xi}(r)dB(r),\quad s\in[t,T],
		\end{aligned}
		\right.
	\end{equation}
	has a unique solution $(Y_{t\xi},p_{t\xi},q_{t\xi})\in \sr^2_\f(t,T)\times\lr^2_{\f}(t,T)\times\left(\lr^2_{\f}(t,T)\right)^n$. Then, there exists a constant $C(L,T)$ depending only on $(L,T)$, such that when $\lambda_v\geq C(L,T)$, MFG \eqref{intro_1} has a unique solution
	\begin{align}\label{vxi}
		v_{t\xi}(s):=\hv\left(s,Y_{t\xi}(s),\lr(Y_{t\xi}(s)),p_{t\xi}(s),q_{t\xi}(s)\right),\quad s\in[t,T].
	\end{align}
    Furthermore, if Assumption (A3') is satisfied, for any $\lambda_v> 0$, the same assertion remains valid.  
\end{lemma}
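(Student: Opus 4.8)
The plan is to read Lemma~\ref{lem:MP1} as a verification (sufficiency) theorem: given a solution $(Y_{t\xi},p_{t\xi},q_{t\xi})$ of the McKean--Vlasov FBSDE~\eqref{intro_2}, I produce the equilibrium flow and the candidate control, check the fixed-point (consistency) condition for free, and then certify optimality by a convexity/Bregman estimate. The starting observation is that, since $H=\inf_v L$ is attained at $\hv$ and $D_vL=0$ there, the envelope theorem gives $D_pH(\cdots)=b(\cdot,\hv)$, $D_qH(\cdots)=\sigma(\cdot,\hv)$ and $D_xH(\cdots)=D_xL(\cdot,\hv)$. Setting $m(s):=\lr(Y_{t\xi}(s))$ and $v_{t\xi}$ as in~\eqref{vxi}, the forward equation of~\eqref{intro_2} is \emph{exactly} the state equation of~\eqref{intro_1} driven by $v_{t\xi}$ with the frozen flow $m$; hence $X^{v_{t\xi}}=Y_{t\xi}$ and the consistency requirement $m(s)=\lr(X^{v_{t\xi}}(s))$ holds automatically. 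It therefore remains only to show that $v_{t\xi}$ minimises $v\mapsto J_{t\xi}(v;m)$ over $\lr^2_\f(t,T)$ for this fixed $m$.

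For the optimality step I would fix an arbitrary competitor $v'\in\lr^2_\f(t,T)$, denote by $X'=X^{v'}$ its state (still with frozen $m$), and write $\delta X:=X'-Y_{t\xi}$, $\delta v:=v'-v_{t\xi}$, with $\delta X(t)=0$. Applying It\^o's formula to $p_{t\xi}^\top\delta X$, using the backward equation of~\eqref{intro_2} together with $D_xH=b_1^\top p_{t\xi}+\sum_j(\sigma_1^j)^\top q_{t\xi}^j+D_xf$ and the first-order condition~\eqref{hv}, all the linear-in-$(x,v)$ contributions coming from $(b,\sigma)$ (which are affine by (A1)) cancel and the first-order terms collapse, leaving the clean identity
\begin{align*}
	J_{t\xi}(v';m)-J_{t\xi}(v_{t\xi};m)=\e\int_t^T R_f(s)\,ds+\e\,R_g,
\end{align*}
where $R_f$ and $R_g$ are the Bregman remainders of $f$ in $(x,v)$ along $(\delta X,\delta v)$ and of $g$ in $x$ along $\delta X(T)$, i.e. $R_f=f(s,X',m,v')-f(s,Y_{t\xi},m,v_{t\xi})-(D_xf)^\top\delta X-(D_vf)^\top\delta v$ with $D_xf,D_vf$ evaluated on the optimal triple, and analogously for $R_g$.

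The conclusion then splits according to the convexity available. Under (A3$'$), the joint strong convexity of $f$ in $(x,v)$ and of $g$ in $x$ give $R_f\ge\lambda_x|\delta X|^2+\lambda_v|\delta v|^2\ge0$ and $R_g\ge\lambda_g|\delta X(T)|^2\ge0$, so $J_{t\xi}(v';m)-J_{t\xi}(v_{t\xi};m)\ge\lambda_v\,\e\int_t^T|\delta v|^2ds$ for \emph{every} $\lambda_v>0$, yielding both minimality and uniqueness of the minimiser (so $\delta v=0$ on equality). Under (A3) alone, $f$ is convex only in $v$, so the bounds $|D_x^2f|,|D_vD_xf|\le L$ give merely $R_f\ge\lambda_v|\delta v|^2-\tfrac L2|\delta X|^2-L|\delta X||\delta v|$ and $R_g\ge-\tfrac L2|\delta X(T)|^2$; I then control $\delta X$ by $\delta v$ through the standard linear-SDE/Gronwall estimate $\e[\sup_s|\delta X(s)|^2]+\e|\delta X(T)|^2\le C_1(L,T)\,\e\int_t^T|\delta v|^2ds$ (available because $\delta X$ solves a linear SDE with inputs $b_2\delta v,\sigma_2\delta v$), and absorb the cross term by Young's inequality. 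This produces $J_{t\xi}(v';m)-J_{t\xi}(v_{t\xi};m)\ge(\lambda_v-C(L,T))\e\int_t^T|\delta v|^2ds$ for an explicit $C(L,T)$ depending only on $(L,T)$, so that $\lambda_v\ge C(L,T)$ forces minimality (and $\lambda_v>C(L,T)$, which one may arrange by enlarging $C(L,T)$, gives strict inequality and uniqueness of the minimiser).

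Finally, to upgrade ``$v_{t\xi}$ is the unique minimiser for the flow $m$'' to ``\eqref{intro_1} has a unique solution'', I would argue that any MFG solution, being optimal for its own frozen flow, satisfies the Pontryagin necessary condition; since the strict convexity in $v$ makes $\hv$ single-valued, this identifies every MFG solution with a solution of~\eqref{intro_2} (the McKean--Vlasov coupling being automatic because the flow equals the law of the state). Hence uniqueness of the equilibrium reduces to uniqueness of~\eqref{intro_2}, which follows from the same monotonicity that drives the continuation method. I expect the main obstacle to lie precisely in the (A3) case: quantitatively forcing the strong convexity in $v$ to dominate the genuine non-convexity in $x$ --- that is, tracking the constant $C(L,T)$ through the Gronwall estimate and the cross-term control --- is the delicate point, whereas under (A3$'$) the $(x,v)$-convexity makes the estimate immediate and the small-mean-field-effect constants enter only through the separate well-posedness of~\eqref{intro_2}.
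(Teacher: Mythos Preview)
Your proposal is correct and follows essentially the same route as the paper: both apply It\^o's formula to $p_{t\xi}^\top(X^{v}-Y_{t\xi})$ to cancel the first-order terms via the optimality condition~\eqref{hv}, leaving a Bregman-type remainder, then invoke joint convexity under (A3$'$) or, under (A3) alone, use the linear-SDE estimate $\e[\sup_s|\delta X|^2]\le C(L,T)\e\int|\delta v|^2$ together with Young's inequality to absorb the non-convex $x$-contributions into the $\lambda_v|\delta v|^2$ term. One minor remark: the paper's proof stops at showing $v_{t\xi}$ is the unique minimiser for the frozen flow $m=\lr(Y_{t\xi})$ and does not explicitly carry out your final reduction of MFG uniqueness to FBSDE uniqueness; that step is handled later via Lemma~\ref{lem:2}.
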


\begin{remark}
In Lemma~\ref{lem:MP1}, the processes $\left(p_{t\xi}, q_{t\xi}\right)$ are the first-order adjoint processes for MFG \eqref{intro_1}. The reason why we only involve the first-order adjoint processes is that our control domain $\brd$ is convex. Consequently, when deriving the necessary condition, we can apply the convex variational method (i.e., setting $v^\epsilon:=v+\epsilon (v'-v)$) instead of the spike variation method (i.e., $v^\epsilon(s):=v(s) \mathbbm{1}_{[t,T]\text{\textbackslash} E_\epsilon}(s)+ v'(s)\mathbbm{1}_{E_\epsilon}(s)$ where $E_\epsilon\subset [t,T]$ is a measurable set and the measure of $E_\epsilon$ is $\epsilon$). Therefore, we do not need to involve the second-order adjoint processes, which are required in non-convexity settings (see  \cite[Page 116, (3.9)]{MR1696772} for instance). Indeed, for the maximum principle of stochastic control problems, when the control domain is convex and all coefficients are $C^1$ in the control variable, the diffusion term can depend on the control, and the optimality condition does not require the second-order adjoint processes; more discussions can be found in \cite[Page 120, Case 2]{MR1696772}. We also refer to \cite[Page 138, Lemma 5.1 and Theorem 5.2]{MR1696772}, and the paragraph before Section 6 in \cite[Page 141]{MR1696772}, for the sufficient conditions of optimality for the stochastic control problems under convexity settings. For maximum principle in mean field theory, also see \cite[Theorem 8]{SA1} for maximum principle for MFG and \cite[Section 4]{CR} for maximum principle for mean field type control problems. 
\end{remark}

To give the well-posedness of FBSDEs \eqref{intro_2}, we first give the well-posedness of the following generic FBSDEs defined on Hilbert space of $L^2(\Omega,\f,\mathbb{P};\brn)$: for initial $(t,\xi)\in[0,T]\times  L_{\f_t}^2$,
\begin{equation}\label{FB:11}
	\left\{
	\begin{aligned}
		&Y(s)=\xi+\int_t^s \mathbf{B} (r,Y(r),p(r),q(r))dr+\int_t^s \mathbf{A}(r,Y(r),p(r),q(r))dB(r),\\
		&p(s)=\mathbf{G}(Y(T))-\int_s^T\mathbf{F}(r,Y(r),p(r),q(r))dr-\int_s^T q(r)dB(r),\quad s\in[t,T],
	\end{aligned}
	\right.
\end{equation}
where $\mathbf{B},\mathbf{F}:[0,T]\times (L^2\times L^2\times(L^2)^n)(\Omega,\f,\mathbb{P};\brn)\to L^2(\Omega,\f,\mathbb{P};\brn)$, $\mathbf{A}: [0,T]\times (L^2\times L^2\times(L^2)^n)(\Omega,\f,\mathbb{P};\brn)\to (L^2(\Omega,\f,\mathbb{P};\brn))^n$ and $\mathbf{G}:L^2(\Omega,\f,\mathbb{P};\brn)\to L^2(\Omega,\f,\mathbb{P};\brn)$. For the well-posedness of the generic FBSDEs \eqref{FB:11}, we normally adopt the following condition: 

\begin{condition}\label{Condition_generic}
	(i) There exists a map $\beta: [0,T]\times \left(L^2\times L^2\times(L^2)^n\right) (\Omega,\f,\mathbb{P};\brn)\ni(s,X,p,q)\mapsto\beta(s,X,p,q)\in L^2(\Omega,\f,\mathbb{P};\brd)$ and constants $\Lambda>0$, $\alpha\geq 0$, such that for any $X,X',p,p'\in L^2(\Omega,\f,\mathbb{P};\brn)$ and $q,q'\in \left(L^2(\Omega,\f,\mathbb{P};\brn)\right)^n$,
	\begin{align*}
		&\e\bigg[\left(\mathbf{F}(s,X',p',q')-\mathbf{F}(s,X,p,q)\right)^\top  (X'-X)+\left(\mathbf{B}(s,X',p',q')-\mathbf{B}(s,X,p,q)\right)^\top  (p'-p) \notag \\
		&\quad +\sum_{j=1}^n \left(\mathbf{A}^j(s,X',p',q')-\mathbf{A}^j(s,X,p,q)\right)^\top  \left({q'}^{j}-q^j\right)\bigg] \notag \\
		\le\ & \e\left[ -\Lambda \left|\beta(s,X',p',q')-\beta(s,X,p,q)\right|^2+\alpha\left(|X'-X|^2+|p'-p|^2+|q'-q|^2\right)\right]. 
	\end{align*}
	(ii) There exists a constant $K>0$, such that,
	\begin{align*}
		&\e\left[|\mathbf{B}(s,X',p',q')-\mathbf{B}(s,X,p,q)|^2+ |\mathbf{A}(s,X',p',q')-\mathbf{A}(s,X,p,q)|^2\right]  \notag\\
		\le\ &   K  \e \left[|X'-X|^2+| \beta(s,X',p',q')-\beta(s,X,p,q)|^2\right],\\
		&\e\left[|\mathbf{F}(s,X',p',q')-\mathbf{F}(s,X,p,q)|^2+|\mathbf{G}(X')-\mathbf{G}(X)|^2\right] \notag\\
		\le\ &   K \e \left[|X'-X|^2+|p'-p|^2+|q'-q|^2+| \beta(X',s;p',q')-\beta(X,s;p,q)|^2 \right]. 
	\end{align*}
    (iii) The coefficient function $\mathbf{G}$ satisfies 
    \begin{align*}
    	&\e\left[\left(\mathbf{G}(X')-\mathbf{G}(X)\right)^\top  (X'-X)\right]\geq 0,\quad X,X'\in L^2(\Omega,\f,\mathbb{P};\brn).
    \end{align*} 
\end{condition}

\begin{remark}
	In different situations, Condition \ref{Condition_generic} is verified under our Assumptions (A1)-(A3) all together with (A3'); see Lemmas~\ref{lem:2} and \ref{lem:3} for instance. 
\end{remark}

We now give the following well-posedness of FBSDEs \eqref{FB:11}.

\begin{lemma}\label{lem:1}
    Under Condition \ref{Condition_generic} ((i) and (ii)), there exists a constant $c(\alpha, K ,T)>0$ depending only on $(\alpha, K ,T)$, such that when $\Lambda\geq c(\alpha, K ,T)$, there is a unique adapted solution $(Y_{t\xi},p_{t\xi},q_{t\xi})$ of the FBSDEs \eqref{FB:11}. And for $\xi,\xi'\in L_{\f_t}^2$, we have     
    \small
    \begin{align}
    	&\e\Bigg[\sup_{t\le s\le T}\left|\left(Y_{t\xi}(s),p_{t\xi}(s)\right)^\top\right|^2+\int_t^T |q_{t\xi}(s)|^2ds\Bigg] \le C(\alpha, K ,T,\Lambda)\left(1+\e\left[|\xi|^2\right]\right), \label{lem1_1}\\
    	&\e\Bigg[\sup_{t\le s\le T}\left|\left(Y_{t\xi'}(s)-Y_{t\xi}(s),p_{t\xi'}(s)-p_{t\xi}(s)\right)^\top\right|^2 +  \int_t^T \left|q_{t\xi'}(s)-q_{t\xi}(s)\right|^2 ds\Bigg] \le C(\alpha, K ,T,\Lambda) \e\left[|\xi'- \xi|^2\right],\label{lem1_2}
    \end{align}
    \normalsize
    where $C(\alpha, K ,T,\Lambda)$ is a constant depending only on $(\alpha, K ,T,\Lambda)$. Furthermore, if the parameter $\alpha=0$ in Condition \ref{Condition_generic} (i) and Condition \ref{Condition_generic} (iii) is satisfied, then for any $\Lambda> 0$, FBSDEs \eqref{FB:11} has a unique adapted solution satisfying \eqref{lem1_1} and \eqref{lem1_2}.
\end{lemma}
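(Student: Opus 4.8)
The plan is to prove Lemma~\ref{lem:1} by the \emph{method of continuation} in a parameter $\rho\in[0,1]$, with the whole argument powered by a single a priori estimate coming from the monotonicity in Condition~\ref{Condition_generic}(i). The first step is to record this estimate. Given two solutions $(Y,p,q)$ and $(Y',p',q')$ of \eqref{FB:11} (allowing different initial data and source terms), I write $\delta Y=Y'-Y$, $\delta p=p'-p$, $\delta q=q'-q$ and denote by $\delta\mathbf{B},\delta\mathbf{A},\delta\mathbf{F},\delta\mathbf{G},\delta\beta$ the differences of the coefficients evaluated along the two solutions. Applying It\^{o}'s formula to $\langle\delta Y(s),\delta p(s)\rangle$, integrating over $[t,T]$ and taking expectation, the martingale parts vanish and the quadratic covariation of $\delta Y$ and $\delta p$ reproduces the $\mathbf{A}$--$q$ pairing, giving (for vanishing sources)
\begin{align*}
	\e\big[\langle\delta Y(T),\delta\mathbf{G}\rangle\big]-\e\big[\langle\delta\xi,\delta p(t)\rangle\big]
	=\e\Big[\int_t^T\Big(\langle\delta\mathbf{F},\delta Y\rangle+\langle\delta\mathbf{B},\delta p\rangle+\textstyle\sum_{j=1}^n\langle\delta\mathbf{A}^j,\delta q^j\rangle\Big)ds\Big].
\end{align*}
The integrand on the right is exactly the combination in Condition~\ref{Condition_generic}(i), hence is dominated by $-\Lambda|\delta\beta|^2+\alpha(|\delta Y|^2+|\delta p|^2+|\delta q|^2)$.

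From this identity I would derive uniqueness in both regimes by taking $\xi=\xi'$, so $\delta\xi=0$. In the global case ($\alpha=0$ with Condition~\ref{Condition_generic}(iii)) the terminal term satisfies $\langle\delta Y(T),\delta\mathbf{G}\rangle=\langle\delta Y(T),\mathbf{G}(Y'(T))-\mathbf{G}(Y(T))\rangle\ge0$, so the identity forces $\Lambda\,\e\int_t^T|\delta\beta|^2ds\le0$, whence $\delta\beta\equiv0$; Condition~\ref{Condition_generic}(ii) then bounds $\delta\mathbf{B},\delta\mathbf{A}$ by $\delta Y$ alone, a Gr\"{o}nwall argument on the forward SDE gives $\delta Y\equiv0$, and the backward BSDE (with zero terminal datum) yields $\delta p\equiv\delta q\equiv0$; this works for every $\Lambda>0$. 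In the local case Condition~\ref{Condition_generic}(iii) is unavailable and $\alpha>0$, so the indefinite terminal term must be absorbed: using $|\delta\mathbf{G}|^2\le K|\delta Y(T)|^2$ and the standard moment estimates for the forward SDE (which depends on $(p,q)$ only through $\beta$) and for the backward BSDE, one checks that $\e\int_t^T|\delta\beta|^2ds$, $\e\sup_s|\delta Y|^2$, $\e\sup_s|\delta p|^2$ and $\e\int_t^T|\delta q|^2ds$ are all bounded by a constant $C_0(\alpha,K,T)$, \emph{independent of} $\Lambda$, times $\e\int_t^T|\delta\beta|^2ds$. Feeding this back into the identity gives $\Lambda\,\e\int_t^T|\delta\beta|^2ds\le C_0(\alpha,K,T)\,\e\int_t^T|\delta\beta|^2ds$, so that any $\Lambda>c(\alpha,K,T):=C_0(\alpha,K,T)$ forces all differences to vanish, identifying the threshold in the statement.

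For existence I run the continuation. For $\rho\in[0,1]$ I interpolate the coefficients, e.g. $\mathbf{B}^\rho=\rho\mathbf{B}$, $\mathbf{A}^\rho=\rho\mathbf{A}$, $\mathbf{F}^\rho=\rho\mathbf{F}-(1-\rho)\,\mathrm{Id}$ and $\mathbf{G}^\rho=\rho\mathbf{G}+(1-\rho)\,\mathrm{Id}$, plus arbitrary source terms, chosen so that the interpolated coefficients are uniformly Lipschitz, satisfy Condition~\ref{Condition_generic} with constants controlled uniformly in $\rho$ (the added $\pm\,\mathrm{Id}$ reinforces the dissipation and the terminal sign), and at $\rho=0$ decouple into an explicit forward SDE for $Y$ followed by a linear BSDE for $(p,q)$, solvable by classical theory. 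The same It\^{o} computation with general source terms shows the solution map is Lipschitz in the data with a constant uniform in $\rho$; a contraction argument then propagates solvability from $\rho_0$ to $\rho_0+\delta$ for a step $\delta>0$ independent of $\rho_0$, and finitely many steps reach $\rho=1$. Finally I read the stated bounds off the a priori estimate with general data: \eqref{lem1_2} is the Lipschitz bound for two initial conditions $\xi,\xi'$ with zero sources, the term $\e[\langle\delta\xi,\delta p(t)\rangle]$ being absorbed by Young's inequality, and \eqref{lem1_1} follows by comparison with a fixed reference solution and the linear growth implied by Condition~\ref{Condition_generic}(ii).

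The main obstacle is the a priori estimate in the local regime, where the absence of Condition~\ref{Condition_generic}(iii) leaves the terminal coupling $\langle\delta Y(T),\delta\mathbf{G}\rangle$ sign-indefinite: one must show that the dissipation $-\Lambda|\delta\beta|^2$ alone, \emph{uniformly along the continuation path}, suffices to decouple the forward and backward equations and to close the estimate with a $\Lambda$-independent constant. Keeping these constants uniform in $\rho$ under the chosen interpolation—so that the threshold $\Lambda>c(\alpha,K,T)$ is preserved (in suitably rescaled form) at every $\rho$—is the delicate bookkeeping on which the continuation rests; the global case is comparatively soft because Condition~\ref{Condition_generic}(iii) supplies the missing sign for free.
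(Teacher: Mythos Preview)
Your proposal is correct and follows essentially the same approach as the paper: the paper explicitly omits the proof, stating only that it ``can be proven with the method of continuation in coefficients of \cite{YH2}, similar to the proof of \cite[Theorem 2.3]{SP}, \cite[Theorem 1]{SA1} and \cite[Lemma 4.1]{AB10}''. Your outline---the It\^o identity for $\langle\delta Y,\delta p\rangle$ yielding the monotonicity combination, the a priori estimate closed via the structural feature of Condition~\ref{Condition_generic}(ii) that $\mathbf{B},\mathbf{A}$ depend on $(p,q)$ only through $\beta$, and the continuation from a trivial decoupled system---is precisely this standard machinery, and your identification of the threshold $c(\alpha,K,T)$ as the $\Lambda$-independent constant bounding all norms by $\e\int|\delta\beta|^2$ is the right mechanism.
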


This well-posedness result can be proven with the method of continuation in coefficients of \cite{YH2}, similar to the proof of \cite[Theorem 2.3]{SP},  \cite[Theorem 1]{SA1} and \cite[Lemma 4.1]{AB10}, which is omitted here. 

\begin{remark}
	Condition \ref{Condition_generic} (i) is actually the 'usual monotonicity condition' for FBSDEs in the literature. This monotonicity condition of the fully coupled FBSDEs in Euclidean spaces can be dated back to Hu-Peng \cite{YH2} and Peng-Wu \cite{SP}. It is also used as a weak monotonicity condition (displacement monotonicity condition) in Ahuja et al. \cite{SA1} for FBSDEs in Hilbert spaces, which can be applied for the FBSDEs arising from mean field games with common noise. Our Condition \ref{Condition_generic} (i) makes an attempt to further extend their results to a more general situation. For example, it is feasible to establish the unique existence of respective solutions of FBSDEs \eqref{intro_2} and \eqref{lem:MP2} (and those of their Jacobian and Hessian flows) under Assumptions (A1)-(A3) and (A3') by choosing a suitable map $\beta$ via \eqref{def:BAFG} and \eqref{chose_coefficients}, respectively. 
\end{remark}

Using Lemma~\ref{lem:1}, we give the well-posedness of FBSDEs \eqref{intro_2}, and also the $L^2$-boundedness and continuity of its solution with respect to the initial condition $\xi$. 

\begin{lemma}\label{lem:2}
	Under Assumptions (A1), (A2) and (A3), there exists a constant $c(L,T)>0$ depending only on $(L,T)$, such that when $\lambda_v\geq c(L,T)$, there is a unique adapted solution $(Y_{t\xi},p_{t\xi},q_{t\xi})$ of the FBSDEs \eqref{intro_2}. For $\xi,\xi'\in L_{\f_t}^2$, we then have	
	\small
	\begin{align}
		&\e\left[\sup_{t\le s\le T}\left|\left(Y_{t\xi}(s), p_{t\xi}(s)\right)^\top\right|^2+\int_t^T |q_{t\xi}(s)|^2ds \right]\le C(L,T,\lambda_v)\left(1+\e\left[|\xi|^2\right]\right), \label{lem2_1}\\
		&\e\left[\sup_{t\le s\le T}\left|\left(Y_{t\xi'}(s)-Y_{t\xi}(s), p_{t\xi'}(s)-p_{t\xi}(s)\right)^\top\right|^2 + \int_t^T \left|q_{t\xi'}(s)-q_{t\xi}(s)\right|^2 ds\right] \le C(L,T,\lambda_v)\e\left[|\xi'- \xi|^2\right]. \label{lem2_2}
	\end{align}	
	\normalsize
	Furthermore, if Assumption (A3') is satisfied, FBSDEs \eqref{intro_2} has a unique solution satisfying \eqref{lem2_1} and \eqref{lem2_2} for any $\lambda_v> 0$.	
\end{lemma}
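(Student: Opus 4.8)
The plan is to recast the FBSDEs \eqref{intro_2} into the generic Hilbert-space form \eqref{FB:11} and then invoke Lemma~\ref{lem:1}. By the envelope theorem applied to $H(s,x,m,p,q)=L(s,x,m,\hv(s,x,m,p,q),p,q)$ and the optimality relation \eqref{hv}, the Hamiltonian derivatives read $D_pH=b(s,x,m,\hv)$, $D_qH=\sigma(s,x,m,\hv)$ and $D_xH=b_1(s)^\top p+\sum_{j=1}^n[\sigma_1^j(s)]^\top q^j+D_xf(s,x,m,\hv)$. Accordingly I would set, writing $\hv=\hv(s,X,\lr(X),p,q)$,
\[
\mathbf{B}(s,X,p,q)=b(s,X,\lr(X),\hv),\quad \mathbf{A}(s,X,p,q)=\sigma(s,X,\lr(X),\hv),\quad \mathbf{G}(X)=D_xg(X,\lr(X)),
\]
\[
\mathbf{F}(s,X,p,q)=-b_1^\top p-\textstyle\sum_{j}[\sigma_1^j]^\top q^j-D_xf(s,X,\lr(X),\hv),
\]
and, crucially, choose the auxiliary map $\beta(s,X,p,q):=\hv(s,X,\lr(X),p,q)$, which is $\frac{L}{2\lambda_v}$-Lipschitz in its arguments. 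With this identification, Condition~\ref{Condition_generic}-(ii) is routine: it follows from the linearity and boundedness in (A1), the Lipschitz continuity of $\hv$, and the uniform bounds on $D_x^2f,D_vD_xf,D_x^2g$ and the measure-derivatives in (A2), together with $W_2(\lr(X),\lr(X'))\le\|X'-X\|_{L^2}$.

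The heart of the argument is verifying the monotonicity Condition~\ref{Condition_generic}-(i). Forming the difference of its left-hand side at $(X',p',q')$ and $(X,p,q)$ and using the linearity in (A1), the terms carrying $b_1$ and $\sigma_1$ cancel pairwise between $\mathbf{F}$ and $(\mathbf{B},\mathbf{A})$. Next I would use the optimality condition \eqref{hv} to rewrite $b_2^\top(p'-p)+\sum_j[\sigma_2^j]^\top(q^{j'}-q^j)=-(D_vf'-D_vf)$, turning the remaining control contributions into a $D_vf$-increment paired with $\hv'-\hv$. What survives, after taking expectations, is
\[
-\e\big[(X'-X)^\top(D_xf'-D_xf)+(\hv'-\hv)^\top(D_vf'-D_vf)\big]+\e\big[(b_0'-b_0)^\top(p'-p)+\textstyle\sum_j(\sigma_0^{j'}-\sigma_0^j)^\top(q^{j'}-q^j)\big],
\]
where primes denote evaluation at $(s,X',\lr(X'),\hv')$. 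Splitting each increment into a fixed-measure part and a measure-correction, the strong convexity of $f$ in $v$ (A3) supplies the decisive term $-2\lambda_v\e|\hv'-\hv|^2$, while the measure-corrections and the $b_0,\sigma_0$ increments are controlled by $L\,W_2(\lr(X),\lr(X'))\le L\|X'-X\|_{L^2}$. Absorbing every cross term containing $\hv'-\hv$ into a fraction of the $-2\lambda_v\e|\hv'-\hv|^2$ budget by Young's inequality then yields Condition~\ref{Condition_generic}-(i) with $\beta=\hv$, a retained coefficient $\Lambda\sim\lambda_v$, and a residual $\alpha=\alpha(L)$ that stays bounded for large $\lambda_v$. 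Feeding this into Lemma~\ref{lem:1} gives the unique solvability and the estimates \eqref{lem2_1}--\eqref{lem2_2} as soon as $\Lambda\ge c(\alpha,K,T)$, i.e. once $\lambda_v\ge c(L,T)$.

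For the refined statement under (A3'), two features improve the bookkeeping. First, $(b_0,\sigma_0)$ no longer depend on $m$, so those Wasserstein increments vanish outright. Second, the joint convexity of $f$ in $(x,v)$ now also produces a term $-2\lambda_x\e|X'-X|^2$, and convexity of $g$ gives $\e[(\mathbf{G}(X')-\mathbf{G}(X))^\top(X'-X)]\ge(2\lambda_g-L_g)\e|X'-X|^2\ge0$ precisely because $\lambda_g\ge\frac{L_g}{2}$, establishing Condition~\ref{Condition_generic}-(iii). Optimizing the Young constant on the single remaining cross term $L_v\|X'-X\|_{L^2}\|\hv'-\hv\|_{L^2}$ against the $-2\lambda_v$ budget, the coefficient of $\e|X'-X|^2$ becomes $-2\lambda_x+L_x+\frac{L_v^2}{4\lambda_v}$, which is $\le0$ exactly under the small-mean-field threshold $\lambda_x\ge\frac{L_v^2}{8\lambda_v}+\frac{L_x}{2}$ in (A3'). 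Thus Condition~\ref{Condition_generic}-(i) holds with $\alpha=0$ and any $\Lambda=\lambda_v>0$, so the $\alpha=0$ clause of Lemma~\ref{lem:1} delivers the conclusion for every $\lambda_v>0$. The main obstacle throughout is this monotonicity computation in (i): one must track the exact cancellation of the $b_1,\sigma_1$ contributions, deploy \eqref{hv} to expose the $D_vf$-monotonicity, and calibrate the Young constants so that $\alpha$ remains bounded in the (A3) case and collapses to $0$ under the precise thresholds of (A3').
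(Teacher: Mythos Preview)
Your proposal is correct and follows essentially the same route as the paper: you cast \eqref{intro_2} into the generic form \eqref{FB:11} via the identification \eqref{def:BAFG}, take $\beta=\hv$, exploit the cancellation of the $b_1,\sigma_1$ contributions and the optimality relation \eqref{hv} to expose the $D_vf$-monotonicity, and then calibrate Young's inequality exactly as the paper does (obtaining $\Lambda=\lambda_v$, $\alpha=C(L)(1+\tfrac{1}{\lambda_v})$ in the (A3) case and $\alpha=0$ under (A3')). The only cosmetic difference is that the paper records the precise residual $\alpha=C(L)(1+\tfrac{1}{\lambda_v})$ rather than your qualitative ``$\alpha(L)$ bounded for large $\lambda_v$'', but this does not affect the argument.
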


Now,  FBSDEs \eqref{intro_2} can be viewed  as a subcase of FBSDEs \eqref{FB:11} in Lemma~\ref{lem:1} by setting
\begin{equation}\label{def:BAFG}
	\begin{split}
		&\mathbf{B}(s,X,p,q)(\omega):=D_p H(s,X(\omega),\lr(X),p(\omega),q(\omega)),\\
		&\mathbf{A}(s,X,p,q)(\omega):=D_q H(s,X(\omega),\lr(X),p(\omega),q(\omega)),\\
		&\mathbf{F}(s,X,p,q)(\omega):=-D_x H(s,X(\omega),\lr(X),p(\omega),q(\omega)),\\
		&\mathbf{G}(X)(\omega):=D_x g(X(\omega),\lr(X)),\quad X,p\in L^2\left(\Omega,\f,\mathbb{P};\brn\right),\quad q\in \left(L^2\left(\Omega,\f,\mathbb{P};\brn\right)\right)^n.
	\end{split}
\end{equation}
If we choose the map $\beta(s,X,p,q)(\omega):=\hv(s,X(\omega),\lr(X),p(\omega),q(\omega))$ and $\Lambda:=\lambda_v$, then, Condition \ref{Condition_generic} is fulfilled; the proof of Lemma~\ref{lem:2} is given in Appendix~\ref{pf_lem2}.

\subsection{Well-posedness of Problem \eqref{intro_1'}}

As long as $m$ is given, Problem \eqref{intro_1'} is certainly a classical stochastic control problem, meanwhile $Y_{t\xi}$ is a known process.  The following sufficient condition is a well-known result, so the proof is omitted.

\begin{lemma}\label{lem:MP2}
	Under Assumptions (A1), (A2) and (A3), suppose that the following FBSDEs 
	\begin{equation}\label{intro_3}
		\left\{	
		\begin{aligned}
			&Y_{tx\mu}(s)=x+\int_t^s D_pH\left(r,Y_{tx\mu}(r),\lr(Y_{t\xi}(r)),p_{tx\mu}(r),q_{tx\mu}(r)\right)dr\\
			&\ \ \; \qquad\qquad+\int_t^s D_qH\left(r,Y_{tx\mu}(r),\lr(Y_{t\xi}(r)),p_{tx\mu}(r),q_{tx\mu}(r)\right)dB(r),\\
			&p_{tx\mu}(s)=D_x g\left(Y_{tx\mu}(T),\lr(Y_{t\xi}(T))\right)+\int_s^T D_x H\left(r,Y_{tx\mu}(r),\lr(Y_{t\xi}(r)),p_{tx\mu}(r),q_{tx\mu}(r)\right)dr\\
			&\ \ \;\qquad\qquad -\int_s^T q_{tx\mu}(r)dB(r),\quad s\in[t,T],
		\end{aligned}
		\right.
	\end{equation}
	has a solution $(Y_{tx\mu},p_{tx\mu},q_{tx\mu})\in \sr^2_\f(t,T)\times\lr^2_{\f}(t,T)\times\left(\lr^2_{\f}(t,T)\right)^n$. Then, there exists a constant $c(L,T)>0$ depending only on $(L,T)$, such that when $\lambda_v\geq c(L,T)$, Problem \eqref{intro_1'} has a unique optimal control
    \begin{align}\label{vxmu}
    	v_{tx\mu}(s):=\hv\left(s,Y_{tx\mu}(s),\lr(Y_{t\xi}(s)),p_{tx\mu}(s),q_{tx\mu}(s)\right),\quad s\in[t,T].
    \end{align}
    Furthermore, if Assumption (A3') is satisfied, for any $\lambda_v> 0$, the same assertion holds.  
\end{lemma}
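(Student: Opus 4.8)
The plan is to treat Problem \eqref{intro_1'} as a genuinely classical stochastic control problem in which $m^{t,\mu}(s)=\lr(Y_{t\xi}(s))$ is frozen as a known time-dependent parameter. By (A1) the coefficients $(b,\sigma)$ are affine in $(x,v)$, so the control-to-state map $v(\cdot)\mapsto X^v(\cdot)$ is affine; moreover $f,g$ have quadratic growth and are $C^2$ in $(x,v)$ with Hessians bounded by $L$. Given the assumed solution $(Y_{tx\mu},p_{tx\mu},q_{tx\mu})$ of the adjoint FBSDEs \eqref{intro_3}, the candidate control $v_{tx\mu}=\hv(\cdots)$ of \eqref{vxmu} is precisely the one enforcing the Hamiltonian-minimizing condition $D_vL=0$; the task is to verify that it is the unique minimizer of $J_{tx}(\cdot;m^{t,\mu})$.

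Under (A3') the cleanest route is the \emph{sufficient} stochastic maximum principle. For an arbitrary admissible $v(\cdot)$ with state $X^v$, write $\bx:=Y_{tx\mu}$, $\bar v:=v_{tx\mu}$ and $(\bar p,\bar q):=(p_{tx\mu},q_{tx\mu})$, and apply It\^o's product rule to $\bar p(s)^\top(X^v(s)-\bx(s))$ on $[t,T]$. Using $d\bar p=-D_xH\,ds+\bar q\,dB$, the terminal condition $\bar p(T)=D_xg(\bx(T),m^{t,\mu}(T))$, the envelope identities $D_pH=b(\cdot,\hv)$, $D_qH=\sigma(\cdot,\hv)$ and definition \eqref{H'} of $L$, together with convexity of $g$ in $x$, the cost gap collapses to
\begin{align*}
J_{tx}(v;m^{t,\mu})-J_{tx}(v_{tx\mu};m^{t,\mu})\ge\ &\e\int_t^T\Big[L(s,X^v,m^{t,\mu},v,\bar p,\bar q)-L(s,\bx,m^{t,\mu},\bar v,\bar p,\bar q)\\
&\qquad -\big(D_xL(s,\bx,m^{t,\mu},\bar v,\bar p,\bar q)\big)^\top(X^v-\bx)\Big]ds,
\end{align*}
where $D_vL(s,\bx,m^{t,\mu},\bar v,\bar p,\bar q)=0$. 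Since $L-f$ is affine in $(x,v)$ by (A1), the convexity of the integrand is exactly that of $f$; hence by the joint strong convexity in (A3') it is bounded below by $\lambda_x|X^v-\bx|^2+\lambda_v|v-\bar v|^2\ge0$. Thus $v_{tx\mu}$ is optimal, and the strict term $\lambda_v|v-\bar v|^2$ forces uniqueness. This argument needs no smallness and gives the ``for any $\lambda_v>0$'' conclusion.

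For the case of (A3) alone the obstacle is that $f$ need \emph{not} be convex in $x$, so the integrand above is no longer manifestly nonnegative; this is where the large-$\lambda_v$ threshold enters. The plan is instead to show directly that $v(\cdot)\mapsto J_{tx}(v;m^{t,\mu})$ is \emph{strictly convex} on the Hilbert space $\lr^2_\f(t,T)$. Since $X^v$ is affine in $v$, along a perturbation $\delta v$ the linearized state $\delta X$ solves a homogeneous linear SDE driven by $\delta v$, and the second variation equals $\e\big[\int_t^T\mathrm{Hess}_{(x,v)}(f)\,(\delta X,\delta v)^{\otimes2}\,ds+D_x^2g\,\delta X(T)^{\otimes2}\big]$. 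Using $|D_x^2f|,|D_vD_xf|,|D_x^2g|\le L$, the strong convexity $D_v^2f\ge2\lambda_v I$, a Young split of the cross term, and the Gronwall estimate $\e\int_t^T|\delta X|^2ds+\e|\delta X(T)|^2\le C(L,T)\,\e\int_t^T|\delta v|^2ds$ coming from the linear dynamics, the second variation is bounded below by $(2\lambda_v-c(L,T))\,\e\int_t^T|\delta v|^2ds$. Choosing $\lambda_v\ge c(L,T)$ makes $J_{tx}(\cdot;m^{t,\mu})$ strictly convex and coercive, so it admits a unique minimizer; by the necessary maximum principle this minimizer obeys the stationarity condition characterizing \eqref{vxmu}, hence coincides with $v_{tx\mu}$.

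I expect the main obstacle to be precisely this (A3)-only regime: one must quantify the trade-off between the potential concavity of $f$ in $x$ (controlled by $L$) and the strong convexity in $v$ (of strength $\lambda_v$), and the explicit threshold $c(L,T)$ is produced exactly by the Gronwall bound linking $\|\delta X\|$ to $\|\delta v\|$. The (A3') part, by contrast, is a routine application of the convex sufficient maximum principle, which is why the assertion holds there for every $\lambda_v>0$.
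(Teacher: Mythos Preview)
Your proposal is correct. The paper in fact omits the proof of this lemma entirely, remarking that once $m^{t,\mu}$ is frozen Problem~\eqref{intro_1'} is a classical stochastic control problem and the sufficient condition is ``a well-known result.'' The closest point of comparison is the paper's proof of the companion Lemma~\ref{lem:MP1} in Appendix~\ref{pf_lem_MP1}.

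Under (A3') your argument via the sufficient maximum principle matches that proof essentially line for line. Under (A3) alone there is a minor methodological difference: you analyze the \emph{second variation} of $J_{tx}(\cdot;m^{t,\mu})$ to obtain strict convexity and then identify the critical point, whereas the paper bounds the \emph{cost gap} $J(v)-J(\bar v)$ directly---splitting $f(x',v')-f(x,v)$ along $v$ then $x$, absorbing the non-convex $x$-part via $|D_x^2f|,|D_x^2g|\le L$, cancelling the linear terms through the adjoint identity from It\^o's formula, and closing with the same Gronwall bound $\e\sup|X^v-\bar X|^2\le C(L,T)\,\e\int|v-\bar v|^2$ to reach $J(v)-J(\bar v)\ge(\lambda_v-C(L,T))\,\e\int|v-\bar v|^2$. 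Both routes produce the same threshold $c(L,T)$; the cost-gap route has the small advantage that it simultaneously exhibits $\bar v$ as the minimizer, so you need not invoke the necessary maximum principle separately to pin down the critical point.
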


In FBSDEs \eqref{intro_3}, the system depends on $\xi$ only through its law $\mu$, so it is reasonable to use the subscript $\mu$ instead of $\xi$. We now give the well-posedness of FBSDEs \eqref{intro_3}, whose proof is given in Appendix~\ref{pf_lem5}. 

\begin{lemma}\label{lem:5}
	Under Assumptions (A1), (A2) and (A3), there exists a constant $c(L,T)>0$ depending only on $(L,T)$, such that when $\lambda_v\geq c(L,T)$, there is a unique adapted solution $(Y_{tx\mu},p_{tx\mu},q_{tx\mu})$ of FBSDEs \eqref{intro_3}. For $x,x'\in\brn$ and $\mu,\mu'\in\pr_2(\brn)$, we then have
	\begin{align}
		&\e\bigg[\sup_{t\le s\le T}\left|\left(Y_{tx\mu}(s), p_{tx\mu}(s)\right)^\top\right|^2+ \int_t^T |q_{tx\mu}(s)|^2ds\bigg]\le C(L,T,\lambda_v)\left(1+|x|^2+W^2_2(\mu,\delta_0)\right), \label{lem5_1}\\
		&\e\bigg[\sup_{t\le s\le T}\left|\left(Y_{tx'\mu'}(s)-Y_{tx\mu}(s), p_{tx'\mu'}(s)-p_{tx\mu}(s)\right)^\top\right|^2 + \int_t^T \left|q_{tx'\mu'} (s) -q_{tx\mu}(s)\right|^2 ds\bigg] \notag \\
		\le\ & C(L,T,\lambda_v)\left(|x'-x|^2+W_2^2\left(\mu,\mu'\right)\right).\label{lem5_2}
	\end{align}
    Furthermore, if Assumption (A3') is satisfied, FBSDEs \eqref{intro_3} has a unique solution satisfying \eqref{lem5_1} and \eqref{lem5_2} for any $\lambda_v> 0$.	
\end{lemma}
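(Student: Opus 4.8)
The plan is to recast FBSDEs~\eqref{intro_3} as a special instance of the generic Hilbert-space system~\eqref{FB:11} and then invoke Lemma~\ref{lem:1}. The crucial structural observation is that, unlike~\eqref{intro_2}, the measure argument entering~\eqref{intro_3} is the \emph{frozen} equilibrium flow $\lr(Y_{t\xi}(s))$ supplied by Lemma~\ref{lem:2}, not the law of the unknown $Y_{tx\mu}$ itself. Hence I would define $(\mathbf{B},\mathbf{A},\mathbf{F},\mathbf{G})$ exactly as in~\eqref{def:BAFG} but with every occurrence of $\lr(X)$ replaced by the fixed flow $\lr(Y_{t\xi}(s))$; the resulting coefficients are ordinary random-coefficient maps on $L^2$ with no McKean--Vlasov self-interaction. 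I then take $\beta(s,X,p,q)(\omega):=\hv(s,X(\omega),\lr(Y_{t\xi}(s)),p(\omega),q(\omega))$ and $\Lambda:=\lambda_v$, so that the whole task reduces to verifying Condition~\ref{Condition_generic}.

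The heart of the argument is Condition~\ref{Condition_generic}-(i). Writing $\Delta X:=X'-X$, $\Delta p:=p'-p$, $\Delta q:=q'-q$ and $\Delta\beta:=\hv'-\hv$, I would first use the envelope identity to read off $D_pH=b$, $D_{q^j}H=\sigma^j$ and $D_xH=b_1^\top p+\sum_j(\sigma_1^j)^\top q^j+D_xf$, all evaluated at $v=\hv$. Substituting Assumption (A1), the contributions of the linear coefficients $b_1,\sigma_1$ produce two cross terms that are transposes of one another and cancel exactly; the optimality relation~\eqref{hv} then rewrites $b_2^\top\Delta p+\sum_j(\sigma_2^j)^\top\Delta q^j=-(D_vf'-D_vf)$, collapsing the monotonicity bracket to
\begin{equation*}
-(D_xf'-D_xf)^\top\Delta X-(D_vf'-D_vf)^\top\Delta\beta,
\end{equation*}
where all derivatives are taken at the \emph{same} frozen measure. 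Under Assumption (A3) together with the bounds on $D_x^2f,D_vD_xf$ in (A2), a single Young's inequality bounds this by $-\lambda_v|\Delta\beta|^2+\alpha|\Delta X|^2$ with $\alpha$ depending only on $L$ (for, say, $\lambda_v\ge1$), which is Condition~\ref{Condition_generic}-(i) with $\Lambda=\lambda_v$; this is precisely what forces the threshold $\lambda_v\ge c(L,T)$. Under Assumption (A3') the displayed bracket is instead $\le-2\lambda_x|\Delta X|^2-2\lambda_v|\Delta\beta|^2$ by the joint convexity of $f$ in $(x,v)$, so $\alpha=0$.

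Conditions~\ref{Condition_generic}-(ii) and (iii) are then routine: (ii) follows from the boundedness of $b_1,b_2,\sigma_1,\sigma_2$ in (A1), the bounds on the second derivatives of $f,g$ in (A2) and the Lipschitz property of $\hv$; (iii) follows, in the (A3') regime, from $(\mathbf{G}(X')-\mathbf{G}(X))^\top\Delta X\ge 2\lambda_g|\Delta X|^2\ge0$, i.e. the convexity of $g$ in $x$ at the frozen terminal law. Lemma~\ref{lem:1} then yields the unique adapted solution and, via~\eqref{lem1_1}, the bound~\eqref{lem5_1}; under (A3) this needs $\lambda_v\ge c(L,T)$, while under (A3') the case $\alpha=0$ with Condition~\ref{Condition_generic}-(iii) covers every $\lambda_v>0$. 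The $x$-part of~\eqref{lem5_2} (same $\mu$) is just the stability estimate~\eqref{lem1_2} applied with deterministic initial data $x,x'$.

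The step I expect to be the main obstacle is the $\mu$-part of~\eqref{lem5_2}, because $\mu$ enters~\eqref{intro_3} only through the \emph{coefficients} (the frozen flow $\lr(Y_{t\xi})$), a situation not covered by the fixed-coefficient statement of Lemma~\ref{lem:1}. Here I would re-run the a priori estimate underlying the continuation method for the difference of the two solutions associated with $m^{t,\mu}$ and $m^{t,\mu'}$: the monotonicity absorbs the solution-difference terms, while the discrepancy of the two frozen flows enters as an inhomogeneous source controlled by $L\,W_2(\lr(Y_{t\xi}(s)),\lr(Y_{t\xi'}(s)))$ through the $m$-Lipschitz continuity of $(b_0,\sigma_0,f,g)$ in (A1)--(A2). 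Coupling $\xi,\xi'$ so that $\e|\xi-\xi'|^2=W_2^2(\mu,\mu')$ and invoking the stability estimate~\eqref{lem2_2} of Lemma~\ref{lem:2} bounds this source by $C\,W_2^2(\mu,\mu')$, and a Gr\"onwall argument then delivers the $W_2(\mu,\mu')$ contribution to~\eqref{lem5_2}.
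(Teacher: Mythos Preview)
Your proposal is correct and follows essentially the same route as the paper. The paper, too, remarks that solvability is obtained ``similar as Lemma~\ref{lem:2}'' by casting~\eqref{intro_3} as an instance of~\eqref{FB:11} with the frozen flow $\lr(Y_{t\xi}(\cdot))$ in place of $\lr(X)$, and then devotes its detailed work to the stability estimate~\eqref{lem5_2}: it writes the difference FBSDEs, applies It\^o's formula to $\Delta p^\top\Delta Y$, uses the convexity bounds (\eqref{convex} under (A3), the sharper \eqref{convex'} under (A3')) to absorb $\int|\Delta v|^2$, and controls the inhomogeneous source coming from $m^{t,\mu}$ versus $m^{t,\mu'}$ via~\eqref{lem2_2}, exactly as you outline. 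One cosmetic remark: rather than assuming an optimal coupling with $\e|\xi-\xi'|^2=W_2^2(\mu,\mu')$, the paper works with \emph{arbitrary} $\xi,\xi'$ of laws $\mu,\mu'$ and takes the infimum at the end, which sidesteps any measurability issue about realizing optimal couplings inside the given filtration.
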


From Lemmas \ref{lem:MP2} and \ref{lem:5}, we know that the value functional $V$ defined in \eqref{intro_4} satisfies
\begin{equation}\label{intro_4'}
	\begin{split}
		V(t,x,\mu)=\e\left[\int_t^T f\left(s,Y_{tx\mu}(s),\lr(Y_{t\xi}(s)),v_{tx\mu}(s)\right)ds+g\left(Y_{tx\mu}(T),\lr(Y_{t\xi}(T))\right)\right].
	\end{split}	
\end{equation}
In the following sections, we shall study the G\^ateaux differentiability of $\left(Y_{t\xi}(s),p_{t\xi}(s),q_{t\xi}(s)\right)$ in $\xi$, and that of  $\left(Y_{tx\mu}(s),p_{tx\mu}(s),q_{tx\mu}(s)\right)$ in $(t,x,\mu)$, with which we can investigate the classical regularity of the value functional $V$.

\section{Derivatives of $\left(Y_{t\xi}(s),p_{t\xi}(s),q_{t\xi}(s)\right)$ in $\xi$}\label{sec:distribution}

In this section, we consider the G\^ateaux differentiability of $\left(Y_{t\xi}(s),p_{t\xi}(s),q_{t\xi}(s)\right)$ in the initial $\xi\in L_{\f_t}^2$. For the sake of convenience, in the rest of this article, we denote by $\theta=(x,m,v)\in \brn\times\pr_2(\brn)\times\brd$ and $\Theta=(x,m,p,q)\in \brn\times\pr_2(\brn)\times\brn\times\br^{n\times n}$. We denote by $\theta_{t\xi}(s)$ the processes $(Y_{t\xi}(s),\lr(Y_{t\xi}(s)),v_{t\xi}(s))$ and $\Theta_{t\xi}(s)$ the processes $(Y_{t\xi}(s),\lr(Y_{t\xi}(s)),p_{t\xi}(s),q_{t\xi}(s))$, where $v_{t\xi}(s)$ is defined in \eqref{vxi}. In view of Assumption (A2) and (A3) and the Young's inequality with a weight of $\frac{\lambda_v}{L}$, for any $x,y\in\brn$, $v,u\in\brd$ and $(s,m)\in[0,T]\times\pr_2(\brn)$,
\begin{equation}\label{convex}
	\begin{split}
		\left[\left(
		\begin{array}{cc}
			D_v^2 f & D_vD_x f\\
			D_xD_v f & D_x^2 f
		\end{array}
		\right)(s,x,m,v)\right] \left(
		\begin{array}{cc}
			u\\
			y
		\end{array}
		\right)^{\otimes 2}\geq \ & 2\lambda_v |u|^2-2L|x||u|-L|y|^2\\
		\geq\ & \lambda_v |u|^2-\left(L+\frac{L^2}{\lambda_v}\right) |y|^2.
	\end{split}
\end{equation}
Furthermore, if Assumption (A3') is satisfied, then,
\begin{equation}\label{convex'}
	\begin{split}
		\left[\left(
		\begin{array}{cc}
			D_v^2 f & D_vD_x f\\
			D_xD_v f & D_x^2 f
		\end{array}
		\right)(s,x,m,v) \right] \left(
		\begin{array}{cc}
			u\\
			y
		\end{array}
		\right)^{\otimes 2} & \geq  2\lambda_v |u|^2+2\lambda_x |y|^2,\\
		y^\top  D_x^2 g(x,m)y &\geq 2\lambda_g |y|^2.
	\end{split}
\end{equation}
From the linearity of $b$ and $\sigma$ in $v$ in Assumption (A1), the differentiability of $D_vf$ in $(x,m)$ in Assumption (A2), the strong convexity of $f$ in $v$, and the application of the implicit function theorem, we know that the map $\widehat{v}$ is continuously differentiable in $(x,p,q)$ and linearly functionally differentiable in $m$ (also see \cite[Page 29]{AB9'}). By taking the derivative of Equation \eqref{hv} with respect to $x$, $m$, $p$ and $q$, respectively, we know that
\begin{equation}\label{optimal_condition}
	\begin{split}
		0=\ &\left[D_xD_vf (s,x,m,\hv(s,\Theta))\right]^\top+D_v^2f(s,x,m,\hv(s,\Theta))\left[D_x\hv (s,\Theta)\right]^\top;\\
		0=\ &\left[D_y\frac{d}{d\nu}D_vf (s,x,m,\hv(s,\Theta))(y)\right]^\top+D_v^2f(s,x,m,\hv(s,\Theta))\left[D_y\frac{d\hv}{d\nu} (s,\Theta)(y)\right]^\top;\\
		0=\ &\left[b _2(s)\right]^\top+D_v^2f(s,x,m,\hv(s,\Theta))\left[D_p\hv (s,\Theta)\right]^\top;\\
		0=\ &\left[\sigma^j_2 (s)\right]^\top+D_v^2f(s,x,m,\hv(s,\Theta))\left[D_{q^j}\hv (s,\Theta)\right]^\top,\quad 1\le j\le n.
	\end{split}
\end{equation}
For any $\eta\in L_{\f_t}^2$, consider the following FBSDEs:
\small
\begin{align}
		\dr_\eta Y_{t\xi}(s)=\ & \eta+\int_t^s \bigg\{\widetilde{\e}\left[ \left[D \frac{db_0}{d\nu} (r,\lr(Y_{t\xi}(r)))\left(\widetilde{Y_{t\xi}}(r)\right) \right]^\top \!\! \widetilde{\dr_\eta Y_{t\xi}}(r)\right]+b_1(r)\dr_\eta Y_{t\xi}(r)+b_2(r)\dr_\eta v_{t\xi}(r)\bigg\}dr \notag \\
		&+\int_t^s \bigg\{\widetilde{\e}\left[\left[D \frac{d\sigma_0}{d\nu} (r,\lr(Y_{t\xi}(r)))\left(\widetilde{Y_{t\xi}}(r)\right)\right]^\top \!\! \widetilde{\dr_\eta Y_{t\xi}}(r)\right] +\sigma_1(r)\dr_\eta Y_{t\xi}(r)+\sigma_2(r)\dr_\eta v_{t\xi}(r)\bigg\}dB(r), \notag \\
		\dr_\eta p_{t\xi}(s)=\ & \left[D_x^2 g (Y_{t\xi}(T),\lr(Y_{t\xi}(T)))\right]^\top  \dr_\eta Y_{t\xi}(T) \notag \\
		&+\widetilde{\e}\left[\left[D_y \frac{d}{d\nu}D_x g (Y_{t\xi}(T),\lr(Y_{t\xi}(T)))\left(\widetilde{Y_{t\xi}}(T)\right) \right]^\top  \widetilde{\dr_\eta Y_{t\xi}}(T)\right] \notag \\
		&+\int_s^T\bigg\{\left[b _1(r)\right]^\top \dr_\eta p_{t\xi}(r)+\sum_{j=1}^n\left[\sigma_1^j (r)\right]^\top \dr_\eta q^j_{t\xi}(r)+\left[D_x^2 f  (r,\theta_{t\xi}(r)) \right]^\top \dr_\eta Y_{t\xi}(r) \notag \\
		&\qquad\qquad +\widetilde{\e}\left[\left[D_y \frac{d}{d\nu}D_x f  (r,\theta_{t\xi}(r))\left(\widetilde{Y_{t\xi}}(r)\right) \right]^\top \widetilde{\dr_\eta Y_{t\xi}}(r)\right]  +\left[D_vD_x f (r,\theta_{t\xi}(r)) \right]^\top  \dr_\eta v_{t\xi}(r)\bigg\}dr \notag \\
		&-\int_s^T\dr_\eta q_{t\xi}(r)dB(r), \quad s\in[t,T], \label{FB:dr}
\end{align}
\normalsize
where
\begin{align*}
	\dr_\eta v_{t\xi}(s):=\ & \left[D_x\hv  (s,\Theta_{t\xi}(s))\right]^\top \dr_\eta Y_{t\xi}(s) +\widetilde{\e}\left[\left[D_y\frac{d\hv}{d\nu} (s,\Theta_{t\xi}(s))\left(\widetilde{Y_{t\xi}}(s)\right) \right]^\top  \widetilde{\dr_\eta Y_{t\xi}}(s) \right]\\
	&+\left[D_p\hv  (s,\Theta_{t\xi}(s))\right]^\top\dr_\eta p_{t\xi}(s)+\sum_{j=1}^n \left[D_{q^j}\hv (s,\Theta_{t\xi}(s))\right]^\top \dr_\eta q^j_{t\xi}(s),\quad s\in[t,T],
\end{align*}
and $\left(\widetilde{Y_{t\xi}}(s),\widetilde{\dr_\eta Y_{t\xi}}(s)\right)$ is an independent copy of $\left(Y_{t\xi}(s),\dr_\eta Y_{t\xi}(s)\right)$. By using Lemma~\ref{lem:1}, it comes with the following solvability of FBSDEs \eqref{FB:dr} and the boundedness of the solution.

\begin{lemma}\label{lem:3}
	Under Assumptions (A1), (A2) and (A3), there exists a constant $c(L,T)$ depending only on $(L,T)$, such that when $\lambda_v\geq c(L,T)$, there is a unique adapted solution $(\dr_\eta Y_{t\xi},\dr_\eta p_{t\xi},\dr_\eta q_{t\xi})$ of FBSDEs \eqref{FB:dr}, such that
	\begin{align}\label{lem3_1}
		\e\left[\sup_{t\le s\le T}\left|\left(\dr_\eta Y_{t\xi}(s),\dr_\eta p_{t\xi}(s)\right)^\top\right|^2+ \int_t^T\left|\dr_\eta q_{t\xi}(s)\right|^2 ds\right]\le C(L,T,\lambda_v)\e\left[|\eta|^2\right].
	\end{align}
    Furthermore, if Assumption (A3') is satisfied, FBSDEs \eqref{FB:dr} has a unique solution satisfying \eqref{lem3_1} for any $\lambda_v> 0$.
\end{lemma}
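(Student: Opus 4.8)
The plan is to realize the linearized system \eqref{FB:dr} as a \emph{linear} special case of the generic Hilbert-space FBSDEs \eqref{FB:11} and then to quote Lemma~\ref{lem:1}. Since $\left(Y_{t\xi},p_{t\xi},q_{t\xi}\right)$ is already fixed by Lemma~\ref{lem:2}, every coefficient appearing in \eqref{FB:dr} --- the matrices $b_1,b_2,\sigma_1,\sigma_2$, the Hessian blocks $D_x^2f,\,D_vD_xf,\,D_v^2f,\,D_x^2 g$, the nonlocal kernels $D\frac{db_0}{d\nu},\,D\frac{d\sigma_0}{d\nu},\,D_y\frac{d}{d\nu}D_xf,\,D_y\frac{d}{d\nu}D_xg$, and the sensitivities $D_x\hv,\,D_y\frac{d\hv}{d\nu},\,D_p\hv,\,D_{q^j}\hv$ --- is evaluated along the known trajectory $\Theta_{t\xi}(\cdot)$ and therefore enters as a given, state-independent input. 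Writing $(X,p,q)$ for the unknown triple $(\dr_\eta Y_{t\xi},\dr_\eta p_{t\xi},\dr_\eta q_{t\xi})$, I read off $\mathbf B,\mathbf A,\mathbf F,\mathbf G$ from the drift, diffusion, driver and terminal datum of \eqref{FB:dr} exactly as in \eqref{def:BAFG}, and I choose the auxiliary map $\beta(s,X,p,q)$ to be the linearized control $\dr_\eta v_{t\xi}$, namely
\[
\beta(s,X,p,q):=\left[D_x\hv\right]^\top X+\widetilde{\e}\Big[\big[D_y\tfrac{d\hv}{d\nu}(s,\Theta_{t\xi}(s))(\widetilde{Y_{t\xi}}(s))\big]^\top\widetilde X\Big]+\left[D_p\hv\right]^\top p+\sum_{j=1}^n\left[D_{q^j}\hv\right]^\top q^j,
\]
with $\Lambda:=\lambda_v$; this is the differentiated analogue of the choice $\beta=\hv$ that proved Lemma~\ref{lem:2}.

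With these identifications Condition~\ref{Condition_generic}(ii) is immediate: by (A1)--(A2) all the matrices and kernels above are bounded by $L$, and linearity turns this into the required Lipschitz estimates with a constant $K=K(L,n)$, the $\brd$-valued $\beta$-difference absorbing the $\dr_\eta v$ contributions. Condition~\ref{Condition_generic}(iii), needed only in the $\alpha=0$ regime, follows from the symmetric nonlocal form of $\mathbf G(X)=\left[D_x^2 g\right]^\top X+\widetilde{\e}[\cdots]$: the pointwise bound $y^\top D_x^2 g\,y\ge 2\lambda_g|y|^2$ of \eqref{convex'} together with $\lambda_g\ge L_g/2$ dominating the $L_g$-bounded nonlocal kernel yields $\e\big[(\mathbf G(X')-\mathbf G(X))^\top(X'-X)\big]\ge 0$.

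The essential point is Condition~\ref{Condition_generic}(i). Expanding its left-hand side along the difference of two inputs and using that all coefficients are frozen, the $b_1$- and $\sigma_1$-terms cancel identically between the forward and backward parts, while the nonlocal drift/diffusion kernels together with $D_y\frac{d}{d\nu}D_xf$ reorganize, via the independent-copy symmetry and Fubini, into expectations bounded by $L$ and $L_x$. The remaining genuinely coupled terms carry the coefficients $b_2,\sigma_2$ and $D_vD_xf$; here the optimality identities \eqref{optimal_condition} are decisive, since they rewrite $\left[b_2\right]^\top,\left[\sigma_2^j\right]^\top,\left[D_xD_vf\right]^\top$ as $-D_v^2f$ times $\left[D_p\hv\right]^\top,\left[D_{q^j}\hv\right]^\top,\left[D_x\hv\right]^\top$ respectively. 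Substituting these and the definition of $\beta$, the mixed $X$--$(p,q)$ terms collapse precisely into the Hessian quadratic form of $\big(\begin{smallmatrix}D_v^2f & D_vD_xf\\ D_xD_vf & D_x^2f\end{smallmatrix}\big)(\theta_{t\xi}(s))$ acting on $\big(\beta'-\beta,\,X'-X\big)$.

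Bounding this form below by \eqref{convex} gives Condition~\ref{Condition_generic}(i) with $\Lambda=\lambda_v$ and some $\alpha=\alpha(L)$, so Lemma~\ref{lem:1} applies once $\lambda_v\ge c(L,T)$ and delivers \eqref{lem3_1}. Under (A3') the kernels $D\frac{db_0}{d\nu},D\frac{d\sigma_0}{d\nu}$ vanish and the sharper bound \eqref{convex'} supplies a strictly positive $|X'-X|^2$ term with coefficient $2\lambda_x$; the small-mean-field-effect constraint $\lambda_x\ge\frac{L_v^2}{8\lambda_v}+\frac{L_x}{2}$ is exactly what absorbs the residual $L_x$- and $L_v$-contributions (the latter entering through $D_y\frac{d\hv}{d\nu}$, bounded by $\frac{L_v}{2\lambda_v}$ via \eqref{optimal_condition}(ii)) so that $\alpha=0$; combined with Condition~\ref{Condition_generic}(iii), Lemma~\ref{lem:1} then applies for every $\lambda_v>0$. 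The main obstacle is precisely this bookkeeping in part~(i): tracking the nonlocal independent-copy terms and verifying that \eqref{optimal_condition} makes the cross terms assemble into the single Hessian form, leaving no uncontrolled $X$--$(p,q)$ coupling.
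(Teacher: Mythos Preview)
Your proposal is correct and follows essentially the same route as the paper: cast \eqref{FB:dr} as an instance of \eqref{FB:11} via the coefficients \eqref{chose_coefficients}, take $\beta=\mathbf V$ (the linearized control), use the optimality relations \eqref{optimal_condition} to collapse the cross terms into the Hessian quadratic form, and then invoke \eqref{convex} (respectively \eqref{convex'} under (A3')) to verify Condition~\ref{Condition_generic}. One small inaccuracy: after absorbing the residual nonlocal term $-(\mathbf V'-\mathbf V)^\top\widetilde{\e}\big[[D_y\tfrac{d}{d\nu}D_vf]^\top(\widetilde{X'}-\widetilde X)\big]$ via Young's inequality, the paper obtains $\Lambda=\tfrac{\lambda_v}{2}$ rather than $\Lambda=\lambda_v$, but this does not affect the conclusion.
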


Note the fact that $\Theta_{t\xi}$ is already known, and \eqref{FB:dr} is a system of FBSDEs for $(\dr_\eta Y_{t\xi},\dr_\eta p_{t\xi},\dr_\eta q_{t\xi})$. Actually,  FBSDEs \eqref{FB:dr} can be viewed  as a subcase of FBSDEs \eqref{FB:11} in Lemma~\ref{lem:1} by setting: for $(s,X,p,q)\in [0,T]\times L^2(\Omega,\f,\mathbb{P};\brn)\times L^2(\Omega,\f,\mathbb{P};\brn)\times(L^2(\Omega,\f,\mathbb{P};\brn))^n$, 
\begin{align}
	&\mathbf{B}(s,X,p,q):=\widetilde{\e}\left[\left[D \frac{db_0}{d\nu} (s,\lr(Y_{t\xi}(s)))\left(\widetilde{Y_{t\xi}}(s)\right) \right]^\top \tx\right]+b_1(s)X+b_2(s)\mathbf{V}(s,X,p,q), \notag\\
	&\mathbf{A}(s,X,p,q):=\widetilde{\e}\left[\left[D \frac{d\sigma_0}{d\nu} (\lr(Y_{t\xi}(s)),s)\left(\widetilde{Y_{t\xi}}(s)\right) \right]^\top \tx\right]+\sigma_1(s)X+\sigma_2(s)\mathbf{V}(s,X,p,q), \notag \\
	&\mathbf{F}(s,X,p,q):=-[b _1(s)]^\top p-\sum_{j=1}^n\left[\sigma_1^j (s)\right]^\top q^j-\left[D_x^2 f (s,\theta_{t\xi}(s)) \right]^\top X \notag\\
	&\ \ \qquad\qquad\qquad  -\widetilde{\e}\left[\left[D_y \frac{d}{d\nu}D_x f  (s,\theta_{t\xi}(s))\left(\widetilde{Y_{t\xi}}(s)\right) \right]^\top \tx \right]-\left[D_vD_x f  (s,\theta_{t\xi}(s))\right]^\top \mathbf{V}(s,X,p,q), \notag\\
	&\mathbf{G}(X):=\left[D_x^2 g  (Y_{t\xi}(T),\lr(Y_{t\xi}(T))) \right]^\top X+\widetilde{\e}\left[\left[D_y \frac{d}{d\nu}D_x g  (Y_{t\xi}(T),\lr(Y_{t\xi}(T)))(\widetilde{Y_{t\xi}}(T)) \right]^\top \tx\right], \label{chose_coefficients}
\end{align}  
\normalsize  
where
\begin{align*}
	\mathbf{V}(s,X,p,q):=&\left[D_x\hv (s,\Theta_{t\xi}(s)) \right]^\top X +\widetilde{\e}\left[\left[D_y\frac{d\hv}{d\nu} (s,\Theta_{t\xi}(s))\left(\ty_{t\xi}(s)\right) \right]^\top  \tx\right]\\
	&+\left[D_p\hv (s,\Theta_{t\xi}(s)) \right]^\top p +\sum_{j=1}^n \left[D_{q^j}\hv  (s,\Theta_{t\xi}(s)) \right]^\top q^j,
\end{align*}
\normalsize
and $(\widetilde{Y_{t\xi}}(s),\tx)$ is an independent copy of $(Y_{t\xi}(s),X)$. By choosing the map $\beta(s,X,p,q):=\mathbf{V}(s,X,p,q)$, then, the corresponding Condition \ref{Condition_generic} holds. The full proof is given in Appendix~\ref{pf_lem3}. We next establish that the components of $(\dr_\eta Y_{t\xi}(s),\dr_\eta p_{t\xi}(s),\dr_\eta q_{t\xi}(s))$ are the G\^ateaux derivatives of $(Y_{t\xi}(s),p_{t\xi}(s),q_{t\xi}(s))$ with respect to the initial $\xi$ along the direction $\eta$.

\begin{theorem}\label{lem:4}
	Under Assumptions (A1), (A2) and (A3), there exists a constant $c(L,T)>0$ depending only on $(L,T)$, such that when $\lambda\geq c(L,T)$, for any $\eta\in L_{\f_t}^2$, we have
	\begin{equation}\label{lem4_1}
		\begin{split}
			\lim_{\epsilon\to0}\e\Bigg[& \sup_{t\le s\le T}\left|\frac{ Y_{ t\xi^\epsilon}(s)-Y_{t\xi}(s)}{\epsilon}-\dr_\eta Y_{t\xi}(s)\right|^2+ \sup_{t\le s\le T}\left|\frac{ p_{ t\xi^\epsilon}(s)-p_{t\xi}(s)}{\epsilon}-\dr_\eta p_{t\xi}(s)\right|^2\\
			&+ \int_t^T \left|\frac{ q_{ t\xi^\epsilon}(s)-q_{t\xi}(s)}{\epsilon}-\dr_\eta q_{t\xi}(s)\right|^2 ds\Bigg]=0,
		\end{split}
	\end{equation}
	where $\xi^\epsilon:=\xi+\epsilon\eta$ for $\epsilon\in(0,1)$. That is, the components of $(\dr_\eta Y_{t\xi}(s),\dr_\eta p_{t\xi}(s),\dr_\eta q_{t\xi}(s))$ defined in \eqref{FB:dr} are the G\^ateaux derivatives of $(Y_{t\xi}(s),p_{t\xi}(s),q_{t\xi}(s))$ in $\xi$ along the direction $\eta$. Moreover, the G\^ateaux derivatives satisfy the boundedness condition \eqref{lem3_1}, and they are linear in $\eta$ and continuous in $\xi$. Furthermore, if Assumption (A3') is satisfied, for any $\lambda_v> 0$, the same assertion still holds.  	
\end{theorem}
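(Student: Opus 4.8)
The plan is to identify the difference quotients with the solution of the linearized system \eqref{FB:dr} by means of the stability estimate for monotone FBSDEs underlying Lemma~\ref{lem:1}. Write $\delta^\epsilon Y(s):=\epsilon^{-1}(Y_{t\xi^\epsilon}(s)-Y_{t\xi}(s))$, and define $\delta^\epsilon p,\delta^\epsilon q$ analogously. Subtracting the two copies of \eqref{intro_2} at $\xi^\epsilon$ and $\xi$ and dividing by $\epsilon$, I express each increment of $D_pH,D_qH,D_xH$ and $D_xg$ through the fundamental theorem of calculus as an integral, over the segment joining $\Theta_{t\xi^\epsilon}(s)$ to $\Theta_{t\xi}(s)$ (and the associated segment of laws), of the derivative maps that appear as coefficients in \eqref{FB:dr} and \eqref{optimal_condition}. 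This realizes $(\delta^\epsilon Y,\delta^\epsilon p,\delta^\epsilon q)$ as the solution of a linear FBSDE of the generic form \eqref{FB:11} whose coefficients are these \emph{averaged} derivative maps; note that its forward initial datum is exactly $\eta$, as for \eqref{FB:dr}.

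First, Lemma~\ref{lem:2} (in particular \eqref{lem2_2}) shows that $\{(\delta^\epsilon Y,\delta^\epsilon p,\delta^\epsilon q)\}_{\epsilon\in(0,1)}$ is bounded in $\sr^2_\f(t,T)\times\sr^2_\f(t,T)\times(\lr^2_\f(t,T))^n$ by $C(L,T,\lambda_v)\e[|\eta|^2]$, uniformly in $\epsilon$. Next I form the error $(\Delta Y,\Delta p,\Delta q):=(\delta^\epsilon Y-\dr_\eta Y_{t\xi},\,\delta^\epsilon p-\dr_\eta p_{t\xi},\,\delta^\epsilon q-\dr_\eta q_{t\xi})$, which starts from zero initial datum. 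Subtracting \eqref{FB:dr} from the averaged system and splitting each averaged coefficient as (coefficient at $\Theta_{t\xi}$) $+$ (averaged $-$ exact), the terms with the coefficient at $\Theta_{t\xi}$ reassemble precisely the linear operator of \eqref{FB:dr} acting on $(\Delta Y,\Delta p,\Delta q)$, while the remaining terms form an inhomogeneous forcing of the schematic shape (averaged $-$ exact coefficient) $\times$ (bounded difference quotient). Since the homogeneous part is exactly the monotone operator of \eqref{FB:dr}, whose well-posedness and stability are furnished by Lemma~\ref{lem:3}, the continuation-method a priori estimate---now carrying the forcing terms---bounds the left-hand side of \eqref{lem4_1} by the $L^2$-norm of the forcing.

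It then remains to prove that the forcing vanishes in $L^2$ as $\epsilon\to0$. Each forcing term is a product of a coefficient-difference factor and a difference-quotient factor. The latter is bounded in $L^2$ uniformly in $\epsilon$ by the previous step; the former tends to zero because $\Theta_{t\xi^\epsilon}\to\Theta_{t\xi}$ in $L^2$ by \eqref{lem2_2}, together with the continuity and uniform $L$-boundedness of the second-order derivatives of $f,g$ and of the derivatives of $\hv$ granted by Assumption~(A2). Combining the uniform boundedness with dominated convergence, the products converge to zero in $L^2$, whence $(\Delta Y,\Delta p,\Delta q)\to0$, which is \eqref{lem4_1}. The main obstacle is the treatment of the measure-dependent contributions, i.e.\ the mean-field terms $\widetilde\e[\,\cdot\,]$ involving the functional derivatives $D\frac{db_0}{d\nu}$, $D\frac{d\sigma_0}{d\nu}$, $D_y\frac{d}{d\nu}D_xf$ and $D_y\frac{d}{d\nu}D_xg$: here one must interpolate simultaneously in the law argument and, after applying Cauchy--Schwarz to the inner expectation $\widetilde\e$, invoke the Lipschitz continuity in $(m,y)$ of these functional derivatives from (A2) in tandem with \eqref{lem2_2} to force the corresponding coefficient-differences to zero in $L^2$.

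Finally, the asserted boundedness \eqref{lem3_1} of the derivatives is inherited directly from Lemma~\ref{lem:3}; linearity of $\eta\mapsto(\dr_\eta Y_{t\xi},\dr_\eta p_{t\xi},\dr_\eta q_{t\xi})$ follows from the uniqueness of solutions to the linear system \eqref{FB:dr} and the linear appearance of $\eta$ therein; and continuity in $\xi$ follows from the same stability-in-coefficients estimate, this time comparing the coefficients of \eqref{FB:dr} at two nearby initial conditions via \eqref{lem2_2}. Under Assumption~(A3') one argues identically, replacing the convexity bound \eqref{convex} by \eqref{convex'} so that the requisite monotonicity, and hence every estimate above, holds for every $\lambda_v>0$ rather than only for $\lambda_v\geq c(L,T)$.
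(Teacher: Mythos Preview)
Your proposal is correct and follows essentially the same route as the paper's proof: both write the difference quotients as solutions of an FBSDE with ``averaged'' (segment-integrated) coefficients, subtract the linearized system \eqref{FB:dr} to obtain an error process with zero initial datum, and then bound the error by forcing terms that vanish by dominated convergence thanks to the uniform $L^2$ bounds from \eqref{lem2_2} and the continuity/boundedness of the second derivatives in (A2). The only cosmetic difference is that you package the a priori bound as an appeal to the abstract stability-with-forcing estimate underlying Lemma~\ref{lem:1}, whereas the paper re-derives it by applying It\^o's formula to $(\delta^\epsilon p)^\top\delta^\epsilon Y$ and invoking \eqref{convex} (resp.\ \eqref{convex'} under (A3')) directly; the two are the same computation.
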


Theorem \ref{lem:4} is proven in Appendix~\ref{pf_lem4}; we also refer to \cite{AB5} for further discussion on their very nature as Fr\'echet derivatives.

\section{Derivatives of $(Y_{tx\mu}(s),p_{tx\mu}(s),q_{tx\mu}(s))$ in $(x,\mu)$}\label{sec:state}

In this section, we study the regularity of $(Y_{tx\mu}(s),p_{tx\mu}(s),q_{tx\mu}(s))$ in $(x,\mu)$, defined in \eqref{intro_3}, with respect to initial data $(x,\mu)\in\brn\times\pr_2(\brn)$.  For the sake of convenience, in the rest of this paper, we denote by $\theta_{tx\mu}$ the processes $(Y_{tx\mu},\lr(Y_{t\xi}),v_{tx\mu})$ and $\Theta_{tx\mu}$ the processes $(Y_{tx\mu},\lr(Y_{t\xi}),p_{tx\mu},q_{tx\mu})$, where $\lr(\xi)=\mu$, $Y_{t\xi}$ is defined in FBSDEs \eqref{intro_2} and $v_{tx\mu}$ is defined in \eqref{vxmu}. Since the processes $(Y_{tx\mu},\lr(Y_{t\xi}),v_{tx\mu},p_{tx\mu},q_{tx\mu})$ depend on $\xi$ only through its law $\mu$, it is natural to use the subscript $\mu$ in $\theta_{tx\mu}$ and $\Theta_{tx\mu}$.

\subsection{Jacobian flow of first-order derivatives}\label{subsec:1-order}

We first consider the differentiability of $(Y_{tx\mu}(s),p_{tx\mu}(s),q_{tx\mu}(s))$ in the initial state $x\in\brn$. Consider the following FBSDEs: 
\begin{align}
		D_xY_{tx\mu}(s)=\ & I+\int_t^s \left[b_1(r)D_xY_{tx\mu}(r)+b_2(r)D_x v_{tx\mu}(r)\right]dr \notag \\
		&\;\; +\int_t^s \left[\sigma_1(r)D_xY_{tx\mu}(r)+\sigma_2(r)D_x v_{tx\mu}(r)\right]dB(r),\notag \\
		D_xp_{tx\mu}(s)=\ & \left[D_x^2 g \left(Y_{tx\mu}(T),\lr(Y_{t\xi}(T))\right) \right]^\top D_x Y_{tx\mu}(T) \notag \\
		& +\int_s^T \bigg[b _1(r)^\top D_xp_{tx\mu}(r)+\sum_{j=1}^n \left(\sigma_1^j (r)\right)^\top D_x q^j_{tx\mu}(r) \notag \\
		&\quad\qquad+\left[D_x^2 f (r,\theta_{tx\mu}(r)) \right]^\top D_x Y_{tx\mu}(r) + \left[D_vD_x f (r,\theta_{tx\mu}(r))\right]^\top D_x v_{tx\mu}(r)\bigg]dr \notag \\
		&  -\int_s^T D_x q_{tx\mu}(r)dB(r),\quad s\in[t,T], \label{FB:x}
\end{align}
where $I$ is the identity matrix and 
\begin{align*}
	D_x v_{tx\mu}(s):=&\left[D_x\hv (s,\Theta_{tx\mu}(s))\right]^\top D_x Y_{tx\mu}(s)+\left[D_p\hv (s,\Theta_{tx\mu}(s)) \right]^\top D_x p_{tx\mu}(s)\\
	&+\sum_{j=1}^n \left[D_{q^j}\hv (s,\Theta_{tx\mu}(s)) \right]^\top D_x q^j_{tx\mu}(s),\quad s\in[t,T].
\end{align*}
Since the FBSDEs \eqref{FB:x} depend on $\xi$ only through its law $\mu$, it is natural to use the subscript $\mu$ in $(D_x Y_{tx\mu},D_x v_{tx\mu},D_x p_{tx\mu},D_x q_{tx\mu})$. The following result gives the solvability of FBSDEs \eqref{FB:x} and shows that the components of $(D_xY_{tx\mu}(s), D_xp_{tx\mu}(s),D_xq_{tx\mu}(s))$ are  the respective G\^ateaux derivatives of $(Y_{tx\mu}(s),p_{tx\mu}(s),q_{tx\mu}(s))$ in $x\in\brn$. Its proof is similar as that of Lemma~\ref{lem:3} and Theorem~\ref{lem:4}, which is omitted here.
\begin{theorem}\label{prop:2}
	Under Assumptions (A1), (A2) and (A3), there exists a constant $c(L,T)$ depending only on $(L,T)$, such that when $\lambda_v\geq c(L,T)$, there is a unique solution $(D_x Y_{tx\mu},D_x p_{tx\mu},D_x q_{tx\mu})$ of FBSDEs \eqref{FB:x}, and the components of $(D_x Y_{tx\mu}(s),D_x p_{tx\mu}(s),D_x q_{tx\mu}(s))$ are the G\^ateaux derivatives of $(Y_{tx\mu}(s),p_{tx\mu}(s),q_{tx\mu}(s))$ in $x$. Moreover, the G\^ateaux derivatives satisfy the following boundedness property:
	\begin{equation}\label{thm2_1}
		\begin{split}
			&\e\bigg[\sup_{t\le s\le T}|D_x Y_{tx\mu}(s)|^l\bigg]^{\frac{1}{l}}+\e\bigg[\sup_{t\le s\le T}|D_x p_{tx\mu}(s)|^l\bigg]^{\frac{1}{l}}+\left[\int_t^T\e\left[|D_x q_{tx\mu}(s)|^l\right]^{\frac{2}{l}} ds\right]^{\frac{1}{2}}\\
			\le\ & C(L,T,\lambda_v),\quad l=2,4,
		\end{split}
	\end{equation}
	and are continuous in $(x,\mu)$. Furthermore, if Assumption (A3') is satisfied, for any $\lambda_v> 0$, the same assertion holds.  	
\end{theorem}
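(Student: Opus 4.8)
The plan is to recognize the linear system \eqref{FB:x} as a special instance of the generic Hilbert-space FBSDEs \eqref{FB:11} and to invoke Lemma~\ref{lem:1}, in direct parallel to the treatment of \eqref{FB:dr} in Lemma~\ref{lem:3} and Theorem~\ref{lem:4}. The decisive structural simplification is that, because the distribution flow $\lr(Y_{t\xi})$ is \emph{autonomous} in Problem \eqref{intro_1'} and does not vary with the spatial datum $x$, the coefficients of \eqref{FB:x} carry \emph{no} mean-field coupling terms, i.e.\ none of the $\widetilde{\e}[\,\cdot\,]$ contributions of \eqref{chose_coefficients}. Concretely, fixing a direction, I would set
\begin{align*}
	&\mathbf{B}(s,X,p,q):=b_1(s)X+b_2(s)\mathbf{V}(s,X,p,q),\qquad \mathbf{A}(s,X,p,q):=\sigma_1(s)X+\sigma_2(s)\mathbf{V}(s,X,p,q),\\
	&\mathbf{F}(s,X,p,q):=-[b_1(s)]^\top p-\sum_{j=1}^n[\sigma_1^j(s)]^\top q^j-[D_x^2 f(s,\theta_{tx\mu}(s))]^\top X-[D_vD_x f(s,\theta_{tx\mu}(s))]^\top \mathbf{V}(s,X,p,q),\\
	&\mathbf{G}(X):=[D_x^2 g(Y_{tx\mu}(T),\lr(Y_{t\xi}(T)))]^\top X,
\end{align*}
with the selection-map increment
\begin{align*}
	\mathbf{V}(s,X,p,q):=[D_x\hv(s,\Theta_{tx\mu}(s))]^\top X+[D_p\hv(s,\Theta_{tx\mu}(s))]^\top p+\sum_{j=1}^n[D_{q^j}\hv(s,\Theta_{tx\mu}(s))]^\top q^j,
\end{align*}
and choose $\beta:=\mathbf{V}$, $\Lambda:=\lambda_v$. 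Since $\theta_{tx\mu}$ and $\Theta_{tx\mu}$ are already-known processes, this is a genuinely linear FBSDE with given, bounded coefficients, and the directional derivative is recovered by taking the deterministic initial datum to be a coordinate vector of $\brn$.

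I would then verify Condition~\ref{Condition_generic}. Part (ii) is immediate from (A1)--(A2): the matrices $b_1,b_2,\sigma_1,\sigma_2$ are bounded, $D_x^2 f,D_vD_x f,D_x^2 g$ are bounded, and $\hv$ is $\frac{L}{2\lambda_v}$-Lipschitz so that $D_x\hv,D_p\hv,D_{q^j}\hv$ are bounded. For the monotonicity (i), the contributions involving $b_1$ and $\sigma_1$ cancel pairwise between the $(X'-X)$-pairing of $\mathbf{F}$ and the $(p'-p)$- and $({q'}^j-q^j)$-pairings of $\mathbf{B}$ and $\mathbf{A}$; substituting the optimality identities \eqref{optimal_condition}(i),(iii),(iv) to eliminate $D_x\hv,D_p\hv,D_{q^j}\hv$, the remaining cross terms reassemble (exactly as in the proof of Lemma~\ref{lem:3}) into the \emph{negative} of the Hessian quadratic form on the left-hand side of \eqref{convex}, evaluated at $(\mathbf{V}'-\mathbf{V},\,X'-X)$. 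Hence \eqref{convex} delivers the bound $\leq -\lambda_v|\mathbf{V}'-\mathbf{V}|^2+\left(L+\frac{L^2}{\lambda_v}\right)|X'-X|^2$, i.e.\ Condition~\ref{Condition_generic}(i) with $\Lambda=\lambda_v$ and $\alpha=L+\frac{L^2}{\lambda_v}$. This is \emph{strictly simpler} than in Lemma~\ref{lem:3}, since no mean-field terms must be absorbed and the small-mean-field-effect bookkeeping is unnecessary. Lemma~\ref{lem:1} then yields, for $\lambda_v\geq c(L,T)$, a unique adapted solution together with \eqref{lem1_1}, which is precisely the $l=2$ case of \eqref{thm2_1} because the initial datum is deterministic. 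Under (A3') the stronger convexity \eqref{convex'} gives $\alpha=0$ in (i), while $D_x^2 g\geq 2\lambda_g\geq 0$ secures (iii), so Lemma~\ref{lem:1} applies for every $\lambda_v>0$.

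To identify the solution as the G\^ateaux derivative, I would run the difference-quotient argument of Theorem~\ref{lem:4}. For a direction $e\in\brn$ and $x^\epsilon:=x+\epsilon e$, the increments $\frac{1}{\epsilon}(Y_{tx^\epsilon\mu}-Y_{tx\mu})$, etc., solve an FBSDE whose coefficients are $\epsilon$-averaged versions of those in \eqref{FB:x}; subtracting \eqref{FB:x} and applying the stability estimate \eqref{lem1_2} bounds the $L^2$-distance between the increment triple and $(D_xY_{tx\mu},D_xp_{tx\mu},D_xq_{tx\mu})$ by the $L^2$-oscillation of the coefficients, which vanishes as $\epsilon\to0$ by the continuity of $D_x^2 f,D_vD_x f,D_x^2 g$ and of the derivatives of $\hv$ (Assumption (A2)) together with dominated convergence. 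Differentiability in every direction together with this $L^2$-convergence gives the asserted G\^ateaux derivative. Continuity of $(D_xY_{tx\mu},D_xp_{tx\mu},D_xq_{tx\mu})$ in $(x,\mu)$ follows identically, now feeding the continuity of $\theta_{tx\mu},\Theta_{tx\mu}$ in $(x,\mu)$ supplied by \eqref{lem5_2} of Lemma~\ref{lem:5} into \eqref{lem1_2}.

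The main obstacle, and the only genuinely new ingredient beyond Lemma~\ref{lem:3} and Theorem~\ref{lem:4}, is the fourth-moment bound $l=4$ in \eqref{thm2_1}. I would obtain it by repeating the continuation-method a priori estimate of Lemma~\ref{lem:1} at the $L^4$-level: applying It\^o's formula to $|D_xp_{tx\mu}|^4$ and $|D_xY_{tx\mu}|^4$, controlling the stochastic integrals by the Burkholder--Davis--Gundy inequality, and closing the forward-backward coupling by Gronwall's inequality. Boundedness of every coefficient (from (A1)--(A2) and the Lipschitz property of $\hv$) and the deterministic initial datum $I$ guarantee both finiteness of the fourth moments and that the resulting constant depends only on $(L,T,\lambda_v)$; the monotonicity constant $\Lambda=\lambda_v$ does not degrade under the change of exponent. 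I expect this $L^4$-bootstrap, rather than the existence or the derivative identification, to demand the most (albeit routine) care.
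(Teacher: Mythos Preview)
Your proposal is correct and follows essentially the same approach the paper indicates: the paper itself omits the proof, stating only that it ``is similar as that of Lemma~\ref{lem:3} and Theorem~\ref{lem:4}'', and your reconstruction---casting \eqref{FB:x} as an instance of \eqref{FB:11} with $\beta=\mathbf{V}$, verifying Condition~\ref{Condition_generic} via the optimality identities \eqref{optimal_condition} and the convexity \eqref{convex}/\eqref{convex'}, then running the difference-quotient stability argument---is exactly that. Your observation that the absence of mean-field coupling makes the monotonicity verification \emph{strictly simpler} than in Lemma~\ref{lem:3} is accurate and worth noting; your handling of the $l=4$ bound via an $L^4$-level a priori estimate is the natural route and is indeed the one genuinely additional step beyond the referenced lemmas.
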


Recall that $(Y_{t\xi},p_{t\xi},q_{t\xi})$ is the solution of FBSDEs \eqref{intro_2}. We consider the following FBSDEs: for $s\in[t,T]$, 
\small
\begin{align}
		\bd_\eta Y_{t\xi}(s)=\ & \int_t^s \bigg\{ \widetilde{\e}\left[\left[D \frac{db_0}{d\nu} (r,\lr(Y_{t\xi}(r)))\left(\widetilde{Y_{t\xi}}(r)\right) \right]^\top  \left(\widetilde{\bd_\eta Y_{t\xi}}(r)+\widetilde{D_xY_{tx\mu}}(r)\left|_{x=\widetilde{\xi}}\ \right. \widetilde{\eta}\right)\right] \notag \\
		&\ \qquad +b_1(r)\bd_\eta Y_{t\xi}(r)+b_2(r)\bd_\eta v_{t\xi}(r)\bigg\}dr \notag \\
		& +\int_t^s \bigg\{ \widetilde{\e}\left[\left[D \frac{d\sigma_0}{d\nu}  (r,\lr(Y_{t\xi}(r)))\left(\widetilde{Y_{t\xi}}(r)\right)\right]^\top  \left(\widetilde{\bd_\eta Y_{t\xi}}(r)+\widetilde{D_xY_{tx\mu}}(r)\left|_{x=\widetilde{\xi}}\ \right. \widetilde{\eta}\right)\right] \notag \\
		&\ \qquad +\sigma_1(r)\bd_\eta Y_{t\xi}(r)+\sigma_2(r)\bd_\eta v_{t\xi}(r)\bigg\} dB(r), \notag \\
		\bd_\eta p_{t\xi}(s)=\ & \left[D_x^2 g (Y_{t\xi}(T),\lr(Y_{t\xi}(T)))\right]^\top \bd_\eta Y_{t\xi}(T) \notag \\
		&+\widetilde{\e}\left[\left[\left(D_y \frac{d}{d\nu}D_x g\right) (Y_{t\xi}(T),\lr(Y_{t\xi}(T)))\left(\widetilde{Y_{t\xi}}(T)\right) \right]^\top \left(\widetilde{\bd_\eta Y_{t\xi}}(T)+\widetilde{D_xY_{tx\mu}}(T)\left|_{x=\widetilde{\xi}}\ \right. \widetilde{\eta}\right)\right] \notag \\
		& +\int_s^T\bigg\{ b _1(r)^\top \bd_\eta p_{t\xi}(r)+\sum_{j=1}^n\left({\sigma_1^j} (r)\right)^\top \bd_\eta q^j_{t\xi}(r)+\left[D_x^2 f  (r,\theta_{t\xi}(r))\right]^\top  \bd_\eta Y_{t\xi}(r) \notag \\
		&\ \quad\qquad +\widetilde{\e}\left[\left[\left(D_y \frac{d}{d\nu}D_x f\right)  (r,\theta_{t\xi}(r))\left(\widetilde{Y_{t\xi}}(r)\right)\right]^\top  \left(\widetilde{\bd_\eta Y_{t\xi}}(r)+\widetilde{D_xY_{tx\mu}}(r)\left|_{x=\widetilde{\xi}}\ \right. \widetilde{\eta}\right)\right] \notag \\
		&\ \quad\qquad +\left[D_vD_x f (r,\theta_{t\xi}(r)) \right]^\top  \bd_\eta v_{t\xi}(r)\bigg\}dr-\int_s^T\bd_\eta q_{t\xi}(r)dB(r), \label{FB:dr'}
\end{align}
\normalsize
where
\begin{align*}
	\bd_\eta v_{t\xi}(s):=&\left[ D_x\hv (s,\Theta_{t\xi}(s))\right]^\top  \bd_\eta Y_{t\xi}(s)\\
	&+\widetilde{\e}\left[\left[D\frac{d\hv}{d\nu} (s,\Theta_{t\xi}(s))\left(\widetilde{Y_{t\xi}}(s)\right)\right]^\top  \left(\widetilde{\bd_\eta Y_{t\xi}}(s)+\widetilde{D_xY_{tx\mu}}(s)\left|_{x=\widetilde{\xi}}\ \right. \widetilde{\eta}\right)\right]\\
	&+\left[D_p\hv (s,\Theta_{t\xi}(s))\right]^\top \bd_\eta p_{t\xi}(s)+\sum_{j=1}^n \left[ D_{q^j}\hv (s,\Theta_{t\xi}(s))\right]^\top \bd_\eta q^j_{t\xi}(s),\quad s\in[t,T],
\end{align*}
and $\left(\widetilde{\xi},\widetilde{\eta},\widetilde{Y_{t\xi}}(s),\widetilde{D_x Y_{tx\mu}}(s),\widetilde{\bd_\eta Y_{t\xi}}(s)\right)$ is an independent copy of $(\xi,\eta,Y_{t\xi}(s),D_x Y_{tx\mu}(s),\bd_\eta Y_{t\xi}(s))$. We now build a connection among $(D_x Y_{tx\mu}(s),D_x p_{tx\mu}(s),D_x q_{tx\mu}(s))$, $(\bd_\eta Y_{t\xi}(s),\bd_\eta p_{t\xi}(s),\bd_\eta q_{t\xi}(s))$ and $(\dr_\eta Y_{t\xi}(s),\dr_\eta p_{t\xi}(s),\dr_\eta q_{t\xi}(s))$ (defined in FBSDEs \eqref{FB:dr}). 

\begin{lemma}\label{prop:4}
	Under Assumptions (A1), (A2) and (A3), there exists a constant $c(L,T)$ depending only on $(L,T)$, such that when $\lambda_v\geq c(L,T)$, there is a unique adapted solution $(\bd_\eta Y_{t\xi},\bd_\eta p_{t\xi},\bd_\eta q_{t\xi})$ of FBSDEs \eqref{FB:dr'}, and 
	\begin{equation}\label{prop4_1}
		\begin{split}
			&\dr_\eta Y_{t\xi}(s)=D_xY_{tx\mu}(s)|_{x=\xi}\ \eta+\bd_\eta Y_{t\xi}(s),\\
			&\dr_\eta p_{t\xi}(s)=D_xp_{tx\mu}(s)|_{x=\xi}\ \eta+\bd_\eta p_{t\xi}(s),\\
			&\dr_\eta q_{t\xi}(s)=D_xq_{tx\mu}(s)|_{x=\xi}\ \eta+\bd_\eta q_{t\xi}(s),\quad s\in[t,T].
		\end{split}
	\end{equation}
    Furthermore, if Assumption (A3') is satisfied, for any $\lambda_v> 0$, the same assertion holds.  	
\end{lemma}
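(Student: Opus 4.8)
The plan is to establish the well-posedness of \eqref{FB:dr'} directly through Lemma~\ref{lem:1}, and then to obtain the decomposition \eqref{prop4_1} by a subtraction-and-uniqueness argument, exploiting that by Theorem~\ref{prop:2} the Jacobian process $(D_xY_{tx\mu},D_xp_{tx\mu},D_xq_{tx\mu})$ is already constructed and obeys the moment bound \eqref{thm2_1}, while $(\dr_\eta Y_{t\xi},\dr_\eta p_{t\xi},\dr_\eta q_{t\xi})$ is supplied by Theorem~\ref{lem:4}. First I would show existence and uniqueness for \eqref{FB:dr'}. This system has exactly the affine structure of \eqref{FB:dr}, the sole difference being the additional already-known source term $\widetilde{D_xY_{tx\mu}}(r)|_{x=\widetilde{\xi}}\,\widetilde{\eta}$ inserted inside each distributional-derivative integral. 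One realizes \eqref{FB:dr'} as a subcase of the generic FBSDEs \eqref{FB:11} by modifying the coefficients \eqref{chose_coefficients} with this source and again choosing $\beta:=\mathbf V$; since Condition~\ref{Condition_generic}(i)--(iii) only compares differences of coefficients at two arguments, the source cancels and the monotonicity verification is identical to that of Lemma~\ref{lem:3}, with the required integrability following from \eqref{thm2_1}. Hence Lemma~\ref{lem:1} gives a unique adapted solution, for $\lambda_v\ge c(L,T)$ under (A1)--(A3) and for all $\lambda_v>0$ under (A3').

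The key identity is that randomizing the initial state by the substitution $x=\xi$ collapses the solution of the FBSDEs \eqref{intro_3} of Problem~\eqref{intro_1'} onto that of the MFG FBSDEs \eqref{intro_2}: because \eqref{intro_3} carries the autonomous law flow $\lr(Y_{t\xi})$ and \eqref{intro_2} is self-consistent with that same flow, setting $x=\xi$ in \eqref{intro_3} produces precisely \eqref{intro_2} with initial datum $\xi$, so the uniqueness in Lemma~\ref{lem:5} forces $Y_{tx\mu}(s)|_{x=\xi}=Y_{t\xi}(s)$, and likewise for $p,q$, giving $\theta_{tx\mu}|_{x=\xi}=\theta_{t\xi}$ and $\Theta_{tx\mu}|_{x=\xi}=\Theta_{t\xi}$. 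Consequently $D_xY_{tx\mu}(s)|_{x=\xi}\,\eta$ and its $p,q$ companions solve the linear FBSDEs \eqref{FB:x} evaluated along $\theta_{t\xi}$ with initial value $\eta$, and crucially contain \emph{no} distributional-derivative terms, since the law is frozen in Problem~\eqref{intro_1'}.

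With these in hand, define the candidate $\bd^{*}_\eta Y_{t\xi}:=\dr_\eta Y_{t\xi}-D_xY_{tx\mu}|_{x=\xi}\,\eta$ and analogously $\bd^{*}_\eta p_{t\xi},\bd^{*}_\eta q_{t\xi}$. Subtracting the $D_x$-system \eqref{FB:x} (at $x=\xi$, multiplied by $\eta$) from the $\dr_\eta$-system \eqref{FB:dr}, all frozen-coefficient terms match by the previous step, the initial values cancel, and the only surviving inhomogeneities are the distributional-derivative integrals of \eqref{FB:dr} carrying the factor $\widetilde{\dr_\eta Y_{t\xi}}$. Rewriting $\widetilde{\dr_\eta Y_{t\xi}}=\widetilde{\bd^{*}_\eta Y_{t\xi}}+\widetilde{D_xY_{tx\mu}}|_{x=\widetilde{\xi}}\,\widetilde{\eta}$, by the very definition of $\bd^{*}_\eta Y_{t\xi}$, turns these into exactly the source terms of \eqref{FB:dr'}; a parallel computation shows $\dr_\eta v_{t\xi}-D_xv_{tx\mu}|_{x=\xi}\,\eta$ equals the $\bd_\eta v_{t\xi}$ defined beneath \eqref{FB:dr'}. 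Thus the candidate solves \eqref{FB:dr'}, and the uniqueness from the first step forces it to coincide with $(\bd_\eta Y_{t\xi},\bd_\eta p_{t\xi},\bd_\eta q_{t\xi})$, which is precisely \eqref{prop4_1}.

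The main obstacle I anticipate is the rigorous justification of the randomized-initial-condition identity $Y_{tx\mu}|_{x=\xi}=Y_{t\xi}$, together with the assertion that $D_xY_{tx\mu}|_{x=\xi}\,\eta$ is a bona fide adapted process solving \eqref{FB:x} frozen at $\theta_{t\xi}$. This hinges on the joint measurability of the parametrized solution map $x\mapsto(Y_{tx\mu},p_{tx\mu},q_{tx\mu},D_xY_{tx\mu},\dots)$ and on the stability of the system under substituting the $\f_t$-measurable $\xi$ for the deterministic parameter $x$. Once this flow-type property is secured, using the Lipschitz estimate \eqref{lem5_2} and the continuity in $(x,\mu)$ from Theorem~\ref{prop:2}, the remainder is linear bookkeeping combined with the uniqueness already established.
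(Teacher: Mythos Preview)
Your proposal is correct and follows essentially the same route as the paper. The only cosmetic difference is the direction of the uniqueness argument: the paper adds $D_xY_{tx\mu}|_{x=\xi}\,\eta$ to the already-constructed $\bd_\eta$-solution, verifies that the sum solves \eqref{FB:dr}, and invokes uniqueness there, whereas you subtract and invoke uniqueness for \eqref{FB:dr'}; for the randomization identity $\Theta_{tx\mu}|_{x=\xi}=\Theta_{t\xi}$ the paper simply cites \cite{BR}, which addresses exactly the measurability concern you raise.
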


\begin{remark}
	To consider the G\^ateaux differentiability of $\Theta_{tx\mu}$ with respect to $\xi\in L_{\f_t}^2$ along the direction $\eta$, where $\lr(\xi)=\mu$ and $\xi$ and $\eta$ are both independent of the Brownian motion $B$, we shall characterize the G\^ateaux derivatives as the solution of a system of FBSDEs, by using the processes $(\bd_\eta Y_{t\xi},\bd_\eta p_{t\xi},\bd_\eta q_{t\xi})$. The advantage of using $(\bd_\eta Y_{t\xi},\bd_\eta p_{t\xi},\bd_\eta q_{t\xi})$ instead of $(\dr_\eta Y_{t\xi}(s),\dr_\eta p_{t\xi}(s),\dr_\eta q_{t\xi}(s))$ is that the initial $\bd_\eta Y_{t\xi}(t)=0$, which will be more convenient when considering the linear functional differentiability of $\Theta_{tx\mu}(s)$ in $\mu\in\pr_2(\brn)$. 
\end{remark}

Lemma~\ref{prop:4} is proven in Appendix~\ref{pf_prop4}. For any $\eta\in L_{\f_t}^2$, consider the following FBSDEs for $(\dr_\eta Y_{tx\xi},\dr_\eta p_{tx\xi},\dr_\eta q_{tx\xi})$:
\small
\begin{align}
		&\dr_\eta Y_{tx\xi}(s)=\int_t^s \bigg\{\widetilde{\e}\left[\left[D \frac{db_0}{d\nu} (r,\lr(Y_{t\xi}(r)))\left(\widetilde{Y_{t\xi}}(r)\right) \right]^\top  \left(\widetilde{D_x Y_{tx\mu}}(r)\left|_{x=\widetilde{\xi}}\ \right. \widetilde{\eta}+\widetilde{\bd_\eta Y_{t\xi}}(r)\right)\right] \notag \\
		&\qquad\qquad\qquad\qquad +b_1(s)\dr_\eta Y_{tx\xi}(s)+b_2(s)\dr_\eta v_{tx\xi}(s)\bigg\}dr \notag \\
		&\quad\qquad\qquad+\int_t^s \bigg\{\widetilde{\e}\left[\left[D \frac{d\sigma_0}{d\nu}(r,\lr(Y_{t\xi}(r)))\left(\widetilde{Y_{t\xi}}(r)\right) \right]^\top \left(\widetilde{D_x Y_{tx\mu}}(r)\left|_{x=\widetilde{\xi}}\ \right. \widetilde{\eta}+\widetilde{\bd_\eta Y_{t\xi}}(r)\right)\right] \notag \\
		&\qquad\qquad\qquad\qquad +\sigma_1(r)\dr_\eta Y_{tx\xi}(r)+\sigma_2(r)\dr_\eta v_{tx\xi}(r)\bigg\}dB(r), \notag \\
		&\dr_\eta p_{tx\xi}(s)=\left[D_x^2 g (Y_{tx\mu}(T),\lr(Y_{t\xi}(T)))\right]^\top \dr_\eta Y_{tx\xi}(T) \notag \\
		&\quad\qquad\qquad +\widetilde{\e}\left[\left[\left(D_y \frac{d}{d\nu}D_x g\right) (Y_{tx\mu}(T),\lr(Y_{t\xi}(T)))\left(\widetilde{Y_{t\xi}}(T)\right)\right]^\top \left(\widetilde{D_x Y_{tx\mu}}(T)\left|_{x=\widetilde{\xi}}\ \right. \widetilde{\eta}+\widetilde{\bd_\eta Y_{t\xi}}(T)\right)\right] \notag \\
		&\quad\qquad\qquad +\int_s^T \bigg\{b_1(r)^\top \dr_\eta p_{tx\xi}(r)+\sum_{j=1}^n \left(\sigma_1^j (r)\right)^\top \dr_\eta q^j_{tx\xi}(r) +\left[D_x^2 f (r,\theta_{tx\mu}(r))\right]^\top \dr_\eta Y_{tx\xi}(r) \notag \\
		&\ \qquad\qquad\qquad\qquad +\widetilde{\e}\left[\left[\left(D_y \frac{d}{d\nu}D_x f\right) (r,\theta_{tx\mu}(r))\left(\widetilde{Y_{t\xi}}(r)\right)\right]^\top \left(\widetilde{D_x Y_{tx\mu}}(r)\left|_{x=\widetilde{\xi}}\ \right. \widetilde{\eta}+\widetilde{\bd_\eta Y_{t\xi}}(r)\right)\right] \notag \\
		&\ \qquad\qquad\qquad\qquad +\left[D_vD_x f (r,\theta_{tx\mu}(r))\right]^\top \dr_\eta v_{tx\xi}(r)\bigg\}dr  -\int_s^T \dr_\eta q_{tx\xi}(r)dB(r),\quad s\in[t,T], \label{FB:mu'} 
\end{align}
\normalsize
where
\begin{align*}
	\dr_\eta v_{tx\xi}(s):=&\left[D_x\hv (s,\Theta_{tx\mu}(s)) \right]^\top \dr_\eta Y_{tx\xi}(s)\\
	&+\widetilde{\e}\left[\left[D_y\frac{d\hv}{d\nu} (s,\Theta_{tx\mu}(s))\left(\widetilde{Y_{t\xi}}(s)\right)\right]^\top\left(\widetilde{D_x Y_{tx\mu}}(s)\left|_{x=\widetilde{\xi}}\ \right. \widetilde{\eta}+\widetilde{\bd_\eta Y_{t\xi}}(s)\right) \right]\\
	&+\left[D_p\hv (s,\Theta_{tx\mu}(s))\right]^\top\dr_\eta p_{tx\xi}(s)+\sum_{j=1}^n \left[D_{q^j}\hv  (s,\Theta_{tx\mu}(s))\right]^\top\dr_\eta q^j_{tx\xi}(s),\quad s\in[t,T],
\end{align*}
and $\left(\widetilde{\xi},\widetilde{\eta},\widetilde{Y_{t\xi}}(s),\widetilde{D_x Y_{tx\mu}}(s),\widetilde{\bd_\eta Y_{t\xi}}(s)\right)$ is an independent copy of $(\xi,\eta,Y_{t\xi}(s),D_x Y_{tx\mu}(s),\bd_\eta Y_{t\xi}(s))$. We then have the following statement, whose proof is given in Appendix~\ref{pf_prop3}. 

\begin{lemma}\label{prop:3}
	Under Assumptions (A1), (A2) and (A3), there exists a constant $c(L,T)>0$ depending only on $(L,T)$, such that when $\lambda_v\geq c(L,T)$, for any $\eta\in L_{\f_t}^2$, there is a unique adapted solution $\left(\dr_\eta Y_{tx\xi},\dr_\eta p_{tx\xi},\dr_\eta q_{tx\xi}\right)$ of FBSDEs \eqref{FB:mu'}, and $\dr_\eta Y_{tx\xi}(s)$, $\dr_\eta p_{tx\xi}(s)$ and $\dr_\eta q_{tx\xi}(s))$ are the respective G\^ateaux derivatives of $(Y_{tx\mu}(s),p_{tx\mu}(s),q_{tx\mu}(s))$ with respect to the lifting $\xi\sim\mu$ along the direction $\eta$. Moreover, the G\^ateaux derivatives satisfy the following boundedness property	
	\begin{align}\label{prop3_1}
		\e\bigg[\sup_{t\le s\le T}|\dr_\eta Y_{tx\xi}(s)|^2+\sup_{t\le s\le T}|\dr_\eta p_{tx\xi}(s)|^2+ \int_t^T|\dr_\eta q_{tx\xi}(s)|^2 ds\bigg]\le C(L,T,\lambda_v)\e\left[|\eta|^2\right],
	\end{align}
	and they are linear in $\eta$ and continuous in $(x,\xi)$. Furthermore, if Assumption (A3') is satisfied, for any $\lambda_v> 0$, the same assertion holds.  		
\end{lemma}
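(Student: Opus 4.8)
The plan is to treat the system \eqref{FB:mu'} as a linear FBSDE of the generic form \eqref{FB:11} with \emph{known} coefficients and a \emph{known} affine source term, and then to proceed in two stages: first establish unique solvability together with the estimate \eqref{prop3_1}, and then identify the solution as the G\^ateaux derivative of $(Y_{tx\mu},p_{tx\mu},q_{tx\mu})$ along $\eta$ via a difference-quotient argument combined with the stability estimate of Lemma~\ref{lem:1}.

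For the first stage, observe that in \eqref{FB:mu'} the processes $\Theta_{tx\mu}$ and $\theta_{tx\mu}$ are already constructed in Lemma~\ref{lem:5}, while the combination $\widetilde{D_x Y_{tx\mu}}(r)|_{x=\widetilde{\xi}}\,\widetilde{\eta}+\widetilde{\bd_\eta Y_{t\xi}}(r)$ equals the independent copy $\widetilde{\dr_\eta Y_{t\xi}}(r)$ by the decomposition \eqref{prop4_1} of Lemma~\ref{prop:4}, hence is a known process entering only as forcing. I would therefore read off the coefficient maps $\mathbf{B},\mathbf{A},\mathbf{F},\mathbf{G}$ exactly as in \eqref{chose_coefficients}, but with every coefficient now evaluated along $\theta_{tx\mu}$, $\Theta_{tx\mu}$ rather than $\theta_{t\xi}$, $\Theta_{t\xi}$, and with the source term $\widetilde{\dr_\eta Y_{t\xi}}$ inserted into the measure-derivative contributions. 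Choosing $\beta$ to be the linearized optimal-control map (the analogue of $\mathbf{V}$), the monotonicity Condition~\ref{Condition_generic}(i) reduces to exactly the same convexity inequality \eqref{convex} (or \eqref{convex'} under (A3')) used in the proof of Lemma~\ref{lem:3}, since that inequality is a pointwise property of the Hessian of $f$ and of $D_x^2 g$ that does not depend on which state/costate process the coefficients are evaluated at. Consequently Lemma~\ref{lem:1} applies verbatim and yields unique solvability together with \eqref{prop3_1}. Linearity in $\eta$ is inherited from the linearity of \eqref{FB:mu'} in its forcing term (which is linear in $\eta$), and continuity in $(x,\xi)$ follows from the stability estimate \eqref{lem1_2}, using the continuity in $(x,\mu)$ of $\Theta_{tx\mu}$ from Lemma~\ref{lem:5} and of the forcing.

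For the second stage, set $\xi^\epsilon:=\xi+\epsilon\eta$ and $\mu^\epsilon:=\lr(\xi^\epsilon)$, and write the difference quotients $U^\epsilon:=\epsilon^{-1}(Y_{tx\mu^\epsilon}-Y_{tx\mu})$, $P^\epsilon:=\epsilon^{-1}(p_{tx\mu^\epsilon}-p_{tx\mu})$, $Q^\epsilon:=\epsilon^{-1}(q_{tx\mu^\epsilon}-q_{tx\mu})$, where $Y_{tx\mu^\epsilon}$ solves \eqref{intro_3} with the measure flow $\lr(Y_{t\xi})$ replaced by $\lr(Y_{t\xi^\epsilon})$. Subtracting the two copies of \eqref{intro_3} and dividing by $\epsilon$, the fundamental theorem of calculus expresses $(U^\epsilon,P^\epsilon,Q^\epsilon)$ as the solution of a linear FBSDE whose coefficients are averaged Hessians of $f,g$ along interpolating arguments, and whose forcing is driven by $\epsilon^{-1}(Y_{t\xi^\epsilon}-Y_{t\xi})$ through the measure arguments of $b_0,\sigma_0$ (via $D\frac{db_0}{d\nu}$, $D\frac{d\sigma_0}{d\nu}$) and of $D_y\frac{d}{d\nu}D_x f$, $D_y\frac{d}{d\nu}D_v f$, $D_y\frac{d}{d\nu}D_x g$. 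I would then subtract \eqref{FB:mu'} and apply the a priori stability estimate of Lemma~\ref{lem:1}: the quantity $\e[\sup_s|U^\epsilon-\dr_\eta Y_{tx\xi}|^2+\sup_s|P^\epsilon-\dr_\eta p_{tx\xi}|^2+\int_t^T|Q^\epsilon-\dr_\eta q_{tx\xi}|^2\,ds]$ is controlled by the $L^2$-discrepancy between the averaged coefficients and forcing of the difference-quotient system and the exact coefficients and forcing of \eqref{FB:mu'}. The crucial convergence of the forcing is supplied by Theorem~\ref{lem:4}, which gives $\epsilon^{-1}(Y_{t\xi^\epsilon}-Y_{t\xi})\to\dr_\eta Y_{t\xi}$ in $\sr^2_\f(t,T)$, hence the convergence of its independent copy inside the $\widetilde{\e}$-integrals; the convergence of the averaged Hessians follows from the continuity and boundedness in (A2) together with the uniform bound $\e[\sup_s|U^\epsilon|^2+\cdots]\le C\e[|\eta|^2]$ coming from \eqref{lem5_2} (since $W_2^2(\mu^\epsilon,\mu)\le\epsilon^2\e[|\eta|^2]$). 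Dominated convergence then sends the right-hand side to $0$.

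The main obstacle I anticipate is this second stage, specifically the justification of passing to the limit inside the $\widetilde{\e}$-expectations that encode the linear-functional-derivative chain-rule contributions of the measure flow $\lr(Y_{t\xi})$. Here one must combine the $\sr^2$-convergence of $\epsilon^{-1}(Y_{t\xi^\epsilon}-Y_{t\xi})$ from Theorem~\ref{lem:4} with the Lipschitz and continuity bounds on $D_y\frac{d}{d\nu}D_x f$, $D_y\frac{d}{d\nu}D_v f$, $D_y\frac{d}{d\nu}D_x g$ in (A2), and invoke uniform integrability to interchange the limit with both the inner and the outer expectation. Finally, under (A3') the convexity \eqref{convex'} upgrades Condition~\ref{Condition_generic}(i) to the case $\alpha=0$ and validates Condition~\ref{Condition_generic}(iii), so that Lemma~\ref{lem:1} removes the smallness restriction on $\lambda_v$ and all conclusions persist for every $\lambda_v>0$.
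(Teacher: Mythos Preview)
Your proposal is correct and follows essentially the same route as the paper: after invoking Lemma~\ref{prop:4} to rewrite the forcing in \eqref{FB:mu'} as $\widetilde{\dr_\eta Y_{t\xi}}$, the paper likewise reduces well-posedness to the monotonicity framework of Lemma~\ref{lem:1} (via the same choice of $\beta$), and obtains the G\^ateaux-derivative identification by the difference-quotient argument of Theorem~\ref{lem:4}. The only place where the paper is more explicit than you is the derivation of \eqref{prop3_1}: Lemma~\ref{lem:1} as stated bounds the solution in terms of the \emph{initial datum}, not the forcing, so the paper instead applies It\^o's formula to $(\dr_\eta p_{tx\xi})^\top \dr_\eta Y_{tx\xi}$ and uses \eqref{convex} (or \eqref{convex'} under (A3')) together with the bound \eqref{lem3_1} on $\dr_\eta Y_{t\xi}$ to extract the factor $\e[|\eta|^2]$; your appeal to Lemma~\ref{lem:1} ``verbatim'' would need this supplement (or an equivalent linearity/scaling argument).
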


To give the differentiability of $(Y_{tx\mu}(s),p_{tx\mu}(s),q_{tx\mu}(s))$ with respect to the initial distribution $\mu\in\pr_2(\brn)$, we consider the following FBSDEs:
\small
\begin{align}
		\bd Y_{t\xi}(s,z)=\ & \int_t^s \bigg\{\widetilde{\e}\bigg[\left[D \frac{db_0}{d\nu} (r,\lr(Y_{t\xi}(r)))\left(\widetilde{Y_{tz\mu}}(r)\right)\right]^\top \widetilde{D_zY_{tz\mu}}(r) \notag\\
		&\qquad\qquad +\left[D \frac{db_0}{d\nu} (r,\lr(Y_{t\xi}(r)))\left(\widetilde{Y_{t\xi}}(r)\right)\right]^\top \widetilde{\bd Y_{t\xi}}(r,z)\bigg] \notag\\
		&\quad\qquad +b_1(r)\bd Y_{t\xi}(r,z)+b_2(r)\bd v_{t\xi}(r,z)\bigg\}dr \notag\\
		& +\int_t^s \bigg\{\widetilde{\e}\bigg[\left[D \frac{d\sigma_0}{d\nu} (r,\lr(Y_{t\xi}(r)))\left(\widetilde{Y_{tz\mu}}(r)\right)\right]^\top \widetilde{D_zY_{tz\mu}}(r) \notag\\
		&\quad\qquad\qquad +\left[D \frac{d\sigma_0}{d\nu} (r,\lr(Y_{t\xi}(r)))\left(\widetilde{Y_{t\xi}}(r)\right) \right]^\top \widetilde{\bd Y_{t\xi}}(r,z)\bigg] \notag\\
		&\qquad\qquad +\sigma_1(r)\bd Y_{t\xi}(r,z)+\sigma_2(r)\bd v_{t\xi}(r,z)\bigg\}dB(r), \notag\\
		\bd p_{t\xi}(s,z)=\ & \left[D_x^2 g (Y_{t\xi}(T),\lr(Y_{t\xi}(T)))\right]^\top \bd Y_{t\xi}(T,z) \notag\\
		&+\widetilde{\e}\bigg[\left[\left(D_y \frac{d}{d\nu}D_x g\right) (Y_{t\xi}(T),\lr(Y_{t\xi}(T)))\left(\widetilde{Y_{tz\mu}}(T)\right)\right]^\top \widetilde{D_zY_{tz\mu}}(T) \notag\\
		&\qquad +\left[\left(D_y \frac{d}{d\nu}D_x g\right) (Y_{t\xi}(T),\lr(Y_{t\xi}(T)))\left(\widetilde{Y_{t\xi}}(T)\right)\right]^\top\widetilde{\bd Y_{t\xi}}(T,z)\bigg] \notag\\
		&+\int_s^T \bigg\{b_1 (r)^\top \bd p_{t\xi}(r,z)+\sum_{j=1}^n\left(\sigma_1^j (r)\right)^\top \bd q^j_{t\xi}(r,z) \notag\\
		&\qquad\qquad +\left[D_x^2 f  (r,\theta_{t\xi}(r))\right]^\top \bd Y_{t\xi}(r,z) +\left[D_vD_x f (r,\theta_{t\xi}(r))\right]^\top \bd v_{t\xi}(r,z)\notag \\
		&\qquad\qquad +\widetilde{\e}\bigg[\left[\left(D_y \frac{d}{d\nu}D_x f\right)  (r, \theta_{t\xi}(r))\left(\widetilde{Y_{tz\mu}}(r)\right)\right]^\top \widetilde{D_zY_{tz\mu}}(r) \notag \\
		&\qquad\qquad\qquad +\left[\left(D_y \frac{d}{d\nu}D_x f\right) (r, \theta_{t\xi}(r))\left(\widetilde{Y_{t\xi}}(r)\right)\right]^\top \widetilde{\bd Y_{t\xi}}(r,z)\bigg]\bigg\}dr \notag \\
		&-\int_s^T \bd q_{t\xi}(r,z)dB(r),\quad (s,z)\in [t,T]\times\brn, \label{FB:xi_y}
\end{align}
\normalsize
where
\small
\begin{align}
		\bd v_{t\xi}(s,z):=\ & \widetilde{\e}\bigg\{\left[D_y\frac{d\hv}{d\nu} (s,\Theta_{t\xi}(s))\left(\widetilde{Y_{tz\mu}}(s)\right)\right]^\top\!\! \widetilde{D_z Y_{tz\mu}}(s) \notag \\
		&\quad +\left[D_y\frac{d\hv}{d\nu}(s,\Theta_{t\xi}(s)) \left(\widetilde{Y_{t\xi}}(s)\right)\right]^\top\!\! \widetilde{\bd Y_{t\xi}}(s,z)\bigg\}+\left[D_x\hv (s,\Theta_{t\xi}(s))\right]^\top \bd Y_{t\xi}(s,z)  \notag \\
		&+\left[D_p\hv (s,\Theta_{t\xi}(s))\right]^\top \bd p_{t\xi}(s,z)+\sum_{j=1}^n \left[D_{q^j}\hv (s,\Theta_{t\xi}(s))\right]^\top \bd q^j_{t\xi}(s,z). \label{FB:xi_y_v}
\end{align}
\normalsize
Here, we denote $D_z Y_{tz\mu}=D_x Y_{tx\mu}\big|_{x=z}$, and $\left(\widetilde{Y_{tz\mu}}(s),\widetilde{Y_{t\xi}}(s),\widetilde{D_z Y_{tz\mu}}(s),\widetilde{\bd Y_{t\xi}}(s,z)\right)$ is an independent copy of $(Y_{tz\mu}(s),Y_{t\xi}(s),D_z Y_{tz\mu}(s),\bd Y_{t\xi}(s,z))$. We also consider the following FBSDEs for derivative proxies:
\small
\begin{align}
		\dr Y_{tx\mu}(s,z)=\ & \int_t^s \bigg\{\widetilde{\e}\bigg[\left[D \frac{db_0}{d\nu} (r,\lr(Y_{t\xi}(r)))\left(\widetilde{Y_{tz\mu}}(r)\right) \right]^\top \widetilde{D_z Y_{tz\mu}}(r) \notag \\
		&\qquad\qquad +\left[D \frac{db_0}{d\nu}  (r,\lr(Y_{t\xi}(r)))\left(\widetilde{Y_{t\xi}}(r)\right)\right]^\top \widetilde{\bd Y_{t\xi}}(r,z)\bigg] \notag \\
		&\quad\qquad +b_1(r)\dr Y_{tx\mu}(r,z)+b_2(r)\dr v_{tx\mu}(r,z)\bigg\}dr \notag \\
		& +\int_t^s \bigg\{\widetilde{\e}\bigg[\left[D \frac{d\sigma_0}{d\nu} (r,\lr(Y_{t\xi}(r)))\left(\widetilde{Y_{tz\mu}}(r)\right) \right]^\top\widetilde{D_z Y_{tz\mu}}(r) \notag \\
		&\quad\qquad\qquad +\left[D \frac{d\sigma_0}{d\nu} (r,\lr(Y_{t\xi}(r)))\left(\widetilde{Y_{t\xi}}(r)\right)\right]^\top\widetilde{\bd Y_{t\xi}}(r,z)\bigg] \notag \\
		&\qquad\qquad +\sigma_1(r)\dr Y_{tx\mu}(r,z)+\sigma_2(r)\dr v_{tx\mu}(r,z)\bigg\}dB(r), \notag \\
		\dr p_{tx\mu}(s,z)=\ &\left[D_x^2 g (Y_{tx\mu}(T),\lr(Y_{t\xi}(T)))\right]^\top\dr Y_{tx\mu}(T,z) \notag \\
		&+\widetilde{\e}\bigg[\left[\left(D_y \frac{d}{d\nu}D_x g\right) (Y_{tx\mu}(T),\lr(Y_{t\xi}(T)))\left(\widetilde{Y_{tz\mu}}(T)\right)\right]^\top\widetilde{D_z Y_{tz\mu}}(T) \notag \\
		&\qquad +\left[\left(D_y \frac{d}{d\nu}D_x g\right) (Y_{tx\mu}(T),\lr(Y_{t\xi}(T)))\left(\widetilde{Y_{t\xi}}(T)\right)\right]^\top\widetilde{\bd Y_{t\xi}}(T,z)\bigg] \notag \\
		&+\int_s^T \bigg\{b_1 (r)^\top \dr p_{tx\mu}(r,z)+\sum_{j=1}^n\left(\sigma_1^j (r)\right)^\top \dr q^j_{tx\mu}(r,z) \notag \\
		&\qquad\qquad +\left[D_x^2 f (r,\theta_{tx\mu}(r)) \right]^\top\dr Y_{tx\mu}(r,z)+\left[D_vD_x f (r,\theta_{tx\mu}(r))\right]^\top\dr v_{tx\mu}(r,z) \notag \\
		&\qquad\qquad +\widetilde{\e}\bigg[\left[\left(D_y \frac{d}{d\nu}D_x f\right) (r,\theta_{tx\mu}(r))\left(\widetilde{Y_{tz\mu}}(r)\right)\right]^\top \widetilde{D_z Y_{tz\mu}}(r)  \notag \\
		&\qquad\qquad\qquad +\left[\left(D_y \frac{d}{d\nu}D_x f\right) (r,\theta_{tx\mu}(r))\left(\widetilde{Y_{t\xi}}(r)\right)\right]^\top \widetilde{\bd Y_{t\xi}}(r,z)\bigg]\bigg\}dr \notag \\
		&-\int_s^T \dr q_{tx\mu}(r,z)dB(r),\quad (s,z)\in [t,T]\times\brn, \label{FB:mu_y}
\end{align}
\normalsize
where
\small
\begin{align*}
	\dr v_{tx\mu}(s,z):=&\left[D_x\hv(s,\Theta_{tx\mu}(s))\right]^\top\dr Y_{tx\mu}(s,z)\\
	&+\widetilde{\e}\bigg\{\left[D_y\frac{d\hv}{d\nu} \!\! (s,\Theta_{tx\mu}(s))\left(\widetilde{Y_{tz\mu}}(s)\right)\right]^\top \widetilde{D_z Y_{tz\mu}}(s)\!+\!\left[D_y\frac{d\hv}{d\nu}\!\!  (s,\Theta_{tx\mu}(s))\left(\widetilde{Y_{t\xi}}(s)\right)\right]^\top\widetilde{\bd Y_{t\xi}}(s,z) \bigg\}\\
	&+\left[D_p\hv (s,\Theta_{tx\mu}(s))\right]^\top\dr p_{tx\mu}(s,z)+\sum_{j=1}^n \left[D_{q^j}\hv (s,\Theta_{tx\mu}(s))\right]^\top \dr q^j_{tx\mu}(s,z).
\end{align*}
\normalsize
Since the FBSDEs \eqref{FB:mu_y} depend on $\xi$ only through its law $\mu$, it is legitimate to use the subscript $\mu$ in $(\dr Y_{tx\mu},\dr v_{tx\mu},\dr p_{tx\mu},\dr q_{tx\mu})$. Finally, the following result shows the well-posedness of FBSDEs \eqref{FB:xi_y}  and \eqref{FB:mu_y}, and also about the linear functional differentiability of $(Y_{tx\mu},v_{tx\mu},p_{tx\mu}, q_{tx\mu})$ in $\mu$. The proof is given in Appendix~\ref{pf_prop5}. 

\begin{theorem}\label{prop:5}
	Under Assumptions (A1), (A2) and (A3), there exists a constant $c(L,T)>0$ depending only on $(L,T)$, such that when $\lambda_v\geq c(L,T)$, there is a unique adapted solution
	\begin{align*}
		(\bd Y_{t\xi}(s,z),\bd p_{t\xi}(s,z),\bd q_{t\xi}(s,z)),\quad (s,z)\in[t,T]\times\brn
	\end{align*}
	for FBSDEs \eqref{FB:xi_y} satisfying the boundedness property:
	\begin{align}\label{prop5_01}
		\e\bigg[\sup_{t\le s\le T}\left|\left(\bd Y_{t\xi}(s,z),\bd p_{t\xi}(s,z)\right)^\top\right|^2+\int_t^T\left|\bd q_{t\xi}(s,z)\right|^2 ds\bigg]\le C(L,T,\lambda_v),
	\end{align}
    and these component processes are also continuous in $(\xi,y)$. Besides, there is a unique adapted solution 
    \begin{align*}
    	(\dr Y_{tx\mu}(s,z),\dr p_{tx\mu}(s,z),\dr q_{tx\mu}(s,z)),\quad (s,z)\in[t,T]\times\brn
    \end{align*}
    for the system of FBSDEs \eqref{FB:mu_y} satisfying the boundedness property:
    \begin{align}\label{prop5_02}
    	\e\bigg[\sup_{t\le s\le T}\left|\left(\dr Y_{tx\mu}(s,z),\dr p_{tx\mu}(s,z)\right)^\top\right|^2+\int_t^T\left|\dr q_{tx\mu}(s,z)\right|^2 ds\bigg]\le C(L,T,\lambda_v),
    \end{align}
	and the component processes are continuous in $(x,\mu,y)$. Moreover, for any $\xi,\eta\in L_{\f_t}^2$ independent of the Brownian motion $B$ such that $\lr(\xi)=\mu$, we have
	\begin{equation}\label{prop5_03}
		\begin{split}
			&\bd_\eta Y_{t\xi}(s)=\widehat{\e}\left[\bd Y_{t\xi}\left(s,\widehat{\xi}\right) \widehat{\eta}\right],\quad \bd_\eta v_{t\xi}(s)=\widehat{\e}\left[\bd v_{t\xi}\left(s,\widehat{\xi}\right) \widehat{\eta}\right],\\
			&\bd_\eta p_{t\xi}(s)=\widehat{\e}\left[\bd p_{t\xi}\left(s,\widehat{\xi}\right) \widehat{\eta}\right],\quad \bd_\eta q_{t\xi}(s)=\widehat{\e}\left[\bd q_{t\xi}\left(s,\widehat{\xi}\right) \widehat{\eta}\right],\quad s\in[t,T],
		\end{split}
	\end{equation}
	and then,
	\begin{equation}\label{prop5_04}
		\begin{split}
			&\dr_\eta Y_{tx\xi}(s)=\widehat{\e}\left[\dr Y_{tx\mu}\left(s,\widehat{\xi}\right) \widehat{\eta}\right],\quad \dr_\eta v_{tx\xi}(s)=\widehat{\e}\left[\dr v_{tx\mu}\left(s,\widehat{\xi}\right) \widehat{\eta}\right],\\
			&\dr_\eta p_{tx\xi}(s)=\widehat{\e}\left[\dr p_{tx\mu}\left(s,\widehat{\xi}\right) \widehat{\eta}\right],\quad \dr_\eta q_{tx\xi}(s)=\widehat{\e}\left[\dr q_{tx\mu}\left(s,\widehat{\xi}\right) \widehat{\eta}\right],\quad s\in[t,T],
		\end{split}
	\end{equation}
	where $(\widehat{\xi},\widehat{\eta})$ is an independent copy of $(\xi,\eta)$. That is, the mapping 	
	\begin{align*}
		\mu \mapsto (Y_{tx\mu}(s),v_{tx\mu}(s),p_{tx\mu}(s),q_{tx\mu}(s))
	\end{align*}
	is linearly functionally differentiable, with the respective derivatives being
	\begin{align*}
		&D_y\frac{d}{d\nu} Y_{tx\mu}(s)(y)=\dr Y_{tx\mu}(s,y),\quad D_y\frac{d}{d\nu} v_{tx\mu}(s)(y)=\dr v_{tx\mu}(s,y),\\
		&D_y\frac{d}{d\nu} p_{tx\mu}(s)(y)=\dr p_{tx\mu}(s,y),\quad D_y\frac{d}{d\nu} q_{tx\mu}(s)(y)=\dr q_{tx\mu}(s,y).
	\end{align*}
    Furthermore, if Assumption (A3') is satisfied, for any $\lambda_v> 0$, the same assertion holds.  		
\end{theorem}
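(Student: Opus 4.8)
The plan is to derive everything from the generic Hilbert-space well-posedness Lemma~\ref{lem:1} together with uniqueness, so that the representation formulas \eqref{prop5_03}--\eqref{prop5_04} follow by matching FBSDEs rather than by direct computation.

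First I would solve the two systems in the correct order. For each fixed $z\in\brn$ I regard \eqref{FB:xi_y} as an instance of the generic FBSDEs \eqref{FB:11}: the terms carrying the already-known processes $Y_{t\xi}$ and $D_zY_{tz\mu}=D_xY_{tx\mu}|_{x=z}$ (whose existence, $L^2$- and $L^4$-bounds and continuity come from Theorem~\ref{prop:2} and \eqref{thm2_1}) are absorbed into inhomogeneous source terms, while the genuinely coupled unknowns are $(\bd Y_{t\xi}(\cdot,z),\bd p_{t\xi}(\cdot,z),\bd q_{t\xi}(\cdot,z))$, with control proxy $\beta:=\bd v_{t\xi}(\cdot,z)$ from \eqref{FB:xi_y_v}; note that \eqref{FB:xi_y} is McKean--Vlasov coupled through $\widetilde{\bd Y_{t\xi}}(\cdot,z)$, which is permitted since $\mathbf{B}$ in \eqref{FB:11} may depend on the full $X\in L^2$. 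Choosing $\Lambda:=\lambda_v$, I would verify Condition~\ref{Condition_generic} exactly as in Lemmas~\ref{lem:3}, \ref{prop:4} and \ref{prop:3}: the monotonicity (i) is the differentiated optimality relation, coming from the convexity inequalities \eqref{convex}/\eqref{convex'} combined with \eqref{optimal_condition}; the Lipschitz bound (ii) and the terminal monotonicity (iii) come from (A1)--(A2) and the convexity of $g$. Lemma~\ref{lem:1} then yields the unique solution, and through \eqref{lem1_1} with the a-priori bounds on the sources the estimate \eqref{prop5_01}; continuity in $(\xi,z)$ follows from \eqref{lem1_2}. With $\bd Y_{t\xi}(\cdot,z)$ now known, I would treat \eqref{FB:mu_y} the same way, this time with $\bd Y_{t\xi}(\cdot,z)$ and $D_zY_{tz\mu}$ entering only as sources so that the system is no longer coupled in its law, obtaining \eqref{prop5_02}.

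The core step is the representation formulas \eqref{prop5_03}--\eqref{prop5_04}, which I would establish by uniqueness. Fixing $\xi,\eta\in L^2_{\f_t}$ with $\lr(\xi)=\mu$, set $\Phi(s):=\widehat\e[\bd Y_{t\xi}(s,\widehat\xi)\widehat\eta]$ and define the analogues $\Psi,\Xi$ from $\bd p,\bd q$. I claim $(\Phi,\Psi,\Xi)$ solves \eqref{FB:dr'}, whence $\Phi=\bd_\eta Y_{t\xi}$ and so on by the uniqueness in Lemma~\ref{prop:4}. To see this I apply $\widehat\e[\,\cdot\,\widehat\eta]$ with $z=\widehat\xi$ to each line of \eqref{FB:xi_y}; interchange with the Lebesgue and It\^o integrals is licit since $(\widehat\xi,\widehat\eta)$ is independent of $B$. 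The decisive point is a Fubini-type merging of the independent copies: using that $Y_{tx\mu}|_{x=\xi}=Y_{t\xi}$ and that $(\widehat\xi,\widehat\eta)$ has the law of $(\xi,\eta)$, the kernel source $\widetilde\e[[D\frac{db_0}{d\nu}(\cdots)(\widetilde{Y_{tz\mu}}(r))]^\top\widetilde{D_zY_{tz\mu}}(r)]$ evaluated at $z=\widehat\xi$ and integrated against $\widehat\eta$ collapses to the single-copy term $\widetilde\e[[D\frac{db_0}{d\nu}(\cdots)(\widetilde{Y_{t\xi}}(r))]^\top\widetilde{D_xY_{tx\mu}}(r)|_{x=\widetilde{\xi}}\widetilde\eta]$ of \eqref{FB:dr'}, while the McKean--Vlasov source $\widetilde\e[[D\frac{db_0}{d\nu}(\cdots)(\widetilde{Y_{t\xi}}(r))]^\top\widetilde{\bd Y_{t\xi}}(r,z)]$ becomes $\widetilde\e[[D\frac{db_0}{d\nu}(\cdots)(\widetilde{Y_{t\xi}}(r))]^\top\widetilde\Phi(r)]$. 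Checking also that $\widehat\e[\bd v_{t\xi}(s,\widehat\xi)\widehat\eta]=\bd_\eta v_{t\xi}(s)$ under \eqref{FB:xi_y_v}, every term of the transformed system matches \eqref{FB:dr'}. The identity \eqref{prop5_04} is obtained by the identical argument against \eqref{FB:mu'} and the uniqueness in Lemma~\ref{prop:3}.

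Finally I would upgrade the kernel representation to genuine linear functional differentiability. Lemma~\ref{prop:3} already identifies $\dr_\eta Y_{tx\xi}(s)$ as the G\^ateaux derivative of the lift $\mu\mapsto Y_{tx\mu}(s)$ along $\eta$; combined with \eqref{prop5_04} this gives $\widehat\e[\dr Y_{tx\mu}(s,\widehat\xi)\widehat\eta]$ as that derivative for every $\eta$, with $y\mapsto\dr Y_{tx\mu}(s,y)$ continuous and of at most linear growth by Part~A. By the correspondence \eqref{lem01_1} between the lifted G\^ateaux derivative and the derivative of the linear functional derivative (see \cite{AB,book_mfg}), this is exactly the statement $\dr Y_{tx\mu}(s,\cdot)=D_y\frac{d}{d\nu}Y_{tx\mu}(s)(\cdot)$, and the same reasoning applies to $v,p,q$. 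The main obstacle is the third paragraph: tracking the tilde/hat copies and justifying the copy-merging so that the kernel systems \eqref{FB:xi_y}/\eqref{FB:mu_y} reduce, after testing against $\eta$ at $z=\widehat\xi$, precisely to the directional systems \eqref{FB:dr'}/\eqref{FB:mu'}; once this bookkeeping is pinned down, the remaining parts are applications of the already-proven lemmas.
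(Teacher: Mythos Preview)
Your proposal is correct and follows essentially the same approach as the paper: well-posedness of \eqref{FB:xi_y} and \eqref{FB:mu_y} is reduced to the generic framework via the arguments of Lemmas~\ref{lem:3} and~\ref{prop:3}, and the representations \eqref{prop5_03}--\eqref{prop5_04} are obtained by applying $\widehat{\e}[\,\cdot\,\widehat\eta]$ at $z=\widehat\xi$, performing exactly the copy-merging identity you describe (the paper records it as a displayed equation), and invoking the uniqueness parts of Lemmas~\ref{prop:4} and~\ref{prop:3}.
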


\subsection{Hessian flows of second-order derivatives}

In order to ensure the well-posedness of the second-order derivatives of $(Y_{tx\mu}(s),p_{tx\mu}(s),q_{tx\mu}(s))$ in $(x,\mu)$, we need the following regularity-enhanced version of Assumptions (A1) and (A2):

\textbf{(A1')} The coefficients $b$ and $\sigma$ satisfy (A1). The derivatives $(D_y^2 \frac{db_0}{d\nu}, \ D_y^2 \frac{d\sigma_0}{d\nu})$ exist, and they are continuous in all their arguments and are globally bounded by $L$ in norm.

\textbf{(A2')} The functionals $f$ and $g$ satisfy (A2). The following derivatives exist, and they are continuous in all their arguments and are globally bounded by $L$ in norm:
\begin{align*}
	D_x^3 f,\ D_x^2 D_v f, \ D_x D_v^2 f,\ D_v^3f, D_y^2 \frac{d}{d\nu}D_x f,\ D_y^2 \frac{d}{d\nu}D_v f,\ D_x^3 g,\ D_y^2 \frac{d}{d\nu}D_x g.
\end{align*}
Here, for instance, the notation $D_x^3f$ stands for the tensor $\left(D_{x_i}D_{x_j}D_{x_k}f,\ 1\le i,j,k\le n \right)$.

From Assumption (A2') and \eqref{hv}, we know that the map $\hv(\cdot)$ is twice continuously differentiable in $(x,p,q)$, and $D_y\frac{d\hv}{d\nu}$ is continuously differentiable in $y$. By differentiating Conditions \eqref{optimal_condition} in $(x,p,q)$, we have
\begin{equation}\label{optimal_condition'}
	\begin{split}
		&\begin{pmatrix}
			1 & D_x \hv(s,\Theta)\\
			0 & D_p\hv(s,\Theta)\\
			0 & D_q\hv(s,\Theta)
		\end{pmatrix}\left[
		\begin{pmatrix}
			D_x^2 D_v f &  D_x D_v D_x f  \\
			D_v D_x D_v f & D_v^3 f 
		\end{pmatrix}\left(s,x,m,\hv(s,\Theta)\right)\right]
		\begin{pmatrix}
			1 & D_x \hv(s,\Theta)\\
			0 & D_p\hv(s,\Theta)\\
			0 & D_q\hv(s,\Theta)
		\end{pmatrix}^\top  \\
	    =\ & -D_v^2f(s,x,m,\hv(s,\Theta )) \begin{pmatrix}
	    	D_x^2\hv &  D_xD_p\hv & D_xD_q\hv \ \\
	    	D_p D_x \hv & D_p^2 \hv & D_pD_q \hv\ \\
	    	D_q D_x \hv & D_qD_p \hv & D_p^2 \hv\ \\
	    \end{pmatrix} (s,\Theta).
	\end{split}
\end{equation}
By differentiating Conditions \eqref{optimal_condition} in $y$, we have
\begin{equation}\label{optimal_condition''}
	\begin{split}
		&\left(D_y^2\frac{d}{d\nu}D_vf\right) (s,x,m,\hv(s,\Theta))(y)+D_v^2f(s,x,m,\hv(s,\Theta)) D_y^2 \frac{d\hv}{d\nu} (s,\Theta)(y)=0.
	\end{split}
\end{equation}
We first give the second-order derivatives of $(Y_{tx\mu}(s),p_{tx\mu}(s),q_{tx\mu}(s))$ in $x$.  From FBSDEs \eqref{FB:x}, Theorem~\ref{prop:2}, and Conditions \eqref{optimal_condition} and \eqref{optimal_condition'}, the second-order derivatives fulfill the following system of FBSDEs:
\small
\begin{align}
		D_x^2 Y_{tx\mu}(s)=\ & \int_t^s \left[b_1(r)D_x^2 Y_{tx\mu}(r)+b_2(r)D_x^2 v_{tx\mu}(r)\right]dr +\int_t^s \left[\sigma_1(r)D_x^2 Y_{tx\mu}(r)+\sigma_2(r)D_x^2 v_{tx\mu}(r)\right]dB(r), \notag\\
		D_x^2 p_{tx\mu}(s)=\ & D_x^2 g(Y_{tx\mu}(T),\lr(Y_{t\xi}(T)))^\top D_x^2 Y_{tx\mu}(T) +D_x^3 g(Y_{tx\mu}(T),\lr(Y_{t\xi}(T)))\left(D_x Y_{tx\mu}(T)\right)^{\otimes2} \notag\\
		&+\int_s^T \Bigg[b_1(r)^\top D_x^2 p_{tx\mu}(r)+\sigma_1(r)^\top D_x^2 q_{tx\mu}(r)+\left[D_vD_x f(r,\theta_{tx\mu}(r))\right]^\top D_x^2 v_{tx\mu}(r) \notag \\
		& +\left[D_x^2 f(r,\theta_{tx\mu}(r))\right]^\top D_x^2 Y_{tx\mu}(r)+ \left[\begin{pmatrix}
			D_x^3 f &  D_x D_v D_x f  \\
			D_v D^2_x f & D_x D_v^2 f 
		\end{pmatrix}(s,\Theta_{tx\mu}(s))\right]
		\begin{pmatrix}
			D_x Y_{tx\mu}(s)\\
			D_x v_{tx\mu}(s)
		\end{pmatrix}^{\otimes 2} \Bigg]dr \notag\\
		&-\int_s^T D_x^2 q_{tx\mu}(r)dB(r),\quad t\in[t,T], \label{FB:xx}
\end{align}
\normalsize
where
\small
\begin{align*}
	D_x^2 v_{tx\mu}(s):=\ & \left[D_x\hv (s,\Theta_{tx\mu}(s))\right]^\top D_x^2 Y_{tx\mu}(s)+\left[D_p\hv (s,\Theta_{tx\mu}(s))\right]^\top D_x^2 p_{tx\mu}(s) + \left[D_{q}\hv (s,\Theta_{tx\mu}(s))\right]^\top D_x^2 q_{tx\mu}(s)\\
	&+ \begin{pmatrix}
		D_x^2 \hv &  D_x D_p \hv &  D_xD_q \hv \\
		D_p D_x \hv & D_p^2 \hv & D_p D_q \hv\\
		D_q D_x \hv &  D_q D_p \hv & D_q^2 \hv
	\end{pmatrix}(s,\Theta_{tx\mu}(s))
    \begin{pmatrix}
    	D_x Y_{tx\mu}(s)\\
    	D_x p_{tx\mu}(s)\\
    	D_x q_{tx\mu}(s)
    \end{pmatrix}^{\otimes 2},\quad s\in[t,T].
\end{align*}
\normalsize
The proof of the following result is similar to that leading to statements in Subsection~\ref{subsec:1-order}, and we omit it here. 

\begin{theorem}\label{prop:8}
	Under Assumptions (A1), (A2') and (A3), there exists a constant $c(L,T)>0$ depending only on $(L,T)$, such that when $\lambda_v\geq c(L,T)$, there is a unique adapted solution $(D_x^2 Y_{tx\mu},D_x^2 p_{tx\mu},D_x^2 q_{tx\mu})$ of FBSDEs \eqref{FB:xx}, and $(D_x^2 Y_{tx\mu}(s),D_x^2 p_{tx\mu}(s),D_x^2 q_{tx\mu}(s))$ are the second-order G\^ateaux derivatives of $(Y_{tx\mu}(s), p_{tx\mu}(s), q_{tx\mu}(s))$ in $x$. Moreover, the second-order G\^ateaux derivatives satisfy the boundedness property
	\begin{align}\label{boundedness-property}
		\e\bigg[\sup_{t\le s\le T}\left|\left(D_x^2 Y_{tx\mu}(s),D_x^2 p_{tx\mu}(s)\right)^\top\right|^2+ \int_t^T\left|D_x^2 q_{tx\mu}(s)\right|^2 ds\bigg]\le C(L,T,\lambda_v),
	\end{align}
	and they are continuous in $(x,\mu)$. Furthermore, if Assumption (A3') is satisfied, for any $\lambda_v> 0$, the same assertion holds.  		
\end{theorem}

We next give the derivatives in $z$ of the processes $\bd Y_{t\xi}(s,z)$, $\bd p_{t\xi}(s,z)$, $\bd q_{t\xi}(s,z)$, $\dr Y_{tx\mu}(s,z)$, $\dr p_{tx\mu}(s,z)$ and $\dr q_{tx\mu}(s,z)$, its proof is given in Appendix~\ref{pf:prop:9}.

\begin{theorem}\label{prop:9}
	Under Assumptions (A1'), (A2') and (A3), there exists a constant $c(L,T)>0$ depending only on $(L,T)$, such that when $\lambda_v\geq c(L,T)$, the processes $\bd Y_{t\xi}(s,z)$, $\bd p_{t\xi}(s,z)$ and $\bd q_{t\xi}(s,z)$ are G\^ateaux differentiable in $z$, and the respective G\^ateaux derivatives $D_z\bd Y_{t\xi}(s,z)$, $D_z\bd p_{t\xi}(s,z)$ and $D_y\bd q_{t\xi}(s,z)$ satisfy the boundedness property
    \begin{align*}
    	\e\bigg[\sup_{t\le s\le T}\left|\left(D_z \bd Y_{t\xi}(s,z),D_z\bd p_{t\xi}(s,z)\right)^\top\right|^2+\int_t^T\left|D_z\bd q_{t\xi}(s,z)\right|^2 ds\bigg]\le C(L,T,\lambda_v),
    \end{align*}
    and are continuous in $(\xi,z)$. The processes $\dr Y_{tx\mu}(s,z)$, $\dr p_{tx\mu}(s,z)$ and $\dr q_{tx\mu}(s,z)$ are G\^ateaux differentiable in $z$, and the G\^ateaux derivatives $D_z\dr Y_{tx\mu}(s,z)$, $D_z\dr p_{tx\mu}(s,z)$ and $D_z\dr q_{tx\mu}(s,z)$ satisfy the boundedness property
    \begin{align*}
    	\e\bigg[\sup_{t\le s\le T}\left|\left(D_z \dr Y_{tx\mu}(s,z),D_z\dr p_{tx\mu}(s,z)\right)^\top\right|^2+ \int_t^T\left|D_z\dr q_{tx\mu}(s,z)\right|^2 ds\bigg]\le C(L,T,\lambda_v),
    \end{align*}
    and are continuous in $(x,\mu,z)$. Furthermore, if Assumption (A3') is satisfied, for any $\lambda_v> 0$, the same assertion holds. 
\end{theorem}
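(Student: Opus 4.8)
The plan is to differentiate the $z$-parametrized systems \eqref{FB:xi_y} and \eqref{FB:mu_y} formally in the finite-dimensional parameter $z$, to recognize the resulting linear systems as instances of the generic FBSDEs \eqref{FB:11}, and then to invoke Lemma~\ref{lem:1} for their well-posedness and a priori bounds, closing the argument with a difference-quotient identification in the spirit of Theorem~\ref{lem:4}.

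First I would write down the candidate derivative systems. In \eqref{FB:xi_y} the parameter $z$ enters only through the kernel arguments $\widetilde{Y_{tz\mu}}(r)$ and through the multiplicative factors $\widetilde{D_z Y_{tz\mu}}(r)=\widetilde{D_x Y_{tx\mu}}(r)|_{x=z}$. Differentiating each such term in $z$ produces, by the chain rule, a contribution in which the kernel is hit by $D_y$ (yielding the second-order derivatives $D_y^2\frac{db_0}{d\nu}$, $D_y^2\frac{d\sigma_0}{d\nu}$, $D_y^2\frac{d}{d\nu}D_x f$, $D_y^2\frac{d}{d\nu}D_x g$, all of which exist and are bounded by $L$ under (A1') and (A2')) times $\widetilde{D_z Y_{tz\mu}}(r)$, and a second contribution in which the multiplicative factor is differentiated, yielding $\widetilde{D_x^2 Y_{tx\mu}}(r)|_{x=z}$, which is well-defined and bounded by Theorem~\ref{prop:8}. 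The new unknowns $D_z\bd Y_{t\xi}(s,z)$, $D_z\bd p_{t\xi}(s,z)$, $D_z\bd q_{t\xi}(s,z)$ then enter \emph{linearly} with exactly the same homogeneous coefficients $b_1,\sigma_1,b_2,\sigma_2$, Hessians of $f$ and derivatives of $\hv$ that govern \eqref{FB:xi_y}; the $z$-differentiation only adds \emph{inhomogeneous} forcing terms assembled from already-controlled lower-order objects. The identical computation applies to \eqref{FB:mu_y}, since $\dr v_{tx\mu}(s,z)$ carries the same velocity structure.

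Second I would cast these linear systems in the form \eqref{FB:11}, reading off $\mathbf{B},\mathbf{A},\mathbf{F},\mathbf{G}$ as in \eqref{chose_coefficients} and taking $\beta$ to be the $\hv$-induced velocity combination of the new unknowns. Because the monotone part of Condition~\ref{Condition_generic}-(i) originates solely from the convexity inequality \eqref{convex} (equivalently \eqref{convex'} under (A3')) and part (iii) from the monotonicity of $D_x g$, all of which involve only the homogeneous coefficients, Condition~\ref{Condition_generic} is inherited verbatim from the verifications already performed for Lemmas~\ref{lem:3} and \ref{prop:3}; the freshly generated forcing terms enter as bounded inputs and leave the monotonicity constant untouched. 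Lemma~\ref{lem:1} then yields a unique adapted solution together with the stated boundedness estimate. The delicate point here is the $L^2$-control of the forcing terms: products such as $D_y^2\frac{d}{d\nu}D_x f(\cdots)\,\widetilde{D_z Y_{tz\mu}}(r)$ integrated against $\mu$, and the quadratic factor $\big|D_x Y_{tx\mu}|_{x=z}\big|^2$, must be square-integrable, which is exactly why the fourth-moment bound ($l=4$) in \eqref{thm2_1} was recorded; together with the uniform bounds on the second measure derivatives this keeps every input in $L^2$.

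Finally, to identify the solutions with genuine G\^ateaux derivatives I would run a difference-quotient argument: set $z^\epsilon=z+\epsilon w$, subtract \eqref{FB:xi_y} at $z^\epsilon$ and $z$, divide by $\epsilon$, and show that the quotients converge to the candidate derivatives, the $Y$- and $p$-components in $\sr^2_\f(t,T)$ and the $q$-component in $\lr^2_\f(t,T)$. The continuity of $Y_{tz\mu}$, $D_x Y_{tz\mu}$ and $D_x^2 Y_{tx\mu}|_{x=z}$ in $z$ (Theorems~\ref{prop:2} and \ref{prop:8}) and of the second measure derivatives permits passage to the limit inside the coefficients, while the stability estimate \eqref{lem1_2} of Lemma~\ref{lem:1} controls the residual; the same stability estimate, applied to differences of the data, yields continuity of the derivatives in $(\xi,z)$ and in $(x,\mu,z)$. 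The extension to arbitrary $\lambda_v>0$ under (A3') is automatic, being the $\alpha=0$ branch of Lemma~\ref{lem:1}. \textbf{The main obstacle} I anticipate is not the monotonicity, which is inherited, but the bookkeeping and uniform $L^2$-integrability of the many quadratic forcing terms generated by the chain rule --- in particular ensuring that a second measure-derivative kernel paired with $D_z Y_{tz\mu}$ and integrated against $\mu$ stays square-integrable uniformly in $z$; the fourth-moment estimates in \eqref{thm2_1} together with the global boundedness imposed in (A1') and (A2') are precisely what make this step work.
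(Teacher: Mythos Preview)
Your proposal is correct and follows essentially the same route as the paper: differentiate \eqref{FB:xi_y} and \eqref{FB:mu_y} in $z$ to obtain linear FBSDEs with the same homogeneous part as before plus inhomogeneous forcing built from $\widetilde{D_z Y_{tz\mu}}$, $\widetilde{D_z^2 Y_{tz\mu}}$ and the second-order kernels supplied by (A1') and (A2'), then invoke Lemma~\ref{lem:1} for well-posedness and bounds and a difference-quotient argument \`a la Theorem~\ref{lem:4} for the identification and continuity. One minor remark: the quadratic factors $|\widetilde{D_z Y_{tz\mu}}(r)|^2$ in the differentiated systems sit inside a $\widetilde{\e}[\cdot]$ and are therefore deterministic inputs bounded by the $l=2$ case of \eqref{thm2_1}; the $l=4$ moment is really needed one step earlier, in Theorem~\ref{prop:8}, to control the genuinely random squares $|D_x Y_{tx\mu}(s)|^2$ appearing in \eqref{FB:xx} and thereby to produce the process $D_x^2 Y_{tx\mu}$ that you (correctly) call upon here.
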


\section{Regularity of the value functional}\label{sec:V}

In this section, we apply our results in Sections~\ref{sec:distribution} and \ref{sec:state} to study the spatial, distributional and temporal regularities of the value functional $V$ defined in \eqref{intro_4} as a function in $(t,x,\mu)\in[0,T]\times\brn\times\pr_2(\brn)$. We first give the spatial regularity of $V$, whose proof is given in Appendix~\ref{pf_prop6}. 

\begin{theorem}\label{prop:6}
	Under Assumptions (A1), (A2) and (A3), there exists a constant $c(L,T)>0$ depending only on $(L,T)$, such that when $\lambda_c\geq c(L,T)$, the value functional $V$ is $C^2$ in $x$ with the derivatives
	\begin{align}
		D_x V(t,x,\mu)  =p_{tx\mu}(t), \qquad D_x^2 V(t,x,\mu)  =D_x p_{tx\mu}(t) \label{prop6_03},
	\end{align}
    and they also satisfy the growth conditions	
    \begin{equation}\label{prop6_04}
    	\begin{split}
    		& |V(t,x,\mu)| \le C(L,T,\lambda_v)\left[1+|x|^2+W^2_2(\mu,\delta_0)\right],\\
    		& |D_x V(t,x,\mu)| \le C(L,T,\lambda_v)\left[1+|x|+W_2(\mu,\delta_0)\right],\qquad \left|D_x^2 V(t,x,\mu)\right|\le C(L,T,\lambda_v),
    	\end{split}
    \end{equation}
    and the following continuity property holds:
    \begin{align}
    	\left|D_x V(t,x',\mu')-D_x V(t,x,\mu)\right|\le C(L,T,\lambda_v)\left[|x'-x|+W_2(\mu,\mu')\right], \label{prop6_05}
    \end{align}
    and $D_x^2 V$ is continuous in $(x,\mu)$. Furthermore, if Assumption (A3') is satisfied, for any $\lambda_v> 0$, the same assertion holds. 
\end{theorem}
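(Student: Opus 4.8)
The plan is to differentiate the representation \eqref{intro_4'} of $V$ directly in $x$, and then use the adjoint (backward) equation together with the optimality condition \eqref{hv} to identify the resulting expression with $p_{tx\mu}(t)$. First I would justify differentiation under the expectation: by Theorem~\ref{prop:2} the triple $(Y_{tx\mu},p_{tx\mu},q_{tx\mu})$ is G\^ateaux differentiable in $x$, with derivative flow $(D_xY_{tx\mu},D_xp_{tx\mu},D_xq_{tx\mu})$ solving \eqref{FB:x} and obeying the $L^2$ and $L^4$ bounds \eqref{thm2_1}, while $D_xv_{tx\mu}$ is the linear combination displayed right after \eqref{FB:x}. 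Since the measure slot $\lr(Y_{t\xi})$ in \eqref{intro_4'} is independent of $x$, only the state and control arguments get differentiated, giving
\[
D_xV(t,x,\mu)=\e\Big[\int_t^T\big(D_xf^\top D_xY_{tx\mu}+D_vf^\top D_xv_{tx\mu}\big)(s)\,ds+D_xg^\top D_xY_{tx\mu}(T)\Big].
\]
The passage from difference quotients to this formula rests on dominated convergence; because the quadratic growth in (A2) forces $D_xf,D_vf,D_xg$ to grow only linearly in $(Y_{tx\mu},v_{tx\mu})$, the $L^4$ estimates in \eqref{thm2_1} together with the corresponding moment bounds of $Y_{tx\mu}$ supply the uniform integrability needed to pass to the limit.

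Next I would carry out the envelope (duality) computation. Applying It\^o's product rule to $(D_xY_{tx\mu}(s))^\top p_{tx\mu}(s)$ and using that $b,\sigma$ are linear in $(x,v)$ — so that $D_pH=b$, $D_qH=\sigma$ and $D_xH=b_1^\top p+\sum_{j=1}^n(\sigma_1^j)^\top q^j+D_xf$ at the optimizer — the drift contributions carrying $b_1$ and $\sigma_1$ cancel against the $-D_xH$ from the backward equation, leaving $-D_xf^\top D_xY_{tx\mu}$ plus the control terms $p^\top b_2D_xv_{tx\mu}+\sum_{j=1}^n(q^j)^\top\sigma_2^jD_xv_{tx\mu}$. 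The optimality condition \eqref{hv}, i.e. $[b_2]^\top p+\sum_{j=1}^n[\sigma_2^j]^\top q^j+D_vf=0$, rewrites the latter as $-D_vf^\top D_xv_{tx\mu}$. Integrating over $[t,T]$, taking expectations to annihilate the (true, by \eqref{thm2_1}) martingale part, and inserting the initial datum $D_xY_{tx\mu}(t)=I$ together with the terminal datum $p_{tx\mu}(T)=D_xg$ produces precisely $p_{tx\mu}(t)=D_xV(t,x,\mu)$.

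For the second derivative I would differentiate the identity $D_xV(t,x,\mu)=p_{tx\mu}(t)$ once more in $x$. Theorem~\ref{prop:2} already gives that $x\mapsto p_{tx\mu}(t)$ is G\^ateaux differentiable with derivative $D_xp_{tx\mu}(t)$, continuous in $(x,\mu)$; since $x$ ranges over the finite-dimensional space $\brn$, a continuous G\^ateaux derivative is an honest derivative, so $p_{tx\mu}(t)$ is $C^1$ in $x$ and hence $V$ is $C^2$ in $x$ with $D_x^2V=D_xp_{tx\mu}(t)$. The estimates \eqref{prop6_04}--\eqref{prop6_05} are then read off from the a priori bounds: $|V|\le C(1+|x|^2+W_2^2(\mu,\delta_0))$ from (A2) and \eqref{lem5_1}; $|D_xV|=|p_{tx\mu}(t)|\le C(1+|x|+W_2(\mu,\delta_0))$ and the Lipschitz bound \eqref{prop6_05} from \eqref{lem5_1}--\eqref{lem5_2} (using that $p_{tx\mu}(t)$ is deterministic, as it must be since $V$ is); and $|D_x^2V|\le C$ with continuity of $D_x^2V$ from \eqref{thm2_1} and the continuity clause of Theorem~\ref{prop:2}. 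The statement under (A3') follows verbatim, since each lemma invoked carries its own clause valid for every $\lambda_v>0$.

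The main obstacle I anticipate is the rigorous justification of differentiation under the integral in the first step: since $D_xf,D_vf,D_xg$ grow linearly rather than being bounded, the difference quotients of the integrand are not dominated by a fixed $L^1$ envelope unless one controls products of a linearly-growing factor against a derivative-flow factor, which is exactly where the $l=4$ moment bounds in \eqref{thm2_1} and the quadratic moments of $Y_{tx\mu}$ from Lemma~\ref{lem:5} become indispensable. The subsequent It\^o computation is otherwise routine, though one must track the matrix/vector contractions carefully and confirm that the stochastic integral is a genuine martingale before dropping it under the expectation.
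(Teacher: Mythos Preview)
Your argument is correct, but the paper proceeds by a different and slightly slicker route. Instead of differentiating the cost functional \eqref{intro_4'} directly and then running the adjoint duality on $(D_xY_{tx\mu})^\top p_{tx\mu}$, the paper sandwiches the increment $V(t,x',\mu)-V(t,x,\mu)$ between $J_{tx'}(v_{tx'\mu})-J_{tx}(v_{tx'\mu})$ and $J_{tx'}(v_{tx\mu})-J_{tx}(v_{tx\mu})$ via optimality, and then applies It\^o's formula to $(p_{tx\mu}(s))^\top\bigl(X^{v_{tx\mu}}_{tx'\mu}(s)-Y_{tx\mu}(s)\bigr)$ (and its analogue with the roles of $x,x'$ swapped). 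Because each bound freezes the control and perturbs only the initial state, the $D_vf$ term never appears and the Taylor remainders are controlled by the elementary SDE Lipschitz estimate \eqref{prop6_3}; both bounds collapse to $(p_{tx\mu}(t))^\top(x'-x)+O(|x'-x|^2)$, yielding $D_xV=p_{tx\mu}(t)$ with no differentiation under the expectation and hence no need for the $L^4$ moments you invoke. Your approach is the classical costate identification and is equally valid; its cost is precisely the uniform-integrability argument you flag as the main obstacle, whereas the paper's comparison argument sidesteps it by exploiting optimality at the level of finite increments. After that, both proofs finish identically: $D_x^2V=D_xp_{tx\mu}(t)$ from Theorem~\ref{prop:2}, and the growth and continuity claims are read off from \eqref{lem5_1}, \eqref{lem5_2} and \eqref{thm2_1}.
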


As a consequence of \eqref{prop6_03}, we have the following characterization of $p_{t\xi}(t)$:	
\begin{align}\label{rk_1}
	p_{t\xi}(t)=p_{tx\mu}\big|_{x=\xi}=D_x V(t,x,\mu)\big|_{x=\xi}.
\end{align}
We next give the regularity of $V$ with respect to the distribution argument $\mu$, whose proof is given in Appendix~\ref{pf_prop7}.

\begin{theorem}\label{prop:7}
	Under Assumptions (A1'), (A2') and (A3), there exists a constant $c(L,T)>0$ depending only on $(L,T)$, such that when $\lambda_v\geq c(L,T)$, the value functional $V$ is linearly functionally differentiable in $\mu$ with the derivatives	
	\begin{align}
			D_y\frac{dV}{d\nu}(t,x,\mu)(y)=\ &\int_t^T \e\bigg\{\left[D_x f (s,\theta_{tx\mu}(s)) \right]^\top \dr Y_{tx\mu}(s,y)+ \left[D_v f  (s,\theta_{tx\mu}(s))  \right]^\top \dr v_{tx\mu}(s,y) \notag \\
			&\quad\qquad +\widetilde{\e}\bigg[\left[D_y \frac{df}{d\nu} (s,\theta_{tx\mu}(s))\left(\widetilde{Y_{ty\mu}}(s)\right) \right]^\top \widetilde{D_y Y_{ty\mu}}(s)  \notag  \\
			&\quad\qquad\qquad +\left[D_y \frac{df}{d\nu} (s,\theta_{tx\mu}(s))\left(\widetilde{Y_{t\xi}}(s)\right) \right]^\top \widetilde{\bd Y_{t\xi}}(s,y) \bigg]  \bigg\} ds  \notag  \\
			& + \e\bigg\{\left[D_x g (Y_{tx\mu}(T),\lr(Y_{t\xi}(T)))  \right]^\top \dr Y_{tx\mu}(T,y) \notag \\
			&\qquad +\widetilde{\e}\bigg[\left[D_y \frac{dg}{d\nu} (Y_{tx\mu}(T),\lr(Y_{t\xi}(T)))\left(\widetilde{Y_{ty\mu}}(T)\right)  \right]^\top \widetilde{D_y Y_{ty\mu}}(T)  \notag  \\
			&\qquad\qquad +\left[D_y \frac{dg}{d\nu} (Y_{tx\mu}(T),\lr(Y_{t\xi}(T)))\left(\widetilde{Y_{t\xi}}(T)\right) \right]^\top \widetilde{\bd Y_{t\xi}}(T,y) \bigg] \bigg\}, \label{prop7_01}
	\end{align}
	and
	\begin{align}
			&D_y^2\frac{dV}{d\nu}(t,x,\mu)(y) \notag \\
			=\ & \int_t^T \e\bigg\{ \left[D_x f (s, \theta_{tx\mu}(s))  \right]^\top D_y\dr Y_{tx\mu}(s,y)+ \left[D_v f (s, \theta_{tx\mu}(s))  \right]^\top D_y \dr v_{tx\mu}(s,y) \notag \\
			&\quad\qquad +\widetilde{\e}\bigg[\left[D_y \frac{df}{d\nu} (s, \theta_{tx\mu}(s))\left(\widetilde{Y_{ty\mu}}(s)\right)  \right]^\top \widetilde{D_y^2Y_{ty\mu}}(s)   \notag \\
			&\quad\qquad\qquad +\left(\widetilde{D_y Y_{ty\mu}}(s) \right)^\top \left[ D_y^2 \frac{df}{d\nu}(s, \theta_{tx\mu}(s))\left(\widetilde{Y_{ty\mu}}(s)\right)\right] \widetilde{D_yY_{ty\mu}}(s)   \notag \\
			&\quad\qquad\qquad +\left[D_y \frac{df}{d\nu} (s, \theta_{tx\mu}(s))\left(\widetilde{Y_{t\xi}}(s)\right) \right]^\top \widetilde{D_y \bd Y_{t\xi}}(s,y) \bigg]  \bigg\} ds  \notag \\
			& + \e\bigg\{\left[D_x g (Y_{tx\mu}(T),\lr(Y_{t\xi}(T)))  \right]^\top D_y \dr Y_{tx\mu}(T,y) \notag \\
			&\qquad +\widetilde{\e}\bigg[\left[D_y \frac{dg}{d\nu} (Y_{tx\mu}(T),\lr(Y_{t\xi}(T)))\left(\widetilde{Y_{ty\mu}}(T)\right) \right]^\top \widetilde{D_y^2Y_{ty\mu}}(T)  \notag \\
			&\qquad\qquad + \left(\widetilde{D_yY_{ty\mu}}(T) \right)^\top \left[ D_y^2 \frac{dg}{d\nu}(Y_{tx\mu}(T),\lr(Y_{t\xi}(T)))\left(\widetilde{Y_{ty\mu}}(T)\right) \right] \widetilde{D_yY_{ty\mu}}(T)  \notag  \\
			&\qquad\qquad + \left[D_y \frac{dg}{d\nu} (Y_{tx\mu}(T),\lr(Y_{t\xi}(T)))\left(\widetilde{Y_{t\xi}}(T)\right) \right]^\top \widetilde{D_y \bd Y_{t\xi}}(T,y) \bigg]\bigg\}, \label{prop7_02}
	\end{align}
	where $\left(D_y Y_{ty\mu},D_y^2 Y_{ty\mu}\right)=\left(D_x Y_{tx\mu},D_x^2 Y_{tx\mu} \right)\!\Big|_{x=y}$, and 
	\begin{align*}
		\left(\widetilde{Y_{ty\mu}}(s),\widetilde{Y_{t\xi}}(s),\widetilde{D_y Y_{ty\mu}}(s),\widetilde{D_y^2 Y_{ty\mu}}(s),\widetilde{\bd Y_{t\xi}}(s,y),\widetilde{D_y\bd Y_{t\xi}}(s,y)\right)
	\end{align*}	
	is an independent copy of 
	\begin{align*}
		\left(Y_{ty\mu}(s),Y_{t\xi}(s),D_y Y_{ty\mu}(s),D_y^2 Y_{ty\mu}(s),\bd Y_{t\xi}(s,y),D_y\bd Y_{t\xi}(s,y)\right).
	\end{align*}
	Moreover, the derivatives satisfy the following growth condition
	\begin{align}
		&\left|D_y\frac{dV}{d\nu}(t,x,\mu)(y)\right|+\left|D_y^2\frac{dV}{d\nu}(t,x,\mu)(y)\right|\le C(L,T,\lambda_v)(1+|x|+W_2(\mu,\delta_0)),\label{prop7_03}
	\end{align}
	and $(D_y\frac{dV}{d\nu},D_y^2\frac{dV}{d\nu})$ are continuous in $(x,\mu,y)$. 	Furthermore, if Assumption (A3') is satisfied, for any $\lambda_v> 0$, the same assertion holds. 
\end{theorem}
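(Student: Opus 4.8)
The plan is to establish \eqref{prop7_01} by differentiating the lifted value functional $\xi\mapsto V(t,x,\lr(\xi))$ along an arbitrary direction $\eta\in L_{\f_t}^2$ and then reading off the kernel. Starting from the representation \eqref{intro_4'}, I would differentiate under the expectation; the chain rule produces, for each of $f$ and $g$, a term $[D_x f]^\top\dr_\eta Y_{tx\xi}(s)+[D_v f]^\top\dr_\eta v_{tx\xi}(s)$ coming from the $\mu$-sensitivity of the controlled state and optimal control (Lemma~\ref{prop:3}), together with a mean-field term $\widetilde\e[[D_y\frac{df}{d\nu}(s,\theta_{tx\mu}(s))(\widetilde{Y_{t\xi}}(s))]^\top\widetilde{\dr_\eta Y_{t\xi}}(s)]$ arising from the explicit law-dependence of $f$ through $\lr(Y_{t\xi}(s))$, the latter being legitimate via the connection \eqref{lem01_1} between linear functional and Gâteaux derivatives. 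Differentiability of each factor is guaranteed by Theorem~\ref{lem:4} (for $\dr_\eta Y_{t\xi}$) and Lemma~\ref{prop:3} (for $\dr_\eta Y_{tx\xi}$ and $\dr_\eta v_{tx\xi}$), while the $L^2$-estimates \eqref{prop3_1} and \eqref{lem3_1} supply the uniform integrability needed to pass the limit inside $\e$.

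The key algebraic step is to insert the decomposition $\widetilde{\dr_\eta Y_{t\xi}}(s)=\widetilde{D_x Y_{tx\mu}}(s)|_{x=\widetilde\xi}\,\widetilde\eta+\widetilde{\bd_\eta Y_{t\xi}}(s)$ from Lemma~\ref{prop:4} into the mean-field terms, and then to replace $\dr_\eta Y_{tx\xi}$, $\dr_\eta v_{tx\xi}$ and $\bd_\eta Y_{t\xi}$ by their kernel representations \eqref{prop5_04} and \eqref{prop5_03} of Theorem~\ref{prop:5}. After a Fubini interchange of the mutually independent copies $\widetilde\e$ and $\widehat\e$, the Gâteaux derivative of the lift takes the form $\e[(\Phi_\mu(\xi))^\top\eta]$ for every $\eta$, where $\Phi_\mu(y)$ denotes the right-hand side of \eqref{prop7_01} with the integration point frozen at $y$ (using $D_z Y_{tz\mu}=D_x Y_{tx\mu}|_{x=z}$ to produce the $\widetilde{D_y Y_{ty\mu}}$ term). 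Since this representation holds for every lift $\xi$ with $\lr(\xi)=\mu$, and since $\Phi_\mu(y)$ is jointly continuous in $(\mu,y)$ (inherited from the continuity of the proxy processes in Theorems~\ref{prop:5} and \ref{prop:2}), the standard equivalence between $L$-differentiability and linear functional differentiability with a continuous kernel gives $D_y\frac{dV}{d\nu}(t,x,\mu)(y)=\Phi_\mu(y)$ for all $y\in\brn$, which is \eqref{prop7_01}.

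To obtain \eqref{prop7_02}, I would differentiate \eqref{prop7_01} in $y$ term by term. The proxy processes $\dr Y_{tx\mu}(s,y)$ and $\dr v_{tx\mu}(s,y)$ are differentiable in $y$ by Theorem~\ref{prop:9}, contributing the terms with $D_y\dr Y_{tx\mu}$ and $D_y\dr v_{tx\mu}$, and $\widetilde{\bd Y_{t\xi}}(s,y)$ differentiates to $\widetilde{D_y\bd Y_{t\xi}}(s,y)$ by the same theorem. The delicate contribution comes from the factor carrying $\widetilde{Y_{ty\mu}}(s)$, in which $y$ appears simultaneously as the spatial argument of $D_y\frac{df}{d\nu}(\cdots)(\cdot)$ and inside $\widetilde{D_y Y_{ty\mu}}(s)$: differentiating the first occurrence yields the quadratic form $(\widetilde{D_y Y_{ty\mu}}(s))^\top[D_y^2\frac{df}{d\nu}(\cdots)(\widetilde{Y_{ty\mu}}(s))]\widetilde{D_y Y_{ty\mu}}(s)$, while differentiating the second yields $[D_y\frac{df}{d\nu}]^\top\widetilde{D_y^2 Y_{ty\mu}}(s)$, with $D_y^2 Y_{ty\mu}=D_x^2 Y_{tx\mu}|_{x=y}$ supplied by Theorem~\ref{prop:8}. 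The boundedness of $D_y^2\frac{d}{d\nu}D_x f$ and $D_y^2\frac{d}{d\nu}D_x g$ in (A2'), combined with the $l=4$ moment bound \eqref{thm2_1} on $D_y Y_{ty\mu}$, secures integrability of the quadratic-form term.

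Finally, the growth estimate \eqref{prop7_03} follows by bounding each summand in \eqref{prop7_01}--\eqref{prop7_02} through the linear growth of $D_x f,D_v f,D_x g$ (from the quadratic growth in (A2)) together with the uniform bounds on the proxy processes in Theorems~\ref{prop:5}, \ref{prop:9}, \ref{prop:2} and \ref{prop:8}; continuity in $(x,\mu,y)$ is inherited from the continuity of those same proxy processes in their parameters. I expect the main obstacle to be not any single estimate but the rigorous justification of differentiating under the expectation and of interchanging the several independent-copy expectations at only $L^2$/$L^4$ integrability with quadratic-growth coefficients; this is handled by dominated-convergence arguments licensed precisely by the a~priori bounds \eqref{lem3_1}, \eqref{prop3_1}, \eqref{thm2_1} and \eqref{prop5_02} together with the boundedness assertions of Theorem~\ref{prop:9}.
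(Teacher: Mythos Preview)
Your proposal is correct and follows essentially the same route as the paper's proof: compute the G\^ateaux derivative of the lift $\xi\mapsto V(t,x,\lr(\xi))$ via \eqref{intro_4'}, Theorem~\ref{lem:4} and Lemma~\ref{prop:3}; then invoke Lemma~\ref{prop:4} and the kernel representations \eqref{prop5_03}--\eqref{prop5_04} of Theorem~\ref{prop:5} with a Fubini interchange to read off the kernel $\Phi_\mu(y)$; obtain \eqref{prop7_02} by differentiating \eqref{prop7_01} in $y$ via Theorem~\ref{prop:9}; and deduce \eqref{prop7_03} and continuity from the cited process bounds. The only cosmetic difference is that the paper writes the identified linear form as $\widehat\e[\Phi(\widehat\xi)\widehat\eta]$ using an independent copy rather than your $\e[(\Phi_\mu(\xi))^\top\eta]$, but the substance is identical.
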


To proceed further, we make the following additional assumption on coefficients $(b,\sigma,f)$ in $t$.

\medskip
\textbf{(A4)} The coefficients $b,\sigma$, and $f$ are continuous in $t\in[0,T]$, and $\sigma_2(s)=0$. 

\begin{remark}
	Here, Assumption (A4) is required for the regularity of the value functional in $t$, but it is not necessary for the solvability of the mean field game \eqref{intro_1} and the control problem \eqref{intro_1'}.
\end{remark}

We now give the time regularity of $V$, and the proof is given in Appendix~\ref{pf_prop10}. 

\begin{theorem}\label{prop:10}
	Under Assumptions (A1'), (A2'), (A3) and (A4), there exists a constant $c(L,T)>0$ depending only on $(L,T)$, such that when $\lambda_v\geq c(L,T)>0$, the value functional $V$ is $C^1$ in $t$ with the temporal derivative
	\begin{equation}\label{prop10_01}
		\begin{split}
			\dd_t V(t,x,\mu)=&-H\left(t,x,\mu,D_x V(t,x,\mu),\frac{1}{2}D_x^2 V(t,x,\mu)\sigma(t,x,\mu)\right)\\
			& -\int_\brn \bigg[b \left(t,{y},\mu,\hv(t,y,\mu,D_x V(t,y,\mu))\right)^\top D_y\frac{dV}{d\nu}(t,x,\mu)({y})\\
			&\quad\qquad +\frac{1}{2}\text{Tr}\left(\sigma (t,{y},\mu)^\top D_y^2\frac{dV}{d\nu}(t,x,\mu)({y})\sigma(t,{y},\mu)\right)\bigg]d\mu(y).
		\end{split}
	\end{equation}
    Furthermore, if Assumption (A3') is satisfied, for any $\lambda_v> 0$, the same assertion holds. 
\end{theorem}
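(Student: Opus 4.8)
The plan is to derive \eqref{prop10_01} from a dynamic programming identity together with an Itô expansion of $V$ in its spatial and measure arguments, and then to identify the limit of the time difference quotient with the claimed right-hand side. Throughout, write $m^{t,\mu}(s):=\lr(Y_{t\xi}(s))$ for the autonomous equilibrium flow and $(Y_{tx\mu},p_{tx\mu},q_{tx\mu})$ for the solution of FBSDEs \eqref{intro_3}.

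First I would record the time-consistency (flow) property of the equilibrium measure: for $t\le t+h\le s$,
\begin{equation*}
	m^{t,\mu}(s)=m^{t+h,\,m^{t,\mu}(t+h)}(s),
\end{equation*}
which follows from the uniqueness of the solution of FBSDEs \eqref{intro_2} (Lemma~\ref{lem:2}) restricted to $[t+h,T]$ with initial datum $Y_{t\xi}(t+h)$. Combined with the representation \eqref{intro_4'} and the optimality of $v_{tx\mu}$, this yields the dynamic programming identity
\begin{equation*}
	V(t,x,\mu)=\e\left[\int_t^{t+h}f\left(s,Y_{tx\mu}(s),m^{t,\mu}(s),v_{tx\mu}(s)\right)ds+V\left(t+h,Y_{tx\mu}(t+h),m^{t,\mu}(t+h)\right)\right].
\end{equation*}

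Next, freezing the first argument at $t+h$, I would apply Itô's formula in $(x,\mu)$ to the process $r\mapsto V\left(t+h,Y_{tx\mu}(r),m^{t,\mu}(r)\right)$ on $[t,t+h]$. The spatial contribution is classical because $V(t+h,\cdot,\mu)$ is $C^2$ with $D_xV=p_{tx\mu}(t)$ and $D_x^2V=D_xp_{tx\mu}(t)$ from Theorem~\ref{prop:6}; the measure contribution is handled by the Itô formula along a flow of measures, using the continuity and boundedness of $D_y\frac{dV}{d\nu}$ and $D_y^2\frac{dV}{d\nu}$ from Theorem~\ref{prop:7}. Since $m^{t,\mu}$ is deterministic, the only martingale comes from the spatial $dB$-term; taking expectations, subtracting $V(t+h,x,\mu)$, and invoking the dynamic programming identity gives
\begin{equation*}
	\frac{V(t+h,x,\mu)-V(t,x,\mu)}{h}=-\frac{1}{h}\,\e\left[\int_t^{t+h}\left(f+\mathcal{A}_r V\right)dr\right],
\end{equation*}
where $\mathcal{A}_r V$ collects the first- and second-order spatial terms at $\left(t+h,Y_{tx\mu}(r),m^{t,\mu}(r)\right)$ together with the measure-flow drift and Hessian terms integrated against $m^{t,\mu}(r)$.

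Finally, I would let $h\downarrow 0$. Using the time-continuity of $b,\sigma,f$ from (A4), the path-continuity of $\left(Y_{tx\mu},m^{t,\mu},v_{tx\mu}\right)$, and the joint continuity of $D_xV$, $D_x^2V$, $D_y\frac{dV}{d\nu}$, $D_y^2\frac{dV}{d\nu}$ (Theorems~\ref{prop:6} and \ref{prop:7}, together with their continuity in the initial time $t$), the integrand converges to its value at $r=t$, so the right-hand side tends to $-\left(f+\mathcal{A}_tV\right)(t,x,\mu)$. At $r=t$ one has $Y_{tx\mu}(t)=x$ and $m^{t,\mu}(t)=\mu$, and the spatial part $f+(D_xV)^\top b+\frac12\text{Tr}(\sigma\sigma^\top D_x^2V)$ equals $L\left(t,x,\mu,v_{tx\mu}(t),D_xV,\frac12 D_x^2V\sigma\right)$; because $\sigma_2(t)=0$ under (A4), the minimizer $\hv$ of \eqref{hv} is independent of its $q$-argument and $v_{tx\mu}(t)=\hv\left(t,x,\mu,D_xV\right)$ is exactly that minimizer, so by \eqref{bh'} this spatial part equals $H\left(t,x,\mu,D_xV,\frac12 D_x^2V\sigma\right)$. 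The remaining measure terms reproduce the integral against $d\mu(y)$ in \eqref{prop10_01}, the decoupling relation \eqref{rk_1} identifying the equilibrium feedback $\hv\left(t,y,\mu,D_xV(t,y,\mu)\right)$ in the drift. This yields the right time-derivative equal to \eqref{prop10_01}; since the right-hand side of \eqref{prop10_01} is continuous in $t$, $V$ is in fact $C^1$ in $t$.

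The step I expect to be the main obstacle is the rigorous justification of the Itô formula along the measure flow $m^{t,\mu}$ — controlling the second-order measure term under merely the boundedness and continuity of $D_y^2\frac{dV}{d\nu}$ afforded by Theorem~\ref{prop:7} — together with the continuity of all the derivatives of $V$ in the initial time $t$, which is needed to pass the frozen argument $t+h$ to the limit and which is not among the regularity statements already proved in $(x,\mu)$ alone.
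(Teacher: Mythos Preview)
Your proposal follows essentially the same route as the paper: the dynamic programming identity, then It\^o's formula for measure-dependent functionals applied to $r\mapsto V(t+h,Y_{tx\mu}(r),m^{t,\mu}(r))$ with the first argument frozen, followed by passage to the limit and identification of the optimal feedback via \eqref{rk_1} and the $q$-independence of $\hv$ under (A4). The decomposition you write is exactly the paper's equation \eqref{prop10_1}, and the limit \eqref{prop10_2} is obtained in the same way.

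On the obstacle you flag at the end: the paper does not sidestep it but handles it somewhat implicitly. Before invoking It\^o and passing to the limit, it first records the short-time increments \eqref{prop10_11}--\eqref{prop10_15} for $Y_{t\xi}$, $Y_{tx\mu}$, $p_{t\xi}$, $p_{tx\mu}$, all of order $|t'-t|^{1/2}$ in $L^2$. Combined with the FBSDE representations $D_xV=p_{tx\mu}(t)$, $D_x^2V=D_xp_{tx\mu}(t)$ and the analogous formulas \eqref{prop7_01}--\eqref{prop7_02} for the linear functional derivatives, these estimates give enough continuity in the frozen time argument to justify the averaging limit; the paper then appeals to \cite[Theorem~7.1]{BR} for the It\^o expansion itself. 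So your concern is legitimate, but it is precisely what the preliminary estimates at the start of the paper's proof are there to address.
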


\section{Master equation}\label{sec:master}

Last but not least, we also discuss the solvability of the corresponding master equation:
\begin{equation}\label{master}
	\left\{
	\begin{aligned}
		&\dd_t V(t,x,\mu)+H\left(t,x,\mu,D_x V(t,x,\mu),\frac{1}{2}D_x^2 V(t,x,\mu)\sigma(t,x,\mu)\right)\\
		&\quad\qquad\qquad +\int_\brn \bigg[b \left(t,y,\mu,\hv(t,y,\mu,D_x V(t,y,\mu))\right)^\top D_y\frac{dV}{d\nu}(t,x,\mu)({y})\\
		&\qquad\qquad\qquad\qquad +\frac{1}{2}\text{Tr}\left(\left(\sigma\sigma^\top\right) (t,{y},\mu) D_y^2\frac{dV}{d\nu}(t,x,\mu)({y})\right)\bigg] d\mu(y)=0, \quad t\in[0,T),\\
		&V(T,x,\mu)=g(x,\mu),\quad (x,\mu)\in\brn\times\pr_2(\brn).
	\end{aligned}	
	\right.
\end{equation}
We give the well-posedness of the master equation \eqref{master}, and the proof is given in Appendix~\ref{pf_thm1}. 

\begin{theorem}\label{thm:1}
	Under Assumptions (A1'), (A2'), (A3) and (A4), there exists a constant $c(L,T)>0$ depending only on $(L,T)$, such that when $\lambda_v\geq C(L,T)$, the value functional $V$ is the unique solution of the master equation \eqref{master} in the sense that the following derivatives
	\begin{align*}
		\dd_t V,\ D_x V,\ D_x^2 V,\ D_y\frac{dV}{d\nu},\ D_y^2\frac{dV}{d\nu}
	\end{align*}
	exist and  are all continuous, and they satisfy the growth conditions \eqref{prop6_04}, \eqref{prop6_05}, and \eqref{prop7_03}. Furthermore, if Assumption (A3') is satisfied, for any $\lambda_v> 0$, the same assertion holds. 
\end{theorem}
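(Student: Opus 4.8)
The plan is to split the statement into existence---that $V$ itself is a classical solution of \eqref{master}---and uniqueness, the latter being where the real work lies. For existence, essentially all the analysis has already been carried out in the preceding sections, so this half is a bookkeeping step. Theorem~\ref{prop:6} provides $D_x V$ and $D_x^2 V$ with the growth \eqref{prop6_04} and continuity \eqref{prop6_05}; Theorem~\ref{prop:7} provides $D_y\frac{dV}{d\nu}$ and $D_y^2\frac{dV}{d\nu}$ with the growth \eqref{prop7_03} and joint continuity; and Theorem~\ref{prop:10} provides $\dd_t V$ under the extra assumption (A4). It then remains only to observe that the formula \eqref{prop10_01} for $\dd_t V$ becomes exactly \eqref{master} after moving all terms to one side and using the cyclicity identity $\text{Tr}(\sigma^\top M\sigma)=\text{Tr}((\sigma\sigma^\top)M)$ with $M=D_y^2\frac{dV}{d\nu}$ in the diffusion part of the transport term; the terminal condition $V(T,x,\mu)=g(x,\mu)$ is immediate from the definition \eqref{intro_4}.

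For uniqueness, let $W$ be any solution of \eqref{master} whose derivatives $\dd_t W,\,D_x W,\,D_x^2 W,\,D_y\frac{dW}{d\nu},\,D_y^2\frac{dW}{d\nu}$ exist, are continuous, and obey the same growth bounds \eqref{prop6_04}, \eqref{prop6_05}, \eqref{prop7_03}. I would argue by a verification-cum-fixed-point scheme that never differentiates \eqref{master}, so that only the assumed $C^{1,2}$-in-$(t,x)$ and $C^2$-in-$\mu$ regularity is used. Fix $(t,\mu)$ and $\xi\in L^2_{\f_t}$ with $\lr(\xi)=\mu$, and let $\nu_s$ be the law of the McKean--Vlasov flow obtained by running, from $\xi$, the closed-loop dynamics with feedback $\hv(s,\cdot,\nu_s,D_x W(s,\cdot,\nu_s))$ and diffusion $\sigma(s,\cdot,\nu_s)$; recall that under $\sigma_2=0$ the minimiser $\hv$ depends on $(p,q)$ only through $p$, consistent with the way $\hv$ enters \eqref{master}. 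Since $D_x W$ is Lipschitz in $(x,\mu)$ by \eqref{prop6_05} and $\hv$ is Lipschitz, this McKean--Vlasov SDE is well-posed. By construction the transport term in \eqref{master}, evaluated at $\nu_s$, is exactly the generator of the flow $\nu_s$, so the It\^o formula along a (deterministic, since there is no common noise) flow of measures applies cleanly to $W(s,X^v(s),\nu_s)$ for any admissible control $v$ and any initial $x$.

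Executing that expansion and substituting the master equation in the form $\dd_t W+(\text{transport term})=-H(s,\cdot,\nu_s,D_x W,\tfrac12 D_x^2 W\sigma)$, the drift of $W(s,X^v(s),\nu_s)$ becomes $D_x W^\top b+\tfrac12\text{Tr}(D_x^2 W\,\sigma\sigma^\top)-H$, which, using $H=\inf_v L\le L$ and the definition \eqref{H'} of $L$, is bounded below by $-f(s,X^v(s),\nu_s,v)$, with equality precisely when $v=\hv(s,X^v(s),\nu_s,D_x W)$. Integrating, taking expectations (the stochastic integral is a true martingale by the growth bounds), and using $W(T,\cdot)=g$ gives $W(t,x,\mu)\le J_{tx}(v;\nu)$ for every $v$, with equality for the $W$-feedback control. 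Hence the $W$-feedback is optimal for the standard control problem \eqref{intro_1'} with frozen flow $\nu$; since starting its optimal state from $\xi\sim\mu$ reproduces $\nu_s$ by the very definition of $\nu_s$, the pair $(\nu,\text{$W$-feedback})$ solves the mean field game \eqref{intro_1}. By the uniqueness of the MFG solution furnished by Lemma~\ref{lem:MP1} together with Lemma~\ref{lem:2}, $\nu_s=m^{t,\mu}(s)$, the true equilibrium flow. Feeding this identity into the verification equality and comparing with \eqref{intro_4'} yields $W(t,x,\mu)=V(t,x,\mu)$ for all $(t,x,\mu)$.

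I expect the main obstacle to be the \emph{measure-flow consistency}: establishing that the flow $\nu_s$ generated by $W$'s own feedback coincides with the equilibrium flow $m^{t,\mu}$. This is the step that couples the otherwise routine verification argument back to the mean field game, and it is why the fixed-point identification (via MFG uniqueness) cannot be bypassed---the transport term in \eqref{master} is defined through $W$'s feedback, so one must show that evaluating along $\nu_s$ is the same as evaluating along the true equilibrium. Secondary care is needed to justify the It\^o formula along a flow of measures under exactly the regularity at hand, and to check the uniform integrability (from \eqref{prop6_04}, \eqref{prop6_05}, \eqref{prop7_03} and the second-moment bounds on $X^v$ and $\nu_s$) that makes the local-martingale terms vanish in expectation.
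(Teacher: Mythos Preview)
Your existence half matches the paper's exactly. For uniqueness the paper takes a more direct route: rather than building a new flow from the candidate $W$'s feedback, it evaluates $U\big(s,X^v_{tx\mu}(s),\lr(Y_{t\xi}(s))\big)$ along the \emph{already-known} equilibrium law $\lr(Y_{t\xi}(s))$, applies the measure-dependent It\^o formula, substitutes the master equation for $\partial_t U$, and reads off $U(t,x,\mu)\le J_{tx}(v;\lr(Y_{t\xi}))$ with equality at $v=v_{tx\mu}$, so $U=V$ by \eqref{intro_4'} without any McKean--Vlasov fixed point or appeal to MFG uniqueness. Your route buys something the paper's does not make explicit: because you evaluate along $\nu_s$ generated by $W$'s own feedback, the transport term from It\^o and the transport term in \eqref{master} both carry $\hv(\cdot,D_x W)$, and the cancellation is automatic. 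In the paper's version this cancellation needs the identification $v_{t\xi}(r)=\hv\big(r,Y_{t\xi}(r),\lr(Y_{t\xi}(r)),D_x U(r,Y_{t\xi}(r),\lr(Y_{t\xi}(r)))\big)$ for the \emph{candidate} $U$ along the equilibrium flow---that is where the reference to \eqref{prop6_03} enters and is precisely the subtlety you are circumventing. The price you pay is the extra McKean--Vlasov well-posedness for $\nu_s$ and the identification $\nu_s=m^{t,\mu}(s)$ via Lemmas~\ref{lem:MP1}--\ref{lem:2}, both of which are available under the standing hypotheses.
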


\begin{remark}\label{rk:FP-HJB}
	For $(t,\mu)\in[0,T]\times\pr_2(\brn)$, recall that   $Y_{t\xi}$ is the equilibrium state process for MFG \eqref{intro_1} and $m^{t,\mu}(s):=\lr\left(Y_{t\xi}(s)\right),\ s\in[t,T]$. From Theorem~\ref{prop:6}, we know that $m^{t,\mu}$ satisfies the following FP equation
	\begin{equation}\label{FP}
		\left\{
		\begin{aligned}
			&\dd_s m^{t,\mu}+ {div}\left[ b\left(s,x,m^{t,\mu}(s),\hv \left(s,x,m^{t,\mu}(s),D_x V\left(s,x,m^{t,\mu}(s)\right)\right)\right) m^{t,\mu}\right]\\
			&\:\:\ \qquad -\sum_{i,j=1}^n \dd_{x_i}\dd_{x_j} \left[a_{ij}\left(s,x,m^{t,\mu}(s)\right)m^{t,\mu}\right]=0,\quad s\in(t,T],\\
	        &m^{t,\mu}(t)=\mu,
		\end{aligned}
	    \right.
	\end{equation}
	where $a_{ij}:=\frac{1}{2}\left(\sigma\sigma^\top\right)_{ij}$. We define 
	\begin{align}\label{decouple}
		v^{t,\mu}(s,x):=V\left(s,x,m^{t,\mu}(s)\right),\ (s,x)\in[t,T]\times\brn,
	\end{align}
	where $V$ is the value functional of Problem \eqref{intro_1'}. Then, we know that
	\begin{align*}
		D_x v^{t,\mu}(s,x)=D_x V\left(s,x,m^{t,\mu}(s)\right), \quad D_x^2 v^{t,\mu}(s,x)=D_x^2 V\left(s,x,m^{t,\mu}(s)\right),
	\end{align*}
	and then, from the FP equation \eqref{FP}, we have
	\begin{align*}
		\dd_s v^{t,\mu}(s,x)=\ & \int_\brn \bigg[b\left(s,y,m^{t,\mu}(s),\hv \left(s,y,m^{t,\mu}(s),D_x v^{t,\mu}(s,y)\right)\right) ^\top  D_y\frac{dV}{d\nu}\left(s,x,m^{t,\mu}(s)\right)(y)\\
		&\qquad + a_{ij}\left(s,x,m^{t,\mu}(s)\right) D_y^2\frac{dV}{d\nu}\left(s,x,m^{t,\mu}(s)\right)(y) \bigg] d\mu(y) +\dd_s V\left(s,x,m^{t,\mu}(s)\right).
	\end{align*}
	Therefore, from Theorem~\ref{thm:1} and the mean field game master equation \eqref{master}, we know that $v^{t,\mu}$ satisfies the following backward equation:
	\begin{equation}\label{HJB}
		\left\{
		\begin{aligned}
			&\dd_s v^{t,\mu}(s,x)+H\left(s,x,m^{t,\mu}(s),D_x v^{t,\mu}(s,x),\frac{1}{2}D_x^2 v^{t,\mu}(s,x)\sigma\left(s,x,m^{t,\mu}(s)\right)\right)=0,\\
			&v^{t,\mu}(T,x)=g\left(x,m^{t,\mu}(T)\right).
		\end{aligned}
		\right.
	\end{equation}
	From \eqref{FP}-\eqref{HJB}, we know that the pair $\left(m^{t,\mu},v^{t,\mu}\right)$ is the solution to the system of HJB-FP equations corresponding to MFG \eqref{intro_1}. That is, our mean field game master equation \eqref{master} is a decoupling field of the mean field game HJB-FP system. We also refer to \cite{AB_JMPA,AB_SPA} for more discussions on the relations between the mean field game master equation and the HJB-FP system.
\end{remark}

\begin{example}[Also refer to \cite{AB_JMPA,AB_SPA,MR3489817}]

	
We consider the following linear-quadratic (LQ) example: 
\begin{equation}\label{example:coefficients}
\begin{aligned}
    &g(x,m):=g_0(m)+ x^\top g_1 \overline{m} + \frac{1}{2}x^\top G x,\\
    &f(s,x,m,v):=f_0(s,m)+ x^\top f_1(s) \overline{m} +  v^\top f_2(s)\overline{m}+ \frac{1}{2}x^\top F_1(s)x+\frac{1}{2}v^\top F_2(s)v,
\end{aligned}
\end{equation}
where $\overline{m}:=\int_\brn y m(dy)$. Here, the maps
\begin{align*}
    &g_0:\pr_2(\brn)\to \br,\quad g_1\in\br^{n\times n},\quad G\in\br^{n\times n},\quad f_0:[0,T]\times\pr_2(\brn)\to \br,\\
    &f_1:[0,T]\to\br^{n\times n},\quad f_2:[0,T]\to\br^{d\times n},\quad F_1:[0,T]\to \br^{n\times n},\quad F_2:[0,T]\to \br^{d\times d};
\end{align*}
and the matrices $F_1(s),G,F_2(s)$ are bounded, symmetric and positive definite, i.e.,
\begin{align*}
    \left| F_1(s) \right|, \left| F_2(s) \right|, \left| G \right|\le L,\qquad F_1(s)\geq 2\lambda_x I_{n},\quad F_2(s)\geq 2\lambda_v I_{d},\quad G\geq 2\lambda_g I_{n}.
\end{align*}
By assuming the following Lipschitz continuous condition for $(f_0,g_0)$:
\begin{align*}
	&\left| D_y\frac{d f_0}{d \nu} (s,m')(y')-D_y\frac{d f_0}{d \nu} (s,m)(y)\right|+\left| D_y\frac{d g_0}{d \nu} (m')(y')-D_y\frac{d g_0}{d \nu} (m)(y)\right|\\
	\le\ & L\left(W_2(m,m')+|y'-y|\right),
\end{align*}
and assuming the following boundedness condition for $(f_1,f_2,g_1)$:
\begin{align*}
	&\left| f_1(s)\right|\le L_x, \quad \left| f_2(s) \right|\le L_v,\quad \left| g_1 \right|\le L_g,
\end{align*}
we know that Assumption (A2') holds true. If we assume that $\lambda_v>0,\ \lambda_x\geq\frac{L_v^2}{8\lambda_v}+\frac{L_x}{2}$ and  $\lambda_g\geq\frac{L_g}{2} $, then, Assumption (A3') holds true. Therefore, from Lemma~\ref{lem:2}, we have the well-posedness of FBSDEs \eqref{intro_2} with coefficients \eqref{example:coefficients}; and from Theorem~\ref{thm:1}, we have the classical solvability of the associated master equation. We can go further to attempt to give an explicit solution. Indeed, with \eqref{example:coefficients}, we know that $\hv (s,x, m,p)=-F_2(s)^{-1} \left(b_2(s)^\top p+f_2(s)\overline{m}\right)$, and then
\small
\begin{align*}
	H(s,x,m,p,q)=\ & p^\top \left[b_0(s, m )+b_1(s)x-b_2(s)F_2(s)^{-1} \left(b_2(s)^\top p+f_2(s)\overline{m}\right)\right]+\sum_{j=1}^n \left(q^j\right)^\top  \left[\sigma^j_0(s, m)+\sigma^j_1(s)x\right]\\
	&+f_0(s,m)+ \overline{m}^\top f_1(s)^\top x- \overline{m}^\top f_2(s)^\top F_2(s)^{-1} \left(b_2(s)^\top p+f_2(s)\overline{m}\right) \\
	&+ \frac{1}{2} F_1(s)x^{\otimes 2}+\frac{1}{2} F_2(s) \left[F_2(s)^{-1} \left(b_2(s)^\top p+f_2(s)\overline{m}\right)\right]^{\otimes 2}.
\end{align*}
\normalsize
The value functional 
$V$ in this LQ case should be the following quadratic form of $x$:
\begin{align*}
	V(t,x,\mu)=V_0(t,\mu)+V_1(t, \mu)^\top x+\frac{1}{2}x^\top V_2(t)x,
\end{align*}
where $V_1(t, \mu)\in\brn$,  and $V_2(t)\in\br^{n\times n}$ is symmetric. We have
\begin{align*}
	&\dd_t V(t,x,\mu)=\dd_t V_0(t,\mu)+ \dd_t{V}_1(t,\mu)^\top x+\frac{1}{2}x^\top \dot{ V}_2(t)x,\\
	&D_xV(t,x,\mu)=V_1(t, \mu)+V_2(t)x,\quad D_x^2V(t,x,\mu)=V_2(t),\\
	&D_y\frac{dV}{d\nu}(t,x,\mu)(y)=D_y\frac{dV_0}{d\nu}(t,\mu)(y)  +\left( D_y\frac{dV_0}{d\nu}(t,\mu)(y)\right)^\top x ,\\
	&D_y^2\frac{dV}{d\nu}(t,x,\mu)(y)=D_y^2\frac{dV_0}{d\nu}(t,\mu)(y)  +\left( D_y^2 \frac{dV_0}{d\nu}(t,\mu)(y)\right)^\top x  .
\end{align*}
Applying these formulae in the mean field game master equation \eqref{master}, we see that $V_2$ solves the following Riccati equation:
\begin{equation}\label{Ricca}
	\begin{split}
		&\dot{V}_2(t)+ \sigma_1^\top(t) V_2(t)\sigma_1(t)-   V_2(t) b_2(t) F_2^{-1} (t)b_2^\top(t) V_2(t) \\
		&\: \qquad +V_2(t) b_1(t) +b_1^\top(t) V_2(t) +F_1(t) =0,\quad t\in[0,T), \qquad  V_2(T)=G.
	\end{split}
\end{equation}
The well-posedness for this generalized symmetric matrix Riccati equation \eqref{Ricca} is given in \cite{Riccati_Tang}. 
The vector-valued  functional $V_1$ solves the following equation:
\small
\begin{align}
	& \dd_t{V}_1(t,\mu)+\left[b_1(t) -b_2(t)F_2^{-1} (t) b_2^\top(t) V_2(t)\right]^\top V_1(t,\mu) +  \sigma_1^\top(t) V_2(t)\sigma_0(t,\mu) \notag \\
    &\qquad\qquad +V_2(t)b_0(t,\mu)-V_2(t)b_2(t)F_2^{-1} (t)  f_2(t)\overline{\mu}+f_1(t)\overline{\mu} \notag \\
	&\qquad\qquad + \int_\brn \bigg\{ \left( D_y\frac{dV_1}{d\nu}(t,\mu)(y)\right) \left[b_0(t,\mu)+b_1(t)y-b_2(t)F_2^{-1} (t) \left(b_2^\top(t) \left(V_1(t,\mu)+V_2(t)y\right)+f_2(t)\overline{\mu}\right)\right]     \notag \\
	&\qquad\qquad\qquad\qquad  +\frac{1}{2}\sum_{i,j=1}^n \left[ \left(\sigma\sigma^\top\right)_{ij}(t,{y},\mu) D_{y_iy_j}^2\frac{dV_1}{d\nu}(t,\mu)(y)   \right]  \bigg\}d\mu(y) =0,\quad t\in[0,T), \notag \\
	&V_1(T,\mu)=g_1\overline{\mu}. \label{V1_equa}
\end{align}
\normalsize
The well-posedness of \eqref{V1_equa} can be deduced with the standard fixed point argument. 
\normalsize
Moreover, $V_0$ solves the following equation:
\small
\begin{align}
	&\dd_t V_0(t,\mu) + \frac{1}{2}\sigma_0^\top(t ,\mu)V_2(t)\sigma_0(t ,\mu) +f_0(t,\mu) - \overline{\mu}^\top f_2^\top(t) F_2^{-1}(t) \left[b_2^\top(t) V_1(t ,\mu)+f_2(t)\overline{\mu}\right] \notag \\
    &+\frac{1}{2}  F_2^{-1}(t) \left[b_2^\top(t) V_1(t ,\mu)+f_2(t)\overline{\mu}\right]^{\otimes 2} +V_1^\top (t ,\mu) \left[b_0(t ,\mu)-b_2(t)F_2^{-1}(t) \left(b_2^\top (t) V_1(t ,\mu)+f_2(t)\overline{\mu}\right)\right] \notag \\
    &+\int_\brn \bigg\{ \left[b_0(t ,\mu)+b_1(t)y-b_2(t)F_2^{-1}(t) \left(b_2^\top(t) \left(V_1(t ,\mu)+V_2(t)y\right)+f_2(t)\overline{\mu}\right)\right]^\top  D_y\frac{dV_0}{d\nu}(t,\mu)(y) \notag \\
	&\quad\qquad +\frac{1}{2}\text{Tr}\left[ \left(\sigma\sigma^\top\right)(t,{y},\mu) D_y^2\frac{dV_0}{d\nu}(t,\mu)(y)   \right]  \bigg\}d\mu(y) =0,\quad t\in[0,T), \notag \\
	&V_0(T,\mu)=g_0(\mu). \label{V0_equa}
\end{align}
\normalsize
Note that \eqref{V0_equa} can be viewed as a mean field master equation, whose well-posedness has been established in the literature; see \cite{AB5,AB9'} for instance. We also refer to \cite{AB_SPA,MR3489817} and \cite[Section 5.4]{AB_JMPA} for more discussions for the case of constant diffusion. 
\end{example}

\section*{Acknowledgement}

Alain Bensoussan is supported by the National Science Foundation under grant NSF-DMS-2204795. Ziyu Huang acknowledges the financial supports as a postdoctoral fellow from Department of Statistics of The Chinese University of Hong Kong. Shanjian Tang is supported by the National Natural Science Foundation of China under grant nos. 12031009 and 11631004. Phillip Yam acknowledges the financial supports from HKGRF-14301321 with the project title ``General Theory for Infinite Dimensional Stochastic Control: Mean Field and Some Classical Problems'', and HKGRF-14300123 with the project title ``Well-posedness of Some Poisson-driven Mean Field Learning Models and their Applications''. 

\footnotesize
\addcontentsline{toc}{section}{References}

\newpage

\normalsize 
\appendix
\appendixpage
\addappheadtotoc

\section{Proof of Statements in Section~\ref{sec:MP}}\label{pf:MP}

\subsection{Proof of Lemma~\ref{lem:MP1}}\label{pf_lem_MP1}
We first prove the case when Assumption (A3) is satisfied. For any control $v\in \lr_{\f}^2(t,T)$, we denote by $X^v$ the corresponding state. From Assumptions (A2) and (A3), we have
\begin{align}
	&J_{t\xi}\left(v;\lr(Y_{t\xi}(s)),0\le s\le T\right)-J\left(v_{t\xi};\lr(Y_{t\xi}(s)),0\le s\le T\right) \notag\\
	=\ &\e\Bigg[ \int_t^T \Big(f(s,Y_{t\xi}(s),\lr(Y_{t\xi}(s)),v(s))-f(s,Y_{t\xi}(s),\lr(Y_{t\xi}(s)),v_{t\xi}(s)) \notag\\
	&\qquad\qquad + f(s,X^v(s),\lr(Y_{t\xi}(s)),v(s))-f(s,Y_{t\xi}(s),\lr(Y_{t\xi}(s)),v(s))\Big)ds \notag\\
	&\quad +g(X^v(T),\lr(Y_{t\xi}(T)))-g(Y_{t\xi}(T),\lr(Y_{t\xi}(T)))\Bigg] \notag\\
	\geq \ & \e\Bigg[\int_t^T \Bigg(\lambda_v |v(s)-v_{t\xi}(s)|^2+\left(D_vf (s,Y_{t\xi}(s),\lr(Y_{t\xi}(s)),v_{t\xi}(s))\right)^\top (v(s)-v_{t\xi}(s)) \notag\\
	&\qquad\qquad+\int_0^1 \left(D_xf (s,Y_{t\xi}(s)+\delta(X^v(s)-Y_{t\xi}(s)),\lr(Y_{t\xi}(s)),v_{t\xi}(s))\right)^\top (X^v(s)-Y_{t\xi}(s))d\delta\Bigg)ds \notag\\
	&\quad + \int_0^1 \left(D_xg (Y_{t\xi}(T)+\delta(X^v(T)-Y_{t\xi}(T)),\lr(Y_{t\xi}(T)))\right)^\top (X^v(T)-Y_{t\xi}(T))d\delta\Bigg] \notag\\
	\geq\ &  \e\Bigg[\int_t^T \Big(\lambda_v |v(s)-v_{t\xi}(s)|^2+\left(D_vf (s,Y_{t\xi}(s),\lr(Y_{t\xi}(s)),v_{t\xi}(s))\right)^\top (v(s)-v_{t\xi}(s)) \notag\\
	&\qquad\qquad+ \left(D_xf (s,Y_{t\xi}(s),\lr(Y_{t\xi}(s)),v_{t\xi}(s))\right)^\top (X^v(s)-Y_{t\xi}(s))- L |X^v(s)-Y_{t\xi}(s)|^2\Big)ds \notag\\
	&\quad +  (D_xg (Y_{t\xi}(T),\lr(Y_{t\xi}(T))))^\top (X^v(T)-Y_{t\xi}(T))-L |X^v(T)-Y_{t\xi}(T)|^2 \bigg]. \label{MP1_1}
\end{align}    
From It\^o's formula and Assumption (A1), we have for $s\in(t,T)$,
\begin{align*}
	\frac{d}{ds}\e\left[p_{t\xi} (s)^\top (X^v(s)-Y_{t\xi}(s))\right]=\e\bigg[&\bigg(p_{t\xi} (s)^\top b_2(s)+\sum_{j=1}^n \left(q^j_{t\xi} (s)\right)^\top \sigma^j_2(s)\bigg)\left(v(s)-v_{t\xi}(s)\right)\\
	& -(D_xf (s,Y_{t\xi}(s),\lr(Y_{t\xi}(s)), v_{t\xi}(s)) )^\top \left(X^v(s)-Y_{t\xi}(s)\right)\bigg].
\end{align*}
From Equation \eqref{hv}, we have
\begin{align*}
	p_{t\xi} (s)^\top b_2(s)+\sum_{j=1}^n \left(q^j_{t\xi} (s)\right)^\top \sigma^j_2(s)+(D_vf (s,Y_{t\xi}(s),\lr(Y_{t\xi}(s)),v_{t\xi}(s)))^\top=0, \quad s\in[t,T].
\end{align*}
Therefore, we deduce that	
\begin{equation}\label{MP1_2}
	\begin{split}
		&\e\left[(D_x g (Y_{t\xi}(T),\lr(Y_{t\xi}(T))))^\top \left(X^v(T)-Y_{t\xi}(T)\right) \right]\\
		=\ & -\e\Bigg[\int_t^T \Big( (D_vf (s,Y_{t\xi}(s),\lr(Y_{t\xi}(s)),v_{t\xi}(s)) )^\top \left(v(s)-v_{t\xi}(s)\right)\\
		&\quad\qquad\qquad +(D_xf (s,Y_{t\xi}(s),\lr(Y_{t\xi}(s)), v_{t\xi}(s)) )^\top \left(X^v(s)-Y_{t\xi}(s)\right) \Big) ds \Bigg].
	\end{split}
\end{equation}
Substituting \eqref{MP1_2} into \eqref{MP1_1}, we have
\begin{align*}
	&J_{t\xi}\left(v;\lr(Y_{t\xi}(s)),0\le s\le T\right)-J\left(v_{t\xi};\lr(Y_{t\xi}(s)),0\le s\le T\right) \\
	\geq\ & \e  \left[\int_t^T \Big(\lambda_v |v(s)-v_{t\xi}(s)|^2 - L |X^v(s)-Y_{t\xi}(s)|^2\Big)ds  -L |X^v(T)-Y_{t\xi}(T)|^2 \right].
\end{align*}
With standard Picard iteration arguments of SDEs, we can deduce that
\begin{equation*}
	\e\bigg[\sup_{t\le s\le T}\left| X^v(s)-Y_{t\xi}(s)\right|^2\bigg]\le C(L,T) \e\left[\int_t^T \left|v(s)-v_{t\xi}(s)\right|^2ds\right],
\end{equation*}
where $C(L,T)$ is a constant depending only on $(L,T)$. Therefore, we have
\begin{align*}
	&J_{t\xi}\left(v;\lr(Y_{t\xi}(s)),0\le s\le T\right)-J\left(v_{t\xi};\lr(Y_{t\xi}(s)),0\le s\le T\right) \\
	\geq\ & [\lambda_v-C(L,T)] \e\left[\int_t^T |v(s)-v_{t\xi}(s)|^2 ds   \right],
\end{align*}
so we know that the stochastic control problem with the fixed distribution flow $\left\{\lr(Y_{t\xi}(s),\ t\le s\le T\right\}$ has a unique optimal control as in \eqref{vxi}. Therefore, \eqref{vxi} gives a solution for MFG \eqref{intro_1}. For the uniqueness result for \eqref{vxi}, suppose that $v'\in\lr_\f^2(t,T)$ is another solution for MFG \eqref{intro_1}, and we denote by $X'$ its related controlled process. For the stochastic control problem in \eqref{intro_1} with the fixed distribution flow $\left\{\lr\left(X'_s\right),\ t\le s\le T \right\}$, we know that $v'$ is an optimal control, and we denote by $\left(P',Q'\right)$ the corresponding adjoint process. Then, from the necessary condition of the maximum principle (see \cite{MR1696772} for instance), we know that $\left(X',P',Q'\right)$ is also a solution for FBSDEs \eqref{intro_2}. From the uniqueness of the solution for FBSDEs \eqref{intro_2}, we know that $\left(X',P',Q'\right)=(X,P,Q)$, and then, $v'=v_{t\xi}$ in $\lr_\f^2(t,T)$, from which we deduce the claim. Further, if  Assumption (A3') is satisfied, we know that
\begin{align*}
	&J_{t\xi}\left(v;\lr(Y_{t\xi}(s)),0\le s\le T\right)-J\left(v_{t\xi};\lr(Y_{t\xi}(s)),0\le s\le T\right) \\
	\geq \ & \e\Bigg[\int_t^T \Big(\lambda_x |X^v(s)-Y_{t\xi}(s)|^2+(D_xf (s,Y_{t\xi}(s),\lr(Y_{t\xi}(s)),v_{t\xi}(s)))^\top (X^v(s)-Y_{t\xi}(s)) \\
	&\qquad\qquad+\lambda_v |v(s)-v_{t\xi}(s)|^2+(D_vf (s,Y_{t\xi}(s),\lr(Y_{t\xi}(s)),v_{t\xi}(s)))^\top (v(s)-v_{t\xi}(s))\Big)ds \\
	&\quad + \lambda_g |X^v(T)-Y_{t\xi}(T)|^2 +(D_xg (Y_{t\xi}(T),\lr(Y_{t\xi}(T))) )^\top (X^v(T)-Y_{t\xi}(T)) \Bigg] .  
\end{align*}    
Then, from Equation \eqref{MP1_2}, we have the second assertion.

\subsection{Proof of Lemma~\ref{lem:2}}\label{pf_lem2}

In view of \eqref{def:BAFG} and Lemma~\ref{lem:1}, we only need to first check Condition~\ref{Condition_generic} (i) and (ii), and then (iii) under our settings. From Assumption (A1), the definition of $\hv(\cdot)$ in \eqref{hv} and Equation
\eqref{def:BAFG}, for $s\in[t,T]$, by denoting $\hv:=\hv(s,X,\lr(X),p,q),\quad \hv':=\hv\left(s,X',\lr(X'),p',q'\right)$, we have
\begin{equation}\label{lem2_3}
	\begin{split}
		&\e\bigg[\left(\mathbf{F}(s,X',p',q')-\mathbf{F}(s,X,p,q)\right)^\top  (X'-X)+\left(\mathbf{B}(s,X',p',q')-\mathbf{B}(s,X,p,q)\right)^\top  (p'-p)\\
		&\quad +\sum_{j=1}^n \left(\mathbf{A}^j(s,X',p',q')-\mathbf{A}^j(s,X,p,q)\right)^\top  \left({q'}^{j}-q^j\right)\bigg]\\
		\ =& -\e\Big[\left(D_vf\left(s,X',\lr(X'),\hv'\right)-D_vf\left(s,X,\lr(X),\hv \right)\right)^\top   \left(\hv'-\hv\right)\\
		&\qquad +\left(D_xf(s,X',\lr(X'),\hv')-D_xf(s,X,\lr(X),\hv)\right)^\top  (X'-X)\Big]\\
		&+\e\bigg[\left(b_0(s,\lr(X'))-b_0(s,\lr(X))\right)^\top  (p'-p)\\
		&\qquad+\sum_{j=1}^n \left(\sigma^j_0(s,\lr(X'))-\sigma^j_0(s,\lr(X))\right)^\top  \left({q'}^{j}-q^j\right)\bigg],
	\end{split}
\end{equation}    
From Assumption (A3), we have    
\begin{equation}\label{lem2_5}
	\begin{split}
		&\e\left[\left(D_vf\left(s,X,\lr(X),\hv'\right)-D_vf\left(s,X,\lr(X),\hv\right)\right)^\top  \left(\hv'-\hv\right)\right]\geq  2\lambda_v \e\left[\left|\hv'-\hv\right|^2\right].
	\end{split}
\end{equation}
From Assumption (A2) and the Young's inequality with a weight of $\frac{\lambda_v}{L}$, we have
\begin{align*}
		&-\e\big[\left(D_vf\left(s,X',\lr(X'),\hv\right)-D_vf(s,X,\lr(X),\hv)\right)^\top    \left(\hv'-\hv\right)\\
		&\qquad +\left(D_xf\left(s,X',\lr(X'),\hv'\right)-D_xf\left(s,X,\lr(X),\hv\right)\right)^\top \left(X'-X\right)\big]\\
		&+\e\bigg[\left(b_0(s,\lr(X'))-b_0(s,\lr(X))\right)^\top   \left(p'-p\right) +\sum_{j=1}^n \left(\sigma^j_0(s,\lr(X'))-\sigma^j_0 (s,\lr(X))\right)^\top  \left({q'}^{j}-q^j\right)\bigg]\\
		\le\ & L \e\bigg[\left(|X'-X|+\e\left[|X'-X|^2\right]^{\frac{1}{2}}\right) \left|\hv'-\hv\right| +\left(|X'-X|+\e\left[|X'-X|^2\right]^{\frac{1}{2}}+ \left|\hv'-\hv\right| \right)\left|X'-X\right| \\
		&\qquad+\bigg(|p'-p|+\sum_{j=1}^n|q'^{j}-q^j|\bigg) \e\left[|X'-X|^2\right]^{\frac{1}{2}} \bigg]\\
		\le\ & \lambda_v \e\left[\left|\hv'-\hv\right|^2\right]+C(L)\left(1+\frac{1}{\lambda_v}\right)\e\left[|X'-X|^2+|p'-p|^2+ |q'-q|^2\right].
\end{align*}    
Subsitituting \eqref{lem2_5} and the last inequality back into \eqref{lem2_3}, Condition \ref{Condition_generic} (i) is satisfied with the map $\beta(s,X,p,q)(\omega):=\hv(s,X(\omega),\lr(X),p(\omega),q(\omega))$ by setting $\Lambda=\lambda_v$ and $\alpha=C(L)\left(1+\frac{1}{\lambda_v}\right)$. The Lipschitz-continuities Condition \ref{Condition_generic} (ii) are also satisfied with $ K =C(L)$. 

If Assumption (A3') is satisfied, then, from Cauchy-Schwarz inequality and the Young's inequality with a weight of $\frac{\lambda_v}{L}$, we have
\begin{align}
		& -\e\Big[\left(D_vf\left(s,X',\lr(X'),\hv'\right)-D_vf\left(s,X,\lr(X),\hv \right)\right)^\top   \left(\hv'-\hv\right) \notag \\
		&\qquad +\left(D_xf(s,X',\lr(X'),\hv')-D_xf(s,X,\lr(X),\hv)\right)^\top  (X'-X)\Big] \notag \\
		\le \ &-\e\Big[\left(D_vf\left(s,X',\lr(X'),\hv'\right)-D_vf\left(s,X,\lr(X'),\hv \right)\right)^\top   \left(\hv'-\hv\right) \notag \\
		&\qquad +\left(D_xf(s,X',\lr(X'),\hv')-D_xf(s,X,\lr(X'),\hv)\right)^\top  (X'-X)\Big] \notag \\
		&+ \e\Big[\left|D_vf\left(s,X,\lr(X),\hv\right)-D_vf\left(s,X,\lr(X'),\hv \right)\right|  \left|\hv'-\hv\right| \notag \\
		&\qquad +\left|D_xf(s,X,\lr(X),\hv)-D_xf(s,X,\lr(X'),\hv)\right| \left|X'-X\right|\Big] \notag \\
		\le\ & -2 \e\left[\lambda_v \left| \hv'-\hv\right|^2+\lambda_x \left| X'-X \right|^2\right] \notag \\
		&+L_v\e\left[\left|X'-X\right|^2\right]^{\frac{1}{2}}\e\left[\left|\hv'-\hv\right|^2\right]^{\frac{1}{2}}+L_x \e\left[\left|X'-X\right|^2\right] \notag \\
		\le\ & -\e\left[\lambda_v \left| \hv'-\hv\right|^2 +\left(2\lambda_x-\frac{L_v^2}{4\lambda_v}-L_x \right) \left| X'-X \right|^2\right] \notag \\
		\le\ &-\e\left[\lambda_v \left| \hv'-\hv\right|^2 \right]. \label{small_mf_eff_3}
\end{align} 
In the last inequality, we use the small mean field effect condition in Assumption (A3'): $\lambda_x\geq\frac{L_v^2}{8\lambda_v}+\frac{L_x}{2}$. From \eqref{lem2_3}, we know that Condition \ref{Condition_generic} (i) is valid for $\alpha=0$. In a similar way, 
\begin{align*}
	&\e\left[\left(\mathbf{G}(X')-\mathbf{G}(X)\right)^\top  (X'-X)\right] \notag \\
	=\ & \e\left[\left(D_xg\left(X',\lr(X')\right)-D_xg\left(X,\lr(X)\right)\right)^\top   \left(X'-X\right)\right] \notag \\
	\geq\ & \e\left[\left(D_x g(X',\lr(X'))-D_xg(X,\lr(X'))\right)^\top  (X'-X)\right] \notag \\
	&- \e\left[\left|D_xg(X,\lr(X))-D_xg(X,\lr(X'))\right| \left|X'-X\right|\right] \notag \\
	\geq\ & \left(2 \lambda_g-L_g\right)  \e\left[\left| X'-X \right|^2\right] \geq 0, \notag
\end{align*} 
and therefore,  Condition \ref{Condition_generic} (iii) is satisfied. As a consequence of Lemma~\ref{lem:1}, we obtain the well-posedness for FBSDEs \eqref{intro_2}. From Estimates \eqref{lem1_1} and \eqref{lem1_2}, the proof is complete. 

\subsection{Proof of Lemma~\ref{lem:5}}\label{pf_lem5}

The proof for the solvability is similar as Lemma~\ref{lem:2}. Here, we only give a proof of Estimate \eqref{lem5_2}. For any $\xi,\xi'\in L_{\f_t}^2$ such that $\lr(\xi)=\mu,\ \lr(\xi')=\mu'$, we denote by $(Y_{t\xi},p_{t\xi},q_{t\xi})$ and $(Y_{t\xi'},p_{t\xi'},q_{t\xi'})$ the respective solutions of FBSDEs \eqref{intro_2} corresponding to initials $\xi$ and $\xi'$. From Lemma~\ref{lem:2}, we know that
\begin{equation}\label{lem5_3}
	\begin{split}
		&\e\bigg[\sup_{t\le s\le T}|Y_{t\xi'}(s)-Y_{t\xi}(s)|^2\bigg]\le C(L,T,\lambda_v)\e\left[|\xi'- \xi|^2\right].
	\end{split}
\end{equation}	
We denote by $\de x:=x'-x$, $\de\xi:=\xi'-\xi$, and for $s\in[t,T]$,
\begin{align*}
	&\de Y(s):=Y_{tx'\mu'}-Y_{tx\mu}(s),\quad \de p(s):=p_{tx'\mu'}-p_{tx\mu}(s),\quad \de q(s):=q_{tx'\mu'}-q_{tx\mu}(s),\\
	&\de v(s):=v_{tx'\mu'}-v_{tx\mu}(s).
\end{align*}
Then, the processes $(\Delta Y,\Delta p,\Delta q)$ satisfy the following FBSDEs:
\begin{align}
		\de Y(s)=\ & \de x+\int_t^s \left[b_0(r,\lr(Y_{t\xi'}(r)))-b_0(r,\lr(Y_{t\xi}(r)))+b_1(r)\de Y(r)+b_2(r)\de v(r)\right]dr \notag \\
		&+\int_t^s \left[\sigma_0(r,\lr(Y_{t\xi'}(r)))-\sigma_0(r,\lr(Y_{t\xi}(r)))+\sigma_1(r)\de Y(r)+\sigma_2(r)\de v(r)\right]dB(r), \notag \\
		\de p(s)=\ & D_x g\left(Y_{tx'\mu'}(T),\lr\left(Y_{t\xi'}(T)\right)\right)-D_x g\left(Y_{tx\mu}(T),\lr\left(Y_{t\xi}(T)\right)\right) \notag \\
		&+\int_s^T \bigg[b_1(r)^\top \de p(r)+\sum_{j=1}^n \left(\sigma^j_1 (s)\right)^\top \de q^j(s)+D_x f\left(r,Y_{tx'\mu'}(r),\lr(Y_{t\xi'}(r)),v_{tx'\mu'}(r)\right) \notag \\
		&\qquad\qquad -D_x f\left(r,Y_{tx\mu}(r),\lr(Y_{t\xi}(r)),v_{tx\mu}(r)\right)\bigg]dr-\int_s^T \de q(r)dB(r),\quad  s\in[t,T]. \label{lem5_4}
\end{align}	
With standard arguments of SDEs and BSDEs in \cite{MR1696772}, we have
\begin{equation}\label{lem5_9}
	\begin{split}
		& \e\bigg[\sup_{t\le s\le T}\left|\left(\de Y(s),\de p(s)\right)^\top \right|^2+\int_t^T |\de q(s)|^2 ds\bigg]\\
		\le\ & C(L,T)\e\bigg[|\de x|^2+\sup_{t\le s\le T}\left|Y_{t\xi'}(s)-Y_{t\xi}(s)\right|^2+\int_t^T |\de v(s)|^2ds\Big].
	\end{split}
\end{equation}
From \eqref{hv}, we have for $s\in[t,T]$,	
\begin{equation*}\label{lem5_5}
	\begin{split}
		&b_2(s)^\top\de p(s)+\sum_{j=1}^n \left(\sigma^j_2 (s) \right)^\top \de q^j(s)\\
		=\ & -D_vf\left(s,Y_{tx'\mu'},\lr(Y_{t\xi'}(s)),v_{tx'\mu'}(s)\right)+D_vf\left(s,Y_{tx\mu},\lr(Y_{t\xi}(s)),v_{tx\mu}(s)\right)=0,
	\end{split}
\end{equation*}
then, by It\^o's formula, we have
\begin{equation}\label{lem5_6}
	\begin{split}
		&\e\left[\de p(T)^\top \de Y(T)-\de p(t)^\top \de x\right]\\
		=\ & \e\Bigg\{\int_t^T \Bigg[\left(b_0(\lr(Y_{t\xi'}(s)),s)-b_0(\lr(Y_{t\xi}(s)),s)\right)^\top \de p(s)\\
		&\qquad\qquad +\sum_{j=1}^n\left(\sigma^j_0(\lr(Y_{t\xi'}(s)),s)-\sigma^j_0(\lr(Y_{t\xi}(s)),s)\right)^\top \de q^j(s)\\
		&\qquad\qquad -\left[\left(
		\begin{array}{cc}
			D_x f\\
			D_v f
		\end{array}
		\right) \left|^{\left(s,Y_{tx'\mu'}(s),\lr(Y_{t\xi'}(s)),v_{tx'\mu'}(s)\right)}_{\left(s,Y_{tx\mu}(s),\lr(Y_{t\xi}(s)),v_{tx\mu}(s)\right)} \right.\right]^\top \left(
		\begin{array}{cc}
			\de Y(s)\\
			\de v(s)
		\end{array}
		\right) \Bigg]ds\Bigg\}.
	\end{split}
\end{equation}	
From Assumptions (A2) and (A3) and the Young's inequality with a weight of $\frac{\lambda_v}{L}$, we have
\begin{align*}
	&\left[\left(
	\begin{array}{cc}
		D_x f\\
		D_v f
	\end{array}
	\right) \left|^{\left(s,Y_{tx'\mu'}(s),\lr(Y_{t\xi'}(s)),v_{tx'\mu'}(s)\right)}_{\left(s,Y_{tx\mu}(s),\lr(Y_{t\xi}(s)),v_{tx\mu}(s)\right)}\right.  \right]^\top \left(
	\begin{array}{cc}
		\de Y(s)\\
		\de v(s)
	\end{array}
	\right) \notag \\
	=\ & \left(D_v f(s,Y_{tx\mu}(s),\lr(Y_{t\xi}(s)),v_{tx'\mu'}(s))-D_v f(s,Y_{tx\mu}(s),\lr(Y_{t\xi}(s)),v_{tx\mu}(s))\right)^\top  \de v(s) \notag\\
	&+\left(D_x f(s,Y_{tx'\mu'}(s),\lr(Y_{t\xi'}(s)),v_{tx'\mu'}(s))-D_x f(s,Y_{tx\mu}(s),\lr(Y_{t\xi}(s)),v_{tx\mu}(s))\right)^\top  \de Y(s) \notag\\
	&+\left(D_v f(s,Y_{tx'\mu'}(s),\lr(Y_{t\xi'}(s)),v_{tx'\mu'}(s))-D_v f(s,Y_{tx\mu}(s),\lr(Y_{t\xi}(s)),v_{tx'\mu'}(s))\right)^\top  \de v(s) \notag\\
	\geq \ & 2\lambda_v|\de v(s)|^2-2L\left(|\de Y(s)|+\e\left[\left|Y_{t\xi'}(s)-Y_{t\xi}(s)\right|^2\right]^{\frac{1}{2}}\right)\left(|\de v(s)|+|\de Y(s)|\right) \notag \\
	\geq \ &  \lambda_v |\de v(s)|^2-C(L)\left(1+\frac{1}{\lambda_v}\right)\left(|\de Y(s)|^2+\e\left[\left|Y_{t\xi'}(s)-Y_{t\xi}(s)\right|^2\right]\right). 
\end{align*}
Applying the last inequality to \eqref{lem5_6}, from Assumptions (A1) and (A2), we have
\begin{equation*}\label{lem5_8}
	\begin{split}
		&\lambda_v \e\left[\int_t^T |\de v(s)|^2 ds\right]\\
		\le\ & C(L,T)\left(1+\frac{1}{\lambda_v}\right)\e\bigg[\sup_{t\le s\le T}\left|\left(\de Y(s),\de p(s),Y_{t\xi'}(s)-Y_{t\xi}(s)\right)\right|^2+  \int_t^T |\de q(s)|^2 ds\bigg].
	\end{split}
\end{equation*}
Substituting \eqref{lem5_9}	into this last inequality, we have
\begin{align*}
	&\lambda_v \e\left[\int_t^T |\de v(s)|^2 ds\right]\\
	\le\ & C(L,T)\left(1+\frac{1}{\lambda_v}\right)\e\bigg[|\de x|^2+\sup_{t\le s\le T}\left|Y_{t\xi'}(s)-Y_{t\xi}(s)\right|^2+  \int_t^T |\de v(s)|^2 ds\bigg].
\end{align*}
Therefore, there exists a constant $c(L,T)$ depending only on $(L,T)$, such that when $\lambda_v\geq c(L,T)$,
\begin{equation*}\label{lem5_11}
	\begin{split}
		\e\left[\int_t^T |\de v(s)|^2 ds\right]\le &C(L,T,\lambda_v)\e\bigg[|\de x|^2+\sup_{t\le s\le T}\left|Y_{t\xi'}(s)-Y_{t\xi}(s)\right|^2\bigg],
	\end{split}
\end{equation*}
Substituting \eqref{lem5_3} into the last inequality, we have
\begin{align*}
	\e\left[\int_t^T |\de v(s)|^2 ds\right]\le &C(L,T,\lambda_v)\e\left[|\de x|^2+|\de\xi|^2\right].
\end{align*}
As the choices of $\xi$ and $\xi'$ are arbitrary, we therefore obtain \eqref{lem5_2}.

If Assumption (A3') is satisfied, then, from Cauchy-Schwarz inequality and Young's inequality, we have
\begin{align}
	&\left[\left(
	\begin{array}{cc}
		D_x f\\
		D_v f
	\end{array}
	\right) \left|^{\left(s,Y_{tx'\mu'}(s),\lr(Y_{t\xi'}(s)),v_{tx'\mu'}(s)\right)}_{\left(s,Y_{tx\mu}(s),\lr(Y_{t\xi}(s)),v_{tx\mu}(s)\right)}\right. \right]^\top \left(
	\begin{array}{cc}
		\de Y(s)\\
		\de v(s)
	\end{array}
	\right) \notag \\
	=\ & \left[\left(
	\begin{array}{cc}
		D_x f\\
		D_v f
	\end{array}
	\right) \left|^{\left(s,Y_{tx'\mu'}(s),\lr(Y_{t\xi'}(s)),v_{tx'\mu'}(s)\right)}_{\left(s,Y_{tx\mu}(s),\lr(Y_{t\xi'}(s)),v_{tx\mu}(s)\right)} \right. \right]^\top \left(
	\begin{array}{cc}
		\de Y(s)\\
		\de v(s)
	\end{array}
	\right) \notag\\
	&+ \left[\left(
	\begin{array}{cc}
		D_x f\\
		D_v f
	\end{array}
	\right) \left|^{\left(s,Y_{tx\mu}(s),\lr(Y_{t\xi'}(s)),v_{tx\mu}(s)\right)}_{\left(s,Y_{tx\mu}(s),\lr(Y_{t\xi}(s)),v_{tx\mu}(s)\right)} \right. \right]^\top \left(
	\begin{array}{cc}
		\de Y(s)\\
		\de v(s)
	\end{array}
	\right) \notag \\
	\geq\ & 2\lambda_v \left| \de v(s)\right|^2+2\lambda_x \left| \de Y(s)\right|^2 -\left(L_v |\de v(s)| +L_x |\de Y(s)|\right)\e\left[ \left| Y_{t\xi'}(s)-Y_{t\xi}(s) \right|^2\right]^{\frac{1}{2}} \notag\\
	\geq \ & \lambda_v \left| \de v(s)\right|^2+\left(2\lambda_x-L_x \right) \left| \de Y(s)\right|^2 -\left(\frac{L_v^2}{4\lambda_v}+\frac{L_x}{4}\right)\e\left[ \left| Y_{t\xi'}(s)-Y_{t\xi}(s) \right|^2\right]^{\frac{1}{2}} \notag \\
	\geq\ & \lambda_v \left| \de v(s)\right|^2 -C(L) \left(1+\frac{1}{\lambda_v}\right)\e\left[ \left| Y_{t\xi'}(s)-Y_{t\xi}(s) \right|^2\right]. \label{lem5_12}
\end{align}
Similarly, for any $\epsilon>0$, we have
\begin{align*}
	&\e[\de p(T)^\top \de Y(T)-\de p(t)^\top \de x]\notag\\
	=\ & \e\left[ \left[D_x g\left|^{\left(Y_{tx'\mu'}(T),\lr\left(Y_{t\xi'}(T)\right)\right)}_{\left(Y_{tx\mu}(T),\lr\left(Y_{t\xi'}(T)\right)\right)}\right.\right]^\top  \de Y(T)+\left[D_x g\left|^{\left(Y_{tx\mu}(T),\lr\left(Y_{t\xi'}(T)\right)\right)}_{\left(Y_{tx\mu}(T),\lr\left(Y_{t\xi}(T)\right)\right)}\right.\right]^\top  \de Y(T)-\de p(t)^\top \de x \right]\notag\\
	\geq\ & \e\left[ 2 \lambda_g |\de Y(T)|^2-L_g \e\left[ \left| Y_{t\xi'}(T)-Y_{t\xi}(T) \right|^2\right]^{\frac{1}{2}} |\de Y(T)|-|\de p(t)||\de x| \right]\notag\\
	\geq\ & \left(2\lambda_g-L_g \right) \e\left[  |\de Y(T)|^2\right]- \frac{L_g}{4} \e\left[ \left| Y_{t\xi'}(T)-Y_{t\xi}(T) \right|^2\right]- \epsilon\e\left[|\de p(t)|^2\right]-\frac{1}{4\epsilon}|\de x|^2 \notag \\
	\geq\ & - \epsilon\e\left[|\de p(t)|^2\right]-\frac{1}{4\epsilon}|\de x|^2-C(L)\e\left[ \left| Y_{t\xi'}(T)-Y_{t\xi}(T) \right|^2\right].
\end{align*}
Substituting \eqref{lem5_12} and the last inequality into \eqref{lem5_6}, we have
\begin{align*}
	& \lambda_v \e\left[\int_t^T \left| \de v(s)\right|^2  \right]ds\\
	\le \ & \epsilon\e\left[|\de p(t)|^2\right]+\frac{1}{4\epsilon}|\de x|^2+C(L,T) \left(1+\frac{1}{\lambda_v}\right)\e\bigg[ \sup_{t\le s\le T}\left| Y_{t\xi'}(s)-Y_{t\xi}(s) \right|^2\bigg].
\end{align*}
Substituting \eqref{lem5_9}	into the last inequality, we have
\begin{align*}
	\lambda_v \e\left[\int_t^T \left| \de v(s)\right|^2  \right]ds \le \ & \epsilon C(L,T)\e\left[\int_t^T |\de v(s)|^2ds\right]+C(L,T)\left(1+\frac{1}{\epsilon}\right)|\de x|^2\\
	&+C(L,T) \left(1+\frac{1}{\lambda_v}\right)\e\bigg[ \sup_{t\le s\le T}\left| Y_{t\xi'}(s)-Y_{t\xi}(s) \right|^2\bigg].
\end{align*}
By choosing $\epsilon:=\frac{\lambda_v}{2 C(L,T)}$, we have
\begin{align*}
	& \frac{\lambda_v}{2} \e\left[\int_t^T \left| \de v (s)\right|^2  \right]ds\le  C(L,T)\left(1+\frac{1}{\lambda_v}\right)\bigg(|\de x|^2+\e\bigg[ \sup_{t\le s\le T}\left| Y_{t\xi'}(s)-Y_{t\xi}(s) \right|^2\bigg]\bigg).
\end{align*}
Using \eqref{lem5_3}, we further have
\begin{align*}
	\e\left[\int_t^T |\de v(s)|^2 ds\right]\le C(L,T,\lambda_v)\e\left[|\de x|^2+|\de\xi|^2\right].
\end{align*}
Since $\xi$ and $\xi'$ are arbitrary, we obtain \eqref{lem5_2} for any $\lambda_v>0$.

\section{Proof of Statements in Section~\ref{sec:distribution}}\label{pf:distribution}

\subsection{Proof of Lemma~\ref{lem:3}}\label{pf_lem3}

We now apply Lemma~\ref{lem:1} to establish the solvability through the use of  \eqref{chose_coefficients}. We first compute the monotonicity condition for FBSDEs \eqref{FB:dr}. For $s\in[t,T]$, $X,X',p,p' \in L^2(\Omega,\f,\mathbb{P};\brn)$ and $q,q'\in (L^2(\Omega,\f,\mathbb{P};\brn))^n$, for simplicity, we denote by $\mathbf{V}':=	\mathbf{V}(s,X',p',q')$ and $\mathbf{V}:=	\mathbf{V}(s,X,p,q)$. From equations of \eqref{optimal_condition}, we deduce that
\begin{align*}
	&\e\bigg[(X'-X)^\top \left(\mathbf{F}(s,X',p',q')-\mathbf{F}(X,s;p,q)\right) +(p'-p)^\top \left(\mathbf{B}(s,X',p',q')-\mathbf{B}(s,X,p,q)\right) \\
	&\quad +\sum_{j=1}^n \left({q'}^{j}-q^j\right)^\top  \left(\mathbf{A}^j(s,X',p',q')-\mathbf{A}^j(s,X,p,q)\right) \bigg]\\
	=\ & \e\Bigg\{(p'-p)^\top  \widetilde{\e}\left[\left[D \frac{db_0}{d\nu} (s,\lr(Y_{t\xi}(s)))\left(\widetilde{Y_{t\xi}}(s)\right) \right]^\top \left(\widetilde{X'}-\widetilde{X}\right)\right] \\
	&\quad +\sum_{j=1}^n \left({q'}^{j}-q^j\right)^\top  \widetilde{\e}\left[\left[D \frac{d\sigma^j_0}{d\nu}  \left(s,\lr(Y_{t\xi}(s))\right)\left(\widetilde{Y_{t\xi}}(s)\right) \right]^\top \left(\widetilde{X'}-\tx\right)\right]\\
	&\quad  - \left[\left(
	\begin{array}{cc}
		D_v^2 f & D_vD_x f\\
		D_xD_v f & D_x^2 f
	\end{array}
	\right)(s,\theta_{t\xi}(s)) \right]\left(
	\begin{array}{cc}
		\mathbf{V}'-\mathbf{V}\\
		X'-X
	\end{array}
	\right)^{\otimes 2} \\
	&\quad -\left(X'-X\right)^\top  \widetilde{\e}\left[\left[\left(D_y \frac{d}{d\nu}D_x f\right) (s,\theta_{t\xi}(s))\left(\widetilde{Y_{t\xi}}(s)\right) \right]^\top  \left(\widetilde{X'}-\tx\right)\right]\\
	&\quad -\left(\mathbf{V}'-\mathbf{V}\right)^\top  \widetilde{\e}\left[\left[\left(D_y \frac{d}{d\nu}D_v f\right) (s,\theta_{t\xi}(s))\left(\widetilde{Y_{t\xi}}(s)\right) \right]^\top \left(\widetilde{X'}-\tx\right)\right]\Bigg\}.
\end{align*}
Then, from \eqref{convex}, Assumptions (A1) and (A2) and the Young's inequality with a weight of $\frac{\lambda_v}{L}$, we have	
\begin{equation}\label{lem3_5}
	\begin{split}
		&\e\bigg[(X'-X)^\top \left(\mathbf{F}(s,X',p',q')-\mathbf{F}(X,s;p,q)\right) +(p'-p)^\top \left(\mathbf{B}(s,X',p',q')-\mathbf{B}(s,X,p,q)\right) \\
		&\quad +\sum_{j=1}^n \left({q'}^{j}-q^j\right)^\top  \left(\mathbf{A}^j(s,X',p',q')-\mathbf{A}^j(s,X,p,q)\right) \bigg]\\
		\le\ & -\lambda_v \e\left[\left|\mathbf{V}'-\mathbf{V}\right|^2\right]+C(L)\left(1+\frac{1}{\lambda_v}\right)\e\left[|X'-X|^2+|p'-p|^2+|q'-q|^2\right]\\
		& +L\e\left[|X'-X|^2\right]^{\frac{1}{2}} \e\left[\left|\mathbf{V}'-\mathbf{V}\right|^2\right]^{\frac{1}{2}}\\
		\le\ & -\frac{\lambda_v}{2} \e\left[\left|\mathbf{V}'-\mathbf{V}\right|^2\right]+C(L)\left(1+\frac{1}{\lambda_v}\right)\e\left[|X'-X|^2+|p'-p|^2+|q'-q|^2\right].
	\end{split}
\end{equation}
Therefore, Condition \eqref{Condition_generic} (i) is satisfied with the map $\beta(s,X,p,q):=\mathbf{V}(s,X,p,q)$ by setting $\Lambda=\frac{\lambda_v}{2}$ and $\alpha=C(L)\left(1+\frac{1}{\lambda_v}\right)$. Conditions  \eqref{Condition_generic} (ii) is also satisfied with $ K =C(L)$. Next, we aim to show \eqref{lem3_1}. With standard arguments from the context of SDEs, we have
\begin{equation}\label{lem3_2}
	\e\bigg[\sup_{t\le s\le T}|\dr_\eta Y_{t\xi}(s)|^2\bigg]\le C(L,T)\e\left[|\eta|^2+\int_t^T |\dr_\eta v_{t\xi}(s)|^2ds\right].
\end{equation}
From \eqref{lem3_2}, usual BSDE arguments also give
\begin{equation}\label{lem3_3}
	\e\bigg[\sup_{t\le s\le T}|\dr_\eta p_{t\xi}(s)|^2+\int_t^T |\dr_\eta q_{t\xi}(s)|^2 ds\bigg]\le C(L,T)\e\left[|\eta|^2+\int_t^T |\dr_\eta v_{t\xi}(s)|^2ds\right].
\end{equation}    
From It\^o's formula, similar as \eqref{lem3_5}, we have
\begin{equation}\label{lem3_4}
	\begin{split}
		&\e\left[\left(\dr_\eta p_{t\xi}(T)\right)^\top \dr_\eta Y_{t\xi}(T)-\left(\dr_\eta p_{t\xi}(t)\right)^\top  \eta\right]\\
		\le\ & -\frac{\lambda_v}{2} \e\left[\int_t^T |\dr_\eta v_{t\xi}(s)|^2 ds\right]+C(L)\left(1+\frac{1}{\lambda_v}\right)\e\left[\int_t^T |\dr_\eta Y_{t\xi}(s)|^2+|\dr_\eta p_{t\xi}(s)|^2+ |\dr_\eta q_{t\xi}(s)|^2 ds\right].
	\end{split}
\end{equation}
Substituting \eqref{lem3_2} and \eqref{lem3_3} into \eqref{lem3_4}, we know that there exists a constant $c(L,T)$ depending only on $(L,T)$, such that when $\lambda_v\geq c(L,T)$, we obtain \eqref{lem3_1}.

If Assumption (A3') is satisfied, then, from \eqref{convex'} and the Young's inequality with a weight of $\frac{\lambda_v}{L}$, we have
\begin{align}
	&\e\bigg[(X'-X)^\top \left(\mathbf{F}(s,X',p',q')-\mathbf{F}(X,s;p,q)\right) +(p'-p)^\top \left(\mathbf{B}(s,X',p',q')-\mathbf{B}(s,X,p,q)\right) \notag \\
	&\quad +\sum_{j=1}^n \left({q'}^{j}-q^j\right)^\top  \left(\mathbf{A}^j(s,X',p',q')-\mathbf{A}^j(s,X,p,q)\right) \bigg] \notag \\
	=\ & \e\Bigg\{- \left[\left(
	\begin{array}{cc}
		D_v^2 f & D_vD_x f\\
		D_xD_v f & D_x^2 f
	\end{array}
	\right)(s,\theta_{t\xi}(s)) \right] \left(
	\begin{array}{cc}
		\mathbf{V}'-\mathbf{V}\\
		X'-X
	\end{array}
	\right)^{\otimes 2} \notag \\
	&\quad -\left(X'-X\right)^\top  \widetilde{\e}\left[\left[\left(D_y \frac{d}{d\nu}D_x f \right) (s,\theta_{t\xi}(s))\left(\widetilde{Y_{t\xi}}(s)\right) \right]^\top \left(\widetilde{X'}-\tx\right)\right] \notag \\
	&\quad -\left(\mathbf{V}'-\mathbf{V}\right)^\top  \widetilde{\e}\left[\left[\left(D_y \frac{d}{d\nu}D_v f\right) (s,\theta_{t\xi}(s))\left(\widetilde{Y_{t\xi}}(s)\right) \right]^\top \left(\widetilde{X'}-\tx\right)\right]\Bigg\} \notag \\
	\le \ & \e\left[-2\lambda_v  \left|\mathbf{V}'-\mathbf{V}\right|^2-2\lambda_x \left|X'-X\right|^2 +L_x |X'-X|^2\right]+L_v \e\left[|X'-X|^2\right]^{\frac{1}{2}} \e\left[\left|\mathbf{V}'-\mathbf{V}\right|^2\right]^{\frac{1}{2}} \notag \\
	\le \ & -\lambda_v\e\left[  \left|\mathbf{V}'-\mathbf{V}\right|^2 \right]-\left(2\lambda_x-L_x-\frac{L_v^2}{4\lambda_v}\right) \e\left[|X'-X|^2\right] \notag \\
	\le \ & -\lambda_v\e\left[  \left|\mathbf{V}'-\mathbf{V}\right|^2 \right]. \label{lem3_6}
\end{align}
In the last inequality, we use the small mean field effect condition in Assumption (A3'): $\lambda_x\geq\frac{L_v^2}{8\lambda_v}+\frac{L_x}{2}$. (Here, $\frac{L_v^2}{8\lambda_v}+\frac{L_x}{2}$ is not the optimal parameter; actually, we only need  $\lambda_x>\frac{L_v^2}{16\lambda_v}+\frac{L_x}{2}$). Therefore, Condition \eqref{Condition_generic} (i) is valid for $\alpha=0$. Similarly, 
\begin{align*}
	&\e\left[(X'-X)^\top  \left(\mathbf{G}(X')-\mathbf{G}(X)\right)\right]\\
	=\ &\e\Big[ (X'-X)^\top  D_x^2 g (Y_{t\xi}(T),\lr(Y_{t\xi}(T)))(X'-X)\\
	&\quad +(X'-X)^\top \widetilde{\e}\left[\left[\left(D_y \frac{d}{d\nu}D_x g\right) (Y_{t\xi}(T),\lr(Y_{t\xi}(T)))\left(\widetilde{Y_{t\xi}}(T)\right) \right]^\top \left(\widetilde{X'}-\tx\right) \right]\Big]\\
	\geq\ & \left(2 \lambda_g-L_g\right)  \e\left[\left| X'-X \right|^2\right] \geq 0,
\end{align*} 
therefore,  Condition \eqref{Condition_generic} (iii) is satisfied. As a consequence of Lemma~\ref{lem:1}, we obtain the solvability for FBSDEs \eqref{FB:dr}. Then, we proceed to establish \eqref{lem3_1} under Assumption (A3'). From It\^o's formula and \eqref{lem3_6}, we have
\begin{align*}
		&\e\left[\left(\dr_\eta p_{t\xi}(T)\right)^\top \dr_\eta Y_{t\xi}(T)-\left(\dr_\eta p_{t\xi}(t)\right)^\top  \eta\right]\le -\lambda_v \e\left[\int_t^T |\dr_\eta v_{t\xi}(s)|^2 ds\right].
\end{align*}
From \eqref{convex'} and Assumption (A2), for any $\epsilon>0$, we have
\begin{align*}
		&\e\left[\left(\dr_\eta p_{t\xi}(T)\right)^\top \dr_\eta Y_{t\xi}(T)- \left(\dr_\eta p_{t\xi}(t)\right)^\top  \eta\right]\\
		=\ &\e\bigg[-\left(\dr_\eta p_{t\xi}(t)\right)^\top  \eta+ \left(\dr_\eta Y_{t\xi}(T)\right)^\top  D_x^2 g (Y_{t\xi}(T),\lr(Y_{t\xi}(T)))\dr_\eta Y_{t\xi}(T)\\
		&\quad + \left(\dr_\eta Y_{t\xi}(T)\right)^\top  \widetilde{\e}\left[\left[\left(D_y \frac{d}{d\nu}D_x g \right) (Y_{t\xi}(T),\lr(Y_{t\xi}(T)))\left(\widetilde{Y_{t\xi}}(T)\right) \right]^\top \widetilde{\dr_\eta Y_{t\xi}}(T)\right]\bigg]\\
		\geq\ & \left(2 \lambda_g-L_g\right)\e\left[\left|\dr_\eta Y_{t\xi}(T)\right|^2 \right]-\e\left[ \left(\dr_\eta p_{t\xi}(t)\right)^\top  \eta \right]\\
		\geq\ & -\epsilon \e\left[ \left|\dr_\eta p_{t\xi}(t)\right|^2\right]-\frac{1}{4\epsilon}\e\left[|\eta|^2\right].
\end{align*}
Therefore, we have
\begin{equation*}
	\begin{split}
		& \lambda_v \e\left[\int_t^T |\dr_\eta v_{t\xi}(s)|^2 ds\right]\le \epsilon \e\left[ \left|\dr_\eta p_{t\xi}(t)\right|^2\right]+\frac{1}{4\epsilon}\e\left[|\eta|^2\right].
	\end{split}
\end{equation*}
Substituting \eqref{lem3_3} into the last inequality, we have
\begin{equation*}
	\begin{split}
		& \lambda_v \e\left[\int_t^T |\dr_\eta v_{t\xi}(s)|^2 ds\right]\le \epsilon C(L,T) \e\left[\int_t^T |\dr_\eta v_{t\xi}(s)|^2ds\right]+C(L,T)\left(1+\frac{1}{\epsilon}\right)\e\left[|\eta|^2\right].
	\end{split}
\end{equation*}
By choosing $\epsilon:=\frac{\lambda_v}{2 C(L,T)}$, we have
\begin{equation*}
	\begin{split}
		& \frac{\lambda_v}{2} \e\left[\int_t^T |\dr_\eta v_{t\xi}(s)|^2 ds\right]\le C(L,T)\left(1+\frac{1}{\lambda_v}\right)\e\left[|\eta|^2\right].
	\end{split}
\end{equation*}
Substituting the last inequality back into \eqref{lem3_2} and \eqref{lem3_3}, we obtain \eqref{lem3_1}, which is valid for all $\lambda_v>0$.

\subsection{Proof of Theorem~\ref{lem:4}}\label{pf_lem4}

We first establish \eqref{lem4_1}. For $\epsilon\in(0,1)$ and $s\in[t,T]$, we denote by $\de^\epsilon Y(s):=\frac{1}{\epsilon}\left[Y_{t\xi^\epsilon}(s)-Y_{t\xi}(s)\right]$ and $\delta^\epsilon Y(s):=\de Y^\epsilon(s)-\dr_\eta Y_{t\xi}(s)$. Similar definitions apply to $\de^\epsilon p(s),\de^\epsilon q(s)$ and $\delta^\epsilon p(s),\delta^\epsilon q(s)$. For the sake of convenience, denote by $\dr_\eta\Theta_{t\xi}(s):=(\dr_\eta Y_{t\xi}(s),\dr_\eta p_{t\xi}(s),\dr_\eta q_{t\xi}(s))$, $\de^\epsilon\Theta(s):=(\de^\epsilon Y(s),\de^\epsilon p(s),\de^\epsilon q(s))$ and $\delta^\epsilon\Theta(s):=(\delta^\epsilon Y(s),\delta^\epsilon p(s),\delta^\epsilon q(s))$. From \eqref{lem2_2},  we see that 
\begin{equation}\label{lem4_2}
	\begin{split}
		&\e\bigg[ \sup_{t\le s\le T}\left|\left(\de^\epsilon Y(s),\de^\epsilon p(s)\right)^\top\right|^2 + \int_t^T \left|\de^\epsilon q(s)\right|^2 ds \bigg]\le C(L,T,\lambda_v)\e[|\eta|^2].
	\end{split}
\end{equation}	
Then, from \eqref{lem3_1} we know that
\begin{equation}\label{lem4_2'}
	\begin{split}
		&\e\left[\sup_{t\le s\le T}\left|\left(\dr_\eta Y_{t\xi}(s),\dr_\eta p_{t\xi}(s),\delta^\epsilon Y(s),\delta^\epsilon p(s)\right)^\top\right|^2+ \int_t^T\left|\left(\dr_\eta q_{t\xi}(s),\delta^\epsilon q(s)\right)^\top \right|^2 ds\right]\\
		\le\ & C(L,T,\lambda_v)\e\left[|\eta|^2\right].
	\end{split}
\end{equation}
For $Y,p\in L^2(\Omega,\f,\mathbb{P};\brn)$ and $q\in(L^2(\Omega,\f,\mathbb{P};\brn))^n$, we use the notation $\Theta:=(Y,\lr(Y),p,q)$ and define
\begin{align*}
	\mathbf{V}(s,\Theta,\Theta'):=\ &\left[D_x\hv  (s,\Theta)\right]^\top Y'+\widetilde{\e}\left[\left[D_y\frac{d\hv}{d\nu}  (s,\Theta)\left(\ty\right) \right]^\top \widetilde{Y'}\right]\\
	&+\left[D_p\hv (s,\Theta) \right]^\top p'+\sum_{j=1}^n \left[D_{q^j}\hv
	  (s,\Theta) \right]^\top {q'}^j,
\end{align*}
where $\widetilde{Y'}$ is an independent copy of $Y'$. Then, the process $\delta^\epsilon\Theta$ satisfies the following FBSDEs: for $s\in[t,T]$,
\small
\begin{align*}
		\delta^\epsilon Y(s)=\ &\int_t^s \bigg\{  \widetilde{\e}\bigg[\int_0^1 \bigg[\left[D \frac{db_0}{d\nu} \left(r,\lr\left(Y_{t\xi}^{h,\epsilon}(r)\right)\right)\left(\widetilde{Y_{t\xi}^{h,\epsilon}}(r)\right) \right]^\top \widetilde{\de^\epsilon Y}(s)\\
		&\ \qquad\qquad\qquad -\left[D \frac{db_0}{d\nu} (r,\lr(Y_{t\xi}(r)))\left(\widetilde{Y_{t\xi}}(r)\right) \right]^\top \widetilde{\dr_\eta Y_{t\xi}}(s)\bigg]dh \bigg]+b_1(r)\;\delta^\epsilon Y(r)\\
		&\qquad +b_2(s)\int_0^1 \delta^{h,\epsilon}\mathbf{V}(r) dh \bigg\}dr\\
		&+\int_t^s \bigg\{ \widetilde{\e}\bigg[ \int_0^1 \bigg[\left[D \frac{d\sigma_0}{d\nu} \left(r,\lr\left(Y_{t\xi}^{h,\epsilon}(r)\right)\right)\left(\widetilde{Y_{t\xi}^{h,\epsilon}}(r)\right) \right]^\top \widetilde{\de^\epsilon Y}(s)\\
		&\ \quad\qquad\qquad\qquad -\left[D \frac{d\sigma_0}{d\nu} (r,\lr(Y_{t\xi}(r)))\left(\widetilde{Y_{t\xi}}(r)\right) \right]^\top \widetilde{\dr_\eta Y_{t\xi}}(r)\bigg]dh \bigg]+\sigma_1(r)\; \delta^\epsilon Y(r)\\
		&\quad\qquad +\sigma_2(s)\int_0^1 \delta^{h,\epsilon}\mathbf{V}(r) dh \bigg\} dB(r),\\
		\delta^\epsilon p(s)=\ &\int_0^1 \left[ \left[D_x^2 g \left(Y_{t\xi}^{h,\epsilon}(T),\lr\left(Y_{t\xi}^{h,\epsilon}(T)\right)\right) \right]^\top \de^\epsilon Y(T) - \left[D_x^2 g  (Y_{t\xi}(T),\lr(Y_{t\xi}(T))) \right]^\top \dr_\eta Y_{t\xi}(T)\right]dh\\
		&+\widetilde{\e}\bigg[\int_0^1 \bigg[\left[\left(D_y \frac{d}{d\nu}D_x g \right) \left(Y_{t\xi}^{h,\epsilon}(T),\lr\left(Y_{t\xi}^{h,\epsilon}(T)\right)\right)\left(\widetilde{Y_{t\xi}^{h,\epsilon}}(T)\right) \right]^\top \widetilde{\de^\epsilon Y}(T)\\
		&\quad\qquad\qquad -\left[\left(D_y \frac{d}{d\nu}D_x g \right) (Y_{t\xi}(T),\lr(Y_{t\xi}(T)))\left(\widetilde{Y_{t\xi}}(T)\right) \right]^\top \widetilde{\dr_\eta Y_{t\xi}}(T)\bigg]dh\bigg]\\
		& + \int_s^T \bigg\{ b _1(r)^\top \delta^\epsilon p(r)+\sum_{j=1}^n\left(\sigma_1^j (r)\right)^\top \delta^\epsilon q^j(r)\\
		&\quad\qquad +\int_0^1 \left[\left[D_x^2 f  \left(r,\theta_{t\xi}^{h,\epsilon}(r)\right) \right]^\top \de^\epsilon Y(r)- \left[D_x^2 f  (r,\theta_{t\xi}(r)) \right]^\top \dr_\eta Y_{t\xi}(s)\right]dh\\
		&\quad\qquad +\widetilde{\e}\bigg[\int_0^1 \bigg[ \left[\left(D_y \frac{d}{d\nu}D_x f \right) \left(r,\theta_{t\xi}^{h,\epsilon}(r)\right) \left(\widetilde{Y_{t\xi}^{h,\epsilon}}(r) \right) \right]^\top \widetilde{\de^\epsilon Y}(s)\\
		&\qquad\qquad\qquad\qquad -\left[\left(D_y \frac{d}{d\nu}D_x f \right) (r,\theta_{t\xi}(r))\left(\widetilde{Y_{t\xi}}(r)\right) \right]^\top \widetilde{\dr_\eta Y_{t\xi}}(r)\bigg]dh\bigg]\\
		&\quad\qquad +\int_0^1 \Big[\left[D_vD_x f \left(r,\theta_{t\xi}^{h,\epsilon}(r)\right) \right]^\top \mathbf{V}\left(r,\Theta_{t\xi}^{h,\epsilon}(r),\de^\epsilon\Theta(r)\right)\\
		&\qquad\qquad\qquad  -\left[D_vD_x f  (r,\theta_{t\xi}(r))\right]^\top\mathbf{V}(r,\Theta_{t\xi}(r),\dr_\eta\Theta_{t\xi}(r))\Big]dh \bigg\}dr-\int_s^T \delta^\epsilon q(r)\; dB(r),
\end{align*}    
\normalsize
where $Y_{t\xi}^{h,\epsilon}(r):=Y_{t\xi}(r)+h\epsilon\de^\epsilon Y(r)$, $\theta_{t\xi}^{h,\epsilon}(r):=\theta_{t\xi}(r)+h\epsilon\de^\epsilon\theta(r)$, $\Theta_{t\xi}^{h,\epsilon}(r):=\Theta_{t\xi}(r)+h\epsilon\de^\epsilon\Theta(r)$ and  $\delta^{h,\epsilon}\mathbf{V}(r):=\mathbf{V}\left(r,\Theta_{t\xi}^{h,\epsilon}(r),\de^\epsilon\Theta(r)\right)-\mathbf{V}(r,\Theta_{t\xi}(r),\dr_\eta\Theta_{t\xi}(r))$ for $(r,h)\in[t,T]\times[0,1]$; and $\left(\widetilde{Y_{t\xi}^{h,\epsilon}}(r),\widetilde{\de^\epsilon Y}(r),\widetilde{\dr_\eta Y_{t\xi}}(r)\right)$ is an independent copy of $\left(Y_{t\xi}^{h,\epsilon}(r),\de^\epsilon Y(r),\dr_\eta Y_{t\xi}(r)\right)$. From Conditions \eqref{optimal_condition} and It\^o's formula, we have
\begin{align}
	&\e\left[\left(\delta^\epsilon p(T)\right)^\top \delta^\epsilon Y(T)\right] \notag \\
	=\ & \int_t^T\int_0^1 \e\widetilde{\e} \Bigg\{ - \left[\left(
	\begin{array}{cc}
		D_v^2 f & D_vD_x f\\
		D_xD_v f & D_x^2 f
	\end{array}
	\right)(s,\theta_{t\xi}(s)) \right] \left(
	\begin{array}{cc}
		\delta^{h,\epsilon}\mathbf{V}(s)\\
		\delta^\epsilon Y(s)
	\end{array}
	\right)^{\otimes 2}  \notag \\
	&\quad\qquad\qquad +\left(\delta^\epsilon p(s)\right)^\top   \left[D \frac{db_0}{d\nu} \left(s,\lr\left(Y_{t\xi}(s)\right)\right)\left(\widetilde{Y_{t\xi}}(s)\right) \right]^\top \widetilde{\delta^\epsilon Y}(s) \notag \\
	&\quad\qquad\qquad +\sum_{j=1}^n \left(\delta^\epsilon q^j(s)\right)^\top  \left[D \frac{d\sigma^j_0}{d\nu} \left(s,\lr\left(Y_{t\xi}(s)\right)\right)\left(\widetilde{Y_{t\xi}}(s)\right) \right]^\top \widetilde{\delta^\epsilon Y}(s) \notag \\
	&\quad\qquad\qquad -\left(\delta^{h,\epsilon}\mathbf{V}(s)\right)^\top  \left[\left(D_y\frac{d}{d\nu}D_v f \right)  (s,\theta_{t\xi}(s))\left(\widetilde{Y_{t\xi}}(s)\right) \right]^\top \widetilde{\delta^\epsilon Y}(s)   \notag \\
	&\quad\qquad\qquad -\left(\delta^\epsilon Y(s)\right)^\top  \left[\left(D_y \frac{d}{d\nu}D_x f \right) \left(s,\theta_{t\xi}(s)\right)\left(\widetilde{Y_{t\xi}}(s)\right)\right]^\top  \widetilde{\delta^\epsilon Y}(s) \notag \\
	&\quad\qquad\qquad + \left(\delta^\epsilon p(s)\right)^\top  \left[D \frac{db_0}{d\nu} \left|_{(s,\lr(Y_{t\xi}(s)))\left(\widetilde{Y_{t\xi}}(s)\right)}^{\left(s,\lr\left(Y_{t\xi}^{h,\epsilon}(s)\right)\right)\left(\widetilde{Y_{t\xi}^{h,\epsilon}}(s)\right)} \right.  \right]^\top \widetilde{\de^\epsilon Y}(s) \notag \\
	&\quad\qquad\qquad +\sum_{j=1}^n \left(\delta^\epsilon q^j(s)\right)^\top  \left[D \frac{d\sigma^j_0}{d\nu} \left|_{\left(s,\lr\left(Y_{t\xi}(s)\right)\right)\left(\widetilde{Y_{t\xi}}(s)\right)  }^{\left(s,\lr\left(Y_{t\xi}^{h,\epsilon}(s)\right)\right)\left(\widetilde{Y_{t\xi}^{h,\epsilon}}(s)\right) } \right. \right]^\top \widetilde{\de^\epsilon Y}(s) \notag \\
	&\quad\qquad\qquad  -\left(\delta^{h,\epsilon}\mathbf{V}(s)\right)^\top  \left[D_v^2f (s,\theta_{t\xi}(s))\right]^\top \left[\mathbf{V}\left(s,\Theta_{t\xi}(s),\de^\epsilon\Theta(s)\right)-\mathbf{V}\left(s,\Theta_{t\xi}^{h,\epsilon}(s),\de^\epsilon\Theta_{t\xi}(s)\right)\right] \notag \\
	&\quad\qquad\qquad  -\left(\delta^\epsilon Y(s)\right)^\top   \left[D_vD_x f \left|_{\left(s,\theta_{t\xi}(s)\right)}^{ \left(s,\theta_{t\xi}^{h,\epsilon}(s)\right)} \right.  \right]^\top \mathbf{V}\left(s,\Theta_{t\xi}^{h,\epsilon}(s),\de^\epsilon\Theta(s)\right)  \notag \\
	&\quad\qquad\qquad  -\left(\delta^\epsilon Y(s)\right)^\top  \left[D_x^2 f  \left|_{\left(s,\theta_{t\xi}(s)\right)}^{ \left(s,\theta_{t\xi}^{h,\epsilon}(s)\right)} \right. \right]^\top  \de^\epsilon Y(s) \notag \\
	&\quad\qquad\qquad -\left(\delta^\epsilon Y(s)\right)^\top  \left[\left(D_y \frac{d}{d\nu}D_x f \right) \left|_{(s,\theta_{t\xi}(s))\left(\widetilde{Y_{t\xi}}(s)\right) }^{\left(s,\theta_{t\xi}^{h,\epsilon}(s)\right)\left(\widetilde{Y_{t\xi}^{h,\epsilon}}(s)\right) } \right.\right]^\top \widetilde{\de^\epsilon Y}(s)  \Bigg\}dh ds, \label{lem4_3} 
\end{align}
where $\widetilde{\delta^\epsilon Y}(s)$ is an independent copy of $\delta^\epsilon Y(s)$. From \eqref{convex}, we know that 
\begin{align*}
	&\e\left[\left(\delta^\epsilon p(T)\right)^\top \delta^\epsilon Y(T)\right] \notag \\
	\le\ & -\lambda_v \e\widetilde{\e} \left[\int_t^T\int_0^1 \left| \delta^{h,\epsilon}\mathbf{V}(s) \right|^2 dhds\right] +I_1(\epsilon)\\
	&+ C(L)\left(1+\frac{1}{\lambda_v}\right) \e\widetilde{\e} \int_t^T\int_0^1 \left[\left| \delta^{\epsilon}Y(s) \right|^2+L \left| \delta^{\epsilon}Y(s) \right| \left(\left| \delta^{\epsilon}p(s) \right|+\left| \delta^{\epsilon}q(s) \right|+\left| \delta^{h,\epsilon} \mathbf{V}(s) \right| \right) \right] dhds,
\end{align*}
where
\small
\begin{align*}
	I_1(\epsilon):= \ & \e\widetilde{\e} \int_t^T\int_0^1  \bigg\{  L\left|\delta^{h,\epsilon}\mathbf{V}(s)\right| \left|\mathbf{V}\left(s,\Theta_{t\xi}(s),\de^\epsilon\Theta(s)\right)-\mathbf{V}\left(s,\Theta_{t\xi}^{h,\epsilon}(s),\de^\epsilon\Theta_{t\xi}(s)\right)\right| \\
	& +\left|\delta^\epsilon p(s)\right| \left| \widetilde{\de^\epsilon Y}(s)\right|  \left|D \frac{db_0}{d\nu}\left(s,\lr\left(Y_{t\xi}^{h,\epsilon}(s)\right)\right)\left(\widetilde{Y_{t\xi}^{h,\epsilon}}(s)\right) - D \frac{db_0}{d\nu}\left( s,\lr(Y_{t\xi}(s))\right)\left(\widetilde{Y_{t\xi}}(s)\right) \right|  \\
	& +\left|\delta^\epsilon q(s)\right| \left| \widetilde{\de^\epsilon Y}(s)\right|  \left|D \frac{d\sigma_0}{d\nu}\left(s,\lr\left(Y_{t\xi}^{h,\epsilon}(s)\right)\right)\left(\widetilde{Y_{t\xi}^{h,\epsilon}}(s)\right) - D \frac{d\sigma_0}{d\nu}\left( s,\lr(Y_{t\xi}(s))\right)\left(\widetilde{Y_{t\xi}}(s)\right) \right| \\
	& +\left|\delta^\epsilon Y(s)\right|  \left|\mathbf{V}\left(s,\Theta_{t\xi}^{h,\epsilon}(s),\de^\epsilon\Theta(s)\right)\right|  \left|D_vD_x f\left(s,\theta_{t\xi}^{h,\epsilon}(s) \right)-D_vD_x f\left(s,\theta_{t\xi}(s)\right) \right| \\
	& +\left|\delta^\epsilon Y(s)\right|  \left|\widetilde{\de^\epsilon Y}(s)\right| \left|\left(D_y \frac{d}{d\nu}D_x f\right)\left(s,\theta_{t\xi}^{h,\epsilon}(s)\right)\left(\widetilde{Y_{t\xi}^{h,\epsilon}}(s)\right) -\left(D_y \frac{d}{d\nu}D_x f\right) (s,\theta_{t\xi}(s))\left(\widetilde{Y_{t\xi}}(s)\right) \right|  \\
	& +\left|\delta^\epsilon Y(s)\right|  \left| \de^\epsilon Y(s)\right|  \left|D_x^2 f\left(s,\theta_{t\xi}^{h,\epsilon}(s)\right)-D_x^2 f\left(s,\theta_{t\xi}(s)\right)  \right| \bigg\} dh ds.
\end{align*}
\normalsize
Similarly, 
\begin{align}
	& \e\left[\left(\delta^\epsilon p(T)\right)^\top  \delta^\epsilon Y(T)\right]\notag \\
	=\  & \int_0^1 \e\bigg\{ \left(\delta^\epsilon Y(T)\right)^\top  \left[D_x^2 g \left(Y_{t\xi}(T),\lr\left(Y_{t\xi}(T)\right)\right) \right]^\top \delta^\epsilon Y(T) \notag\\
	&\quad\qquad +\left(\delta^\epsilon Y(T)\right)^\top \left[D_x^2 g  \left(Y_{t\xi}^{h,\epsilon}(T),\lr\left(Y_{t\xi}^{h,\epsilon}(T)\right)\right)- D_x^2 g  (Y_{t\xi}(T),\lr(Y_{t\xi}(T))) \right]^\top \de^\epsilon Y(T) \notag\\
	&\quad\qquad +\left(\delta^\epsilon Y(T)\right)^\top  \widetilde{\e}\bigg[\left[\left(D_y \frac{d}{d\nu}D_x g \right) \left(Y_{t\xi}(T),\lr\left(Y_{t\xi}(T)\right)\right)\left(\widetilde{Y_{t\xi}}(T)\right) \right]^\top \widetilde{\delta^\epsilon Y}(T) \notag\\
	&\quad\qquad\qquad\qquad\qquad +\bigg[\left(D_y \frac{d}{d\nu}D_x g \right) \left(Y_{t\xi}^{h,\epsilon}(T),\lr\left(Y_{t\xi}^{h,\epsilon}(T)\right)\right)\left(\widetilde{Y_{t\xi}^{h,\epsilon}}(T)\right)  \notag\\
	&\ \qquad\qquad\qquad\qquad\qquad -\left(D_y \frac{d}{d\nu}D_x g\right)  (Y_{t\xi}(T),\lr(Y_{t\xi}(T)))\left(\widetilde{Y_{t\xi}}(T)\right) \bigg]^\top \widetilde{\de^\epsilon Y}(T)\bigg]\bigg\}dh \notag\\
	\geq\ & -2L \e\left[\left|\delta^\epsilon Y(T)\right|^2\right]-I_2(\epsilon), \label{lem4_3'}
\end{align}
where
\small
\begin{align*}
	I_2(\epsilon):=\ &\e\bigg[\left|\delta^\epsilon Y(T)\right| \left|\widetilde{\de^\epsilon Y}(T)\right| \bigg(  \left|D_x^2 g \left(Y_{t\xi}^{h,\epsilon}(T),\lr\left(Y_{t\xi}^{h,\epsilon}(T)\right)\right) - D_x^2 g (Y_{t\xi}(T),\lr(Y_{t\xi}(T))) \right|  \\
	&+ \left| D_y \frac{d}{d\nu}D_x g \left(Y_{t\xi}^{h,\epsilon}(T),\lr\left(Y_{t\xi}^{h,\epsilon}(T)\right)\right)\left(\widetilde{Y_{t\xi}^{h,\epsilon}}(T)\right) - D_y \frac{d}{d\nu}D_x g (Y_{t\xi}(T),\lr(Y_{t\xi}(T)))\left(\widetilde{Y_{t\xi}}(T)\right)\right| \bigg) \bigg].
\end{align*}
\normalsize
Therefore, we have
\begin{align}
		&\lambda_v \e\widetilde{\e}\left[ \int_t^T\int_0^1 \left| \delta^{h,\epsilon}\mathbf{V}(s) \right|^2 dhds \right]  \notag \\
		\le\ & C(L)\left(1+\frac{1}{\lambda_v}\right) \e\widetilde{\e} \int_t^T\int_0^1 \left[\left| \delta^{\epsilon}Y(s) \right|^2+L \left| \delta^{\epsilon}Y(s) \right| \left(\left| \delta^{\epsilon}p(s) \right|+\left| \delta^{\epsilon}q(s) \right|+\left| \delta^{h,\epsilon} \mathbf{V}(s) \right| \right) \right] dhds \notag \\
		&+ I_1(\epsilon)+I_2(\epsilon) +2L \e\left[\left|\delta^\epsilon Y(T)\right|^2\right], \label{lem4_9}
\end{align}
With the usual standard arguments for SDEs and BSDEs, we have
\begin{equation}\label{lem4_4}
	\begin{split}
		&\e\bigg[\sup_{t\le s\le T}\left|\left(\delta^\epsilon Y(s),\delta^\epsilon p(s)\right)^\top\right|^2 + \int_t^T \left|\delta^\epsilon q(s) \right|^2ds \bigg]\\
		\le\ & C(L,T)\e\widetilde{\e} \int_t^T\int_0^1 \left| \delta^{h,\epsilon}\mathbf{V}(s) \right|^2 dhds +C(L,T)I_3(\epsilon),
	\end{split}    	
\end{equation}    
where
\small
\begin{align*}
	I_3(\epsilon):=\e\widetilde{\e} \int_t^T\int_0^1  & \bigg\{ \left| \widetilde{\de^\epsilon Y}(s)\right|^2  \left|D \frac{db_0}{d\nu}\left(s,\lr\left(Y_{t\xi}^{h,\epsilon}(s)\right)\right)\left(\widetilde{Y_{t\xi}^{h,\epsilon}}(s)\right) - D \frac{db_0}{d\nu}\left( s,\lr(Y_{t\xi}(s))\right)\left(\widetilde{Y_{t\xi}}(s)\right) \right| ^2 \\
	& + \left| \widetilde{\de^\epsilon Y}(s)\right|^2  \left|D \frac{d\sigma_0}{d\nu}\left(s,\lr\left(Y_{t\xi}^{h,\epsilon}(s)\right)\right)\left(\widetilde{Y_{t\xi}^{h,\epsilon}}(s)\right) - D \frac{d\sigma_0}{d\nu}\left( s,\lr(Y_{t\xi}(s))\right)\left(\widetilde{Y_{t\xi}}(s)\right) \right|^2 \\
	& +  \left|\mathbf{V}\left(s,\Theta_{t\xi}^{h,\epsilon}(s),\de^\epsilon\Theta(s)\right)\right|^2  \left|D_vD_x f\left(s,\theta_{t\xi}^{h,\epsilon}(s) \right)-D_vD_x f\left(s,\theta_{t\xi}(s)\right) \right| \\
	& + \left|\widetilde{\de^\epsilon Y}(s)\right|^2 \left|\left(D_y \frac{d}{d\nu}D_x f\right) \left(s,\theta_{t\xi}^{h,\epsilon}(s)\right)\left(\widetilde{Y_{t\xi}^{h,\epsilon}}(s)\right) -\left(D_y \frac{d}{d\nu}D_x f\right) (s,\theta_{t\xi}(s))\left(\widetilde{Y_{t\xi}}(s)\right) \right|^2  \\
	& + \left| \de^\epsilon Y(s)\right|^2  \left|D_x^2 f\left(s,\theta_{t\xi}^{h,\epsilon}(s)\right)-D_x^2 f\left(s,\theta_{t\xi}(s)\right)  \right| \bigg\} dh ds.
\end{align*}
\normalsize
Applying \eqref{lem4_4} to \eqref{lem4_9}, one can find a constant $c(L,T)$ depending only on $(L,T)$, such that when $\lambda_v\geq c(L,T)$, we have
\begin{equation*}\label{lem4_6}
	\begin{split}
		& \lambda_v \e\widetilde{\e}\left[ \int_t^T\int_0^1 \left| \delta^{h,\epsilon}\mathbf{V}(s) \right|^2 dhds \right]  \le C(L,T,\lambda_v)\left[I_1(\epsilon)+I_2(\epsilon)+I_3(\epsilon)\right].
	\end{split}	
\end{equation*}
Using the last inequality with \eqref{lem4_4}, we have
\begin{equation}\label{lem4_7}
	\begin{split}
		&\e\bigg[\sup_{t\le s\le T}\left|\left(\delta^\epsilon Y(s),\delta^\epsilon p(s)\right)^\top \right|^2 + \int_t^T \left|\delta^\epsilon q(s) \right|^2ds \bigg]\le C(L,T,\lambda_v)\left[I_1(\epsilon)+I_2(\epsilon)+I_3(\epsilon)\right].
	\end{split}
\end{equation}
From Estimates \eqref{lem4_2} and \eqref{lem4_2'}, Assumptions (A1) and (A2), and the dominated convergence theorem, we therefore have
\small
\begin{align*}
	\lim_{\epsilon\to0}\e\widetilde{\e} \int_t^T\int_0^1   & \left| \widetilde{\de^\epsilon Y}(s)\right|^2  \left|D \frac{db_0}{d\nu}\left(s,\lr\left(Y_{t\xi}^{h,\epsilon}(s)\right)\right)\left(\widetilde{Y_{t\xi}^{h,\epsilon}}(s)\right) - D \frac{db_0}{d\nu}\left( s,\lr(Y_{t\xi}(s))\right)\left(\widetilde{Y_{t\xi}}(s)\right) \right| ^2  dh ds=0.
\end{align*}
\normalsize
Using similar approach for all other $I_\cdot$'s, we can deduce that altogether 
\begin{align}\label{lem4_10}
	\lim_{\epsilon\to0}\left[I_1(\epsilon)+I_2(\epsilon)+I_3(\epsilon)\right]=0.
\end{align}
Therefore, from \eqref{lem4_7}, we deduce \eqref{lem4_1}.  

If Assumption (A3') is satisfied, then, from \eqref{lem4_3}, \eqref{convex'}, Assumption (A2) and the Young's inequality with a weight of $\frac{\lambda_v}{L}$, we have
\begin{align*}
	&\e\left[\left(\delta^\epsilon p(T)\right)^\top  \delta^\epsilon Y(T)\right] \notag \\
	\le\ & -2\int_t^T\int_0^1 \e\left[  \lambda_v\left| \delta^{h,\epsilon}\mathbf{V}(s)\right|^2 +\lambda_x \left| \delta^\epsilon Y(s) \right|^2 \right] dh ds\\
	&+\int_t^T\int_0^1 \e\widetilde{\e} \Bigg\{  L_v\left|\delta^{h,\epsilon}\mathbf{V}(s)\right| \left|\widetilde{\delta^\epsilon Y}(s)\right| +L_x \left| \delta^\epsilon Y(s) \right| \left|\widetilde{\delta^\epsilon Y}(s)\right|   \Bigg\} dh ds+I_1(\epsilon)\\
	\le\ &  \int_t^T\int_0^1 \e \left[ -\lambda_v\left| \delta^{h,\epsilon}\mathbf{V}(s)\right|^2 - \left(2\lambda_x-\frac{L_v^2}{4\lambda_v}-L_x \right)  \left| \delta^\epsilon Y(s) \right|^2 \right] dh ds+I_1(\epsilon)\\
	\le\ &  -\lambda_v \int_t^T\int_0^1 \e \left[\left| \delta^{h,\epsilon}\mathbf{V}(s)\right|^2  \right] dh ds+I_1(\epsilon),
\end{align*}
where the last inequality follows from the small mean field effect condition in Assumption (A3'): $\lambda_x\geq\frac{L_v^2}{8\lambda_v}+\frac{L_x}{2}$. Similarly, from \eqref{lem4_3'}, we have
\begin{align*}
	& \e\left[\left(\delta^\epsilon p(T)\right)^\top  \delta^\epsilon Y(T)\right] \\
	\geq\  & \int_0^1 \e \widetilde{\e}\left[ 2\lambda_g  \left|\delta^\epsilon Y(T)\right|^2  -L_g \left|\delta^\epsilon Y(T)\right| \left| \widetilde{\delta^\epsilon Y}(T)\right| \right] dh-I_2(\epsilon)\\
	\geq\ & \int_0^1 \e \left[ (2\lambda_g-L_g)  \left|\delta^\epsilon Y(T)\right|^2   \right] dh-I_2(\epsilon)\geq -I_2(\epsilon).
\end{align*}
Therefore, we have
\begin{align*}
	\lambda_v \int_t^T\int_0^1 \e \left[\left| \delta^{h,\epsilon}\mathbf{V}(s)\right|^2  \right] dh ds \le I_1(\epsilon)+I_2(\epsilon).
\end{align*}
Then, from \eqref{lem4_4} and \eqref{lem4_10}, we achieve \eqref{lem4_1} for any $\lambda_v>0$.  The claimed G\^ateaux differentiability is then a consequence of \eqref{lem4_1} and Lemma~\ref{lem:3}. The proof of continuity of the processes $(\dr_\eta Y_{t\xi}(s),\dr_\eta p_{t\xi}(s),\dr_\eta q_{t\xi}(s))$ in $\xi$ is similar to that of \eqref{lem4_1}, and is thus omitted.

\section{Proof of Statements in Section~\ref{sec:state}}\label{pf:state}

\subsection{Proof of Lemma~\ref{prop:4}}\label{pf_prop4}

The solvability of FBSDEs \eqref{FB:dr'} is shown in a similar way to  that of Lemma~\ref{lem:3}, and is thus omitted. We only prove \eqref{prop4_1} here. From the uniqueness of the solution of FBSDEs \eqref{intro_2}, we see 
\begin{align*}
	\Theta_{tx\mu}(s)|_{x=\xi}=\Theta_{t\xi}(s),\quad s\in[t,T];
\end{align*}
see \cite{BR} for details. Then, from FBSDEs \eqref{FB:x} and \eqref{FB:dr'}, we know that the processes
\begin{align*}
	D_xY_{tx\mu}(s)\big|_{x=\xi}\ \eta+\bd_\eta Y_{t\xi}(s), \ D_xp_{tx\mu}(s)\big|_{x=\xi}\ \eta+\bd_\eta p_{t\xi}(s), \ D_xq_{tx\mu}(s)\big|_{x=\xi}\ \eta+\bd_\eta q_{t\xi}(s)
\end{align*}
satisfy the following FBSDEs	
\begin{align*}
		&D_xY_{tx\mu}(s)\big|_{x=\xi}\ \eta+\bd_\eta Y_{t\xi}(s)\\
		=\ & \eta+\int_t^s \bigg\{ b_1(r)\left[D_xY_{tx\mu}(r)\big|_{x=\xi}\ \eta+\bd_\eta Y_{t\xi}(r)\right]+b_2(r)\left[D_x v_{tx\mu}(r)\big|_{x=\xi}\ \eta+\bd_\eta v_{t\xi}(r)\right]\\
		&\qquad\qquad +\widetilde{\e}\left[\left[D \frac{db_0}{d\nu} (r,\lr(Y_{t\xi}(r)))\left(\widetilde{Y_{t\xi}}(r)\right)\right]^\top  \left(\widetilde{D_xY_{tx\mu}}(r)\left|_{x=\widetilde{\xi}}\ \right. \widetilde{\eta}+\widetilde{\bd_\eta Y_{t\xi}}(r)\right)\right] \bigg\}dr\\
		&\;\; +\int_t^s \bigg\{ \sigma_1(r)\left[D_xY_{tx\mu}(r)\big|_{x=\xi}\ \eta+\bd_\eta Y_{t\xi}(r)\right]+\sigma_2(r)\left[D_x v_{tx\mu}(r)\big|_{x=\xi}\ \eta+\bd_\eta v_{t\xi}(r)\right]\\
		&\qquad\qquad +\widetilde{\e}\left[\left[D \frac{d\sigma_0}{d\nu} (r,\lr(Y_{t\xi}(r)))\left(\widetilde{Y_{t\xi}}(r)\right)\right]^\top  \left(\widetilde{D_xY_{tx\mu}}(r)\left|_{x=\widetilde{\xi}}\ \right. \widetilde{\eta}+\widetilde{\bd_\eta Y_{t\xi}}(r)\right)\right] \bigg\}dB(r),\\
		&D_xp_{tx\mu}(s)\big|_{x=\xi}\ \eta+\bd_\eta p_{t\xi}(s)\\
		=\ & \left[D_x^2 g \left(Y_{t\xi}(T),\lr(Y_{t\xi}(T))\right)\right]^\top \left[D_x Y_{tx\mu}(T)\big|_{x=\xi}\ \eta+ \bd_\eta Y_{t\xi}(s)\right]\\
		&+\widetilde{\e}\left[\left[\left(D_y \frac{d}{d\nu}D_x g\right)  (Y_{t\xi}(T),\lr(Y_{t\xi}(T)))\left(\widetilde{Y_{t\xi}}(T)\right)\right]^\top  \left(\widetilde{D_xY_{tx\mu}}(T)\left|_{x=\widetilde{\xi}}\ \right. \widetilde{\eta}+\widetilde{\bd_\eta Y_{t\xi}}(T)\right)\right]\\
		& +\int_s^T \bigg\{b _1(r)^\top \left[D_xp_{tx\mu}(r)\big|_{x=\xi}\ \eta+ \bd_\eta p_{t\xi}(r)\right]+\sum_{j=1}^n \left(\sigma_1^j (r)\right)^\top \left[D_x q^j_{tx\mu}(r)\big|_{x=\xi}\ \eta+\bd_\eta q^j_{t\xi}(r)\right]\\
		&\quad\qquad+\left[D_x^2 f (r,\theta_{t\xi}(r))\right]^\top  \left[D_x Y_{tx\mu}(r)\big|_{x=\xi}\ \eta+\bd_\eta Y_{t\xi}(r)\right] \\
		&\quad\qquad +\widetilde{\e}\left[\left[\left(D_y \frac{d}{d\nu}D_x f\right) (r,\theta_{t\xi}(r))\left(\widetilde{Y_{t\xi}}(r)\right)\right]^\top  \left(\widetilde{\bd_\eta Y_{t\xi}}(r)+\widetilde{D_xY_{tx\mu}}(r)\left|_{x=\widetilde{\xi}}\ \right. \widetilde{\eta}\right)\right]\\
		&\quad\qquad+ \left[D_vD_x f (r,\theta_{t\xi}(r))\right]^\top  \left[D_x v_{tx\mu}(r)\big|_{x=\xi}\ \eta+\bd_\eta v_{t\xi}(r)\right]\bigg\}dr\\
		&  -\int_s^T \left[D_x q_{tx\mu}(r)\big|_{x=\xi}\ \eta+\bd_\eta q_{t\xi}(r)\right] dB(r),\quad s\in[t,T],
\end{align*}
and	
\begin{align*}
	&D_x v_{tx\mu}(s)\big|_{x=\xi}\ \eta+\bd_\eta v_{t\xi}(s)\\
	=\ &\left[D_x\hv (s,\Theta_{t\xi}(s)) \right]^\top \left[D_x Y_{tx\mu}(s)\big|_{x=\xi}\ \eta+\bd_\eta Y_{t\xi}(s)\right]\\
	&+\widetilde{\e}\left[\left[D\frac{d\hv}{d\nu} (s,\Theta_{t\xi}(s))\left(\widetilde{Y_{t\xi}}(s)\right)\right]^\top \left(\widetilde{D_xY_{tx\mu}}(s)\left|_{x=\widetilde{\xi}}\ \right. \widetilde{\eta}+\widetilde{\bd_\eta Y_{t\xi}}(s)\right)\right]\\
	&+\left[D_p\hv (s,\Theta_{t\xi}(s))\right]^\top \left[D_x p_{tx\mu}(s)\big|_{x=\xi}\ \eta+\bd_\eta p_{t\xi}(s)\right]\\
	&+\sum_{j=1}^n \left[D_{q^j}\hv (s,\Theta_{t\xi}(s))\right]^\top \left[D_x q^j_{tx\mu}(s)\big|_{x=\xi}\ \eta+\bd_\eta q^j_{t\xi}(s)\right],\quad s\in[t,T],
\end{align*}
which altogether form exactly the FBSDEs system \eqref{FB:dr} for $(\dr_\eta Y_{t\xi},\dr_\eta p_{t\xi},\dr_\eta q_{t\xi})$. From the uniqueness of the solution of FBSDEs \eqref{FB:dr}, we also deduce \eqref{prop4_1}.

\subsection{Proof of Lemma~\ref{prop:3}}\label{pf_prop3} 

In view of Lemma~\ref{prop:4}, the system of FBSDEs \eqref{FB:mu'} also reads
\begin{align}
		\dr_\eta Y_{tx\xi}(s)=\ & \int_t^s \bigg\{\widetilde{\e}\left[\left[D \frac{db_0}{d\nu} (r,\lr(Y_{t\xi}(r)))\left(\widetilde{Y_{t\xi}}(r)\right)\right]^\top \widetilde{\dr_\eta Y_{t\xi}}(r)\right] \notag\\
		&\qquad +b_1(r)\dr_\eta Y_{tx\xi}(r)+b_2(r)\dr_\eta v_{tx\xi}(r)\bigg\}dr \notag\\
		&+\int_t^s \bigg\{\widetilde{\e}\left[\left[D \frac{d\sigma_0}{d\nu} \left(r,\lr(Y_{t\xi}(r))\right)\left(\widetilde{Y_{t\xi}}(r)\right)\right]^\top \widetilde{\dr_\eta Y_{t\xi}}(r)\right] \notag\\
		&\quad\qquad +\sigma_1(r)\dr_\eta Y_{tx\xi}(r)+\sigma_2(r)\dr_\eta v_{tx\xi}(r)\bigg\}dB(r), \notag\\
		\dr_\eta p_{tx\xi}(s)=\ & \left[D_x^2 g  (Y_{tx\mu}(T),\lr(Y_{t\xi}(T)))\right]^\top\dr_\eta Y_{tx\xi}(T) \notag\\
		&+\widetilde{\e}\left\{\left[\left(D_y \frac{d}{d\nu}D_x g\right) (Y_{tx\mu}(T),\lr(Y_{t\xi}(T)))\left(\widetilde{Y_{t\xi}}(T)\right)\right]^\top \widetilde{\dr_\eta Y_{t\xi}}(T)\right\} \notag\\
		&+\int_s^T \bigg\{b _1(r)^\top \dr_\eta p_{tx\xi}(r)+\sum_{j=1}^n\left(\sigma_1^j(r)\right)^\top  \dr_\eta q^j_{tx\xi}(r) +\left[D_x^2 f (r,\theta_{tx\mu}(r))\right]^\top \dr_\eta Y_{tx\xi}(r) \notag\\
		&\quad\qquad +\widetilde{\e}\left[\left[\left(D_y \frac{d}{d\nu}D_x f\right) (r,\theta_{tx\mu}(r))\left(\widetilde{Y_{t\xi}}(r)\right)\right]^\top \widetilde{\dr_\eta Y_{t\xi}}(r)\right]  \notag\\
		&\quad\qquad +\left[D_vD_x f (r,\theta_{tx\mu}(r))\right]^\top\dr_\eta v_{tx\xi}(r)\bigg\}dr \notag \\
		&-\int_s^T \dr_\eta q_{tx\xi}(r)dB(r),\quad s\in[t,T], \label{FB:mu}
\end{align}
where
\begin{align*}
	\dr_\eta v_{tx\xi}(s):=\ &\left[D_x\hv (s,\theta_{tx\mu}(s)) \right]^\top \dr_\eta Y_{tx\xi}(s)+\widetilde{\e}\left[\left[D_y\frac{d\hv}{d\nu}  (s,\Theta_{tx\mu}(s))\left(\widetilde{Y_{t\xi}}(s)\right) \right]^\top\widetilde{\dr_\eta Y_{t\xi}}(s) \right]\\
	&+\left[D_p\hv(s,\Theta_{tx\mu}(s)) \right]^\top \dr_\eta p_{tx\xi}(s)+\sum_{j=1}^n \left[D_{q^j}\hv (s,\Theta_{tx\mu}(s)) \right]^\top\dr_\eta q^j_{tx\xi}(s),\quad s\in[t,T],
\end{align*}
and $\left(\widetilde{Y_{t\xi}}(s),\widetilde{\dr_\eta Y_{t\xi}}(s)\right)$ is an independent copy of $(Y_{t\xi}(s),\dr_\eta Y_{t\xi}(s))$. Then, the proof of the well-posedness of the above FBSDEs is similar as that of Lemma~\ref{lem:3}, and the proof for the G\^ateaux differentiability is similar as that of Theorem~\ref{lem:4}, which is omitted here. And then we aim to prove \eqref{prop3_1}. Following standard arguments of SDEs, we have
\begin{equation}\label{prop3_2}
	\e\bigg[\sup_{t\le s\le T}|\dr_\eta Y_{tx\xi}(s)|^2\bigg]\le C(L,T)\e\bigg[\sup_{t\le s\le T}|\dr_\eta Y_{t\xi}(s)|^2+\int_t^T |\dr_\eta v_{tx\xi}(s)|^2ds\bigg].
\end{equation}
Using standard arguments of BSDEs, from \eqref{prop3_2}, we have
\begin{equation}\label{prop3_3}
	\begin{split}
		&\e\bigg[\sup_{t\le s\le T}|\dr_\eta p_{tx\xi}(s)|^2+ \int_t^T |\dr_\eta q_{tx\xi}(s)|^2 ds\bigg]\\
		\le\ & C(L,T)\e\bigg[\sup_{t\le s\le T}|\dr_\eta Y_{t\xi}(s)|^2+\int_t^T |\dr_\eta v_{tx\xi}(s)|^2ds\bigg].
	\end{split}
\end{equation}    
In accordance with It\^o's formula, Conditions \eqref{optimal_condition} and \eqref{convex}, and Assumptions (A2), we have
\begin{align}
		&\e\left[\left(\dr_\eta p_{tx\xi}(T)\right)^\top  \dr_\eta Y_{tx\xi}(T)\right] \notag \\
		=\ & \int_t^T \e\widetilde{\e}\bigg\{ \left(\dr_\eta p_{tx\xi}(s)\right)^\top  \left[D \frac{db_0}{d\nu}  (s,\lr(Y_{t\xi}(s)))\left(\widetilde{Y_{t\xi}}(s)\right) \right]^\top \widetilde{\dr_\eta Y_{t\xi}}(s)  \notag \\
		&\qquad\qquad +\sum_{j=1}^n \left(\dr_\eta q^j_{tx\xi}(s)\right)^\top  \left[D \frac{d\sigma^j_0}{d\nu} (s,\lr(Y_{t\xi}(s)))\left(\widetilde{Y_{t\xi}}(s)\right) \right]^\top \widetilde{\dr_\eta Y_{t\xi}}(s)  \notag \\
		&\qquad\qquad - \left[\left(
		\begin{array}{cc}
			D_v^2 f & D_vD_x f\\
			D_xD_v f & D_x^2 f
		\end{array}
		\right)(s,\theta_{tx\mu}(s))\right] \left(
		\begin{array}{cc}
			\dr_\eta v_{tx\xi}(s)\\
			\dr_\eta Y_{tx\xi}(s)
		\end{array}
		\right)^{\otimes 2}  \notag \\
		&\qquad\qquad -\left(\dr_\eta v_{tx\xi}(s)\right)^\top  \left[\left(D_y\frac{d}{d\nu}D_v f\right) (s,\theta_{tx\mu}(s))\left(\widetilde{Y_{t\xi}}(s)\right) \right]^\top \widetilde{\dr_\eta Y_{t\xi}}(s)  \notag  \\
		&\qquad\qquad -\left(\dr_\eta Y_{tx\xi}(s)\right)^\top  \left[\left(D_y \frac{d}{d\nu}D_x f\right) (s,\theta_{tx\mu}(s))\left(\widetilde{Y_{t\xi}}(s)\right) \right]^\top \widetilde{\dr_\eta Y_{t\xi}}(s) \bigg\}ds \label{prop3_4} \\
		\le\ & -\lambda_v \e\left[\int_t^T |\dr_\eta v_{tx\xi}(s)|^2 ds\right] \notag  \\
		&+C(L)\left(1+\frac{1}{\lambda_v}\right)\e\bigg[\int_t^T \left|\left(\dr_\eta Y_{t\xi}(s),\dr_\eta Y_{tx\xi}(s),\dr_\eta p_{tx\xi}(s),\dr_\eta q_{tx\xi}(s)\right)^\top\right|^2 ds\bigg].  \notag 
\end{align}	
Therefore, we have
\begin{align*}
	&\lambda_v \e\left[\int_t^T |\dr_\eta v_{tx\xi}(s)|^2 ds\right]\\
	\le\ & C(L,T)\left(1+\frac{1}{\lambda_v}\right)\e\bigg[\sup_{t\le s\le T}\left|\left(\dr_\eta Y_{t\xi}(s),\dr_\eta Y_{tx\xi}(s),\dr_\eta p_{tx\xi}(s)\right)^\top\right|^2 +\int_t^T |\dr_\eta q_{tx\xi}(s)|^2 ds\bigg].
\end{align*}
Substituting \eqref{prop3_2} and \eqref{prop3_3} into the last inequality, we have
\begin{align*}
	\lambda_v \e\left[\int_t^T |\dr_\eta v_{tx\xi}(s)|^2 ds\right]\le C(L,T)\left(1+\frac{1}{\lambda_v}\right)\e\bigg[\sup_{t\le s\le T}\left|\dr_\eta Y_{t\xi}(s)\right|^2+\int_t^T |\dr_\eta v_{tx\xi}(s)|^2 ds\bigg].
\end{align*}
Therefore, there exists a constant $c(L,T)$ depending only on $(L,T)$, such that when $\lambda_v\geq c(L,T)$, we have
\begin{equation*}\label{prop3_7}
	\e\left[\int_t^T |\dr_\eta v_{tx\xi}(s)|^2 ds\right]\le C(L,T,\lambda_v)\e\bigg[\sup_{t\le s\le T}\left|\dr_\eta Y_{t\xi}(s)\right|^2\bigg].
\end{equation*}
Then, from Estimate \eqref{lem3_1}, we know that
\begin{equation}\label{prop3_8}
	\begin{split}
		\e\left[\int_t^T |\dr_\eta v_{t\xi}(s)|^2 ds\right]\le C(L,T,\lambda_v)\e\left[|\eta|^2\right].
	\end{split}
\end{equation}
Substituting Estimate \eqref{lem3_1} and using the last inequality for \eqref{prop3_2} and \eqref{prop3_3}, we obtain \eqref{prop3_1}.

If Assumption (A3') is satisfied, then, from \eqref{prop3_4}, \eqref{convex'}, Assumption (A2) and the Young's inequality with a weight of $\frac{\lambda_v}{L}$, we have
\begin{align}
	&\e\left[\left(\dr_\eta p_{tx\xi}(T)\right)^\top  \dr_\eta Y_{tx\xi}(T)\right]  \notag \\
	\le\ & -2\lambda_v \e\left[\int_t^T |\dr_\eta v_{tx\xi}(s)|^2 ds\right]-2\lambda_x \e\left[\int_t^T |\dr_\eta Y_{tx\xi}(s)|^2 ds\right]   \notag \\
	&+\int_t^T \e\widetilde{\e}\bigg[ L_v \left|\dr_\eta v_{tx\xi}(s)\right|  \left|\widetilde{\dr_\eta Y_{t\xi}}(s) \right|  +L_x \left|\dr_\eta Y_{tx\xi}(s)\right| \left|\widetilde{\dr_\eta Y_{t\xi}}(s)\right| \bigg]ds \notag \\
	\le\ & -\lambda_v \e\left[\int_t^T |\dr_\eta v_{tx\xi}(s)|^2 ds\right]-\left(2\lambda_x-\frac{L_v^2}{4\lambda_v}-L_x \right)\e\left[\int_t^T |\dr_\eta Y_{tx\xi}(s)|^2 ds\right]   \notag \\
	&+ \frac{L_x}{4} \e\left[\int_t^T |\dr_\eta Y_{t\xi}(s)|^2 ds\right] \notag \\
	\le\ &  -\lambda_v \e\left[\int_t^T |\dr_\eta v_{tx\xi}(s)|^2 ds\right]+ \frac{L}{4} \e\left[\int_t^T |\dr_\eta Y_{t\xi}(s)|^2 ds\right], \label{small_mf_eff_1}
\end{align}
where the last inequality follows from the small mean field effect condition in Assumption (A3'): $\lambda_x\geq\frac{L_v^2}{8\lambda_v}+\frac{L_x}{2}$. On the other hand, from \eqref{convex'}, we know that
\begin{align*}
	\e\left[\left(\dr_\eta p_{tx\xi}(T)\right)^\top  \dr_\eta Y_{tx\xi}(T)\right] =\e\left[ \left(\dr_\eta Y_{tx\xi}(T)\right)^\top   \left[D_x^2 g (Y_{tx\mu}(T),\lr(Y_{t\xi}(T))) \right]^\top\dr_\eta Y_{tx\xi}(T)\right]\geq 0.
\end{align*}
Therefore, 
\begin{align*}
	\lambda_v \e\left[\int_t^T |\dr_\eta v_{tx\xi}(s)|^2 ds\right]\le  \frac{L}{4} \e\left[\int_t^T |\dr_\eta Y_{t\xi}(s)|^2 ds\right].
\end{align*}
Then, from Lemma~\ref{lem:3}, we have Estimate \eqref{prop3_8},  which  yields~\eqref{prop3_1}.

\subsection{Proof of Theorem~\ref{prop:5}}\label{pf_prop5}
The proof of the well-posedness of FBSDEs \eqref{FB:xi_y} and \eqref{FB:mu_y}, and the derivation of Estimates \eqref{prop5_01} and \eqref{prop5_02} are similar to those leading to Lemmas~\ref{lem:3} and \ref{prop:3}, and we omit here. We only prove \eqref{prop5_03} and \eqref{prop5_04} here. From the SDE in \eqref{FB:xi_y}, for $s\in[t,T]$, we have
\begin{align*}
	\widehat{\e}\left[\bd Y_{t\xi}\left(s,\widehat{\xi}\right) \widehat{\eta}\right]=\ & \int_t^s \bigg\{\widetilde{\e}\widehat{\e}\bigg[\left[D \frac{db_0}{d\nu} (r,\lr(Y_{t\xi}(r)))\left(\left.\widetilde{Y_{tz\mu}}(r)\right|_{z=\widehat{\xi}}\right) \right]^\top \left.\widetilde{D_zY_{tz\mu}}(r)\right|_{z=\widehat{\xi}} \widehat{\eta}\bigg]\\
	&\ \qquad +\widetilde{\e}\bigg[\left[D \frac{db_0}{d\nu} (r,\lr(Y_{t\xi}(r)))\left(\widetilde{Y_{t\xi}}(r)\right) \right]^\top \widehat{\e}\left[\widetilde{\bd Y_{t\xi}}\left(r,\widehat{\xi}\right) \widehat{\eta}\right]\bigg]\\
	&\ \qquad +b_1(r)\widehat{\e}\left[\bd Y_{t\xi}\left(r,\widehat{\xi}\right) \widehat{\eta}\right]+b_2(r)\widehat{\e}\left[\bd v_{t\xi}\left(r,\widehat{\xi}\right) \widehat{\eta}\right]\bigg\}dr\\
	& +\int_t^s \bigg\{\widetilde{\e}\widehat{\e}\bigg[\left[D \frac{d\sigma_0}{d\nu} (r,\lr(Y_{t\xi}(r)))\left(\left.\widetilde{Y_{tz\mu}}(r)\right|_{z=\widehat{\xi}}\right)  \right]^\top \left.\widetilde{D_zY_{tz\mu}}(r)\right|_{z=\widehat{\xi}} \widehat{\eta}\bigg]\\
	&\ \quad\qquad +\widetilde{\e}\bigg[\left[D \frac{d\sigma_0}{d\nu} (r,\lr(Y_{t\xi}(r)))\left(\widetilde{Y_{t\xi}}(r)\right) \right]^\top \widehat{\e}\left[\widetilde{\bd Y_{t\xi}}\left(r,\widehat{\xi}\right) \widehat{\eta}\right]\bigg]\\
	&\ \quad\qquad +\sigma_1(r)\widehat{\e}\left[\bd Y_{t\xi}\left(r,\widehat{\xi}\right) \widehat{\eta}\right]+\sigma_2(r)\widehat{\e}\left[\bd v_{t\xi}\left(r,\widehat{\xi}\right) \widehat{\eta}\right] \bigg\}dB(r).
\end{align*}	
Taking into account that $(\widehat{\xi},\widehat{\eta})$ are independent of $(\xi,\eta)$ and $(\widetilde{\xi},\widetilde{\eta})$, and of the same law as $(\widetilde{\xi},\widetilde{\eta})$, we see that, for instance,
\begin{equation}\label{prop5_5}
	\begin{split}
		&\widetilde{\e}\widehat{\e}\bigg[\left[D \frac{db_0}{d\nu} (r,\lr(Y_{t\xi}(r)))\left(\left.\widetilde{Y_{tz\mu}}(r)\right|_{z=\widehat{\xi}}\right)  \right]^\top \left.\widetilde{D_zY_{tz\mu}}(r)\right|_{z=\widehat{\xi}} \widehat{\eta}\bigg]\\
		=\ &\widetilde{\e} \bigg[\left[D \frac{db_0}{d\nu} (r,\lr(Y_{t\xi}(r)))\left(\widetilde{Y_{t\xi}}(r)\right)  \right]^\top \left.\widetilde{D_zY_{tz\mu}}(r)\right|_{z=\widetilde{\xi}} \widetilde{\eta}\bigg].
	\end{split}
\end{equation}
Therefore, we have
\begin{align}
		&\widehat{\e}\left[\bd Y_{t\xi}\left(s,\widehat{\xi}\right) \widehat{\eta}\right] \notag \\
		=\ & \int_t^s \bigg\{\widetilde{\e} \bigg[\left[D \frac{db_0}{d\nu} (r,\lr(Y_{t\xi}(r)))\left(\widetilde{Y_{t\xi}}(r)\right)  \right]^\top \left(\left.\widetilde{D_zY_{tz\mu}}(r)\right|_{z=\widetilde{\xi}} \widetilde{\eta}+ \widehat{\e}\left[\widetilde{\bd Y_{t\xi}}\left(r,\widehat{\xi}\right) \widehat{\eta}\right]\right)\bigg] \notag\\
		&\quad\qquad +b_1(r)\widehat{\e}\left[\bd Y_{t\xi}\left(r,\widehat{\xi}\right) \widehat{\eta}\right]+b_2(r)\widehat{\e}\left[\bd v_{t\xi}\left(r,\widehat{\xi}\right) \widehat{\eta}\right]\bigg\}dr \notag\\
		& +\int_t^s \bigg\{\widetilde{\e} \bigg[\left[D \frac{d\sigma_0}{d\nu} (r,\lr(Y_{t\xi}(r)))\left(\widetilde{Y_{t\xi}}(r)\right) \right]^\top \left(\left.\widetilde{D_zY_{tz\mu}}(r)\right|_{z=\widetilde{\xi}} \widetilde{\eta}+ \widehat{\e}\left[\widetilde{\bd Y_{t\xi}}\left(r,\widehat{\xi}\right) \widehat{\eta}\right]\right)\bigg] \notag \\
		&\qquad\qquad +\sigma_1(r)\widehat{\e}\left[\bd Y_{t\xi}\left(r,\widehat{\xi}\right) \widehat{\eta}\right]+\sigma_2(r)\widehat{\e}\left[\bd v_{t\xi}\left(r,\widehat{\xi}\right) \widehat{\eta}\right] \bigg\}dB(r). \label{prop5_1}
\end{align}
Using similar approach for all other terms, from \eqref{FB:xi_y_v}, we can deduce that 
\begin{align}
	&\widehat{\e}\left[\bd v_{t\xi}\left(s,\widehat{\xi}\right)\widehat{\eta}\right] \notag \\
	=\ & \left[D_x\hv (s,\Theta_{t\xi}(s)) \right]^\top \widehat{\e}\left[\bd Y_{t\xi}\left(s,\widehat{\xi}\right)\widehat{\eta}\right] \notag \\
	&+\widetilde{\e}\bigg[\left[D_y\frac{d\hv}{d\nu} (s,\Theta_{t\xi}(s))\left(\widetilde{Y_{t\xi}}(s)\right) \right]^\top\left( \left.\widetilde{D_z Y_{tz\mu}}(s)\right|_{z=\widetilde{\xi}}\widetilde{\eta} +\widehat{\e}\left[\widetilde{\bd Y_{t\xi}}\left(s,\widehat{\xi}
	\right)\widehat{\eta}\right]\right)\bigg] \notag \\
	&+\left[D_p\hv (s,\Theta_{t\xi}(s)) \right]^\top\bd p_{t\xi}\left(s,\widehat{\xi}\right)\widehat{\eta}+\sum_{j=1}^n \left[D_{q^j}\hv(s,\Theta_{t\xi}(s)) \right]^\top\bd q^j_{t\xi}\left(s,\widehat{\xi}\right)\widehat{\eta}; \label{prop5_2}
\end{align}
from the BSDE in \eqref{FB:xi_y}, we also have	
\small
\begin{align}
	& \widehat{\e}\left[\bd p_{t\xi}\left(s,\widehat{\xi} \right)\widehat{\eta}\right] \notag \\
	=\ & \left[D_x^2 g (Y_{t\xi}(T),\lr(Y_{t\xi}(T))) \right]^\top\widehat{\e}\left[\bd Y_{t\xi}\left(T,\widehat{\xi}\right)\widehat{\eta}\right] \notag \\
	&+\widetilde{\e}\bigg\{\left[\left(D_y \frac{d}{d\nu}D_x g\right)  (Y_{t\xi}(T),\lr(Y_{t\xi}(T)))\left(\widetilde{Y_{t\xi}}(T)\right)  \right]^\top \left( \left.\widetilde{D_zY_{tz\mu}}(T)\right|_{z=\widetilde{\xi}} \widetilde{\eta}+\widehat{\e}\left[\widetilde{\bd Y_{t\xi}}\left(T,\widehat{\xi}\right) \widehat{\eta}\right]\right) \bigg\} \notag \\
	&+\int_s^T \bigg\{b_1 (r)^\top  \widehat{\e}\left[\bd p_{t\xi}\left(r,\widehat{\xi}\right)\widehat{\eta}\right]+\sum_{j=1}^n\left(\sigma_1^j (r)\right)^\top  \widehat{\e}\left[ \bd q^j_{t\xi}\left(r,\widehat{\xi}\right)\widehat{\eta} \right] \notag \\
	&\ \quad\qquad +\left[D_x^2 f (r,\theta_{t\xi}(r)) \right]^\top \widehat{\e}\left[\bd Y_{t\xi}\left(r,\widehat{\xi}\right)\widehat{\eta}\right] +\left[D_vD_x f (r,\theta_{t\xi}(r)) \right]^\top \widehat{\e}\left[\bd v_{t\xi}\left(r,\widehat{\xi}\right) \widehat{\eta}\right] \notag \\
	&\ \quad\qquad +\widetilde{\e}\bigg[\left[\left(D_y \frac{d}{d\nu}D_x f\right)(r, \theta_{t\xi}(r))\left(\widetilde{Y_{t\xi}}(r)\right) \right]^\top\left(\left.\widetilde{D_zY_{tz\mu}}(r)\right|_{z=\widetilde{\xi}} \widetilde{\eta} +\widehat{\e}\left[ \widetilde{\bd Y_{t\xi}}\left(r,\widehat{\xi}\right)\widehat{\eta}\right]\right)\bigg]\bigg\}dr \notag \\
	&-\int_s^T \widehat{\e}\left[ \bd q_{t\xi}\left(r,\widehat{\xi}\right)\widehat{\eta} \right] dB(r), \label{prop5_3}
\end{align}
\normalsize
From \eqref{prop5_1}, \eqref{prop5_2} and \eqref{prop5_3}, we know that the component processes
\begin{align*}
	\left(\widehat{\e}\left[\bd Y_{t\xi}\left(s,\widehat{\xi}\right) \widehat{\eta}\right],\ \widehat{\e}\left[\bd p_{t\xi}\left(s,\widehat{\xi}\right) \widehat{\eta}\right],\ \widehat{\e}\left[\bd q_{t\xi}\left(s,\widehat{\xi}\right) \widehat{\eta}\right] \right)
\end{align*}
satisfy FBSDEs \eqref{FB:dr'}. From the uniqueness result of FBSDEs \eqref{FB:dr'}, we obtain \eqref{prop5_03}. Next, we shall prove \eqref{prop5_04}. From the SDE in \eqref{FB:mu_y}, \eqref{prop5_03} and \eqref{prop5_5}, we have for $s\in[t,T]$,
\begin{align*}
	&\widehat{\e}\left[\dr Y_{tx\mu}\left(s,\widehat{\xi}\right)\widehat{\eta}\right]\\
	=\ & \int_t^s \bigg\{\widetilde{\e}\bigg[\left[D \frac{db_0}{d\nu} (r,\lr(Y_{t\xi}(r)))\left(\widetilde{Y_{t\xi}}(r)\right) \right]^\top \left(\left.\widetilde{D_z Y_{tz\mu}}(r)\right|_{z=\widetilde{\xi}} \widetilde{\eta}+\widetilde{\bd_\eta Y_{t\xi}}\left(r\right)\right)\bigg]\\
	&\qquad +b_1(r)\widehat{\e}\left[\dr Y_{tx\mu}\left(r,\widehat{\xi}\right)\widehat{\eta}\right]+\widehat{\e}\left[b_2(r)\dr v_{tx\mu}\left(r,\widehat{\xi}\right)\widehat{\eta}\right] \bigg\}dr\\
	& +\int_t^s \bigg\{\widetilde{\e}\bigg[\left[D \frac{d\sigma_0}{d\nu} (r,\lr(Y_{t\xi}(r)))\left(\widetilde{Y_{t\xi}}(r)\right)  \right]^\top \left( \left.\widetilde{D_z Y_{tz\mu}}(r)\right|_{z=\widetilde{\xi}}\widetilde{\eta} + \widetilde{\bd_\eta Y_{t\xi}}\left(r\right)\right)\bigg]\\
	&\quad\qquad +\sigma_1(r)\widehat{\e}\left[\dr Y_{tx\mu}\left(r,\widehat{\xi}\right)\widehat{\eta}\right]+\sigma_2(r)\widehat{\e}\left[\dr v_{tx\mu}\left(r,\widehat{\xi}\right)\widehat{\eta}\right]\bigg\}dB(r).
\end{align*}
In a similar way, we deduce
\begin{align*}
	&\widehat{\e}\left[\dr v_{tx\mu}\left(s,\widehat{\xi}\right)\widehat{\eta}\right]\\
	=&\left[D_x\hv (s,\Theta_{tx\mu}(s)) \right]^\top\widehat{\e}\left[\dr Y_{tx\mu}\left(s,\widehat{\xi}\right)\widehat{\eta}\right]\\
	&+\widetilde{\e}\bigg[\left[D_y\frac{d\hv}{d\nu} (s,\Theta_{tx\mu}(s))\left(\widetilde{Y_{t\xi}}(s)\right) \right]^\top \left(\left.\widetilde{D_z Y_{tz\mu}}(s)\right|_{z=\widetilde{\xi}}\widetilde{\eta} + \widetilde{\bd_\eta Y_{t\xi}}\left(s\right) \right) \bigg]\\
	&+\left[D_p\hv (s,\Theta_{tx\mu}(s)) \right]^\top\widehat{\e}\left[\dr p_{tx\mu}\left(s,\widehat{\xi}\right)\widehat{\eta}\right]+\sum_{j=1}^n \left[D_{q^j}\hv (s,\Theta_{tx\mu}(s)) \right]^\top \widehat{\e}\left[\dr q^j_{tx\mu}\left(s,\widehat{\xi}\right)\widehat{\eta}\right],
\end{align*}
and
\begin{align*}
	&\widehat{\e}\left[\dr p_{tx\mu}\left(s,\widehat{\xi}\right) \widehat{\eta} \right] \\
	=\ &\left[D_x^2 g (Y_{tx\mu}(T),\lr(Y_{t\xi}(T))) \right]^\top \widehat{\e}\left[\dr Y_{tx\mu}\left(T,\widehat{\xi}\right) \widehat{\eta} \right] \\
	&+\widetilde{\e}\bigg[\left[\left(D_y \frac{d}{d\nu}D_x g\right) (Y_{tx\mu}(T),\lr(Y_{t\xi}(T)))\left(\widetilde{Y_{t\xi}}(T)\right) \right]^\top \left(\left.\widetilde{D_z Y_{tz\mu}}(T)\right|_{z=\widetilde{\xi}} \widetilde{\eta} +  \widetilde{\bd_\eta Y_{t\xi}}\left(T\right) \right) \bigg]\\
	&+\int_s^T \bigg\{b_1 (r)^\top \widehat{\e}\left[\dr p_{tx\mu}\left(r,\widehat{\xi}\right) \widehat{\eta} \right] +\sum_{j=1}^n\left(\sigma_1^j (r)\right)^\top  \widehat{\e}\left[\dr q^j_{tx\mu}\left(r,\widehat{\xi}\right) \widehat{\eta}\right] \\
	&
	\ \quad\qquad +\left[D_x^2 f (r,\theta_{tx\mu}(r)) \right]^\top \widehat{\e}\left[\dr Y_{tx\mu}\left(r,\widehat{\xi}\right) \widehat{\eta}\right] +\left[D_vD_x f (r,\theta_{tx\mu}(r))\right]^\top \widehat{\e}\left[\dr v_{tx\mu}\left(r,\widehat{\xi}\right) \widehat{\eta}\right] \\
	&\ \quad\qquad +\widetilde{\e}\bigg[\left[\left(D_y \frac{d}{d\nu}D_x f\right)(r,\theta_{tx\mu}(r))\left(\widetilde{Y_{t\xi}}(r)\right)  \right]^\top \left( \left.\widetilde{D_z Y_{tz\mu}}(r)\right|_{z=\widetilde{\xi}} \widetilde{\eta} + \widetilde{\bd_\eta Y_{t\xi}}\left(r\right)\right) \bigg]\bigg\}dr\\
	&-\int_s^T \widehat{\e}\left[\dr q_{tx\mu}\left(r,\widehat{\xi}\right)\widehat{\eta}\right] dB(r).
\end{align*}
Therefore, the component processes
\begin{align*}
	\left(\widehat{\e}\left[\dr Y_{tx\mu}\left(s,\widehat{\xi}\right) \widehat{\eta}\right],\ \widehat{\e}\left[\dr p_{tx\mu}\left(s,\widehat{\xi}\right) \widehat{\eta}\right],\ \widehat{\e}\left[\dr q_{tx\mu}\left(s,\widehat{\xi}\right) \widehat{\eta}\right] \right)
\end{align*}
satisfy FBSDEs \eqref{FB:mu'}. From the uniqueness result of FBSDEs \eqref{FB:mu'}, we obtain \eqref{prop5_04}. 

\subsection{Proof of Theorem~\ref{prop:9}}\label{pf:prop:9}

To shorten the formulae and simplify the notations, we only establish the case $n=d=1$ for simplicity. We emphasize that the general case with dimension $n,d\geq 1$ can be obtained by using a straight-forward manner. We here give the respective systems of FBSDEs for the derivatives vector $(D_z \bd Y_{t\xi}(s,z),D_z \bd p_{t\xi}(s,z),D_z \bd q_{t\xi}(s,z))$ and $(D_z \dr Y_{tx\mu}(s,z),D_z \dr p_{tx\mu}(s,z),D_z \dr q_{tx\mu}(s,z))$ for $n=d=1$ here. From FBSDEs \eqref{FB:xi_y}, the G\^ateaux derivatives of $(\bd Y_{t\xi}(s,z),\bd p_{t\xi}(s,z),\bd q_{t\xi}(s,z))$ in $z$ can be characterized as the solution of the following FBSDEs: 
\begin{align}
	D_z\bd Y_{t\xi}(s,z)=\ & \int_t^s \bigg\{\widetilde{\e}\bigg[D \frac{db_0}{d\nu}(r,\lr(Y_{t\xi}(r)))\left(\widetilde{Y_{t\xi}}(r)\right) \widetilde{D_z \bd Y_{t\xi}}(r,z) \notag\\
	&\qquad\qquad +D \frac{db_0}{d\nu}(r,\lr(Y_{t\xi}(r)))\left(\widetilde{Y_{tz\mu}}(r)\right) \widetilde{D_z^2 Y_{tz\mu}}(r) \notag\\
	&\qquad\qquad + D^2 \frac{db_0}{d\nu}(r,\lr(Y_{t\xi}(r)))\left(\widetilde{Y_{tz\mu}}(r)\right) \left|\widetilde{D_zY_{tz\mu}}(r)\right|^2  \bigg] \notag\\
	&\qquad +b_1(r)D_z\bd Y_{t\xi}(r,z)+b_2(r)D_z\bd v_{t\xi}(r,z)\bigg\}ds  \notag\\
	&+\int_t^s \bigg\{\widetilde{\e}\bigg[D \frac{d\sigma_0}{d\nu}(r,\lr(Y_{t\xi}(r)))\left(\widetilde{Y_{t\xi}}(r)\right) \widetilde{D_z \bd Y_{t\xi}}(r,z) \notag\\
	&\quad\qquad\qquad +D \frac{d\sigma_0}{d\nu}(r,\lr(Y_{t\xi}(r)))\left(\widetilde{Y_{tz\mu}}(r)\right) \widetilde{D_z^2 Y_{tz\mu}}(r) \notag\\
	&\quad\qquad\qquad + D^2 \frac{d\sigma_0}{d\nu}(r,\lr(Y_{t\xi}(r)))\left(\widetilde{Y_{tz\mu}}(r)\right) \left|\widetilde{D_zY_{tz\mu}}(r)\right|^2  \bigg] \notag\\
	&\quad\qquad +\sigma_1(r)D_z\bd Y_{t\xi}(r,z)+\sigma_2(r)D_z\bd v_{t\xi}(r,z)\bigg\}dB(r), \notag\\
	D_z \bd p_{t\xi}(s,z)=\ &D_x^2 g(Y_{t\xi}(T),\lr(Y_{t\xi}(T))) D_z \bd Y_{t\xi}(T,z)  \notag\\
	& +\widetilde{\e}\bigg[D_y \frac{d}{d\nu}D_x g(Y_{t\xi}(T),\lr(Y_{t\xi}(T)))\left(\widetilde{Y_{t\xi}}(T)\right) \widetilde{D_z \bd Y_{t\xi}}(T,z)\notag\\
	&\qquad+D_y \frac{d}{d\nu}D_x g(Y_{t\xi}(T),\lr(Y_{t\xi}(T)))\left(\widetilde{Y_{tz\mu}}(T)\right) \widetilde{D_z^2 Y_{tz\mu}}(T)  \notag\\
	&\qquad + D_y^2 \frac{d}{d\nu}D_x g(Y_{t\xi}(T),\lr(Y_{t\xi}(T)))\left(\widetilde{Y_{ty\mu}}(T)\right) \left|\widetilde{D_zY_{tz\mu}}(T)\right|^2  \bigg] \notag\\
	& +\int_s^T \bigg\{ b_1(r) D_z \bd p_{t\xi}(r,z) + \sigma_1(r) D_z \bd q_{t\xi}(r,z)  \notag\\
	&\ \quad\qquad +D_x^2 f(r,\theta_{t\xi}(r)) D_z \bd Y_{t\xi}(r,z)+D_vD_x f(r,\theta_{t\xi}(r)) D_z\bd v_{t\xi}(r,z)  \notag\\
	&\ \quad\qquad +\widetilde{\e}\bigg[D_y \frac{d}{d\nu}D_x f(r,\theta_{t\xi}(r))\left(\widetilde{Y_{t\xi}}(r)\right) \widetilde{D_z \bd Y_{t\xi}}(r,z)\notag\\
	&\ \quad\qquad\qquad +D_y \frac{d}{d\nu}D_x f(r,\theta_{t\xi}(r))\left(\widetilde{Y_{tz\mu}}(r)\right)\widetilde{D_z^2 Y_{tz\mu}}(r)  \notag\\
	&\ \quad\qquad\qquad + D_y^2 \frac{d}{d\nu}D_x f(r,Y_{t\xi}(r))\left(\widetilde{Y_{tz\mu}}(r)\right)\left|\widetilde{D_z Y_{tz\mu}}(r)\right|^2  \bigg]  \bigg\}ds  \notag\\
	& -\int_s^T D_z\bd q_{t\xi}(r,z)dB(r),\quad (s,z)\in[t,T]\times\br, \label{FB:xi_yy}
\end{align}
where
\begin{align*}
	D_z \bd v_{t\xi}(s,z):=\ & D_x\hv (s,\Theta_{t\xi}(s)) D_z \bd Y_{t\xi}(s,z)\\
	&+\widetilde{\e}\bigg[D_y\frac{d\hv}{d\nu} (s,\Theta_{t\xi}(s)) \left(\widetilde{Y_{t\xi}}(s)\right) \widetilde{D_z \bd Y_{t\xi}}(s,z)\\
	&\qquad + D_y\frac{d\hv}{d\nu} (s,\Theta_{t\xi}(s))\left(\widetilde{Y_{tz\mu}}(s)\right) \widetilde{D_z^2 Y_{tz\mu}}(s)\\
	&\qquad + D_y^2\frac{d\hv}{d\nu} (s,\Theta_{t\xi}(s))\left(\widetilde{Y_{tz\mu}}(s)\right)\left| \widetilde{D_z Y_{tz\mu}}(s)\right|^2 \bigg]\\
	&+D_p\hv  (s,\Theta_{t\xi}(s)) D_z \bd p_{t\xi}(s,z)+ D_{q}\hv (s,\Theta_{t\xi}(s)) D_z \bd q_{t\xi}(s,z),
\end{align*}
and $\left(D_z Y_{tz\mu},D_z^2 Y_{tz\mu}\right)=\left(D_x Y_{tx\mu},D_x^2 Y_{tx\mu} \right)\!\Big|_{x=z}$, and $\widetilde{Y_{tz\mu}}(s)$, $\widetilde{Y_{t\xi}}(s)$, $\widetilde{D_z Y_{tz\mu}}(s)$, $\widetilde{D_z^2 Y_{tz\mu}}(s)$ and $\widetilde{D_z\bd Y_{t\xi}}(s,z)$ are the respective independent copy of $Y_{tz\mu}(s)$, $Y_{t\xi}(s)$, $D_z Y_{tz\mu}(s)$, $D_z^2 Y_{tz\mu}(s)$ and $D_z\bd Y_{t\xi}(s,z)$. From FBSDEs \eqref{FB:mu_y} and \eqref{FB:xi_yy}, the component G\^ateaux derivatives of $\dr Y_{tx\mu}(s,z)$, $\dr p_{tx\mu}(s,z)$ and $\dr q_{tx\mu}(s,z))$ in $z$ can be characterized as the solution of the following FBSDEs: 
\begin{align}
	D_z\dr Y_{tx\mu}(s,z)=\ & \int_t^s \bigg\{\widetilde{\e}\bigg[D \frac{db_0}{d\nu}(r,\lr(Y_{t\xi}(r)))\left(\widetilde{Y_{t\xi}}(r)\right) \widetilde{D_z\bd Y_{t\xi}}(r,z) \notag \\
	&\qquad\qquad +D \frac{db_0}{d\nu}(r,\lr(Y_{t\xi}(r)))\left(\widetilde{Y_{tz\mu}}(r)\right) \widetilde{D_z^2 Y_{tz\mu}}(r) \notag \\
	&\qquad\qquad + D^2 \frac{db_0}{d\nu}(r,\lr(Y_{t\xi}(r)))\left(\widetilde{Y_{tz\mu}}(r)\right) \left|\widetilde{D_z Y_{tz\mu}}(r)\right|^2 \bigg] \notag\\
	&\qquad +b_1(r)D_z\dr Y_{tx\mu}(r,z)+b_2(r)D_y\dr v_{tx\mu}(r,z)\bigg\}dr \notag \\
	&+\int_t^s \bigg\{\widetilde{\e}\bigg[D \frac{d\sigma_0}{d\nu}(r,\lr(Y_{t\xi}(r)))\left(\widetilde{Y_{t\xi}}(r)\right) \widetilde{D_z\bd Y_{t\xi}}(r,z) \notag \\
	&\quad\qquad\qquad +D \frac{d \sigma_0}{d\nu}(r,\lr(Y_{t\xi}(r)))\left(\widetilde{Y_{tz\mu}}(r)\right) \widetilde{D_z^2 Y_{tz\mu}}(r) \notag \\
	&\quad\qquad\qquad + D^2 \frac{d \sigma_0}{d\nu}(r,\lr(Y_{t\xi}(r)))\left(\widetilde{Y_{tz\mu}}(r)\right) \left|\widetilde{D_z Y_{tz\mu}}(r)\right|^2 \bigg] \notag\\
	&\quad\qquad +\sigma_1(r)D_z\dr Y_{tx\mu}(r,z)+\sigma_2(r)D_y\dr v_{tx\mu}(r,z)\bigg\} dB(r),\notag \\
	D_z\dr p_{tx\mu}(s,z)=\ &D_x^2 g(Y_{tx\mu}(T),\lr(Y_{t\xi}(T)))D_z \dr Y_{tx\mu}(T,z) \notag \\
	& +\widetilde{\e}\bigg[D_y \frac{d}{d\nu}D_x g(Y_{tx\mu}(T),\lr(Y_{t\xi}(T)))\left(\widetilde{Y_{t\xi}}(T)\right) \widetilde{D_z \bd Y_{t\xi}}(T,z) \notag \\
	&\qquad +D_y \frac{d}{d\nu}D_x g(Y_{tx\mu}(T),\lr(Y_{t\xi}(T)))\left(\widetilde{Y_{tz\mu}}(T)\right)\widetilde{D_z^2 Y_{tz\mu}}(T) \notag\\
	&\qquad +D_y^2 \frac{d}{d\nu}D_x g(Y_{tx\mu}(T),\lr(Y_{t\xi}(T)))\left(\widetilde{Y_{tz\mu}}(T)\right)\left|\widetilde{D_z Y_{tz\mu}}(T)\right|^2 \bigg] \notag \\
	& +\int_s^T \bigg\{ b_1(r) D_z\dr p_{tx\mu}(r,z)+\sigma_1(r) D_z\dr q_{tz\mu}(r,z)\notag \\
	&\ \quad\qquad +D_x^2 f(r,\theta_{tx\mu}(r)) D_z\dr Y_{tx\mu}(r,z)+D_vD_x f(r,\theta_{tx\mu}(r)) D_z \dr v_{tx\mu}(r,z) \notag \\
	&\ \quad\qquad +\widetilde{\e}\bigg[D_y \frac{d}{d\nu}D_x f(r,\theta_{tx\mu}(r))\left(\widetilde{Y_{t\xi}}(s)\right)\widetilde{D_z \bd Y_{t\xi}}(r,z) \notag \\
	&\ \quad\qquad\qquad + D_y \frac{d}{d\nu}D_x f(r,\theta_{tx\mu}(r))\left(\widetilde{Y_{tz\mu}}(r)\right) \widetilde{D_z^2 Y_{tz\mu}}(r)\notag\\
	&\ \quad\qquad\qquad + D_y^2 \frac{d}{d\nu}D_x f(r,\theta_{tx\mu}(r))\left(\widetilde{Y_{tz\mu}}(r)\right) \left|\widetilde{D_z Y_{tz\mu}}(r)\right|^2 \bigg]  \bigg\}dr \notag \\
	&-\int_s^T D_z \dr q_{tx\mu}(r,z)dB(r),\quad (s,z)\in[t,T]\times\br, \label{FB:mu_yy}
\end{align}
where
\begin{align*}
	D_z \dr v_{tx\mu}(s,z):=&D_x\hv (s,\Theta_{tx\mu}(s))D_z \dr Y_{tx\mu}(s,z)\\
	&+\widetilde{\e}\bigg[D_y\frac{d\hv}{d\nu} (s,\Theta_{tx\mu}(s))\left(\widetilde{Y_{t\xi}}(s)\right)\widetilde{D_z \bd Y_{t\xi}}(s,z)\\
	&\qquad +D_y\frac{d\hv}{d\nu} (s,\Theta_{tx\mu}(s))\left(\widetilde{Y_{tz\mu}}(s)\right) \widetilde{D_z^2 Y_{tz\mu}}(s)\\
	&\qquad +D_y^2 \frac{d\hv}{d\nu} (s,\Theta_{tx\mu}(s))\left(\widetilde{Y_{tz\mu}}(s)\right) \left|\widetilde{D_z Y_{tz\mu}}(s)\right|^2 \bigg]\\
	&+D_p\hv (s,\Theta_{tx\mu}(s))D_z \dr p_{tx\mu}(s,z)+D_q\hv (s,\Theta_{tx\mu}(s))D_z \dr q_{tx\mu}(s,z).
\end{align*}
From Theorems~\ref{prop:2} and \ref{prop:8}, Conditions \eqref{optimal_condition}, \eqref{optimal_condition'} and \eqref{optimal_condition''}, the proof of the well-posedness of FBSDEs \eqref{FB:xi_yy} and \eqref{FB:mu_yy}, and the corresponding convergence results for finite differences are similar to those of statements in Subsection~\ref{subsec:1-order}, and we simply omit. From the continuity conditions specified in Assumption (A2') and Theorem~\ref{prop:5}, following a similar approach as in the proof of Theorem~\ref{lem:4}, we obtain the continuity of the derivatives. 

\section{Proof of Statements in Section~\ref{sec:V}}\label{pf:V}

\subsection{Proof of Theorem~\ref{prop:6}}\label{pf_prop6}
We first show the growth condition of $V$. From Assumption (A2), \eqref{intro_4'}, Lemmas~\ref{lem:2} and \ref{lem:5}, we know that
\begin{align*}
	|V(t,x,\mu)| &\le C(L)\e\bigg[ \int_t^T\left( 1+|Y_{tx\mu}(s)|^2+W^2_2(\lr(Y_{t\xi}(s)),\delta_0)+|v_{tx\mu}(s)|^2 \right)ds\\
	&\quad\qquad\qquad+ |Y_{tx\mu}(T)|^2+W^2_2(\lr(Y_{t\xi}(T)),\delta_0)\bigg]\\
	&\le C(L,T)\e\bigg[\sup_{t\le s\le T}\left|\left(Y_{tx\mu}(s),Y_{t\xi}(s),p_{tx\mu}(s)\right)^\top\right|^2 + \int_t^T \left|q_{tx\mu}(s)\right|^2 ds\bigg] \\
	&\le C(L,T,\lambda_v)\left(1+|x|^2+W^2_2(\mu,\delta_0)\right).
\end{align*}
We next prove \eqref{prop6_03}. From the definition of $V$ in \eqref{intro_4}, we have
\begin{equation}\label{prop6_1}
	\begin{split}
		&J_{tx'}\left(v_{tx'\mu};\lr(Y_{t\xi}(s)),t\le s\le T\right)-J_{tx}\left(v_{tx'\mu};\lr(Y_{t\xi}(s)),t\le s\le T\right) \\
		\le\ & V(t,x',\mu)-V(t,x,\mu)\\
		\le\ &  J_{tx'}\left(v_{tx\mu};\lr(Y_{t\xi}(s)),t\le s\le T\right)-J_{tx}\left(v_{tx\mu};\lr(Y_{t\xi}(s)),t\le s\le T\right).
	\end{split}
\end{equation}
From Assumption (A2), we deduce that
\begin{align}
		&J_{tx'}\left(v_{tx\mu};\lr(Y_{t\xi}(s)),t\le s\le T\right)-J_{tx}\left(v_{tx\mu};\lr(Y_{t\xi}(s)),t\le s\le T\right) \notag \\
		=\ & \e\bigg[\int_t^T \left[f\left(s,X_{tx'\mu}^{v_{tx\mu}}(s),\lr(Y_{t\xi}(s)),v_{tx\mu}(s)\right)-f\left(s,Y_{tx\mu}(s),\lr(Y_{t\xi}(s)),v_{tx\mu}(s)\right)\right]ds \notag \\
		&\quad +g\left(X^{v_{tx\mu}}_{tx'\mu}(T),\lr(Y_{t\xi}(T))\right)-g\left(Y_{tx\mu}(T),\lr(Y_{t\xi}(T))\right)\bigg] \notag \\
		\le\ & \e\bigg[\int_t^T \left[D_x f (s,Y_{tx\mu}(s),\lr(Y_{t\xi}(s)),v_{tx\mu}(s))\right]^\top \left(X_{tx'\mu}^{v_{tx\mu}}(s)-Y_{tx\mu}(s)\right) ds \notag \\
		&\qquad +\left[D_x g (Y_{tx\mu}(T),\lr(Y_{t\xi}(T)))\right]^\top \left(X_{tx'\mu}^{v_{tx\mu}}(T)-Y_{tx\mu}(T)\right) \bigg] \notag \\
		& +C(L,T)\e\bigg[\sup_{t\le s\le T}\left|X_{tx'\mu}^{v_{tx\mu}}(s)-Y_{tx\mu}(s)\right|^2\bigg], \label{prop6_2}
\end{align}
where $X_{tx'\mu}^{v_{tx\mu}}$ is the controlled state process with the initial $x'$ and control $v_{tx\mu}$:
\begin{align*}
	X_{tx'\mu}^{v_{tx\mu}}(s)=x'&+\int_t^sb\left(r,X_{tx'\mu}^{v_{tx\mu}}(r),\lr(Y_{t\xi}(r)),v_{tx\mu}(r)\right)dr\\
	&+\int_t^s\sigma\left(r,X_{tx'\mu}^{v_{tx\mu}}(r),\lr(Y_{t\xi}(r)),v_{tx\mu}(r)\right)dB(r),\quad s\in[t,T].
\end{align*}
Note the fact that $Y_{tx\mu}=X_{tx\mu}^{v_{tx\mu}}$, then by using standard arguments in the context of SDEs similar as Lemma~\ref{lem:1}, we have
\begin{equation}\label{prop6_3}
	\e\bigg[\sup_{t\le s\le T}\left|X_{tx'\mu}^{v_{tx\mu}}(s)-Y_{tx\mu}(s)\right|^2\bigg]=\e\bigg[\sup_{t\le s\le T}\left|X_{tx'\mu}^{v_{tx\mu}}(s)-X_{tx\mu}^{v_{tx\mu}}(s)\right|^2\bigg]\le C(L,T)|x'-x|^2.
\end{equation}
By applying It\^o's formula to $(p_{tx\mu}(s))^\top \left(X_{tx'\mu}^{v_{tx\mu}}(s)-Y_{tx\mu}(s)\right)$, we can deduce that
\begin{equation*}\label{prop6_4}
	\begin{split}
		&\e\bigg[\int_t^T \left[D_x f  \left(s,Y_{tx\mu}(s),\lr(Y_{t\xi}(s)),v_{tx\mu}(s)\right)\right]^\top \left(X_{tx'\mu}^{v_{tx\mu}}(s)-Y_{tx\mu}(s)\right)  ds\\
		&\quad +\left[D_x g (Y_{tx\mu}(T),\lr(Y_{t\xi}(T))) \right]^\top \left(X_{tx'\mu}^{v_{tx\mu}}(T)-Y_{tx\mu}(T)\right) \bigg]\\
		=\ & (p_{tx\mu}(t))^\top (x'-x).
	\end{split}
\end{equation*}
Substituting \eqref{prop6_3} and the last equation back to \eqref{prop6_2}, we obtain that
\begin{equation}\label{prop6_5}
	\begin{split}
		&J_{tx'}\left(v_{tx\mu};\lr(Y_{t\xi}(s)),t\le s\le T\right)-J_{tx}\left(v_{tx\mu};\lr(Y_{t\xi}(s)),t\le s\le T\right) \\
		\le\ &( p_{tx\mu}(t))^\top  (x'-x) +C(L,T)|x'-x|^2.
	\end{split}
\end{equation}
Similar as \eqref{prop6_5} and \cite[Lemma 5.1]{AB10}, we can also obtain the lower bound
\begin{equation}\label{prop6_6}
	\begin{split}
		&J_{tx'}\left(v_{tx'\mu};\lr(Y_{t\xi}(s)),t\le s\le T\right)-J_{tx}\left(v_{tx'\mu};\lr(Y_{t\xi}(s)),t\le s\le T\right) \\
		\geq\ & (p_{tx'\mu}(t))^\top  (x'-x) -C(L,T)|x'-x|^2.
	\end{split}
\end{equation}
From \eqref{lem5_2}, we have
\begin{equation}\label{prop6_7}
	|p_{tx'\mu}(t)-p_{tx\mu}(t)|\le C(L,T,\lambda_v)|x'-x|.
\end{equation}
Combining \eqref{prop6_1}, \eqref{prop6_5}, \eqref{prop6_6} and \eqref{prop6_7}, we deduce that
\begin{equation}
	\left|V(t,x',\mu)-V(t,x,\mu)-(p_{tx\mu}(t))^\top (x'-x)\right|\le C(L,T,\lambda_v)|x'-x|^2,
\end{equation} 
from which we obtain $D_x V(t,x,\mu)  =p_{tx\mu}(t)$. Then, \eqref{prop6_03} is a consequence of Theorem~\ref{prop:2}. Estimate \eqref{prop6_04} is another immediate sequel of \eqref{lem5_1} and \eqref{thm2_1}, and Estimate \eqref{prop6_05} is a result after of \eqref{lem5_2}. The continuity of $D_x^2 V$ in $(x,\mu)$ is a consequence of Theorem~\ref{prop:2}.    

\subsection{Proof of Theorem~\ref{prop:7}}\label{pf_prop7}
For any $\xi,\eta\in L_{\f_t}^2$ both independent of the Brownian motion $B_\cdot$ such that $\lr(\xi)=\mu$, from \eqref{intro_4'}, Theorem~\ref{lem:4} and Lemma~\ref{prop:3}, we have
\begin{align*}
	&\lim_{\epsilon\to0} \frac{1}{\epsilon}[V(t,x,\lr(\xi+\epsilon\eta))-V(t,x,\mu)]\\
	=\ & \int_t^T \e\bigg\{ \left[D_x f (s,\theta_{tx\mu}(s))\right]^\top \dr_\eta Y_{tx\xi}(s)+\left[D_v f (s,\theta_{tx\mu}(s)) \right]^\top \dr_\eta v_{tx\xi}(s)\\
	&\quad\qquad +\widetilde{\e}\bigg[\left[D_y \frac{df}{d\nu}  (s,\theta_{tx\mu}(s))\left(\widetilde{Y_{t\xi}}(s)\right)\right]^\top \widetilde{\dr_\eta Y_{t\xi}}(s)  \bigg]   \bigg\} ds\\
	& + \e\bigg\{ \left[D_x g (Y_{tx\mu}(T),\lr(Y_{t\xi}(T))) \right]^\top \dr_\eta Y_{tx\xi}(T)\\
	&\qquad +\widetilde{\e}\left[\left[D_y \frac{dg}{d\nu} (Y_{tx\mu}(T),\lr(Y_{t\xi}(T)))\left(\widetilde{Y_{t\xi}}(T)\right)\right]^\top \widetilde{\dr_\eta Y_{t\xi}}(T) \right] \bigg\}.
\end{align*}
where $\left(\widetilde{Y_{t\xi}}(s),\widetilde{\dr_\eta Y_{t\xi}}(s)\right)$ is an independent copy of $\left({Y_{t\xi}}(s),\dr_\eta Y_{t\xi}(s)\right)$ for $s\in[t,T]$. Then, from Lemma~\ref{prop:4} and Theorem~\ref{prop:5}, we deduce that
\begin{align*}
	&\lim_{\epsilon\to0} \frac{1}{\epsilon}[V(t,x,\lr(\xi+\epsilon\eta))-V(t,x,\mu)]\\
	=\ & \int_t^T \e\bigg\{ \left[D_x f (s,\theta_{tx\mu}(s))\right]^\top \widehat{\e}\left[\dr Y_{tx\mu}\left(s,\widehat{\xi}\right) \widehat{\eta}\right]+ (D_v f)^\top  (s,\theta_{tx\mu}(s)) \widehat{\e}\left[\dr v_{tx\mu}\left(s,\widehat{\xi}\right)\widehat{\eta}\right] \\
	&\quad\qquad +\widetilde{\e}\left[\left[D_y \frac{df}{d\nu}  (s,\theta_{tx\mu}(s))\left(\widetilde{Y_{t\xi}}(s)\right)\right]^\top\left(\left.\widetilde{D_yY_{ty\mu}}(s)\right|_{y=\widetilde{\xi}}\ \widetilde{\eta}+\widehat{\e}\left[\widetilde{\bd Y_{t\xi}}\left(s,\widehat{\xi}\right) \widehat{\eta}\right]\right) \right]  \bigg\}ds\\
	& + \e\bigg\{\left[D_x g (Y_{tx\mu}(T),\lr(Y_{t\xi}(T)))\right]^\top \widehat{\e}\left[\bd Y_{t\xi}\left(T,\widehat{\xi}\right) \widehat{\eta}\right]\\
	&\qquad +\widetilde{\e}\left[\left[D_y \frac{dg}{d\nu} (Y_{tx\mu}(T),\lr(Y_{t\xi}(T)))\left(\widetilde{Y_{t\xi}}(T)\right)\right]^\top \left(\left.\widetilde{D_yY_{ty\mu}}(T)\right|_{y=\widetilde{\xi}}\ \widetilde{\eta}+\widehat{\e}[\widetilde{\bd Y_{t\xi}}\left(T,\widehat{\xi}\right) \widehat{\eta}]\right) \right]\bigg\},
\end{align*} 
where $\left(\widetilde{\xi},\widetilde{\eta},\widetilde{D_y Y_{ty\mu}}(s),\widetilde{\bd Y_{t\xi}}(s,y)\right)$ is an independent copy of $\left(\xi,{\eta},{D_y Y_{ty\mu}}(s),{\bd Y_{t\xi}}(s,y)\right)$, and $(\widehat{\xi},\widehat{\eta})$ is another independent copy of $(\xi,\eta)$. Taking into account that $(\widehat{\xi},\widehat{\eta})$ are independent of $(\xi,\eta)$ and $(\widetilde{\xi},\widetilde{\eta})$, and of the same law as $(\widetilde{\xi},\widetilde{\eta})$, we see that, for instance,
\begin{align*}
	&\e\bigg\{ \widetilde{\e}\left[\left[D_y \frac{dg}{d\nu} (Y_{tx\mu}(T),\lr(Y_{t\xi}(T)))\left(\widetilde{Y_{t\xi}}(T)\right)\right]^\top\left(\left.\widetilde{D_yY_{ty\mu}}(T)\right|_{y=\widetilde{\xi}}\ \widetilde{\eta}+\widehat{\e}[\widetilde{\bd Y_{t\xi}}\left(T,\widehat{\xi}\right) \widehat{\eta}]\right) \right]\bigg\}\\
	=\ &\widehat{\e}\bigg\{  \e\widetilde{\e}\bigg[\left[D_y \frac{dg}{d\nu} (Y_{tx\mu}(T),\lr(Y_{t\xi}(T)))\left(\left.\widetilde{Y_{ty\mu}}(T)\right|_{y=\widehat{\xi}}\right)\right]^\top \left.\widetilde{D_yY_{ty\mu}}(T)\right|_{y=\widehat{\xi}}  \\
	&\ \quad\qquad +\left[ D_y \frac{dg}{d\nu} (Y_{tx\mu}(T),\lr(Y_{t\xi}(T)))\left(\widetilde{Y_{t\xi}}(T)\right)\right]^\top \widetilde{\bd Y_{t\xi}}\left(T,\widehat{\xi}\right) \bigg]  \widehat{\eta}\bigg\}.
\end{align*}
Therefore, we have
\begin{align*}
	&\lim_{\epsilon\to0} \frac{1}{\epsilon}[V(t,x,\lr(\xi+\epsilon\eta))-V(t,x,\mu)]\\
	=\ & \widehat{\e}\bigg\{ \bigg[ \int_t^T  \e \bigg( \left[D_x f (s,\theta_{tx\mu}(s))\right]^\top \dr Y_{tx\mu}\left(s,\widehat{\xi}\right)+ (D_v f)^\top  (s,\theta_{tx\mu}(s)) \dr v_{tx\mu}\left(s,\widehat{\xi}\right)\\
	&\quad\qquad\qquad +\widetilde{\e}\bigg[\left[D_y \frac{df}{d\nu} (s,\theta_{tx\mu}(s))\left(\left.\widetilde{Y_{ty\mu}}(s)\right|_{y=\widehat{\xi}}\right)\right]^\top \left.\widetilde{D_yY_{ty\mu}}(s)\right|_{y=\widehat{\xi}} \bigg] \\
	&\quad\qquad\qquad\qquad +\left[D_y \frac{df}{d\nu} (s,\theta_{tx\mu}(s))\left(\widetilde{Y_{t\xi}}(s)\right)\right]^\top \widetilde{\bd Y_{t\xi}}\left(s,\widehat{\xi}\right) \bigg) ds  \\
	&\qquad + \e\bigg(\left[D_x g  (Y_{tx\mu}(T),\lr(Y_{t\xi}(T))) \right]^\top \dr Y_{tx\mu}\left(T,\widehat{\xi}\right)\\
	&\qquad\qquad +\widetilde{\e}\bigg[\left[D_y \frac{dg}{d\nu} (Y_{tx\mu}(T),\lr(Y_{t\xi}(T)))\left(\left.\widetilde{Y_{ty\mu}}(T)\right|_{y=\widehat{\xi}}\right)\right]^\top \left.\widetilde{D_yY_{ty\mu}}(T)\right|_{y=\widehat{\xi}}  \\
	&\qquad\qquad\qquad +\left[ D_y \frac{dg}{d\nu} (Y_{tx\mu}(T),\lr(Y_{t\xi}(T)))\left(\widetilde{Y_{t\xi}}(T)\right)\right]^\top \widetilde{\bd Y_{t\xi}}\left(T,\widehat{\xi}\right) \bigg] \bigg)  \bigg] \widehat{\eta}\bigg\},
\end{align*}
where $\widetilde{Y_{ty\mu}}(s)$ is an independent copy of $Y_{ty\mu}(s)$. Since the choices of $\xi$ and $\eta$ are arbitrary, we can then deduce \eqref{prop7_01}. Then, \eqref{prop7_02} is a direct consequence of \eqref{prop7_01} and Theorem~\ref{prop:9}. And Estimate \eqref{prop7_03} is a direct consequence of Estimates \eqref{thm2_1}, \eqref{prop5_01} and \eqref{prop5_02}, and Theorems~\ref{prop:8} and \ref{prop:9}.    

\subsection{Proof of Theorem~\ref{prop:10}}\label{pf_prop10}

Note that when $\sigma_2(s)=0$, so the map $\sigma$ do not depend on $v$ and the map $\hat{v}$ do not depend on $q$. By Cauchy-Schwarz inequality and \eqref{lem2_1}, we have
\begin{align}
		&\e\left[|Y_{t\xi}(t')-\xi|^2\right] \notag \\
		=\ & \e\Bigg[\left| \int_t^{t'}b(s,\theta_{t\xi}(s)) ds+ \int_t^{t'}\sigma(s,Y_{t\xi}(s),\lr(Y_{t\xi}(s))) dB(s) \right|^2\Bigg] \notag \\
		\le\ & C(L,T) \int_t^{t'}\e\left[1+|Y_{t\xi}(s)|^2+|\hv(s,Y_{t\xi}(s),\lr(Y_{t\xi}(s)),p_{t\xi}(s))|^2\right]ds \notag \\
		\le\ & C(L,T) \int_t^{t'}\e\bigg[1+\sup_{t\le s\le T}|Y_{t\xi}(s)|^2+\sup_{t\le s\le T}|p_{t\xi}(s)|^2\bigg]ds \notag \\
		\le\ & C(L,T,\lambda_v)\left(1+W_2^2(\mu,\delta_0)\right)|t'-t|.  \label{prop10_11}
\end{align}
In a similar manner, from \eqref{lem2_1} and \eqref{lem5_1}, we can deduce that 
\begin{equation}\label{prop10_12}
	\begin{split}
		&\e\left[|Y_{tx\mu}(t')-x|^2\right] \le C(L,T,\lambda_v)\left(1+|x|^2+W_2^2(\mu,\delta_0)\right) |t'-t|.
	\end{split}		
\end{equation}
Note that
\begin{align}\label{prop10_13}
	p_{t\xi}(t')-p_{t\xi}(t)=p_{t\xi}(t')-\e\left[p_{t\xi}(t')|\xi\right]-\int_t^{t'}\e\left[D_x H(s,\Theta_{t\xi}(s))|\xi\right]ds.
\end{align}  
From Cauchy-Schwarz inequality and \eqref{lem2_1}, we deduce that
\begin{align*}
	&\e\Bigg[\left|\int_t^{t'}\e\left[D_x H(s,\Theta_{t\xi}(s))|\xi\right]ds\right|^2\Bigg]\\
	\le\ & |t'-t| \e\bigg[\int_t^{t'} \left| D_x H(s,\Theta_{t\xi}(s))\right|^2 ds\bigg]\\
	\le\ &  |t'-t| C(L,T) \e\bigg[\int_t^{t'} \left(|Y_{t\xi}(r)|^2 + |v_{t\xi}(r)|^2+|p_{t\xi}(r)|^2+|q_{t\xi}(r)|^2 \right) ds\bigg]\\
	\le\ & C(L,T,\lambda_v)\left(1+W_2^2(\mu,\delta_0)\right) |t'-t| .
\end{align*}
Then, from \eqref{prop10_13}, we have
\begin{equation}\label{prop10_14}
	\begin{split}
		&\e\left[|p_{t\xi}(t')-p_{t\xi}(t)|^2\right]\le C(L,T,\lambda_v)\left(1+W_2^2(\mu,\delta_0)\right)|t'-t|.
	\end{split}		
\end{equation}
Similarly, we can deduce that
\begin{equation}\label{prop10_15}
	\begin{split}
		&\e\left[|p_{tx\mu}(t')-p_{tx\mu}(t)|^2\right]\le C(L,T,\lambda_v)\left(1+|x|^2+W_2^2(\mu,\delta_0)\right) |t'-t|.
	\end{split}		
\end{equation}
By dynamic programming principle, for any $\epsilon\in[0,T-t]$,
\begin{equation*}
	V(t,x,\mu)=\e\left[\int_t^{t+\epsilon}f(s,\theta_{tx\mu}(s))ds\right]+V\left(t+\epsilon,Y_{tx\mu}(t+\epsilon),\lr(Y_{t\xi}(t+\epsilon))\right),
\end{equation*}
so we have
\begin{align}
		\frac{1}{\epsilon}\left[V(t+\epsilon,x,\mu)-V(t,x,\mu)\right] \notag =\ & \frac{1}{\epsilon}\e[V(t+\epsilon,x,\mu)-V(t,x,\mu)] \notag \\
		=\ & \frac{1}{\epsilon} \e\left[V(t+\epsilon,x,\mu)-V(t+\epsilon,Y_{tx\mu}(t+\epsilon),\lr(Y_{t\xi}(t+\epsilon)))\right] \notag \\
		& -\frac{1}{\epsilon} \e\left[\int_t^{t+\epsilon}f(s,\theta_{tx\mu}(s))ds\right]. \label{prop10_1}
\end{align}	
From Theorems~\ref{prop:6} and \ref{prop:7}, Estimates \eqref{prop10_11} and \eqref{prop10_12} and It\^o's formula for measure-dependent functionals (see \cite[Theorem 7.1]{BR} and also \cite{AB5,AB9'}), we deduce that
\begin{equation}\label{prop10_2}
	\begin{split}
		&\lim_{\epsilon\to0}\frac{1}{\epsilon}\e\left[V(t+\epsilon,x,\mu)-V(t+\epsilon,Y_{tx\mu}(t+\epsilon),\lr(Y_{t\xi}(t+\epsilon)))\right]\\
		=\ & - b (t,x,\mu,v_{tx\mu}(t))^\top D_x V(t,x,\mu)-\frac{1}{2}\text{Tr}\left(\sigma (t,x,\mu)^\top D_x^2 V(t,x,\mu)\sigma(t,x,\mu)\right)\\
		& -{\e}\left[b (t,{\xi},\mu,{v_{t\xi}}(t))^\top D_y\frac{dV}{d\nu}(t,x,\mu)({\xi})+\frac{1}{2}\text{Tr}\left(\sigma (t,{\xi},\mu)^\top D_y^2\frac{dV}{d\nu}(t,x,\mu)({\xi})\sigma(t,{\xi},\mu)\right) \right].
	\end{split}
\end{equation}
From \eqref{prop10_11}, \eqref{prop10_12}, \eqref{prop10_14} and \eqref{prop10_15}, we have
\begin{equation*}\label{prop10_3}
	\lim_{\epsilon\to0}\frac{1}{\epsilon}\e\left[\int_t^{t+\epsilon}f(s,Y_{tx\mu}(s),\lr(Y_{t\xi}(s)),v_{tx\mu}(s))ds\right]=f(t,x,\mu,v_{tx\mu}(t)).
\end{equation*}
Substituting \eqref{prop10_2} and the last equation back to \eqref{prop10_1}, from the definition of $v_{tx\mu}(t)$ and $v_{t\xi}(t)$, \eqref{prop6_03} and \eqref{rk_1}, we have
\begin{align*}
		&\lim_{\epsilon\to0}\frac{1}{\epsilon}\left[V(t+\epsilon,x,\mu)-V(t,x,\mu)\right]\\
		=\ & -b(t,x,\mu,v_{tx\mu}(t))^\top D_x V(t,x,\mu) -\frac{1}{2}\text{Tr}\left[\sigma (t,x,\mu)^\top D_x^2 V(t,x,\mu)\sigma(t,x,\mu)\right]-f(t,x,\mu,v_{tx\mu}(t))\\
		& -{\e}\left[b(t,{\xi},\mu,{v_{t\xi}}(t))^\top D_y\frac{dV}{d\nu}(t,x,\mu)({\xi})+\frac{1}{2}\text{Tr}\left(\sigma (t,{\xi},\mu)^\top D_y^2\frac{dV}{d\nu}(t,x,\mu)({\xi})\sigma(t,{\xi},\mu)\right) \right]\\
		=\ & -b \left(t,x,\mu,\hv(t,x,\mu,D_x V(t,x,\mu))\right)^\top D_x V(t,x,\mu) -\frac{1}{2}\text{Tr}\left(\sigma (t,x,\mu)^\top D_x^2 V(t,x,\mu)\sigma(t,x,\mu)\right)\\
		&-f\left(t,x,\mu,\hv(t,x,\mu,D_x V(t,x,\mu))\right)\\
		& -{\e}\bigg[b \left(t,{\xi},\mu,\hv(t,\xi,\mu,D_x V(t,\xi,\mu))\right)^\top D_y\frac{dV}{d\nu}(t,x,\mu)({\xi})\\
		&\qquad +\frac{1}{2}\text{Tr}\left(\sigma (t,{\xi},\mu)^\top D_y^2\frac{dV}{d\nu}(t,x,\mu)({\xi})\sigma(t,{\xi},\mu)\right) \bigg]\\
		=\ & -H\left(t,x,\mu,D_x V(t,x,\mu),\frac{1}{2}D_x^2 V(t,x,\mu)\sigma(t,x,\mu)\right)\\
		& -\int_\brn \bigg[ b \left(t,y,\mu,\hv(t,y,\mu,D_x V(t,y,\mu))\right)^\top D_y\frac{dV}{d\nu}(t,x,\mu)({y})\\
		&\quad\qquad +\frac{1}{2}\text{Tr}\left( \sigma (t,y,\mu)^\top D_y^2\frac{dV}{d\nu}(t,x,\mu)({y})\sigma(t,y,\mu)\right)\bigg] d\mu(y),
\end{align*}
from which we deduce \eqref{prop10_01}.	

\section{Proof of Theorem~\ref{thm:1}}\label{pf_thm1}

The existence result is a direct consequence of Theorems~\ref{prop:6}, \ref{prop:7} and \ref{prop:10}. We next aim to prove the uniqueness. Let ${U}$ be another classical solution of the master equation \eqref{master}. For any initial $(t,\mu)$, we choose any $\xi\sim\mu$, and define the process $\bar{Y}_{t\xi}$ as
	\begin{equation}\label{SDE_bar_Y}
		\begin{aligned}
			\bar{Y}_{t\xi}(s)=\xi&+\int_t^s b\left(r, \bar{Y}_{t\xi}(r),\lr(\bar{Y}_{t\xi}(r)), \hv(r, \bar{Y}_{t\xi}(r),\lr(\bar{Y}_{t\xi}(r)),D_x U(r, \bar{Y}_{t\xi}(r),\lr(\bar{Y}_{t\xi}(r)))) \right)dr\\
			&+\int_t^s \sigma \left(r, \bar{Y}_{t\xi}(r),\lr(\bar{Y}_{t\xi}(r))\right)dB(r),\quad s\in[t,T].
		\end{aligned}
	\end{equation}
	The well-posedness of SDE \eqref{SDE_bar_Y} follows from the regularity of the functional $U$ and standard argument for SDEs, and we denote by $\bar{v}_{t\xi}(s):= \hv(s, \bar{Y}_{t\xi}(s),\lr(\bar{Y}_{t\xi}(s)),D_x U(s, \bar{Y}_{t\xi}(s),\lr(\bar{Y}_{t\xi}(s))))$. Then, for any initial $x$, we consider the control problem $J_{tx}(\cdot;\lr(\bar{Y}_{t\xi}(s)),t\le s\le T)$: for an admissible control $v(\cdot)\in\lr_{\f}^2(t,T)$, the controlled state process is 
	\begin{align*}
		&X_{tx\mu}^v(s)=x+\int_t^s b\left(r,X^v_{tx\mu}(r), \lr(\bar{Y}_{t\xi}(r)),v(r)\right)dr+\int_t^s \sigma\left(r,X^v_{tx\mu}(r), \lr(\bar{Y}_{t\xi}(r))\right)dB(r),\quad s\in[t,T],
	\end{align*}	 
	and the cost functional is 
	\begin{align*}
		J_{tx}(v;\lr(\bar{Y}_{t\xi}(s)),t\le s\le T)=\e\left[\int_t^T f\left(s,X^v_{tx\mu}(s), \lr(\bar{Y}_{t\xi}(s)),v(s)\right) ds +g\left(X^v_{tx\mu}(T), \lr(\bar{Y}_{t\xi}(T))\right) \right].
	\end{align*}
	From It\^o's formula for measure-dependent functionals (see \cite[Theorem 7.1]{BR} and also \cite{AB5}), we deduce that
	\begin{align*}
		&\e\left[U\left(s,X^v_{tx\mu}(s),\lr(\bar{Y}_{t\xi}(s))\right)-U(t,x,\mu)\right]\\
		=\ & \int_t^s \e \bigg\{ \dd_t U\left(r,X^v_{tx\mu}(r),\lr(\bar{Y}_{t\xi}(r))\right)+b \left(r,X^v_{tx\mu}(r),\lr(\bar{Y}_{t\xi}(r)),v(r)\right)^\top D_x U\left(r,X^v_{tx\mu}(r),\lr(\bar{Y}_{t\xi}(r))\right)\\
		&\quad\qquad +\frac{1}{2}\text{Tr}\left[ D_x^2 U\left(r,X^v_{tx\mu}(r),\lr(\bar{Y}_{t\xi}(r))\right)\left(\sigma\sigma^\top\right) \left(r,X^v_{tx\mu}(r),\lr(\bar{Y}_{t\xi}(r))\right)\right]\\
		&\quad\qquad+\widetilde{\e}\bigg[b \left(r,\widetilde{\bar{Y}_{t\xi}(r)},\lr(\bar{Y}_{t\xi}(r)),\widetilde{\bar{v}_{t\xi}}(r)\right)^\top D_y\frac{dU}{d\nu}\left(r,X^v_{tx\mu}(r),\lr(\bar{Y}_{t\xi}(r))\right)\left(\widetilde{\bar{Y}_{t\xi}(r)}\right) \\
		&\quad\qquad\qquad +\frac{1}{2}\text{Tr}\left[ D_y^2\frac{dU}{d\nu}\left(r,X^v_{tx\mu}(r),\lr(\bar{Y}_{t\xi}(r))\right)\left(\widetilde{\bar{Y}_{t\xi}(r)}\right)\left(\sigma\sigma^\top\right) \left(r,\widetilde{\bar{Y}_{t\xi}(r)},\lr(\bar{Y}_{t\xi}(r))\right)\right]\bigg]\bigg\}dr\\
		=\ & \int_t^s \e\bigg\{\dd_t U\left(r,X^v_{tx\mu}(r),\lr(\bar{Y}_{t\xi}(r))\right)-f\left(r,X^v_{tx\mu}(r),\lr(\bar{Y}_{t\xi}(r)),v(r)\right)\\
		&\quad\qquad +L\Big(r,X^v_{tx\mu}(r),\lr(\bar{Y}_{t\xi}(r)),v(r),D_x U\left(r,X^v_{tx\mu}(r),\lr(\bar{Y}_{t\xi}(r))\right),\\
		&\qquad\qquad\qquad \frac{1}{2}D_x^2 U\left(r,X^v_{tx\mu}(r),\lr(\bar{Y}_{t\xi}(r))\right) \sigma\left(r,X^v_{tx\mu}(r),\lr(\bar{Y}_{t\xi}(r))\right)\Big)\\
		&\quad\qquad +\widetilde{\e}\bigg[b \left(r,\widetilde{\bar{Y}_{t\xi}(r)},\lr(\bar{Y}_{t\xi}(r)),\widetilde{\bar{v}_{t\xi}}(r)\right)^\top D_y\frac{dU}{d\nu}\left(r,X^v_{tx\mu}(r),\lr(\bar{Y}_{t\xi}(r))\right)\left(\widetilde{\bar{Y}_{t\xi}(r)}\right)\\
		&\quad\qquad\qquad +\frac{1}{2}\text{Tr}\left[ D_y^2\frac{dU}{d\nu}\left(r,X^v_{tx\mu}(r),\lr(\bar{Y}_{t\xi}(r))\right)\left(\widetilde{\bar{Y}_{t\xi}(r)}\right)\left(\sigma\sigma^\top\right) \left(r,\widetilde{\bar{Y}_{t\xi}(r)},\lr(\bar{Y}_{t\xi}(r))\right)\right]\bigg] \bigg\}dr,
	\end{align*}	
	where $\left(\widetilde{\bar{Y}_{t\xi}}(r),\widetilde{\bar{v}_{t\xi}}(r)\right)$ is an independent copy of $({\bar{Y}_{t\xi}}(r),{\bar{v}_{t\xi}}(r))$. Since $U$ satisfies Equation \eqref{master}, from the last equation, we know that
	\begin{equation*}\label{thm1_1}
		\begin{split}
			&\e\left[g\left(X^v_{tx\mu}(T),\lr(\bar{Y}_{t\xi}(T))\right)-U(t,x,\mu)\right]\\
			=\ & \int_t^T \e\Big\{-f\left(r,X^v_{tx\mu}(r),\lr(\bar{Y}_{t\xi}(r)),v(r)\right)\\
			&\quad\qquad +L\Big(r,X^v_{tx\mu}(r),\lr(\bar{Y}_{t\xi}(r)),v(r),D_x U\left(r,X^v_{tx\mu}(r),\lr(\bar{Y}_{t\xi}(r))\right),\\
			&\qquad\qquad\qquad \frac{1}{2}D_x^2 U\left(r,X^v_{tx\mu}(r),\lr(\bar{Y}_{t\xi}(r))\right) \sigma\left(r,X^v_{tx\mu}(r),\lr(\bar{Y}_{t\xi}(r))\right)\Big)\\
			&\quad\qquad -H\Big(r,X^v_{tx\mu}(r),\lr(\bar{Y}_{t\xi}(r)),D_x U\left(r,X^v_{tx\mu}(r),\lr(\bar{Y}_{t\xi}(r))\right),\\
			&\qquad\qquad\qquad \frac{1}{2}D_x^2 U\left(r,X^v_{tx\mu}(r),\lr(\bar{Y}_{t\xi}(r))\right)\sigma\left(r,X^v_{tx\mu}(r),\lr(\bar{Y}_{t\xi}(r))\right)\Big)\Big\}dr,
		\end{split}
	\end{equation*}
	therefore, from the very definition of the Hamiltonian $H$, we have
	\begin{equation}\label{pf:uni_2}
		\begin{split}
			J_{tx}(v(\cdot);\lr(\bar{Y}_{t\xi}(s),t\le s\le T))&\geq U(t,x,\mu).
		\end{split}
	\end{equation}
	If we set $v(s)=\bar{v}_{tx\mu}(s):=\hv\left(s, \bar{Y}_{tx\mu}(s),\lr(\bar{Y}_{t\xi}(s)),D_x U\left(s,\bar{Y}_{tx\mu}(s),\lr(\bar{Y}_{t\xi}(s))\right) \right)$, where $\bar{Y}_{tx\mu}$ is the solution of the following SDE
	\begin{equation*}
		\begin{aligned}
			\bar{Y}_{tx\mu}(s)=x&+\int_t^s b\left(r, \bar{Y}_{tx\mu}(r),\lr(\bar{Y}_{t\xi}(r)), \hv(r, \bar{Y}_{tx\mu}(r),\lr(\bar{Y}_{t\xi}(r)),D_x U(r, \bar{Y}_{tx\mu}(r),\lr(\bar{Y}_{t\xi}(r)))) \right)dr\\
			&+ \int_t^s \sigma \left(r, \bar{Y}_{tx\mu}(r),\lr(\bar{Y}_{t\xi}(r))\right)dB(r),\quad s\in[t,T],
		\end{aligned}
	\end{equation*}    
	then we see that 
	\begin{align}\label{pf:uni_3}
		{U}(t,x,\mu)= J_{tx}(\bar{v}_{tx\mu};\lr(\bar{Y}_{t\xi}(s)),t\le s\le T).
	\end{align}
	We now define the processes 
	\begin{equation}\label{pf:uni_1}
		\begin{aligned}
			\bar{p}_{t\xi}(s):=\ & D_x U\left(s,\bar{Y}_{t\xi}(s),\lr(\bar{Y}_{t\xi}(s))\right),\\
			\bar{q}_{t\xi}(s):=\ & D_x^2 U\left(s,\bar{Y}_{t\xi}(s),\lr(\bar{Y}_{t\xi}(s))\right)\sigma \left(s,\bar{Y}_{t\xi}(s),\lr(\bar{Y}_{t\xi}(s))\right),
		\end{aligned}
	\end{equation}
	then, by using  It\^o's lemma and the master equation  \eqref{master} for $U$, we know that the processes $(\bar{p}_{t\xi},\bar{q}_{t\xi})$ satisfy the following BSDE:
	\begin{equation}\label{BSDE_bar_pq}
		\begin{aligned}
			\bar{p}_{t\xi}(T)=\ & D_x g\left(\bar{Y}_{t\xi}(T),\lr(\bar{Y}_{t\xi}(T))\right)-\int_s^T \bar{q}_{t\xi}(r)dB(r)\\
			&+\int_s^T \left(D_x H\left(r, \bar{Y}_{t\xi}(r),\lr(\bar{Y}_{t\xi}(r)), \bar{p}_{t\xi}(r), \bar{q}_{t\xi}(r) \right)\right) dr,\quad s\in[t,T].
		\end{aligned}
	\end{equation}
	From \eqref{SDE_bar_Y}, \eqref{pf:uni_1} and \eqref{BSDE_bar_pq}, we know that the processes $(\bar{Y}_{t\xi}, \bar{p}_{t\xi},\bar{q}_{t\xi})$ satisfy the FBSDEs \eqref{intro_2}. Then, from the uniqueness result of FBSDEs \eqref{intro_2}, we know that $(\bar{Y}_{t\xi}, \bar{p}_{t\xi},\bar{q}_{t\xi})=({Y}_{t\xi}, {p}_{t\xi},{q}_{t\xi})$, and therefore, $\lr(\bar{Y}_{t\xi}(s))=\lr({Y}_{t\xi}(s))$. Similar as above, if we define 		
	\begin{align*}
		\bar{p}_{tx\mu}(s):=\ & D_x U\left(s, \bar{Y}_{tx\mu}(s),\lr(Y_{t\xi}(s))\right), \\
		\bar{q}_{tx\mu}(s):=\ & D_x^2 U\left(s,\bar{Y}_{tx\mu}(s),\lr(Y_{t\xi}(s))\right)\sigma \left(s,\bar{Y}_{tx\mu}(s),\lr(Y_{t\xi}(s))\right),
	\end{align*}
	then we can show that $(\bar{Y}_{tx\mu}, \bar{p}_{tx\mu},\bar{q}_{tx\mu})=({Y}_{tx\mu}, {p}_{tx\mu},{q}_{tx\mu})$, and therefore $\bar{v}_{tx\mu}=v_{tx\mu}$. From \eqref{pf:uni_2} and \eqref{pf:uni_3} and the definition of $V$ in  \eqref{intro_4}, we know that $U$ coincides with the value functional $V$.

\end{document}